\newtheorem{theorem}{Theorem}[section]
\newtheorem{lemma}[theorem]{Lemma}
\newtheorem{prop}[theorem]{Proposition}
\newtheorem{coro}[theorem]{Corollary}
\newtheorem{conj}[theorem]{Conjecture}
\theoremstyle{remark}
\newtheorem{remark}[theorem]{Remark}
\newtheorem{exam}[theorem]{Example}
\newtheorem{definition}[theorem]{Definition}
\newcommand{\Det}{\mathrm{Det}}
\newcommand{\Hom}{\mathrm{Hom}}
\newcommand{\frD}{\mathfrak{D}}
\newcommand{\Fa}{\mathfrak{a}}
\newcommand{\fra}{\mathfrak{a}}
\newcommand{\frc}{\mathfrak{c}}
\newcommand{\frf}{\mathfrak{f}}
\newcommand{\QQ}{\mathbb{Q}}
\newcommand{\IQ}{\mathbb{Q}}
\newcommand{\IC}{\mathbb{C}}
\newcommand{\IZ}{\mathbb{Z}}
\newcommand{\IR}{\mathbb{R}}
\newcommand{\RR}{\mathbb{R}}
\newcommand{\ZZ}{\mathbb{Z}}
\newcommand{\CC}{\mathbb{C}}
\newcommand{\Qu}{\mathbb{Q}}
\newcommand{\Qc}{{\mathbb{Q}^c}}
\newcommand{\Ce}{{\mathbb{C}}}
\newcommand{\Qp}{{\mathbb{Q}_p}}
\newcommand{\Ql}{{\mathbb{Q}_\ell}}
\newcommand{\Qlc}{{\mathbb{Q}_\ell^c}}
\newcommand{\Ze}{\mathbb{Z}}
\newcommand{\Zl}{{\mathbb{Z}_\ell}}
\newcommand{\Zp}{\mathbb{Z}_p}
\newcommand{\sseq}{\subseteq}
\newcommand{\Fp}{\mathbb{F}_p}
\newcommand{\frp}{\mathfrak{p}}
\newcommand{\lra}{\longrightarrow}
\renewcommand{\OE}{\mathcal{O}_E}
\newcommand{\OK}{\mathcal{O}_K}
\newcommand{\OKG}{\OK[G]}
\newcommand{\ind}{\mathrm{ind}}
\newcommand{\ZG}{\Ze[G]}
\newcommand{\QG}{{\Qu[G]}}
\newcommand{\ZlG}{\Zl[G]}
\newcommand{\QlG}{\Ql[G]}
\newcommand{\Cl}{\mathrm{Cl}}
\newcommand{\Gl}{\mathrm{GL}}
\newcommand{\tr}{\mathrm{tr}}
\newcommand{\calF}{\mathcal{F}}
\newcommand{\calL}{\mathcal{L}}
\newcommand{\calM}{\mathcal{M}}
\newcommand{\calO}{\mathcal{O}}
\newcommand{\calA}{\mathcal{A}}
\newcommand{\calN}{\mathcal{N}}
\newcommand{\calW}{\mathcal{W}}
\newcommand{\Nrd}{\mathrm{Nrd}}
\newcommand{\tensor}{\otimes}
\newcommand{\HCl}{\mathrm{HCl}}
\newcommand{\eHCl}{\mathrm{eHCl}}
\newcommand{\cok}{\mathrm{Cok}}
\newcommand{\id}{\mathrm{id}}
\newcommand{\rec}{\mathrm{rec}}
\newcommand{\cl}{\mathrm{cl}}
\newcommand{\mpar}[1]{}
\begin{document}

\title[Refined metric and hermitian structures]{On refined metric and hermitian\\
 structures in arithmetic, I:\\
Galois-Gauss sums and weak ramification}

\author{Werner Bley, David Burns and Carl Hahn}

\address{Math. Inst. Univ. M\"unchen,
Theresienstr. 39,
D-80333 München,
Germany}
\email{bley@math.lmu.de}

\address{King's College London,
Department of Mathematics,
London WC2R 2LS,
U.K.}
\email{david.burns@kcl.ac.uk}

\address{15 Keswick Avenue,
London SW15 3QH,
U.K.}
\email{carlhahn01@gmail.com}


\maketitle

\begin{abstract} We use techniques of relative algebraic $K$-theory to develop a common refinement of the theories of metrised and hermitian Galois structures in arithmetic. As a first application of the general approach, we then use it to prove several new results,
and to formulate several explicit new conjectures, concerning the detailed arithmetic properties of a natural class of wildly ramified Galois-Gauss sums.
\end{abstract}
\tableofcontents

\section{Introduction}\label{intro}\mpar{intro}

This article has essentially two main purposes. Firstly, we shall use techniques of relative algebraic $K$-theory to develop a natural, and very general, algebraic formalism that gives a common, and strong, refinement of the theory of `hermitian modules' and `hermitian class groups' described by Fr\"ohlich in \cite{fro2} and of the theory of `metrised modules and complexes' and `arithmetic classgroups'  introduced by Chinburg, Pappas and Taylor in \cite{cpt}.

Secondly, as a first concrete application of this refined theory, we shall show that it gives considerable new insight on the detailed arithmetic properties of a natural class of wildly ramified Galois-Gauss sums.

To give a few more details we fix a finite group $\Gamma$ and recall that a hermitian $\Gamma$-module is a pair comprising a finitely generated projective $\Gamma$-module together with a non-degenerate $\Gamma$-invariant pairing on this module. Fr\"ohlich showed that such modules are naturally classified by a `discriminant' invariant that lies in the hermitian classgroup ${\rm HCl}(\Gamma)$ of $\Gamma$ and is defined in terms of idelic-valued functions on the ring $R_\Gamma$ of $\QQ^c$-valued virtual characters of $\Gamma$.

This theory was developed with arithmetic applications in mind since for any  tamely ra\-mi\-fied Galois extension of number fields $L/K$ with ${\rm Gal}(L/K) = \Gamma$ the ring of algebraic integers of $L$ constitutes a hermitian $\Gamma$-module when endowed with its natural trace pairing. In this setting, Fr\"ohlich conjectured, and Cassou-Nogu\`es and Taylor subsequently proved (\cite{cn-t2}),
that the corresponding discriminant element uniquely characterises the Artin root numbers of irreducible complex symplectic characters of $\Gamma$. The latter result is commonly regarded as the highlight of classical `Galois module theory', as had been developed in the 1970's and 1980's (for more details see \cite{fro2})

To develop an analogous theory in the setting of arithmetic schemes admitting a tame action of $\Gamma$, Chinburg, Pappas and Taylor subsequently defined a  metrised $\Gamma$-module (respectively, complex of $\Gamma$-modules) to be a pair comprising a finitely generated projective $\Gamma$-module and a collection of suitable metrics on the isotypic components of the complexified module (respectively, a perfect complex of $\Gamma$-modules together with  metrics on the isotypic components of the complexified cohomomology modules). To classify such structures they defined the Arithmetic classgroup  $A(\Gamma)$ of $\Gamma$ in terms of idelic-valued functions on $R_\Gamma$ and showed each metrised $\Gamma$-module (respectively complex) gives rise to an associated invariant in $A(\Gamma)$.

To describe a common refinement of the above algebraic theories we construct canonical homomorphisms $\Pi_\Gamma^{\rm met}$ and
$\Pi_\Gamma^{\rm herm}$ from the relative algebraic $K_0$-group $K_0(\ZZ[\Gamma],\QQ^c[\Gamma])$ of the ring inclusion
$\ZZ[\Gamma] \to \QQ^c[\Gamma]$ to the group $A(\Gamma)$ and to a natural extension of the group ${\rm HCl}(\Gamma)$ respectively.

We then show that $\Pi_\Gamma^{\rm met}$ and $\Pi_\Gamma^{\rm herm}$ send each of the natural generating elements of
$K_0(\ZZ[\Gamma],\QQ^c[\Gamma])$, respectively of the subgroup $K_0(\ZZ[\Gamma],\QQ[\Gamma])$ of
$K_0(\ZZ[\Gamma],\QQ^c[\Gamma])$, to the difference of the natural invariants of two metrised modules in $A(\Gamma)$,
respectively of the discriminants of two hermitian modules in ${\rm HCl}(\Gamma)$.

To define the homomorphisms
$\Pi_\Gamma^{\rm met}$ and $\Pi_\Gamma^{\rm herm}$ we rely on a description of the group
$K_0(\ZZ[\Gamma],\QQ^c[\Gamma])$ in terms of idelic-valued functions on $R_\Gamma$ that is proved by
Agboola and the first author in \cite{AgboolaBurns}.

The strategy to apply this theory in arithmetic settings is then twofold.
In any given setting, one first hopes to identify a canonical
element of $K_0(\ZZ[\Gamma],\QQ^c[\Gamma])$ that at least one of $\Pi_\Gamma^{\rm met}$ or $\Pi_\Gamma^{\rm herm}$ sends
to arithmetic invariants that have been considered previously.  
 Then one can hope to prove, or at least to formulate conjecturally, a precise relation in $K_0(\ZZ[\Gamma],\QQ^c[\Gamma])$
that projects (under either $\Pi_\Gamma^{\rm met}$ or $\Pi_\Gamma^{\rm herm}$ or both) to recover pre-existing results,
or conjectures, in ${\rm A}(\Gamma)$ and ${\rm HCl}(\Gamma)$.

 In any case in which this can be achieved one can reasonably hope to obtain up to three significant outcomes.

 Firstly, one will obtain strong refinements of earlier results in the literature since both of the homomorphisms
$\Pi_\Gamma^{\rm met}$ and $\Pi_\Gamma^{\rm herm}$ have large kernels.

 Secondly, one can hope to obtain an explanation of any parallel aspects of the nature of earlier
results in ${\rm A}(\Gamma)$ and ${\rm HCl}(\Gamma)$.

 Thirdly, and perhaps most importantly, since $K_0(\ZZ[\Gamma],\QQ[\Gamma])$ has a canonical direct sum decomposition as
$\bigoplus_\ell K_0(\ZZ_\ell[\Gamma],\QQ_\ell[\Gamma])$, where $\ell$ runs over all primes, theorems and conjectures in
${\rm A}(\Gamma)$ and ${\rm HCl}(\Gamma)$ that appeared to be intrinsically global in nature are replaced by problems that can admit natural local decompositions and hence become easier to study.

Whilst there is, of course, no guarantee that this strategy can work in all natural settings, in this article we show that it works very well in the setting of hermitian and metrised modules that arise from fractional ideals of number fields and their links to classical Galois-Gauss sums.

In addition, in a subsequent article it will be shown that the same approach can also be used to refine the theory of Chinburg, Pappas and Taylor related to connections between the Zariski cohomology complexes of sheaves of differentials on arithmetic schemes with a tame action of a finite group and the associated epsilon constants and, in particular, to explain the similarity between the results obtained in \cite{cpt} and \cite{cpt2}.

A little more precisely, in the present article we first use the above approach in the setting of tamely ramified extensions of number fields to quickly both refine and extend previous results of Chinburg and the first author in \cite{BurnsChinburg} related to the links between Galois-Gauss sums and the hermitian modules comprising fractional powers of the different of $L/K$ endowed with the natural trace pairing.

In the main body of the article we then consider wildly ramified Galois-Gauss sums. Whilst the arithmetic properties of such sums are still in general poorly understood, significant progress has been made by Erez and others (see, for example, \cite{ErezTaylor}) in the case of Galois extensions $L/K$ that are both of odd degree and `weakly ramified' in the sense of \cite{Erez}.

We recall, in particular, that under these hypotheses there exists a unique fractional ideal $\mathcal{A}_{L/K}$ of $L$ the square of which is equal to the inverse of the different of $L/K$ and that the hermitian Galois structure of $\mathcal{A}_{L/K}$ has been shown in special cases to be closely linked to the properties of Galois-Gauss sums twisted by second Adams operators.

Following the general strategy described above, we shall now show that for any such extension $L/K$, with ${\rm Gal}(L/K) = \Gamma$, there exists a canonical element $\mathfrak{a}_{L/K}$ of $K_0(\ZZ[\Gamma],\QQ^c[\Gamma])$ that simultaneously controls both the hermitian and metrised structures that are naturally associated to $\mathcal{A}_{L/K}$.

We then prove that $\mathfrak{a}_{L/K}$ belongs to, and has finite order in, the subgroup $K_0(\ZZ[\Gamma],\QQ[\Gamma])$ of $K_0(\ZZ[\Gamma],\QQ^c[\Gamma])$ and also that it behaves well functorially under change of extension.

We also show that $\mathfrak{a}_{L/K}$ has a canonical decomposition as a sum of elements constructed from local fields and then use this decomposition to compute $\mathfrak{a}_{L/K}$ explicitly in several important cases.

By using these results we can then derive several unconditional results concerning the hermitian and metrised structures associated to $\mathcal{A}_{L/K}$ and thereby, for example, extend the main results of the celebrated article \cite{ErezTaylor} of Erez and Taylor.

In the general case, these results also combine with extensive numerical computation to motivate us to formulate several new conjectures
 concerning the detailed arithmetic properties of the local Galois-Gauss sums that are attached to weakly ramified extensions.

In the first such conjecture (Conjecture \ref{local conj}) we predict a precise conjectural description of $\mathfrak{a}_{L/K}$ in terms of local `Galois-Jacobi' sums and the fundamental classes of local class field theory.

We show that this explicit conjecture is equivalent to a special case of the `local epsilon constant conjecture' formulated by Breuning in \cite{mb2} and hence provides the first concrete link between the theory of the square root of the inverse different and the general framework of Tamagawa number conjectures that originated with Bloch and Kato in \cite{BK}.

At the same time, this link combines with the methods developed here to give a new, and effective, strategy for proving the epsilon constant conjecture formulated by the first and second authors in \cite{BleyBurns} for certain new families of wildly ramified Galois extensions of number fields.

Then, in Conjecture \ref{second local conj}, we predict that $\mathfrak{a}_{L/K}$ can also be directly computed in terms of a naturally-defined `idelic twisted unramified  characteristic' element. This simple (and, we feel, quite surprising) conjectural formula can be proved completely in certain important special cases and is also supported by extensive numerical computations.

Upon comparing the predictions made in Conjectures \ref{local conj} and \ref{second local conj} one then derives a new, and explicit, conjectural formula for Galois-Jacobi sums in terms of local fundamental classes (for details see Remark \ref{last rem}).

This framework of new conjectures concerning the arithmetic properties of wildly ramified Galois-Gauss sums is surely worthy of further investigation.

However, to finish with an even more concrete example of the insight that comes from using techniques of relative algebraic $K$-theory we recall that in \cite{Vinatier2}
Vinatier conjectures that $\mathcal{A}_{L/K}$ is a free $\Gamma$-module when $K = \QQ$ and is able,
by using the connection to twisted Galois-Gauss sums, to prove this conjecture if the decomposition groups in ${\rm Gal}(L/\QQ)$ of each
wildly ramified prime are abelian (see \cite{Vinatier1}). The conjecture is also known to hold if $L/\Qu$ is tamely ramified by the work of Erez in \cite{Erez}. However, aside from numerical verifications in a small (finite) number of cases (see \cite{Vinatier3}), there is still essentially nothing known about this conjecture in the non-abelian weakly and wildly ramified case.

By contrast, applying our approach in this setting now allows us to show easily that Vinatier's Conjecture naturally decomposes into a family of corresponding conjectures concerning extensions of local fields. This observation leads directly to a general `finiteness result' for Vinatier's Conjecture and hence renders the conjecture accessible to effective computation. In particular, in this way we are able to prove the conjecture for several new, and infinite, families of non-abelian wildly ramified Galois extensions.

Although we do not pursue it here, we believe it likely that the same local approach would also shed light on several of the explicit questions that were recently raised by Caputo and Vinatier in the introduction to \cite{cv}.

Finally, we would like to note that much of this work grew out of the King's College London PhD Thesis \cite{hahn} of the third author.
\\

{\bf Acknowledgements} We are grateful to St\'ephane Vinatier for interesting correspondence and to Claus Fieker for his valuable help with the computation of the automorphism groups for the degree
$63$ extensions that are discussed in \S\ref{deg 63 extensions}. The second author is also grateful to Adebisi Agboola for stimulating discussions concerning related work. Finally, we are very grateful to the anonymous referee for a very careful reading of the article and, in particular, for having pointed out an error in an earlier version of Theorem \ref{thm tame}.

\section*{\large{Part I: The general approach and first examples}}

In this part of the article we shall first review some basic facts concerning relative algebraic $K$-theory and the theories of both arithmetic and hermitian classgroups. We then establish a new link between these theories that will play a key role in subsequent arithmetic applications.

Throughout the section we illustrate abstract definitions and results by means of arithmetic examples that are motivated by our later applications.

For any Galois extension of fields $F/E$ we set $G({F/E}) := {\rm Gal}(F/E)$. We write $\mathbb{Q}^c$ for the algebraic closure of
$\mathbb{Q}$ in $\mathbb{C}$ and for any number field $E \sseq \mathbb{Q}^c$ we also write $\Omega_E$ for the absolute Galois group $G({\mathbb{Q}^c/E})$.

 For any finite group $\Gamma$ we write $\widehat\Gamma$ for the set of irreducible $\mathbb{Q}^c$-valued characters of $\Gamma$.
If $\ell$ denotes a rational prime, then we write $\widehat\Gamma_\ell$ for the set of irreducible $\Qlc$-valued characters.

\section{Relative $K$-theory, metric structures and hermitian structures}

\subsection{Relative algebraic $K$-theory}

We fix a finite group $\Gamma$ and a Dedekind domain $R$ of characteristic zero and write $F$ for the field of fractions of $R$.

For any extension field $E$ of $F$ and any $R[\Gamma]$-module $M$ we set $M_E := E\otimes_R M$ and for any homomorphism $\phi: M \to N$ of $R[\Gamma]$-modules we write $\phi_E: M_E \to N_E$ for the induced homomorphism of $E[\Gamma]$-modules.

\subsubsection{}We write $K_0(R[\Gamma],E[\Gamma])$ for the relative algebraic $K_0$-group that arises
from the inclusion of rings $R[\Gamma]\subset E[\Gamma]$ and we use the description of this group in
 terms of explicit generators and relations that is given by Swan in
 \cite[p. 215]{swan}.

 We recall in particular that in this description each element of
 $K_0(R[\Gamma],E[\Gamma])$ is represented by a triple $[P,\phi, Q]$ where $P$ and
 $Q$ are finitely generated projective left $R[\Gamma]$-modules and
 $\phi \colon P_E  \lra Q_E$  is an isomorphism of (left) $E[\Gamma]$-modules.

We write ${\Cl}(R[\Gamma])$ for the reduced projective classgroup of $R[\Gamma]$ (as discussed in \cite[\S 49]{CurtisReinerII})
and often use the fact that there exists a canonical exact commutative diagram

\begin{equation} \label{E:kcomm}
\begin{CD} K_1(R[\Gamma]) @> >> K_1(E[\Gamma]) @> \partial^1_{R,E,\Gamma} >> K_0(R[\Gamma],E[\Gamma]) @> \partial^0_{R,E,\Gamma} >> {\rm Cl}(R[\Gamma])\\
@\vert @A\iota AA @A\iota' AA @\vert\\
K_1(R[\Gamma]) @> >> K_1(F[\Gamma]) @> \partial^1_{R,F,\Gamma}  >> K_0(R[\Gamma],F[\Gamma]) @> \partial^0_{R,F,\Gamma}  >> {\rm Cl}(R[\Gamma]).
\end{CD}
\end{equation}
Here the map $\iota$ is induced by the inclusion $F[\Gamma]\subseteq E[\Gamma]$ and $\iota'$
sends each element $[P,\phi, Q]$ to $[P,E\otimes_F\phi,Q]$. These maps are injective and will usually be regarded as inclusions.
The map $\partial^0_{R,E,\Gamma}$ sends each element $[P,\phi, Q]$ to $[P]-[Q]$. (For details of all the other homomorphisms that occur above see \cite[Th. 15.5]{swan}.)


We write $K_0T(R[\Gamma])$ for the Grothendieck group of finite $R[\Gamma]$-modules that are of finite projective dimension and recall that there are natural isomorphisms of abelian groups
\begin{equation}\label{decomp}
K_0T(R[\Gamma]) \cong K_0(R[\Gamma],F[\Gamma])\cong \bigoplus_v K_0(R_v[\Gamma],F_v[\Gamma]).
\end{equation}
We choose the normalisation of the first isomorphism so that for any finite $R[\Gamma]$-module $M$ of finite
projective dimension, and any resolution of the form $0 \to P\xrightarrow{\theta} P' \to M \to 0$,
where the modules $P$ and $P'$ are finitely generated and projective,
the class of $M$ in $K_0T(R[\Gamma])$ is sent to $[P,\theta_F, P']$.
In addition, the direct sum in (\ref{decomp}) runs over all non-archimedean places $v$ of $F$ and the second
isomorphism is the diagonal map induced by the homomorphisms
\begin{equation}\label{localization map}
\pi_{\Gamma,v}: K_0(R[\Gamma],F[\Gamma])\to K_0(R_v[\Gamma],F_v[\Gamma])
\end{equation}
that sends each element $[X,\xi, Y]$ to $[X_v, \xi_v, Y_v]$, where we set $X_v := R_v\otimes_RX$ and $\xi_v := F_v\otimes_F\xi$.

We write $\zeta(A)$ for the centre of a ring $A$. Then to compute in $K_1(E[\Gamma])$ one uses the `reduced norm' homomorphism
\begin{equation*}
 \mathrm{Nrd}_{E[\Gamma]}: K_1(E[\Gamma]) \to \zeta(E[\Gamma])^{\times}
\end{equation*}
which sends the class of each pair $(V,\phi)$, where $V$
is a finitely generated free $E[\Gamma]$-module and $\phi$ is an automorphism of $V$ (as $E[\Gamma]$-module), to the reduced norm of
 $\phi$, considered as an element of the semisimple $E$-algebra
${\rm End}_{E[\Gamma]}(V)$. If $E\subseteq \mathbb{Q}^c$ is a number field and $|\Gamma|$ is odd,
then $\mathrm{Nrd}_{E[\Gamma]}$ is bijective by the Hasse-Schilling-Maass Norm Theorem (cf. \cite[Th.~(45.3)]{CurtisReinerII}).
The same is true for algebraically closed fields and $p$-adic fields. In particular we write
\begin{equation}\label{ebh}
\delta_\Gamma: \zeta(\mathbb{Q}^c[\Gamma])^\times \to K_0(\mathbb{Z}[\Gamma],\mathbb{Q}^c[\Gamma])
\end{equation}
for the composite $\partial^1_{\mathbb{Z},\mathbb{Q}^c,\Gamma}\circ ({\rm Nrd}_{\mathbb{Q}^c[\Gamma]})^{-1}$.
 For a rational prime  $\ell$ we write

\[
\delta_{\Gamma,\ell} \colon \zeta(\Qlc[\Gamma])^\times \to K_0(\Zl[\Gamma],\Qlc[\Gamma])
\]
for the composite $\partial^1_{\Zl,\Qlc,\Gamma}\circ ({\rm Nrd}_{\Qlc[\Gamma]})^{-1}$.

\subsubsection{}In the sequel we make much use of the fact that $K_0(\mathbb{Z}[\Gamma],\mathbb{Q}^c[\Gamma])$ can be explicitly described in terms of idelic-valued functions on the characters of $\Gamma$.

To recall this description we write $R_\Gamma$ for the free abelian group on $\widehat\Gamma$. Then the Galois group $\Omega_\mathbb{Q}$ acts on $R_\Gamma$ {via the rule $(\omega \circ \chi)(\gamma)
= \omega (\chi(\gamma))$ for every $\omega \in \Omega_\mathbb{Q}$, $\chi \in \widehat \Gamma$ and $\gamma \in \Gamma$}.

For each $a$ in ${\rm GL}_n(\mathbb{Q}^c[\Gamma])$ we define an element $\Det (a)$ of
$\Hom(R_\Gamma,\mathbb{Q}^{c\times})$ in the following way: if $T$ is a
representation over $\mathbb{Q}^c$ which has character $\phi$, then $\Det (a)(\phi) := \det( T(a))$. This definition depends
only on $\phi$ and not on the choice of representation $T$.
Analogously, if $w$ denotes a finite place of $\Qc$, then each element $a$ of ${\rm GL}_n( \mathbb{Q}_w^c[\Gamma] )$ defines
a homomorphism $\Det(a) \colon R_\Gamma   \lra ( \mathbb{Q}_w^c )^{\times}$ .

{We write $J_f(\mathbb{Q}^c[\Gamma])$ for the group of finite ideles of $\mathbb{Q}^c[\Gamma]$
and view $\mathbb{Q}[\Gamma]^\times$ as a subgroup of $J_f(\mathbb{Q}^c[\Gamma])$ via the
natural diagonal embedding. In particular, if $a$ is any element of ${\rm GL}_n(J_f(\mathbb{Q}^c[\Gamma]))$ the above approach
allows one} to define an element $\Det (a)$ of $\Hom(R_\Gamma, J_f(\mathbb{Q}^c))$ which is easily seen to be $\Omega_\Qu$-equivariant.
We set
\begin{equation*}
{U_f(\mathbb{Z}[\Gamma]) := \prod_{\ell}\mathbb{Z}_\ell[\Gamma]^\times \subset J_f(\mathbb{Q}[\Gamma])},
\end{equation*}
with the product taken over all primes $\ell$, and then define a homomorphism
\begin{equation}\label{Delta rel}
\Delta^{{\rm rel}}_{\Gamma} \colon
\Det(\QG^\times) \to
\frac{\Hom(R_\Gamma ,  J_f(\mathbb{Q}^c))^{\Omega _\mathbb{Q}} }{\Det(U_f(\mathbb{Z}[\Gamma]))} \times
\Det(\mathbb{Q}^c[\Gamma]^\times);
\qquad \theta \mapsto ([\theta], \theta^{-1})
\end{equation}
where $[\theta]$ denotes the class of $\theta$ modulo $\Det(U_f(\mathbb{Z}[\Gamma]))$. We recall that by the Hasse-Schilling-Maass
norm theorem
\[
\Det(\QG^\times) = \Hom^+(R_\Gamma ,\mathbb{Q}^{c\times})^{\Omega _\mathbb{Q}}
\]
where the right hand expression denotes Galois equivariant homomorphisms whose values on $R_\Gamma^s$, the group of
virtual symplectic characters, are totally positive. In particular,  if $\Gamma$ has odd order,
 then $\Det(\QG^\times) = \Hom(R_\Gamma ,\mathbb{Q}^{c\times})^{\Omega _\mathbb{Q}}$

It is shown in \cite[Th. 3.5]{AgboolaBurns} that there is a natural isomorphism of abelian groups
\begin{equation}\label{rel isom}
h^{\rm rel}_{\Gamma}: K_0(\mathbb{Z}[\Gamma] ,\mathbb{Q}^c[\Gamma]) \xrightarrow{\sim} \mathrm{Cok}(\Delta^{\rm rel}_{\Gamma}).
\end{equation}
We shall often use the explicit description of this map given in the following result (taken from \cite[Rem.~3.8]{AgboolaBurns}).

In the sequel for any ordered set of $d$ elements $\{e^j\}_{1 \le j \le d}$ we write $\underline{e^j}$ for the $d \times 1$ column vector with $j$-th entry $e^j$.

In addition, for any $\Gamma$-modules $X$ and $Y$ we write $\mathrm{Is}_{\Qu[\Gamma]}(X_\Qu, Y_\Qu)$ for the set of
isomorphisms of $\Qu[\Gamma]$-modules $X_\Qu\to Y_\Qu$.

\begin{lemma}\label{rep for relative group}\mpar{rep for relative group}
Let $c = [X,\xi, Y]$ be an element of $K_0(\Ze[\Gamma], \Qc[\Gamma])$ with locally free $\Ze[\Gamma]$-modules
$X$ and $Y$ of rank $d$. Choose a $\Qu[\Gamma]$-basis $\{ y_0^j \}$ of $Y_\Qu$ and, for each rational prime $p$,
a $\Zp[\Gamma]$-basis $\{ y_p^j\}$ of $Y_p$ and an $\Zp[\Gamma]$-basis $\{x_p^j\}$ of $X_p$ and define $\mu_p$
to be the element of $\Gl_d(\Qp[\Gamma])$ which satisfies $\underline{y_p^j} = \mu_p\cdot\underline{y_0^j}$.
Fix $\theta$ in $\mathrm{Is}_{\Qu[\Gamma]}(X_\Qu, Y_\Qu)$, note $\{\theta^{-1}(y_0^j)\}$ is a $\Qu[\Gamma]$-basis of
$X_\Qu$ and write $\lambda_p$ for the matrix in $\Gl_d(\Qp[\Gamma])$ with $\underline{x_p^j} = \lambda_p \cdot\underline{\theta^{-1}(y_0^j)}$.
Finally, write $\mu$ for the matrix in $\Gl_d(\Qc[\Gamma])$ that represents $\xi \circ (\theta^{-1} \tensor_\Qu \Qc)$
with respect to the $\Qc[\Gamma]$-basis $\{y^j\}$ of $Y_\Qc$.

Then the element $h^{{\rm rel}}_{\Gamma}(c)$  is represented by the homomorphism pair
\[
\left( \prod_{p} \Det(\lambda_p\cdot \mu_p^{-1}) \right)  \times \Det(\mu) \in
\Hom(R_\Gamma , J_f(\mathbb{Q}^c))^{\Omega _\mathbb{Q}}  \times
\Det(\mathbb{Q}^c[\Gamma]^\times).
\]
\end{lemma}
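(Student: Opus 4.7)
The plan is to decompose $c$ in $K_0(\Ze[\Gamma],\Qc[\Gamma])$ into two pieces that can be evaluated separately under $h^{\rm rel}_\Gamma$, and then add them in $\cok(\Delta^{\rm rel}_\Gamma)$. Using the composition relation $[P,\psi_2\circ\psi_1,T]=[P,\psi_1,Q]+[Q,\psi_2,T]$ from Swan's presentation, taken with $\psi_1 = \theta_\Qc := \theta\tensor_\Qu\Qc \colon X_\Qc \to Y_\Qc$ and $\psi_2 = \xi\circ\theta_\Qc^{-1} \colon Y_\Qc \to Y_\Qc$, one obtains
\[
c \;=\; [X,\theta_\Qc,Y]\;+\;[Y,\xi\circ\theta_\Qc^{-1},Y]
\]
in $K_0(\Ze[\Gamma],\Qc[\Gamma])$. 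The first summand is $\iota'([X,\theta,Y])$ for the class $[X,\theta,Y]\in K_0(\Ze[\Gamma],\Qu[\Gamma])$, while the second is the image under $\partial^1_{\Ze,\Qc,\Gamma}$ of the class in $K_1(\Qc[\Gamma])$ represented by the $\Qc[\Gamma]$-automorphism $\xi\circ\theta_\Qc^{-1}$ of $Y_\Qc$.

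For the second summand, the definition $\delta_\Gamma = \partial^1_{\Ze,\Qc,\Gamma}\circ(\Nrd_{\Qc[\Gamma]})^{-1}$ shows it equals $\delta_\Gamma(u)$ with $u = \Nrd_{\Qc[\Gamma]}(\xi\circ\theta_\Qc^{-1})$. Since $\mu$ represents $\xi\circ\theta_\Qc^{-1}$ in the chosen basis $\{y_0^j\}$ of $Y_\Qc$, one has $u = \Det(\mu)$. From the construction of $h^{\rm rel}_\Gamma$ in \cite[Th.~3.5]{AgboolaBurns}, the composite $h^{\rm rel}_\Gamma\circ\delta_\Gamma$ sends $u$ to the class of $(1,u)$, so this summand contributes $(1,\Det(\mu))$ in $\cok(\Delta^{\rm rel}_\Gamma)$.

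For the first summand, the local-global isomorphism (\ref{decomp}) gives $[X,\theta,Y] = \sum_p\pi_{\Gamma,p}([X,\theta,Y])$ with $p$-component $[X_p,\theta_p,Y_p]\in K_0(\Zp[\Gamma],\Qp[\Gamma])$. Using the local bases $\{x_p^j\}$ and $\{y_p^j\}$ to identify $X_p$ and $Y_p$ with $\Zp[\Gamma]^d$, the map $\theta_p$ is represented by a matrix $\nu_p\in\Gl_d(\Qp[\Gamma])$ with $[X_p,\theta_p,Y_p] = \delta_{\Gamma,p}(\Det(\nu_p))$. A direct computation from the defining relations $\underline{x_p^j} = \lambda_p\cdot\underline{\theta^{-1}(y_0^j)}$ and $\underline{y_p^j} = \mu_p\cdot\underline{y_0^j}$ yields $\underline{\theta_p(x_p^j)} = \lambda_p\mu_p^{-1}\cdot\underline{y_p^j}$, so $\nu_p = \lambda_p\mu_p^{-1}$. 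Assembling over all primes and using the compatibility of $h^{\rm rel}_\Gamma$ with the local decomposition then shows the first summand contributes the class of $\bigl(\prod_p\Det(\lambda_p\mu_p^{-1}),\,1\bigr)$ in the cokernel; adding the two contributions gives the claimed representative pair.

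The principal obstacle is maintaining complete consistency with the sign and direction conventions of \cite{AgboolaBurns}. In particular, the identification $h^{\rm rel}_\Gamma\circ\delta_\Gamma \colon u\mapsto(1,u)$ and the identification of the restriction of $h^{\rm rel}_\Gamma\circ\iota'$ to each local component with the local reduced-norm construction both depend on the precise normalisation $\Delta^{\rm rel}_\Gamma(\theta) = ([\theta],\theta^{-1})$; these must be tracked carefully to confirm that $\lambda_p\mu_p^{-1}$ appears in the first component rather than its inverse, and that $\Det(\mu)$ appears in the second component rather than $\Det(\mu)^{-1}$, before the two contributions can be legitimately added.
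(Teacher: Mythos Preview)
The paper does not give its own proof of this lemma; it simply records the result as ``taken from \cite[Rem.~3.8]{AgboolaBurns}''. So there is no paper-side argument to compare against. Your approach---splitting $c=[X,\theta_\Qc,Y]+[Y,\xi\circ\theta_\Qc^{-1},Y]$, handling the second term via $\delta_\Gamma$ and the first via the local decomposition---is the natural one, and your matrix computation $\underline{\theta_p(x_p^j)}=\lambda_p\mu_p^{-1}\cdot\underline{y_p^j}$ is correct.

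Your final paragraph flags the sign and direction conventions as the principal uncertainty, but in fact the paper itself resolves these. Later in the text (proof of Corollary~\ref{coro 1}) it is recorded that $h_G^{\rm rel}(\delta_G(\alpha))$ is represented by $(1,\alpha)$, confirming your treatment of the second summand; and in the proof of Theorem~\ref{prop key diag}(ii) the paper applies the present lemma with $\theta=\xi$ (so $\mu=\mathrm{id}$) and obtains the representative $(\prod_p\Det(\lambda_p)\Det(\mu_p)^{-1},1)$, confirming your treatment of the first. So the convention checks you defer can actually be completed from the surrounding text, and your argument is sound.
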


\subsubsection{}\label{example0} We give a first example of elements of relative algebraic $K$-groups that naturally arise in arithmetic contexts.

To do this we fix a finite Galois extension of number fields $L/K$ and set $G := G(L/K)$. Since $\QQ^c\subset \CC$ we identity the set $\Sigma(L)$ of field embeddings $L \to \IQ^c$ with the set of embeddings $L \to \IC$ and we write $H_L := \prod_{\Sigma(L)}\IZ$.

Then the natural action of $G$ on $\Sigma(L)$ endows $H_L$ with the structure of a $G$-module (explicitly, if  $\{ w_\sigma : \sigma \in \Sigma(L)\}$ is the canonical $\Ze$-basis of $H_L$, then $\gamma w_\sigma = w_{\sigma\circ\gamma^{-1}}$). This module is free of rank $[K:\QQ]$ since, if one fixes an extension $\hat \sigma$ in $\Sigma(L)$ of each $\sigma$ in $\Sigma(K)$, then the set $\{ w_{\hat\sigma}\}_{\sigma \in \Sigma(K)}$ is a basis of $H_L$ over $\ZZ[G]$.

In addition, the map
\[
\kappa_L:\mathbb{Q}^c\otimes_{\mathbb{Q}}L \rightarrow\prod_{\Sigma(L)}\mathbb{Q}^c = \QQ^c\otimes_\ZZ H_L
\]
that sends each element $z\otimes \ell$ to $(\sigma(\ell)z)_{\sigma \in \Sigma(L)}$ is then an isomorphism of $\mathbb{Q}^c[G]$-modules.

As a result, any full projective $\ZZ[G]$-sublattice $\mathcal{L}$ of $L$ gives rise to an associated element
\[ [\mathcal{L},\kappa_L,H_L]\]
of $K_0(\ZZ[G],\QQ^c[G])$.

In the case that $\mathcal{L}$ is an $\mathcal{O}_K[G]$-module the recipe in Lemma \ref{rep for relative group} gives rise to a useful description of this element that we record in the next result.

In this result (and the sequel) we use the following notation. For each element $b$ of $L$ with $L = K[G]\cdot b$ and each character $\chi$ in $\widehat G$ that is represented by a homomorphism of the form $T_\chi: G \to {\rm GL}_{n_\chi}(\QQ^c)$, one defines a resolvent element
\[ (b\mid \chi) := {\rm det}(\sum_{g \in G}g(b) T_\chi(g^{-1}))\]
and then an associated `norm-resolvent' by setting
\[
\calN_{K/\Qu}(b\mid \chi):= \prod_\omega (b \mid \chi^{\omega^{-1}})^\omega,
\]
where $\omega$ runs through a transversal of $\Omega_\Qu$ modulo $\Omega_K$.

For each finite place $v$ of $K$ we write $K_v$ for the completion of $K$ at $v$ and note that $L_v := L \tensor_K K_v \simeq \prod_{w|v}L_w$ is a
free $K_v[G]$-module of rank one. Then, in the same way as above, for each element $b_v$ in $L_v$ such that $L_v = K_v[G]\cdot b_v$ we define an idelic-valued resolvent $(b_v \mid \chi)$ and an idelic-valued norm resolvent $\calN_{K/\Qu}(b_v \mid \chi)$ (for more details see \cite[\S~4.1]{BurnsChinburg}). For an $\mathcal{O}_K$-module $\calL$ we also set $\calL_v := \calL \tensor_{\OK} \calO_{K_v}$.

\begin{lemma}\label{norm resolvent lemma}\mpar{norm resolvent lemma}
Fix a $\ZZ$-basis $\{a_\sigma\}_{\sigma \in \Sigma(K)}$ of $\OK$, an element $b$ of $L$ such that $L = K[G]\cdot b$ and,
for each finite place $v$ of $K$, an element $b_v$ of $L_v$ such that $\calL_v = \calO_{K_v}[G]\cdot b_v$.

Then the element $h_G^{\mathrm{rel}}([\calL, \kappa_L, H_L])$ is represented by the homomorphism pair $(\theta_1\theta_2^{-1}, \theta_2 \theta_3)$ where for $\chi$ in $\widehat G$ one has
\[
\theta_1(\chi) := \prod_v \calN_{K/\Qu}(b_v \mid \chi), \quad \theta_2(\chi) := \calN_{K/\Qu}(b \mid \chi), \quad \theta_3(\chi) := \delta_K^{\chi(1)}
\]
with
$\delta_K := \det( \tau(a_\sigma))_{\sigma, \tau \in  \Sigma(K)}$.
\end{lemma}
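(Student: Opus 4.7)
The plan is to apply Lemma \ref{rep for relative group} directly to $c := [\calL, \kappa_L, H_L]$ after making explicit choices of data. Enumerate $\Sigma(K)$ as $\sigma_1, \ldots, \sigma_d$ (with $d = [K:\QQ]$) and fix extensions $\hat\sigma_j \in \Sigma(L)$ of each $\sigma_j$. Take the $\QQ[G]$-basis $y_0^j := w_{\hat\sigma_j}$ of $(H_L)_\QQ$ and, for every rational prime $p$, set $y_p^j := y_0^j$, so that $\mu_p = I_d$ and the finite-idelic factor in Lemma \ref{rep for relative group} reduces to $\prod_p \Det(\lambda_p)$. Since $\calL_p = \prod_{v \mid p} \calL_v$ is free of rank one over $(\calO_K \otimes_\ZZ \ZZ_p)[G]$ with basis $b_p := (b_v)_{v \mid p}$, and $\{a_{\sigma_j}\}$ is a $\ZZ_p$-basis of $\calO_K \otimes_\ZZ \ZZ_p$, take the $\ZZ_p[G]$-basis $x_p^j := a_{\sigma_j} b_p$ of $\calL_p$. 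Finally, take $\theta : L \xrightarrow{\sim} (H_L)_\QQ$ to be the unique $\QQ[G]$-isomorphism with $\theta(a_{\sigma_j} b) = w_{\hat\sigma_j}$, so that $\theta^{-1}(y_0^j) = a_{\sigma_j} b$.

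For the second component $\Det(\mu)$ I use the parametrization $\Sigma(L) = \{\hat\tau \circ g^{-1} : \tau \in \Sigma(K),\, g \in G\}$ together with the $G$-action $g \cdot w_{\hat\tau} = w_{\hat\tau \circ g^{-1}}$ on $H_L$ to obtain
\[
\kappa_L(a_{\sigma_j} b) \;=\; \sum_{\tau \in \Sigma(K)} \tau(a_{\sigma_j})\, \beta_\tau \cdot w_{\hat\tau}, \qquad \beta_\tau := \sum_{g \in G} \hat\tau(g^{-1}(b)) \cdot g \in \QQ^c[G].
\]
Hence $\mu$ factors as $\mathrm{diag}(\beta_\tau)_\tau \cdot (\tau(a_{\sigma_j}))_{\tau, j}$, so $\Det(\mu)(\chi) = \prod_\tau \det(T_\chi(\beta_\tau)) \cdot \delta_K^{\chi(1)}$. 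Substituting $h = g^{-1}$ in $\beta_\tau$ and using $T_{\chi^{\hat\tau^{-1}}} = \hat\tau^{-1} \circ T_\chi$ identifies $\det(T_\chi(\beta_\tau))$ with $(b \mid \chi^{\hat\tau^{-1}})^{\hat\tau}$. As $\{\hat\tau\}_{\tau \in \Sigma(K)}$ forms a transversal of $\Omega_\QQ/\Omega_K$, multiplying over $\tau$ yields $\calN_{K/\QQ}(b \mid \chi) = \theta_2(\chi)$, so $\Det(\mu) = \theta_2 \cdot \theta_3$ as required.

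For the first component, note that $b$ remains a $K_p[G]$-basis of $L_p = \prod_{v \mid p} L_v$, so there is a unique $\gamma_p = (\gamma_v)_{v \mid p} \in K_p[G]^\times$ with $b_p = \gamma_p \cdot b$; then $x_p^j = \gamma_p \cdot \theta^{-1}(y_0^j)$. The matrix $\lambda_p$ is determined by the unique $\QQ_p[G]$-linear expansion $\gamma_p \cdot a_{\sigma_i} = \sum_j (\lambda_p)_{ij} \cdot a_{\sigma_j}$ in $K_p[G]$ (possible because $\{a_{\sigma_j}\}$ is a $\QQ_p[G]$-basis of $K_p[G]$). Extending scalars to $\QQ_p^c$ via $K_p[G] \otimes_{\QQ_p} \QQ_p^c \cong \prod_{\tau \in \Sigma(K)} \QQ_p^c[G]$ diagonalizes multiplication by $\gamma_p$ with entries $\tilde\tau(\gamma_{v(\tau)})$, where $\tilde\tau : K \to \QQ_p^c$ is the composition of $\tau$ with a fixed embedding $\QQ^c \hookrightarrow \QQ_p^c$ and $v(\tau)$ is the associated $p$-adic place. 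Consequently $\Det(\lambda_p)(\chi) = \prod_\tau \det(T_\chi(\tilde\tau(\gamma_{v(\tau)})))$. A manipulation identical to that for $\beta_\tau$, applied to $b_v = \gamma_v \cdot b$, gives $\det(T_\chi(\gamma_v)) = (b_v \mid \chi) / (b \mid \chi)$; assembling contributions across all primes $p$ then produces $\prod_p \Det(\lambda_p) = \theta_1 \cdot \theta_2^{-1}$.

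The main technical obstacle is the bookkeeping in this final step, where one must carefully align the local parametrization $\Sigma(K) = \bigsqcup_{v \mid p} \Hom_{\QQ_p}(K_v, \QQ_p^c)$ with the global Galois transversal $\{\hat\tau\} \subset \Omega_\QQ$ used to define $\calN_{K/\QQ}(b_v \mid \chi)$, so that the $p$-local determinant formulae patch together correctly into the claimed idelic homomorphism.
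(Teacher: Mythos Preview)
Your proposal is correct and follows essentially the same strategy as the paper's proof: both make the identical choices $y_0^j = y_p^j = w_{\hat\sigma_j}$ (so $\mu_p = I_d$), $x_p^j = a_{\sigma_j} b_p$, and $\theta(a_{\sigma_j} b) = w_{\hat\sigma_j}$, and both compute $\Det(\mu) = \theta_2\theta_3$ by the same resolvent manipulation (the paper cites \cite[(16),(17)]{BleyBurns} for this, while you spell it out).

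The only genuine difference is in the treatment of $\Det(\lambda_p)$. You compute $\lambda_p$ directly as the $\Qp[G]$-matrix of multiplication by $\gamma_p \in K_p[G]^\times$ on $K_p[G]$ and diagonalise after base change to $\Qp^c$; the paper instead inserts $\kappa_L$, writing the map with matrix $\lambda_p$ as the composite $(\theta_b\circ\kappa_L^{-1})\circ(\kappa_L\circ\theta_{b_p}^{-1})$, so that each factor is computed by exactly the same calculation already done for $\mu$ (applied once to $b$, once to $b_p$). The advantage of the paper's route is that it sidesteps precisely the ``bookkeeping'' you flag: the identification of $\prod_\tau \det(T_\chi(\tilde\tau(\gamma_{v(\tau)})))$ with $\prod_{v\mid p}\calN_{K/\Qu}(b_v\mid\chi)\cdot\calN_{K/\Qu}(b\mid\chi)^{-1}$ is automatic once one recognises $\kappa_L\circ\theta_{b_p}^{-1}$ as the $p$-adic analogue of $\kappa_L\circ\theta_b^{-1}$, with the $\delta_K^{\chi(1)}$ factors cancelling between the two pieces. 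Your diagonalisation argument reaches the same endpoint but requires carrying out that alignment by hand.
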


\begin{proof}
 Since $H_L$ is a free $G$-module, in terms of the notation of Lemma \ref{rep for relative group} we can and will use the basis $\{y_0^j\} = \{y_p^j\} = \{ w_{\hat\sigma}\}_{\sigma \in \Sigma(K)}$ so that $\mu_p$ is the identity matrix for every prime $p$.

We write $\theta_b: L \lra H_{L, \Qu}$ for the $\QG$-linear isomorphism that sends each element $a_\sigma \cdot b$ to $w_{\hat\sigma}$.

For each prime $p$ we set $\calO_{K,p} := \Zp \tensor_\Ze \OK \simeq \prod_{v \mid p}\calO_{K_v}$ and $\calL_p := \Zp \tensor_\Ze \calL \simeq \prod_{v \mid p}\calL_v$. We note that the element $b_p := \left( b_v \right)_{v\mid p}$ is a $\calO_{K,p}[G]$-generator of $\calL_p$ and that the homomorphism of $\Zp[G]$-modules $\theta_{b_p}: \calL_p \lra H_{L,p}$ that sends each element $a_\sigma\cdot  b_p$ to $w_{\hat\sigma}$ is bijective.

For the basis $\{x_p^j\}$ that occurs in the statement of Lemma \ref{rep for relative group} we choose $\{a_\sigma \cdot b_p\}_{\sigma \in \Sigma(K)}$ and then write $\lambda_p$ for the matrix in $\Gl_d(\Qp[G])$ which satisfies $\underline{a_\sigma \cdot b_p} = \lambda_p \cdot \underline{\theta_{b}^{-1}(w_{\hat\sigma})}$. We note, in particular, that $\lambda_p$ is the coordinate matrix of the $\Qp[G]$-linear map $(\QQ_p^c\otimes_{\QQ}\theta_b) \circ (\QQ_p^c\otimes_{\ZZ_p}\theta_{b_p})^{-1}$ with respect to the basis $\{ w_{\hat\sigma}\}$.

Then Lemma \ref{rep for relative group} implies that $h_G^{\mathrm{rel}}([\calL, \kappa_L, H_L])$ is represented by the homomorphism pair
\[ \left( \prod_p \Det(\lambda_p) \right) \times \Det(\mu)\]
where $\mu$  is the coordinate matrix in $\Gl_d(\Qc[G])$ of $\kappa_L \circ (\QQ^c\otimes_\QQ\theta_{b})^{-1}$ with respect to the basis $\{ w_{\hat\sigma}\}$.

In addition, from \cite[(16) and (17)]{BleyBurns}, one knows that $\Det(\mu)(\chi) = \delta_K^{\chi(1)}\cdot\calN_{K/\Qu}(b \mid \chi)$ for each character $\chi$.

Finally to compute each homomorphism $\Det(\lambda_p) $ we note that
\[ (\QQ_p^c\otimes_\QQ\theta_{b})\circ (\QQ^c_p\otimes_{\QQ_p}\theta_{b_p})^{-1} = ((\QQ_p^c\otimes_\QQ\theta_{b})\circ (\QQ_p^c\otimes_{\QQ^c}\kappa_L)^{-1}) \circ ((\QQ_p^c\otimes_{\QQ^c}\kappa_L)\circ (\QQ^c_p\otimes_{\ZZ_p}\theta_{b_p})^{-1})\]
and write $\lambda_{p,2}$ for the coordinate matrix
of $(\QQ_p^c\otimes_{\QQ^c}\kappa_L)\circ (\QQ^c_p\otimes_{\ZZ_p}\theta_{b_p})^{-1}$.

Then using similar computations to those used to derive \cite[(16) and (17)]{BleyBurns} one finds that for each character $\chi$ one has
$$
 \Det(\lambda_{p,2})(\chi)= \calN_{K/\Qu}(b_p \mid \chi)  = \prod_{v \mid p}\calN_{K/\Qu}(b_v \mid \chi),
$$
as required to complete the proof.
\end{proof}


\subsection{Hermitian Modules and Classgroups}

In this section we recall some of the basic theory of hermitian modules and classgroups.
For more details see Fr\"ohlich \cite[Chap. II]{fro2}. Note however, that in contrast to the convention used in loc. cit.
we consider all modules as left modules.

\begin{definition}\label{herm def} A \emph{hermitian form} on a $\Gamma$-module $X$ is a nondegenerate bilinear map
\[
h \colon X_\Qu \times X_\Qu \to \IQ[\Gamma]
\]
that is $\IQ[\Gamma]$-linear in the first variable and satisfies $h(x,y)=h(y,x)^\sharp$ with $z \mapsto z^\sharp$
the $\IQ$-linear anti-involution of $\IQ[\Gamma]$ which inverts elements of $\Gamma$.

A \emph{hermitian} $\Gamma$-module is a pair $(X,h)$ comprising a finitely generated projective
$\Gamma$-module $X$ and a hermitian form $h$ on $X$.
\end{definition}

\begin{exam}\label{example3}
For any number field $K$ and any finite group
$\Gamma$ we extend the field-theoretic trace ${\tr}_{K/\IQ}: K \to \QQ$ to a linear map
$K[\Gamma] \to \QQ[\Gamma]$ by applying it to  the coefficients of each element of $K[\Gamma]$.

This action then gives rise to a hermitian form
\[
t_{K[\Gamma]} \colon  K[\Gamma]\times K[\Gamma] \to \IQ[\Gamma]
\]
by setting $t_{K[\Gamma]}(x,y) = {\rm tr}_{K/\IQ}(xy^\sharp)$. In particular, since $\mathcal{O}_K$ is a free $\IZ$-module
the pair $(\mathcal{O}_K[\Gamma], t_{K[\Gamma]})$ is a hermitian $\Gamma$-module.
\end{exam}

\begin{exam}\label{example2}
For any finite Galois extension $L/K$ of number fields, with $G = G({L/K})$, one obtains a hermitian form
\[
t_{L/K}: L\times L \to \IQ[G]
\]
by setting $t_{L/K}(x,y) = \sum_{g \in G}{\tr}_{L/\IQ}(x \cdot g(y))g$.
For each full projective $G$-sublattice $\mathcal{L}$ of $L$ the pair $(\mathcal{L}, t_{L/K})$
is then a hermitian $G$-module.
\end{exam}

\begin{exam}\label{pullback2}
Let $X_1$ and $X_2$ be finitely
generated projective $\Gamma$-modules and $\xi$ an isomorphism of $\QQ[\Gamma]$-modules
$X_{2, \Qu}\cong X_{1, \Qu}$. For any hermitian form $h$ on $X_1$ we define the `pullback of $h$
through $\xi$' to be the hermitian form $\xi^*(h)$ on $X_2$ that satisfies
\[
\xi^*(h)(x_2,y_2) = h(\xi(x_2),\xi(y_2))
\]
for all $x_2, y_2 \in X_2$.
\end{exam}

To classify general hermitian $\Gamma$-modules Fr\"ohlich defined (see, for example, \cite[Chap. II, (5.3)]{fro2}) the `hermitian classgroup'
${\HCl}(\Gamma)$ of $\Gamma$ to be the cokernel of the homomorphism
\begin{equation}\label{hcl def}
\Delta^{\rm herm}_\Gamma \colon
\Det(\Qu[\Gamma]^\times)  \to
\frac{{\rm Hom}({\rm R}_\Gamma,{\rm J}_{\rm f}(\IQ^c))^{\Omega_\IQ}}
{{\rm Det}({\rm U_f}(\IZ[\Gamma]))}\times {\rm Hom}({\rm R}^s_\Gamma,\IQ^{c\times})^{\Omega_\IQ}
;\;\;\;\theta\mapsto ([\theta]^{-1},\theta^s)
\end{equation}
where $R^s_\Gamma$ denotes the subgroup of $R_\Gamma$ generated by the set of irreducible symplectic characters of $\Gamma$ and $\theta^s$ denotes  the restriction of $\theta$ to $R^s_\Gamma$.

To each hermitian $\Gamma$-module $(X,h)$ Fr\"ohlich then associated a canonical 'discriminant' element ${\rm Disc}(X,h)$ in ${\rm HCl}(\Gamma)$ that is defined explicitly as follows.

\begin{definition}\label{pfaffian def}\mpar{pfaffian def}
Let $(X,h)$ be a hermitian $\Gamma$-module and write $d$ for the rank of the
free $\IQ[\Gamma]$-module $X_\Qu$.
Choose a $\IQ[\Gamma]$-basis $\{ x_0^j\}$ of $X_\Qu$ and,
for each prime $p$, a $\IZ_p[\Gamma]$-basis $\{x_p^j\}$ of $X_p$.
Then there exists an element $\lambda_p$ of ${\rm GL}_d(\IQ_p[\Gamma])$ with
$\underline{x_p^j} = \lambda_p\cdot\underline{x_0^j}$ and the `discriminant class'  ${\rm Disc}(X,h)$
is the element of ${\rm HCl}(\Gamma)$ represented by the pair
\[
\left( \prod_p {\rm Det}(\lambda_p), {\rm Pf}(h(x_0^i,x_0^j)) \right).
\]
Here ${\rm Pf}$ is the `Pfaffian' function in ${\rm Hom}({\rm R}^s_\Gamma,\IQ^{c\times})$
defined in \cite[Chap. II, Prop. 4.3]{fro2}.
\end{definition}

We end this section with a new definition that will be useful in the sequel.

\begin{definition} The `extended hermitian classgroup' ${\eHCl}(\Gamma)$ of $\Gamma$ is defined to the
cokernel of the homomorphism that is defined just as $\Delta^{\rm herm}_\Gamma$
except that the term ${\rm Hom}({\rm R}^s_\Gamma,\IQ^{c\times})^{\Omega_\IQ}$ on the right hand side
of (\ref{hcl def}) is replaced by ${\rm Hom}({\rm R}^s_\Gamma,\IQ^{c\times})$.
We regard ${\rm HCl}(\Gamma)$ as a subgroup of ${\rm eHCl}(\Gamma)$ in the obvious way.\end{definition}

\subsection{Metrised modules and Classgroups}

We quickly recall the definition of metrised modules and class groups. For further details we refer the reader to
\cite[\S 2 and  \S 3.1]{cpt}.

For each $\phi$ in $\widehat\Gamma$ we write $W_\phi$ for the Wedderburn
component of $\IQ^c[\Gamma]$ which corresponds to the contragredient
character $\overline{\phi}$ of $\phi$. Thus $W_\phi$ has character $\phi(1)\overline{\phi}$.

For any $\IQ^c[\Gamma ]$-module
$X$ we then set
\begin{equation*}
X_\phi := {\bigwedge}^{\rm top}_{\IQ^c}(X\otimes_{\IQ^c}W_\phi)^\Gamma,
\end{equation*}
where `$\bigwedge^{\rm top}_{\IQ^c}$' denotes the highest exterior power
over $\IQ^c$ which is non-zero, and $\Gamma$ acts diagonally on the tensor product. We recall from \cite[Lem.~2.3]{cpt} that $X_\phi \simeq \overline{W}_\phi X$.

Recall that $\Qc$ is the algebraic closure of $\Qu$ in $\Ce$. We write
$\sigma_\infty :\IQ^c \rightarrow \IC$ for the inclusion and
$\overline{z}$ for the conjugate of a complex number $z$.


\begin{definition}
A \emph{metrised} $\Gamma$-module is a pair $\left(X, \left\{ ||\cdot||_\phi \right\}_{\phi \in \widehat\Gamma} \right)$ comprising a finitely generated projective
$\Gamma$-module $X$ and a set $\{||\cdot||_\phi\}_{\phi \in \widehat\Gamma}$ of metrics on the complex lines
$\IC\otimes_{\IQ^c}X_\phi$ induced by positive definite hermitian forms $\mu_\phi$ on the spaces
$\IC\otimes_{\IQ^c}X_\phi$.

In this situation, we usually abbreviate $\left(X, \left\{ ||\cdot||_\phi \right\}_{\phi \in \widehat\Gamma} \right)$ to $\left(X, \mu_\bullet \right)$ and note that for each $\phi$ in $\widehat\Gamma$ and each element $x$ of $\IC\otimes_{\IQ^c}X_\phi$ one has $|| x ||_\phi^2 = \mu_\phi(x,x)$.\end{definition}

\begin{exam}\label{highest ext power exam} An important special case occurs when $\mu_\phi$ arises as the `highest exterior power' of a positive definite hermitian form $\tilde\mu_\phi$ on the space
\[ (X \tensor_\Ze W_\phi)^\Gamma \tensor_\Qc \Ce = ((X \tensor_\Ze \Ce) \tensor_\Ce (W_\phi \tensor_\Qc \Ce))^\Gamma.\]
In this case, for any $\Ce$-basis $v_1,  \ldots, v_d$ of this space one has
\[ || v_1 \wedge \ldots \wedge v_d ||_\phi^2 = \det(\left( \tilde\mu_\phi(v_i, v_j)_{1 \le i,j \le d} \right)).\]
\end{exam}

\

Let $\Gamma$ be a finite group.  Then the standard $\Gamma$-equivariant positive definite
hermitian form $\mu_{\IC [\Gamma]}$ on $\IC [\Gamma]$ is defined (for example, in \cite[\S~2.1]{cpt}) by setting
\[
\mu_{\IC [\Gamma]}\left( \sum_{g \in \Gamma}x_gg, \sum_{h \in \Gamma}y_hh \right) =
\sum_{g\in \Gamma}x_g\overline{y_g}.
\]

The associated $\Ce[\Gamma]$-valued hermitian form is the so-called `multiplication form'
\[ \hat\mu_{\Ce[\Gamma]} \colon \Ce[\Gamma] \times \Ce[\Gamma] \lra \Ce[\Gamma]\]
that sends each pair $(x,y)$ to $x \cdot \overline{y}$, where we extend complex conjugation to an anti-involution on $\Ce[\Gamma]$ by setting
\[
\overline{\sum_{\gamma \in \Gamma}a_\gamma \gamma } := \sum_{\gamma \in \Gamma}\overline{a_\gamma} \gamma^{-1}.
\]

\begin{exam}\label{example1}\mpar{example1}
In this example we use the hypotheses and notation of \S\ref{example0}.

(i) We write $\mu_L$ for the (unique) $\Gamma$-equivariant positive definite hermitian form
 on $\IC \otimes_\IZ H_L$ that satisfies
\[ \mu_L \left(
\sum_{\sigma \in \Sigma(L)}x_\sigma  w_\sigma, \sum_{\sigma \in \Sigma(L)}y_\sigma w_\sigma \right) = \sum_{\sigma \in \Sigma (L)}x_\sigma \overline{y_\sigma}.
\]
For each $\phi \in \widehat \Gamma$ the form $\mu_L$ together with the restriction of $\mu_{\IC[\Gamma]}$ on $\Ce \tensor_\Qc W_\phi$ induces a
positive definite hermitian form $\tilde\mu_{L,\phi}$ on the tensor product
\[
\left( \left( \Ce \tensor_\Ze H_L \right) \tensor_ \Ce \left( \Ce \tensor_\Qc W_\phi \right) \right)^\Gamma = \Ce \tensor_\Qc \left( H_L \tensor_\Ze W_\phi \right)^\Gamma.
\]
We then write $\mu_{L, \phi}$ for the positive definite hermitian form on
\[
\Ce \tensor_\Qc {\bigwedge}_\Qc^{\rm top} \left( H_L \tensor_\Ze W_\phi \right)^\Gamma =
{\bigwedge}_\Ce^{\rm top}\left( \Ce \tensor_\Qc (H_L \tensor_\Ze W_\phi)^\Gamma \right)
\]
that is obtained as the
highest exterior power of $\tilde\mu_{L, \phi}$ (as per the discussion in Example \ref{highest ext power exam}). The induced metric
\[ \mu_{L,\bullet} := \{ \mu_{L, \phi}\}_{\phi\in \widehat \Gamma}\]
on $H_L$ plays an important role in the sequel.
%

\smallskip

(ii) There is a $\Gamma$-equivariant positive definite hermitian form $h_L$
on $\Ce \otimes_\IQ L$ defined by
\[ h_L(z_1\otimes m,z_2\otimes n) = z_1\overline{z_2}\sum_{\sigma \in
\Sigma(L)}\sigma(m)
\overline{\sigma (n)} .\]
(This form is a scalar multiple of the `Hecke form' defined by Chinburg, Pappas and Taylor in \cite[\S5.2]{cpt}.) For each $\phi$ in $\widehat \Gamma$ we write $h_{L,\phi}$ for the positive definite hermitian form
on $(\IC\otimes _\IQ L)_\phi$ that is obtained as the highest
exterior power of the form on $(L\otimes_\IQ W_\phi)^G$ which is
induced by $h_L$ on $\Ce \otimes_\IQ L$ and by the restriction of
$\mu_{\IC [\Gamma]}$ on $\Ce \tensor_\Qc W_\phi$.

We set
\[ h_{L,\bullet} := \{ h_{L,\phi }\}_{\phi \in \widehat \Gamma}\]
and note that if
$\mathcal{L}$ is any full projective $\IZ [\Gamma]$-sublattice of $L$, then
the pair $(\mathcal{L}, h_{L, \bullet})$ is naturally a metrised $\Gamma$-module.
\end{exam}

\begin{exam}\label{pullback exam} Let $E \sseq \Qc$ be a subfield and let $X_1$ and $X_2$ be finitely
generated locally-free $\Ze[\Gamma]$-modules. Let $\xi$ denote an isomorphism of $E[\Gamma]$-modules $X_{2,E}\cong X_{1,E}$.
For each $\phi$ in $\widehat\Gamma$ we write
\[
\xi_{\phi} : (X_2\otimes_{\Ze}\Qc)_\phi\otimes_{\Qc}\CC
 \cong
(X_1\otimes_{\Ze}\Qc)_\phi\otimes_{\Qc}\CC
\]
for the isomorphism of complex lines which is induced by $\xi$. If $h$
is any metric on $X_1$, then we define the `pullback' of $h$ under $\xi$ to be the (unique) metric $\xi^*(h)$ on $X_2$ which satisfies
\[
\xi^*(h)_{\phi}(z) = h_{\phi}(\xi_{ \phi}(z))
\]
for all $\phi\in \widehat\Gamma$ and $z \in (X_2\otimes_{\Ze} \Qc)_\phi\otimes_{\Qc}\CC$.
\end{exam}

%
%
%
%

In order to classify metrised $\Gamma$-modules Chinburg, Pappas and Taylor (\cite[\S3.1 and \S3.2]{cpt})
defined the \emph{arithmetic classgroup} $A(\Gamma)$ of $\Gamma$ to be the cokernel of the homomorphism
 \[
\Delta^{\rm met}_\Gamma \colon
\Det(\Qu[\Gamma]^\times) \to
\frac{{\rm Hom}({\rm R}_\Gamma,{\rm J}_{\rm f}(\IQ^c))^{\Omega_\IQ}}{{\rm Det}({\rm U_f}(\IZ[\Gamma]))}
\times {\rm Hom}({\rm R}_\Gamma,\IR^\times_{>0});\;\;\;\theta\mapsto ([\theta],|\theta|)
\]
where we write $|\theta|$ for the homomorphism which sends each character $\phi$ in $\hat{\Gamma}$ to $|\theta(\phi)|^{-1}$.
Note that we adopt here the convention of \cite[\S4.2 and Rem.~4.4]{AgboolaBurns}, i.e. our $|\theta|$ is the inverse of the
map $|\theta|$ used in \cite{cpt}.

To each metrised $\Gamma$-module $(X,h)$ one can then associate a canonical `arithmetic class' $[X,h]$ in $A(\Gamma)$.

We next recall the explicit definition of this element from \cite[\S 3.2]{cpt} (see also \cite[Rem.~4.6]{AgboolaBurns}) and to do this we use the notation of Lemma \ref{rep for relative group}.

\begin{definition}\label{metrised element def} Let $(X,\mu_\bullet)$ be a metrised $\Gamma$-module, with $X$ locally-free over
$\IZ[\Gamma]$ of rank $d$.
Choose a $\IQ[\Gamma]$-basis $\{ x_0^j\}$ of $X_\Qu$ and,
for each prime $p$, a $\IZ_p[\Gamma]$-basis $\{x_p^j\}$ of $X_p$.
Then there exists an element $\lambda_p$ of ${\rm GL}_d(\IQ_p[\Gamma])$ such that
$\underline{x_p^j} = \lambda_p\cdot\underline{x_0^j}$.

For each $x$ in $X_\Qu$ we set
\[ r(x) := \sum_{\gamma \in \Gamma}\gamma (x)\otimes \gamma \in X \otimes _{\IZ} \IQ^c[\Gamma].\]
We note that for each $w$ in $W_\phi$ one has $r(x)(1\otimes w) \in (X \otimes_{\Ze}W_\phi)^\Gamma$
 where for each $w$ in $W_\phi$ the action of $r(x)$ on $1 \tensor w$
is defined by $r(x)(1 \tensor w) := \sum_{\gamma \in \Gamma} \gamma(x) \tensor \gamma(w)$.

Let $\{ w_{\phi,k}\}_{1\le k\le \phi(1)^2}$ be a $\IQ^c$-basis of $W_\phi$ that is
orthonormal with respect to the restriction of $\mu_{\Ce [\Gamma]}$ to
$W_\phi$. Then the set $\{ r(x_0^j)(1\otimes w_{\phi,k})\}_{j, k}$ is an
$\IQ^c$-basis of $(X \otimes_{\IZ} W_\phi )^\Gamma$ and so $\bigwedge_j \bigwedge_k r(x_0^j)(1\otimes w_{\phi,k})$ is an
$\IQ^c$-basis of $(X \otimes_{\IZ}\IQ^c)_\phi$.

We then define $[X, \mu_\bullet]$ to be the element of
$\mathrm{A}(\Gamma)$ that is represented by the homomorphism on
 $R_\Gamma$ which sends each character $\phi \in \widehat \Gamma$ to
\begin{equation}\label{arithmetic rep}
\prod_{p}{\rm Det}(\lambda_p)(\phi) \times
\left|\left|  \left(\bigwedge_j \bigwedge_k r(x_0^j)(1\otimes w_{\phi,k})\right)\otimes_{}1 \right|\right|_\phi^{1/\phi(1)}
\in J_f(\IQ^c)\times \IR^\times_{>0} .
\end{equation}
We note that it is straightforward to show that $[X,\mu]$ is independent of the precise choices
of bases $\{ x_0^j\}$, $\{x_{p}^{j}\}$ and $\{w_{\phi,k}\}$.
\end{definition}

\

As a concrete example, we now apply the above recipe in the setting of Example \ref{example1}(i). To do this we recall from \S\ref{example0} that $\{w_\sigma \colon \sigma \in \Sigma(L)\}$ denotes the canonical $\Ze$-basis of the $G$-module
$H_L = \prod_{\Sigma(L)}\Ze$. Moreover, in Example  \ref{example1}(i) we have defined a metric $\mu_{L, \bullet}$ on $H_L$ so that the pair $(H_L,\mu_{L,\bullet})$ gives rise to an element $[H_L,\mu_{L,\bullet}]$ of $A(G)$.

The following result will play an important role in a later argument.

\begin{lemma}\label{betti example}
The element $[H_L,\mu_{L,\bullet}]$ of $A(G)$ is represented
by the pair $(1,\theta)$ where $\theta$ sends each character $\phi$ of $\widehat{G}$ to $|G|^{[K:\IQ]\frac{\phi(1)}{2}}$.
\end{lemma}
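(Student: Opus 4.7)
The plan is to apply Definition \ref{metrised element def} directly, exploiting the fact that $H_L$ is $\ZZ[G]$-free to trivialise the finite-adelic factor, and then computing the archimedean factor from the explicit form of $\mu_L$ and $\mu_{\CC[G]}$.

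To set up the finite-prime side, I choose as $\QQ[G]$-basis $\{x_0^j\}$ of $H_{L,\QQ}$ the fixed $\ZZ[G]$-basis $\{w_{\hat\sigma}\}_{\sigma \in \Sigma(K)}$ of $H_L$ itself, and as $\ZZ_p[G]$-basis $\{x_p^j\}$ of $H_{L,p}$ the same basis. Then each transition matrix $\lambda_p$ is the identity in ${\rm GL}_{[K:\QQ]}(\QQ_p[G])$, so $\Det(\lambda_p) = 1$ and the product $\prod_p \Det(\lambda_p)(\phi)$ in (\ref{arithmetic rep}) is trivial; this yields the first coordinate $1$ of the pair.

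For the archimedean factor, I must compute $|| \bigwedge_j \bigwedge_k r(w_{\hat\sigma})(1 \otimes w_{\phi,k}) ||_{L,\phi}^{1/\phi(1)}$. By the definition of $\mu_{L,\phi}$ as the highest exterior power of $\tilde\mu_{L,\phi}$ (see Example \ref{highest ext power exam}), this norm squared equals the determinant of the Gram matrix
\[
M_\phi := \Bigl( \tilde\mu_{L,\phi}\bigl( r(w_{\hat\sigma})(1\otimes w_{\phi,k}),\, r(w_{\hat\tau})(1\otimes w_{\phi,k'}) \bigr) \Bigr)_{(\sigma,k),(\tau,k')}.
\]
Expanding $r(w_{\hat\sigma})(1 \otimes w_{\phi,k}) = \sum_{\gamma \in G} w_{\hat\sigma\gamma^{-1}} \otimes \gamma(w_{\phi,k})$ and using that $\tilde\mu_{L,\phi}$ is the restriction of the tensor product form $\mu_L \otimes \mu_{\CC[G]}|_{W_\phi}$, each entry of $M_\phi$ unfolds as
\[
\sum_{\gamma,\gamma' \in G} \mu_L(w_{\hat\sigma\gamma^{-1}}, w_{\hat\tau\gamma'^{-1}}) \cdot \mu_{\CC[G]}(\gamma w_{\phi,k}, \gamma' w_{\phi,k'}).
\]

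The key observation is now two-fold. First, since $\{w_\sigma\}_{\sigma \in \Sigma(L)}$ is orthonormal for $\mu_L$, the $\mu_L$-factor kills all terms except those with $\hat\sigma\gamma^{-1} = \hat\tau\gamma'^{-1}$; but $\gamma, \gamma'$ fix $K$, so this already forces $\sigma = \tau$ (hence $\hat\sigma = \hat\tau$) and then $\gamma = \gamma'$. Second, because $\mu_{\CC[G]}$ is $G$-invariant (an immediate check from its definition on the group basis) and $\{w_{\phi,k}\}$ is $\mu_{\CC[G]}$-orthonormal, the remaining sum $\sum_\gamma \mu_{\CC[G]}(w_{\phi,k}, w_{\phi,k'}) = |G|\,\delta_{kk'}$. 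Thus $M_\phi$ is diagonal of size $[K:\QQ]\cdot\phi(1)^2$ with all diagonal entries equal to $|G|$, so $\det M_\phi = |G|^{[K:\QQ]\phi(1)^2}$. Taking square root and then the $\phi(1)$-th root gives the claimed value $|G|^{[K:\QQ]\phi(1)/2}$.

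The only real subtlety in the argument is the bookkeeping needed to confirm that the $[K:\QQ]\phi(1)^2$ vectors $r(w_{\hat\sigma})(1\otimes w_{\phi,k})$ genuinely form a basis of $(H_L \otimes_\ZZ W_\phi)^G$, but this follows from a dimension count (using $H_L \simeq \QQ^c[G]^{[K:\QQ]}$ and the decomposition of $\QQ^c[G] \otimes W_\phi$ into isotypics) combined with the restriction argument above, which shows that the Gram matrix is invertible. No non-trivial obstacle otherwise; everything reduces to the orthonormality of the chosen bases and the $G$-equivariance of the multiplication form.
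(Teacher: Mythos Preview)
Your proof is correct and follows essentially the same approach as the paper: the same choice of global and local bases trivialises the finite-adelic component, and the archimedean component is computed via the same Gram-matrix calculation exploiting orthonormality of $\{w_\sigma\}$ for $\mu_L$ and of $\{w_{\phi,k}\}$ for $\mu_{\CC[G]}$ together with $G$-invariance of $\mu_{\CC[G]}$. Your explicit justification that $\hat\sigma\gamma^{-1}=\hat\tau\gamma'^{-1}$ forces $\sigma=\tau$ and $\gamma=\gamma'$ (by restricting to $K$) is a slight elaboration on what the paper records simply as $\mu_L(g(w_{\hat\sigma}),h(w_{\hat\tau}))=\delta_{g,h}\delta_{\hat\sigma,\hat\tau}$, but the content is the same.
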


\begin{proof} If $X = H_L$ and $\mu_\bullet = \mu_{L,\bullet}$, then in the notation of Definition \ref{metrised element def} we can take both $\{ x_0^j\}$ and $\{x_p^j\}$ to be the basis $\{ w_{\hat\sigma}\}_{\sigma \in \Sigma(K)}$ described in \S\ref{example0} and so $\lambda_p = 1$.

In addition, for a character $\phi$ in $\widehat{G}$, embeddings $\sigma$ and $\tau$ in $\Sigma(K)$ and integers $k$ and $\ell$ with $1\le k,\ell\le \phi(1)^2$ one has

\begin{align*} &\, (\mu_L\otimes \mu_{\CC[G]})(r(w_{\hat\sigma})(1\otimes
w_{\phi,k}), r(w_{\hat\tau})(1\otimes
w_{\phi,\ell})) \\
= &\, (\mu_L\otimes \mu_{\CC[G]})(\sum_{g \in G}g(w_{\hat\sigma})\otimes g(w_{\phi,k}),\sum_{h \in G}h(w_{\hat\tau})\otimes h(w_{\phi,\ell}))\\
= &\, \sum_{g,h} \mu_L(g(w_{\hat\sigma}),h(w_{\hat\tau}))\cdot\mu_{\CC[G]}(g(w_{\phi,k}),h(w_{\phi,\ell}))\\
= &\, \sum_{g,h} \delta_{g,h}\delta_{\hat\sigma,\hat\tau}\cdot\mu_{\CC[G]}(g(w_{\phi,k}),h(w_{\phi,\ell}))\\
= &\, \delta_{\hat\sigma,\hat\tau}\delta_{k,\ell}\cdot |G|.
\end{align*}

From the explicit description given in Example \ref{highest ext power exam} it thus follows that the second component of the representative (\ref{arithmetic rep}) is equal to the $\phi(1)$-st root of
\[
{\rm det}\!\left((\delta_{\hat\sigma,\hat\tau}\delta_{k,\ell}\cdot |G|)_{(\sigma,k),(\tau,\ell)}\right)^{1/2} = |G|^{[K:\QQ]\cdot\phi(1)^2/2},
\]
as suffices to give the claimed result. \end{proof}

\section{Canonical homomorphisms and the universal diagram} In this section we establish a direct link between relative algebraic $K$-theory and the theories of metrised and hermitian modules reviewed above. The existence of such a link will then play a key role in subsequent arithmetic results.

For any finite group $\Gamma$ we abbreviate ${\rm Cl}(\ZZ[\Gamma])$ to ${\rm Cl}(\Gamma)$ and we recall that there is a natural isomorphism of abelian groups

\begin{equation}\label{red isom}
h_\Gamma^{\rm red}: {\rm Cl}(\Gamma) \cong {\rm Cok}(\Delta^{\rm red}_\Gamma)
\end{equation}
where $\Delta^{\rm red}_{\Gamma}$ denotes the homomorphism
\[
\Delta^{\rm red}_{\Gamma} \colon
\Hom(R_\Gamma  ,\mathbb{Q}^{c\times})^{\Omega _\mathbb{Q}} \to
\frac{\Hom(R_\Gamma  ,J_f(\mathbb{Q}^c))^{\Omega _\mathbb{Q}} }
       {\Det(U_f(\mathbb{Z}[\Gamma]))}; \qquad \theta \mapsto [\theta].
\]

\begin{remark}\label{rep for classgroup}\mpar{rep for classgroup} We normalise the isomorphism $h_\Gamma^{\rm red}$ as in \cite[Rem. 1, p. 21]{Frohlich}. To be specific, if $X$ is a finitely generated projective $\Ze[\Gamma]$-module, then one can give
an explicit representative of the class $h^{{\rm red}}_{\Gamma}([X])$ as follows. We choose a $\Qu[\Gamma]$-basis $\{x_0^j\}$ of $X_\Qu$ and, for each rational prime $p$, a $\Zp[\Gamma]$-basis $\{ x_p^j\}$ of $X_p$. Let $\lambda_p$ be the matrix in $\Gl_d(\Qp[\Gamma])$ which satisfies
$\underline{x_p^j} = \lambda_p\cdot\underline{x_0^j}$. Then $h^{{\rm red}}_{\Gamma}([X])$  is represented by the function
$( \prod_{p} \Det(\lambda_p))$.
\end{remark}

In the next result we shall use the following canonical homomorphisms (of abelian groups)
\[
\begin{array}{ccll}
  \partial_\Gamma^{1,1} &\colon& \mathrm{Cok} (\Delta_\Gamma^{\rm rel})  \lra \mathrm{A}(\Gamma), & \hskip 0.5truein ([\theta_1], \theta_2) \mapsto ([\theta_1], |\theta_2|)\\
  \\
  \partial_\Gamma^{2,1} &\colon& \mathrm{Cok} (\Delta_\Gamma^{\rm rel})  \lra \eHCl(\Gamma), & \hskip 0.5truein ([\theta_1], \theta_2) \mapsto ([\theta_1], \theta_2^s)\\
  \\
  \partial_\Gamma^{1,2} &\colon& \mathrm{A} (\Gamma)  \lra \cok(\Delta_\Gamma^{\mathrm{red}}), & \hskip 0.5truein ([\theta_1], \theta_2) \mapsto [\theta_1]\\
  \\
\partial_\Gamma^{2,2} &\colon& \eHCl (\Gamma)  \lra \cok(\Delta_\Gamma^{\mathrm{red}}), & \hskip 0.5truein ([\theta_1], \theta_2) \mapsto [\theta_1].
\end{array}
\]

We shall also use the following composite homomorphisms (defined using the isomorphisms $h_\Gamma^{\mathrm{rel}}$ and
 $h_\Gamma^{\mathrm{red}}$)
\[
\begin{array}{lll}
   \Pi_\Gamma^{\mathrm{met}} := \partial_\Gamma^{1,1} \circ h_\Gamma^{\mathrm{rel}} & \colon &
  K_0(\Ze[\Gamma], \Qc[\Gamma]) \lra \mathrm{A}(\Gamma), \\
  \\
  \Pi_\Gamma^{\mathrm{herm}} := \partial_\Gamma^{2,1} \circ h_\Gamma^{\mathrm{rel}} &\colon&
  K_0(\Ze[\Gamma], \Qc[\Gamma]) \lra \eHCl(\Gamma), \\
 \\
 \partial_\Gamma^{\mathrm{met}} := \left( h_\Gamma^{\mathrm{red}} \right)^{-1} \circ \partial_\Gamma^{1,2} &\colon&
  \mathrm{A}(\Gamma)  \lra \Cl(\Gamma), \\
\\
\partial_\Gamma^{\mathrm{herm}} := \left( h_\Gamma^{\mathrm{red}} \right)^{-1} \circ \partial_\Gamma^{2,2} &\colon&
  \eHCl(\Gamma)  \lra \Cl(\Gamma).
\end{array}
\]

For convenience we shall use the same notation $\partial_\Gamma^{\mathrm{herm}}$ to denote the restriction of $\partial_\Gamma^{\mathrm{herm}}$ to the subgroup $\HCl(\Gamma)$.

\begin{theorem}\label{prop key diag} \
\begin{itemize}
\item[(i)] The homomorphism $\Pi_\Gamma^{\rm met}$ sends each class $[X,\xi, Y]$
to $[X,\xi^*(\mu)] - [Y,\mu]$ for any choice of metric $\mu$ on $Y$.
\item[(ii)] The homomorphism $\Pi_\Gamma^{\rm herm}$ sends each element $[X,\xi, Y]$
of the subgroup $K_0(\mathbb{Z}[\Gamma],\mathbb{Q}[\Gamma])$ to ${\rm Disc}(X,\xi^*(h)) -
{\rm Disc}(Y,h)$ for any  choice of hermitian form $h$ on $Y$.\end{itemize}
\begin{itemize}
\item[(iii)] The homomorphism $\partial_\Gamma^{\rm met}$ sends the class  $[X,h]$ of a metrised module
$(X, h)$ to the class $[X]$.

   \item[(iv)] The homomorphism $\partial_\Gamma^{\rm herm}$ sends the discriminant  $\mathrm{Disc}(X,h)$
of a hermitian module $(X, h)$ to the class $[X]$.

\item[(v)] The following diagram commutes.

\begin{equation*}
\xymatrix{& &  & {\rm A}(\Gamma)\ar[ddr]_{\,\,\,\,\,\,\,\,\,\partial^{\rm met}_\Gamma}\ar[ddrrr]^{\partial_{\Gamma}^{1,2}}\\
& \\
{\rm Cok}(\Delta^{\rm rel}_\Gamma) \ar[dddrrr]_{\partial_{\Gamma}^{2,1}} \ar[uurrr]^{\partial_{\Gamma}^{1,1}}  & &
K_0(\mathbb{Z}[\Gamma],\mathbb{Q}^c[\Gamma])  \ar[uur]_{\!\!\!\Pi^{\rm met}_\Gamma} \ar[dddr]_<<<<<<<<{\Pi^{\rm herm}_\Gamma} \ar[ll]^-{h_\Gamma^{\rm rel}}_-{\sim} \ar[rr]^-{\partial^0_{\mathbb{Z},\mathbb{Q}^c,\Gamma}} &
&{\rm Cl}(\Gamma) \ar[rr]^-{\sim}_-{h_\Gamma^{\rm red}} & &{\rm Cok}(\Delta_\Gamma^{\rm red})\\
& & & {\rm K}_0{\rm T}(\Gamma)\ar[d]_{\Upsilon_\Gamma} \ar[ul]_{\iota_\Gamma} \ar[ur]^{\partial_\Gamma}\\
 & & & {\rm HCl}(\Gamma)\ar[d] \ar@/_/[ruu]_<<<<<{\!\!\!\!\delta''_\Gamma}\\
 & & &  {\rm eHCl}(\Gamma) \ar@/_1pc/[ruuu]_>>>>>>>>>{\partial^{\rm herm}_{\Gamma}}\ar[uuurrr]_{\partial_{\Gamma}^{2,2}}}
\end{equation*}
Here the unlabeled arrow is the natural inclusion ${\rm HCl}(\Gamma) \to {\rm eHCl}(\Gamma)$ and the remaining homomorphisms that are not defined above are as follows.
\begin{itemize}
\item  $\iota_\Gamma$ is the composition of the first isomorphism in (\ref{decomp}) and
the natural inclusion $K_0(\ZZ[\Gamma],\QQ[\Gamma]) \to K_0(\ZZ[\Gamma],\QQ^c[\Gamma])$,
\item $\Upsilon_\Gamma$ is the homomorphism defined by Fr\"ohlich in \cite[Chap. 2, \S6]{fro2},
\item $\partial_\Gamma$ is the canonical map (as described in \cite[Chap. 1, (1.3)]{fro2}) ,
\item  $\delta''_\Gamma$ the homomorphism described in \cite[Chap. 2, (6.16)]{fro2}.
\end{itemize}
(For further details of these maps see the argument below.)
 \end{itemize}
\end{theorem}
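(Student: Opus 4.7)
The strategy is to establish parts (i)--(iv) by direct representative-level computations using Lemma~\ref{rep for relative group} together with Definitions~\ref{pfaffian def} and~\ref{metrised element def}, and then to deduce (v) by a diagram chase.

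For parts (i) and (ii) I would fix the data of Lemma~\ref{rep for relative group}: a $\QQ[\Gamma]$-basis $\{y_0^j\}$ of $Y_\QQ$, $\ZZ_p[\Gamma]$-bases $\{y_p^j\}$ and $\{x_p^j\}$ of $Y_p$ and $X_p$, and $\theta \in \mathrm{Is}_{\QQ[\Gamma]}(X_\QQ, Y_\QQ)$, yielding matrices $\mu_p, \lambda_p \in \Gl_d(\QQ_p[\Gamma])$ and $\mu \in \Gl_d(\QQ^c[\Gamma])$. When computing $[X, \xi^*(\mu)]$ in (i) (respectively $\mathrm{Disc}(X, \xi^*(h))$ in (ii)) I would use $\{\theta^{-1}(y_0^j)\}$ as the $\QQ[\Gamma]$-basis of $X_\QQ$ in Definition~\ref{metrised element def} (respectively Definition~\ref{pfaffian def}), so that the relevant $p$-adic change-of-basis matrix is exactly $\lambda_p$. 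The finite idelic components of $[X,\xi^*(\mu)]-[Y,\mu]$ and of $\mathrm{Disc}(X,\xi^*(h))-\mathrm{Disc}(Y,h)$ are then each represented by $\prod_p \Det(\lambda_p \mu_p^{-1})$, matching the images under $\partial_\Gamma^{1,1}$ and $\partial_\Gamma^{2,1}$ of the finite part of $h^{\rm rel}_\Gamma([X,\xi,Y])$ described by Lemma~\ref{rep for relative group}. For the archimedean component in (i), the crux is the identity that the $\QQ^c[\Gamma]$-automorphism $\xi \circ \theta^{-1}$ of $Y_{\QQ^c}$, represented by $\mu$, scales the wedge basis in $(Y\otimes_\ZZ W_\phi)^\Gamma$ by the reduced-norm scalar $\Det(\mu)(\phi)^{\phi(1)}$; after the $1/\phi(1)$ normalisation in (\ref{arithmetic rep}) this gives a factor $|\Det(\mu)(\phi)|$, matching the second component of $\partial_\Gamma^{1,1}(h^{\rm rel}_\Gamma([X,\xi,Y]))$ once the sign convention defining $|\theta|$ is taken into account. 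For (ii) the parallel input is the transformation rule $\mathrm{Pf}(\xi^*(h))=\Det(\mu)\cdot \mathrm{Pf}(h)$ on the subgroup $R_\Gamma^s$ of symplectic characters, which is a standard property of the Pfaffian established in \cite[Chap.~II, \S4]{fro2}.

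Parts (iii) and (iv) are then immediate: the finite idelic component of both the discriminant in Definition~\ref{pfaffian def} and the metrised-module class in Definition~\ref{metrised element def} is $\prod_p \Det(\lambda_p)$, which by Remark~\ref{rep for classgroup} is precisely the representative of $h^{\rm red}_\Gamma([X])$.

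For (v), the outer triangles involving $h_\Gamma^{\rm rel}$, $\Pi_\Gamma^{\rm met}$, $\Pi_\Gamma^{\rm herm}$, $\partial_\Gamma^{\rm met}$, $\partial_\Gamma^{\rm herm}$ commute by construction, while the triangles through $K_0(\ZZ[\Gamma],\QQ^c[\Gamma]) \to \Cl(\Gamma)$ commute by parts (iii) and (iv) combined with the formula $\partial^0_{\ZZ,\QQ^c,\Gamma}([X,\xi,Y])=[X]-[Y]$. Compatibility with $\iota_\Gamma$ reduces, via the normalisation of the first isomorphism in~(\ref{decomp}) and the formula of Lemma~\ref{rep for relative group} applied to a resolution $0 \to P \to P' \to M \to 0$, to a check on finite torsion modules. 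Compatibility of $\Pi_\Gamma^{\rm herm}$ with Fr\"ohlich's maps $\Upsilon_\Gamma,\partial_\Gamma,\delta''_\Gamma$ then amounts to a comparison of representative formulas against those in \cite[Chap.~II]{fro2}, most directly via his formula (6.16). The main obstacle throughout is the archimedean scaling identity used in (i) (and its Pfaffian analogue in (ii)): pinning down precisely how $\xi\circ\theta^{-1}$ acts on the complex line $Y_\phi\otimes_{\QQ^c}\CC$ in terms of $\Det(\mu)(\phi)$, with the correct $\phi(1)$-th root normalisation and sign. Once this local identity is carefully established, the rest of the argument is essentially diagrammatic.
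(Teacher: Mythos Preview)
Your overall strategy is sound and matches the paper's in spirit, but there are two points worth noting.

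For claim (i) the paper does not give a direct argument at all: it simply cites \cite[Th.~4.11]{AgboolaBurns}. Your proposed computation via the action of $\xi\circ\theta^{-1}$ on the complex line $Y_\phi\otimes_{\QQ^c}\CC$ is presumably what that reference carries out, and you correctly flag the archimedean scaling identity as the delicate point. So here you are proposing to do strictly more than the paper.

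For claim (ii) your approach works but is more involved than necessary. The key observation you miss is that, since $[X,\xi,Y]$ lies in the subgroup $K_0(\ZZ[\Gamma],\QQ[\Gamma])$, the isomorphism $\xi$ is already $\QQ[\Gamma]$-linear. The paper therefore chooses a $\QQ[\Gamma]$-basis $\{x_0^j\}$ of $X_\QQ$ and uses $\{\xi(x_0^j)\}$ as the basis of $Y_\QQ$; in the language of Lemma~\ref{rep for relative group} this amounts to taking $\theta=\xi$, so $\mu$ is the identity and the second component of $h^{\rm rel}_\Gamma([X,\xi,Y])$ is trivially~$1$. On the discriminant side, with these bases one has $\xi^*(h)(x_0^i,x_0^j)=h(\xi(x_0^i),\xi(x_0^j))$ by the very definition of pullback, so the two Pfaffian terms are literally equal and cancel. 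No Pfaffian transformation rule is needed. Your route via the identity $\mathrm{Pf}(\xi^*(h))=\Det(\mu)\cdot\mathrm{Pf}(h)$ on $R_\Gamma^s$ is correct and yields the same representative $(\prod_p\Det(\lambda_p\mu_p^{-1}),\Det(\mu)^s)$, but it introduces an extra ingredient that the simpler choice of bases avoids.

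Parts (iii), (iv) and the diagram chase in (v) are handled essentially as in the paper.
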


\begin{proof} Claim (i) is proved by Agboola and the first author in \cite[Th. 4.11]{AgboolaBurns}.

 To prove claim (ii)  we write $d$ for the $\QQ[\Gamma]$-rank of $X_\Qu \cong Y_\Qu$ and,
just as in Definition \ref{metrised element def},  we fix a $\QQ[\Gamma]$-basis
$\{x_0^j\}_{1\le j\le d}$ of $X_\Qu$ and also, for each prime $p$,
 $\ZZ_p[\Gamma]$-bases $\{x_p^j\}_{1\le j\le d}$ of $X_p$ and $\{y_p^j\}_{1\le j\le d}$ of $Y_p$.

We write $\lambda_p$ and $\mu_p$ for
 the (unique) elements of ${\rm GL}_d(\QQ_p[\Gamma])$ with
$\underline{x_p^j} = \lambda_p\cdot\underline{x_0^j}$ and $\underline{y_p^j} = \mu_p\cdot
  \underline{\xi(x_0^j)}$, where in the last equality we use the fact that
$\{\xi(x_0^j)\}_{1\le j\le d}$ is a $\QQ[\Gamma]$-basis of   $Y_\Qu$.

Then the explicit definition of $h_\Gamma^{\rm rel}$ as described in Lemma \ref{rep for relative group} ensures that $h_\Gamma^{\rm rel}([X,\xi, Y])$ is
represented by the pair

\[
\left(\prod_{p} \Det(\lambda_p)\cdot\Det(\mu_p)^{-1}\right) \times 1 \in \Hom (R_\Gamma ,J_f(\QQ^c))^{\Omega_F}\times
 \Hom(R_\Gamma , (\QQ^c)^{\times}).
\]

The assertion of claim (ii) thus follows because Definition \ref{pfaffian def} implies that for any
hermitian form $h$ on $X$ the element
${\rm Disc}(X,\xi^*(h)) - {\rm Disc}(Y,h)$ is also represented by

\begin{align*}
&(\prod_p {\rm Det}(\lambda_p), {\rm Pf}(\xi^*(h)(x_0^i,x_0^j)))\times (\prod_p {\rm Det}(\mu_p)^{-1}, {\rm Pf}
(h(\xi(x_0^i),\xi(x_0^j)))^{-1}) \\
 = &(\prod_p {\rm Det}(\lambda_p)\cdot{\rm Det}(\mu_p)^{-1}, {\rm Pf}(h(\xi(x_0^i),\xi(x_0^j)))
{\rm Pf}(h(\xi(x_0^i),\xi(x_0^j)))^{-1})\\
  = &(\prod_p {\rm Det}(\lambda_p)\cdot{\rm Det}(\mu_p)^{-1},1)
\end{align*}
where the first equality follows immediately from the definition of the pullback $\xi^*(h)$.

Claim (iii) and (iv) are immediate consequences of the respective Hom-descriptions of the groups $\mathrm{A}(\Gamma), \eHCl(\Gamma)$ and $\Cl(\Gamma)$.

Turning to claim (v) we note at the outset that the upper and lower left and right hand most triangles commute by definition of the maps involved and that the outer quadrilateral commutes since both of the composites $\partial^{1,2}_\Gamma\circ\partial^{1,1}_\Gamma$ and $\partial^{2,2}_\Gamma\circ\partial^{2,1}_\Gamma$ send each pair
$([\theta_1],\theta_2)$ to the class of $[\theta_1]$.

We next note that the commutativity of the upper central triangle, namely the equality $\partial_{\Ze, \Qc, \Gamma}^0
= \partial_\Gamma^{\mathrm{met}} \circ \pi_\Gamma^{\mathrm{met}}$,
will follow if we show that the composites $\partial_\Gamma^{1,2}\circ \partial_\Gamma^{1,1}\circ h^{\rm rel}_\Gamma$ and
$h^{\rm red}_\Gamma\circ \partial^0_{\IZ,\QQ^c,\Gamma}$ coincide.

This is true because the explicit description of
$h^{\rm rel}_\Gamma$ implies that $\partial_\Gamma^{1,2}\circ \partial_\Gamma^{1,1}\circ h^{\rm rel}_\Gamma$ sends each element
$[X,\xi, Y]$ of $K_0(\IZ[\Gamma],\IQ^c[\Gamma])$ to the class represented by the homomorphism
\[ \prod_p({\rm Det}(\lambda_p)\cdot {\rm Det}(\mu_p)^{-1}) = (\prod_p{\rm Det}(\lambda_p))\cdot (\prod_p{\rm Det}(\mu_p))^{-1}\]
whilst
\[
(h^{\rm red}_\Gamma\circ \partial^0_{\IZ, \Qc,\Gamma})([X,\xi, Y])
= h^{\rm red}_\Gamma([X]-[Y]) = h^{\rm red}_\Gamma([X])h^{\rm red}_\Gamma([Y])^{-1}
\]
and Remark \ref{rep for classgroup} implies that the classes $h^{\rm red}_\Gamma([X])$ and $h^{\rm red}_\Gamma([Y])$ are respectively represented by the products $\prod_p{\rm Det}(\lambda_p)$ and $\prod_p{\rm Det}(\mu_p)$.

The above facts combine to directly imply commutativity of the lower central triangle, namely the equality $\partial_{\Ze, \Qc, \Gamma}^0
= \partial_\Gamma^{\mathrm{herm}} \circ \pi_\Gamma^{\mathrm{herm}}$, and so it only suffices to prove commutativity of the four triangles inside this triangle.

We shall now discuss these triangles clockwise, starting from the uppermost.

The commutativity of the first triangle follows directly from the fact that for any finite $\Gamma$-module $M$ of finite projective dimension, and any resolution of the form
\[ 0 \to P\xrightarrow{\theta} P' \to M \to 0,\]
where $P$ is finitely generated and locally-free and $P'$ is finitely generated and free, the class of $M$ in $K_0T(\ZZ[\Gamma])$ is sent by $\iota_\Gamma$ to $[P,\theta_\QQ,P']$ and by $\partial_\Gamma$ to $[P]-[P']$ $(= [P]$ as $P'$ is free).

If for the above sequence we fix a $\ZZ[\Gamma]$-basis $\{x^i\}$ of $P'$ and then for each prime $p$ choose a matrix $\lambda_p$
in ${\rm GL}_d(\QQ_p[\Gamma])$ so that the components of the vector $\lambda_p\cdot \underline{\theta_\QQ^{-1}(x^i)}$ are a $\ZZ_p[\Gamma]$-basis of $P_p$,
then the image of the class of $M$ in $K_0T(\ZZ[\Gamma])$ under $\Upsilon_\Gamma$ is represented by $(\prod_p{\rm Det}(\lambda_p),1)$.
This implies the commutativity of the second triangle since Remark \ref{rep for classgroup} implies the class of $\partial_\Gamma(M) = [P]$
is represented by $(\prod_p{\rm Det}(\lambda_p))$
whilst the definition of $\delta_\Gamma''$ implies that it is induced by sending each pair $([\theta_1],\theta_2)$ to $(h_\Gamma^{\rm red})^{-1}([\theta_1])$.

The latter fact also directly implies commutativity of the third triangle and the fourth triangle commutes since, in terms of the above notation,
the composite $h_\Gamma^{\rm rel}\circ \iota_\Gamma$ sends the class of $M$ to the element represented by the pair $((\prod_p{\rm Det}(\lambda_p)),1)$.\end{proof}

In the next result we describe an explicit link between the elements in relative algebraic $K$-theory constructed in \S\ref{example0}, the hermitian modules described in Example \ref{example2} and the metrised modules defined in Example \ref{example1}.

This link explains the relevance of Theorem \ref{prop key diag} to our later results.

\begin{prop}\label{almost there} Let $L/K$ be a finite Galois extension of number fields with group $G$. Then for any full projective $\mathcal{O}_K[G]$-submodule $\mathcal{L}$ of $L$ the following claims are valid.

\begin{itemize}
\item[(i)] The image of $[\mathcal{L}, \kappa_L, H_L]$ under $\Pi_G^{\rm met}$ is equal to $[\mathcal{L},h_{L,\bullet}]-[H_L,\mu_{L,\bullet}]$.

\item[(ii)] The image of $[\mathcal{L}, \kappa_L, H_L]$ under $\Pi_G^{\rm herm}$ is equal to ${\rm Disc}(\mathcal{L},t_{L/K})$.
\end{itemize}
\end{prop}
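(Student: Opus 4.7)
The plan is to derive both claims from Theorem~\ref{prop key diag}, combined with explicit manipulations in the Hom-description provided by Lemma~\ref{norm resolvent lemma}.

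For claim (i), Theorem~\ref{prop key diag}(i) applied with $X = \mathcal{L}$, $\xi = \kappa_L$, $Y = H_L$ and $\mu = \mu_{L,\bullet}$ yields immediately
\[
\Pi_G^{\mathrm{met}}([\mathcal{L},\kappa_L,H_L]) = [\mathcal{L},\,\kappa_L^{*}(\mu_{L,\bullet})] - [H_L,\mu_{L,\bullet}],
\]
so it suffices to verify that $\kappa_L^{*}(\mu_{L,\bullet}) = h_{L,\bullet}$. Comparing the defining formulas in Example~\ref{example1}(i) and (ii), one checks directly from the definition of $\kappa_L$ that $\kappa_L^{*}(\mu_L) = h_L$ as hermitian forms on $\CC\otimes_\QQ L$. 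Since both $\tilde\mu_{L,\phi}$ and $\tilde h_{L,\phi}$ are built from these primary forms together with the same restriction of $\mu_{\CC[G]}$ to $\CC\otimes_\Qc W_\phi$, this compatibility transfers to $\tilde\mu_{L,\phi}$ and then, by functoriality of the highest exterior power, to $\mu_{L,\phi}$.

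For claim (ii) the strategy is to compute representatives of both sides in the Hom-description. Fix a $K[G]$-generator $b$ of $L$, a $\ZZ$-basis $\{a_\sigma\}_{\sigma\in\Sigma(K)}$ of $\OK$ and, for each finite place $v$ of $K$, an $\OK[G]$-generator $b_v$ of $\mathcal{L}_v$. By Lemma~\ref{norm resolvent lemma}, $h_G^{\mathrm{rel}}([\mathcal{L},\kappa_L,H_L])$ is represented by the pair $(\theta_1 \theta_2^{-1},\,\theta_2 \theta_3)$. Since $\partial_G^{2,1}$ merely restricts the second coordinate to $R^s_G$, this produces the representative
\[
\bigl(\theta_1 \theta_2^{-1},\,(\theta_2\theta_3)^s\bigr)
\]
for $\Pi_G^{\mathrm{herm}}([\mathcal{L},\kappa_L,H_L])$ in $\eHCl(G)$. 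On the other hand, a direct application of Definition~\ref{pfaffian def} with $\{x_0^j\} = \{a_\sigma b\}_\sigma$ and $\{x_p^j\} = \{a_\sigma b_p\}_\sigma$, where $b_p := (b_v)_{v\mid p}$, shows that $\mathrm{Disc}(\mathcal{L},t_{L/K})$ is represented by $\bigl(\prod_p \Det(\lambda_p),\,\mathrm{Pf}(t_{L/K}(a_\sigma b, a_\tau b))\bigr)$. The matrices $\lambda_p$ here coincide with those already computed in the proof of Lemma~\ref{norm resolvent lemma}, so the first components automatically match.

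The main obstacle is therefore to establish the Pfaffian identity
\[
\mathrm{Pf}(t_{L/K}(a_\sigma b, a_\tau b))(\chi) = \delta_K^{\chi(1)}\cdot\calN_{K/\QQ}(b\mid \chi)
\]
for every irreducible symplectic character $\chi$ of $G$. This identity realises the Pfaffian of the trace pairing as a coherent square root on $R^s_G$ of the standard determinant formula for the trace form, and belongs to the classical apparatus of Fr\"ohlich's theory of symplectic Pfaffians \cite[Ch.~II]{fro2}. Once this is established, the second components match as well, and the proof is complete; every step outside this classical input is a purely formal comparison of Hom-representatives.
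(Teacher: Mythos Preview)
Your proof is correct and follows essentially the same route as the paper: part (i) is identical, and for part (ii) both arguments compute $\Pi_G^{\rm herm}([\mathcal{L},\kappa_L,H_L])$ via Lemma~\ref{norm resolvent lemma} and then match it against the Fr\"ohlich Pfaffian of the trace form. The only difference is organisational: the paper obtains your Pfaffian identity in two steps, first invoking Fr\"ohlich's \cite[Cor.\ to Th.~27]{fro2} for ${\rm Disc}(\mathcal{L},t_{L/K}) - {\rm Disc}(\mathcal{O}_K[G],t_{K[G]}) = (\theta_1\theta_2^{-1},\theta_2^s)$ and then computing ${\rm Disc}(\mathcal{O}_K[G],t_{K[G]}) = (1,\theta_3^s)$ directly, so your ``classical apparatus'' citation can be made precise in exactly this way.
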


\begin{proof} The pullback with respect to $\kappa_L$ of the metric $\mu_{L,\bullet}$ defined in Example \ref{example1}
is equal to $h_{L,\bullet}$ (cf. \cite[Exam. 4.10(i)]{AgboolaBurns}). This fact combines with Theorem \ref{prop key diag}(i) to directly imply the equality in claim (i).

To prove claim (ii) we use the representative $(\theta_1\theta_2^{-1}, \theta_2 \theta_3)$ of $h_G^{\mathrm{rel}}([\calL, \kappa_L, H_L])$ described in Lemma \ref{norm resolvent lemma}. We also recall that, with this notation, the general result of Fr\"ohlich in \cite[Cor. to Th. 27]{fro2} implies the element ${\rm Disc}(\mathcal{L},t_{L/K}) - {\rm Disc}(\mathcal{O}_K[G],t_{K[G]})$ of ${\rm HCl}(G)$ is represented by  $(\theta_{1}\cdot\theta_2^{-1},\theta_2^s)$, where the form $t_{K[G]}$ is as defined in Example \ref{example3}.

Comparing these results one deduces that the element
\[
\Pi_G^{\rm herm}([\mathcal{L},\kappa_L, H_L]) - {\rm Disc}(\mathcal{L},t_{L/K}) + {\rm Disc}(\mathcal{O}_K[G],t_{K[G]})
\]
of ${\rm HCl}(G)$ is represented by the pair $(1,\theta_3^s)$.

To deduce claim (ii) from this it is thus enough to show that the pair
$(1,\theta_3^s)$ also represents the element ${\rm Disc}(\mathcal{O}_K[G],t_{K[G]})$.

To check this we need only note that, in the terminology of \cite[Chap. II, \S5]{fro2},
the Pfaffian of the matrix $(t_{K[G]}(a_\sigma,a_\tau))_{\sigma,\tau \in \Sigma(K)}$ sends each character
$\chi$ in $R_G^s$ to $\delta_K^{\chi(1)} = \theta_3(\chi)$.

Then, by applying the recipe of Definition \ref{pfaffian def} with $\{ x_0^j\} = \{ x_p^j \} = \{ a_\sigma\}_{\sigma \in \Sigma(K)}$ one finds
that ${\rm Disc}(\mathcal{O}_K[G],t_{K[G]})$ is indeed represented by the pair $(1,\theta_3^s)$, as required.
\end{proof}

\section*{\large{Part II: Weak ramification and Galois-Gauss sums}}

In this part of the article we describe a first arithmetic application of the approach described in earlier sections by using Theorem \ref{prop key diag} (and Proposition \ref{almost there}) to refine existing results concerning links between Galois-Gauss sums and certain metric and hermitian structures that arise naturally in arithmetic.

In this way, in \S\ref{tame section} we refine the main results of Chinburg and the second author in \cite{BurnsChinburg}
concerning relations between hermitian-metric structures involving fractional powers of the inverse different of a tamely ramified
Galois extension of number fields and the associated Galois-Gauss sums (twisted by appropriate Adams operations).

In the remainder of the article we then focus on weakly ramified Galois extensions (of odd degree) and use Theorem \ref{prop key diag} to refine key aspects of the extensive existing theory of the square root of the inverse different for such extensions.

\section{Tamely ramified Galois-Gauss sums}\label{tame section}

\subsection{Galois-Gauss sums, Adams operators and Galois-Jacobi sums}\label{general cons}\mpar{general cons} For the reader's convenience in this section we fix notation regarding various variants of Galois-Gauss sums that will play a role in the sequel.

To do this we fix an arbitrary finite Galois extension $L/K$ of number fields in $\mathbb{Q}^c$ and set $G :=G({L/K})$.

For each character $\chi$ in $\widehat{G}$ we obtain a primitive central idempotent of $\Qu^c[G]$ by setting
\[
e_\chi := \frac{\chi(1)}{|G|} \sum_{g \in G} \chi(g)g^{-1}.
\]

We use the fact that each element of $\zeta(\QQ^c[G])$ can then be written uniquely in the form
\begin{equation}\label{explicit decomp} x = \sum_{\chi \in \widehat G}e_\chi\cdot x_\chi\end{equation}
with each $x_\chi$ in $\QQ^c$.

For convenience we extend the assignment $x \mapsto x_\chi$ to arbitrary elements $\chi$ of $R_G$ by multiplicativity.

\subsubsection{}We define the `equivariant global Galois-Gauss sum' for $L/K$ by setting
\[ \tau_{L/K} := \sum_{\chi \in \widehat G}e_\chi\cdot\tau(K,\chi) \in \zeta(\QQ^c[G])\]
where each (global) Galois-Gauss sum $\tau(K,\chi)$ belongs to $\QQ^c$ and is as defined, for example, by Fr\"ohlich in \cite[Ch. I, (5.22)]{Frohlich}.

We also define an `equivariant unramified characteristic' in $\zeta(\QQ[G])$ by setting
\[ y_{L/K} :=  \sum_{\chi \in \widehat G}e_\chi\cdot \prod_{v \mid d_L}y(K_v,\chi_v).\]
Here $\chi_v$ is the restriction of $\chi$ to the decomposition subgroup of some fixed place $w$ of $L$ above $v$ and (following \cite[Ch. IV, \S 1]{Frohlich}) for any finite Galois extension of local fields $F/E$ of group $D$ and each $\phi$ in $\widehat D$ we set

\mpar{explicit unram char}
\begin{equation}\label{explicit unram char}
y(E, \phi) :=
\begin{cases}
  1, & \text{ if } \phi |_{I} \ne 1, \\ - \phi(\sigma), & \text{ if } \phi |_{I} = 1,
\end{cases}
\end{equation}
where $I$ is the inertia subgroup of $D$ and $\sigma$ is a lift to $D$ of the Frobenius element in $D/I$.

We then define the `modified equivariant (global) Galois-Gauss sum' for $L/K$ by setting
\[ \tau'_{L/K} := \tau_{L/K}\cdot y_{L/K}^{-1}.\]


Since we rely on certain results from \cite{BleyBurns} we will also use the `absolute (global) Galois-Gauss sum for $L/K$' that is obtained by setting
\[
\tau^\dagger_{L/K} := \sum_{\chi\in \hat{G}} e_\chi \tau(\Qu,{\rm ind}^{\Qu}_K\chi) \in \zeta(\Qc[G])^\times.
\]

In particular, it is useful to note that the inductivity property of Galois-Gauss sums combines with the fact $\tau(K, 1_K) = 1$ to imply
\mpar{dagger tau}
\begin{equation}\label{dagger tau}
\tau^\dagger_{L/K} = \tau_K^G\cdot \tau_{L/K}\end{equation}
where $\tau^G_K$ is the invertible element of $\zeta(\mathbb{Q}^c[G])$ obtained by setting
\[
\tau^G_K :=\text{Nrd}_{\mathbb{Q}[G]}(\tau(\mathbb{Q},\text{ind}_K^\mathbb{Q}{\bf 1}_{K}))\]
so that $(\tau^G_K)_\chi = \tau(\mathbb{Q},\text{ind}_K^\mathbb{Q}{\bf 1}_{K})^{\chi(1)}$ for all $\chi$ in $\widehat G$.

\subsubsection{}For each integer $k$ that is coprime to $|G|$ we write $\psi_k$ for the $k$-th Adams operator on $R_G$ (for the relevant properties of which we refer to \cite[Lem. 3.1]{BurnsChinburg}).

We use this operator to construct endomorphisms of $\zeta(\QQ^c[G])$ in the following way. For each pair of integers $m$ and $n$
we  write $(m+n\cdot \psi_{k,*})(x)$ for the unique element of $\zeta(\QQ^c[G])$ with $(m+n\cdot \psi_{k,*})(x)_\chi := (x_\chi)^m\cdot (x_{\psi_k(\chi)})^n$ for every $\chi$ in $\widehat G$.

We then define the `$k$-th Galois-Jacobi sum' for the extension $L/K$ by setting
\[ J_{k,L/K} := (\psi_{k,\ast} - k)(\tau_{L/K}). \]

In the sequel we shall often use the following key property of these sums.

\begin{lemma}\label{jacobi lemma} For each integer $k$ prime to $|G|$ one has $J_{k,L/K}\in \zeta(\IQ[G])^\times$.\end{lemma}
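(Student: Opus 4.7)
The plan is to verify that $J_{k,L/K}$, whose $\chi$-component equals $\tau(K,\psi_k\chi)\cdot\tau(K,\chi)^{-k}$, satisfies the two conditions that characterise membership in $\zeta(\IQ[G])^\times$ inside $\zeta(\IQ^c[G])^\times$: that each component lies in $\IQ^\times$, and that the assignment $\chi\mapsto (J_{k,L/K})_\chi$ is $\Omega_\IQ$-equivariant.

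First I would establish the key algebraic identity $\psi_k(\chi)=\chi^{\omega_k}$, valid for any $\omega_k\in\Omega_\IQ$ acting on the $|G|$-th roots of unity by $\zeta\mapsto\zeta^k$; such an $\omega_k$ exists precisely because $\gcd(k,|G|)=1$, and the identity itself is immediate from the eigenvalue description of character values (the eigenvalues of a representing matrix $T_\chi(g)$ are $|G|$-th roots of unity, and their $k$-th powers are simultaneously the eigenvalues of $T_\chi(g^k)$ and the $\omega_k$-images of the eigenvalues of $T_\chi(g)$). Under this identification each component becomes the classical Galois-Jacobi-type ratio $(J_{k,L/K})_\chi = \tau(K,\chi^{\omega_k})/\tau(K,\chi)^k$.

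Next I would reduce the rationality assertion to the case of a one-dimensional $\chi$ via Brauer's induction theorem. This reduction is available because Galois-Gauss sums are inductive, so that $\tau(K,\ind_H^G\chi_0)=\tau(L^H,\chi_0)$, and — crucially — because the operation $\chi\mapsto\chi^{\omega_k}$ commutes with induction, even though $\psi_k$ itself does not commute with induction in general. For $\chi=\sum_i n_i\ind_{H_i}^G\chi_i$ with each $\chi_i$ one-dimensional, $(J_{k,L/K})_\chi$ then factorises as $\prod_i\bigl(\tau(L^{H_i},\chi_i^{\omega_k})/\tau(L^{H_i},\chi_i)^k\bigr)^{n_i}$, and the rationality in $\IQ(\chi_i)$ of each factor is the classical rationality of one-dimensional Jacobi sums, proved via an explicit change of variable in the defining Gauss sum combined with the norm identity for $|\tau(L^{H_i},\chi_i)|^2$ that pins down the Galois orbit of the classical Gauss sum. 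Galois-equivariance of the assignment $\chi\mapsto (J_{k,L/K})_\chi$ then follows from the same explicit transformation formulas together with the fact that $\psi_k$ commutes with the full $\Omega_\IQ$-action on $R_G$ (both acting by permutations of $\widehat G$).

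The hard part will be the careful bookkeeping of the root-of-unity correction factors $\chi_i(\ell)^{-\ell}$ arising when applying a Galois automorphism $\omega_\ell$ to each classical Gauss sum $\tau(L^{H_i},\chi_i)$, and verifying that these factors cancel systematically between numerator and denominator of the ratio $\tau(K,\chi^{\omega_k})/\tau(K,\chi)^k$ — a cancellation ultimately guaranteed by the coprimality of $k$ with $|G|$ (which ensures that the $k$-th power operation is invertible on the relevant group of roots of unity). A more streamlined alternative would be to invoke directly the classical rationality statement for Galois-Jacobi sums established in the work of Fr\"ohlich and Taylor, notably \cite[Ch.~I and Ch.~IV]{Frohlich}, which encapsulates precisely the content required here.
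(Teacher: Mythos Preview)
Your opening characterisation is wrong: membership in $\zeta(\IQ[G])^\times$ is \emph{not} equivalent to ``each component $J_\chi$ lies in $\IQ^\times$ and the assignment is $\Omega_\IQ$-equivariant''. The single equivariance condition $(J_\chi)^\omega = J_{\chi^\omega}$ for all $\omega\in\Omega_\IQ$ is already the full criterion, and individual components need only lie in $\IQ(\chi)$, not in $\IQ$. Indeed your own argument later only establishes rationality in $\IQ(\chi_i)$ for the one-dimensional pieces, so the plan as stated does not hang together. There is also a confusion in your remark that ``$\psi_k$ itself does not commute with induction in general'': once you have observed (correctly) that $\psi_k(\chi)=\chi^{\omega_k}$ when $\gcd(k,|G|)=1$, it follows immediately that $\psi_k$ \emph{does} commute with induction in this setting, so the distinction you draw is spurious.

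The paper's proof is much shorter and avoids Brauer induction entirely. It checks the equivariance $(J_\chi)^\omega = J_{\chi^\omega}$ directly using Fr\"ohlich's formula for the Galois action on global Gauss sums, $\tau(K,\chi^{\omega^{-1}})^\omega = \tau(K,\chi)\cdot (v_{K/\IQ}\det_\chi)(\omega)$ (\cite[Th.~20B(ii)]{Frohlich}), together with the elementary identity $\det_{\psi_k(\chi)}=(\det_\chi)^k$; the correction factors then cancel in the ratio $\tau(K,\psi_k\chi)/\tau(K,\chi)^k$ immediately. Your Brauer-induction route can be repaired (drop the false ``$\IQ^\times$'' claim and prove only the equivariance, which does reduce to the abelian case as you outline), but it is considerably more laborious and the bookkeeping you flag as ``hard'' is precisely what the determinant-character identity handles in one line.
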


\begin{proof} An element $x$ of $\zeta(\IQ^c[G])$ belongs to $\zeta(\IQ[G])$ if and only if one has
$(x_\chi)^\omega=x_{\chi^\omega}$ for all $\chi\in \hat{G}$ and all $\omega\in \Omega_{\IQ}$.

To verify that the elements $J_{k,L/K}$ satisfy this criterion we recall how the absolute Galois group acts on Gauss sums. We let
$\mathrm{Ver}_{K/\Qu} \colon \Omega_\Qu^{ab} \lra  \Omega_K^{ab}$ denote the transfer map and write $v_{K/\Qu}$ for the cotransfer
map from abelian characters of $\Omega_K$ to abelian characters of $\Omega_\Qu$. Thus, for each $\chi \in \widehat G$ the map
$v_{K/\Qu}\det_\chi$ is an abelian character of $\Omega_\Qu$. Then, by  \cite[Th.~20B(ii)]{Frohlich}, one has $\tau(K, \chi^{\omega^{-1}})^\omega = \tau(K, \chi) \cdot \left( v_{K/\Qu} \det\nolimits_\chi \right) (\omega)$ for all $\chi \in \widehat G$ and all
$\omega \in \Omega_\Qu$.

Hence it suffices to show that $\left(  \left( v_{K/\Qu} \det_\chi \right) (\omega) \right)^k = \left( v_{K/\Qu} \det_{\psi_k(\chi)} \right) (\omega)$ and this is true because $\det_{\psi_k(\chi)} = (\det_{\chi})^k$ (see \cite[Lem. 3.1]{BurnsChinburg}).\end{proof}

{With the results of \cite{BleyBurns} in mind we finally note} that if $G$ has odd order,
then an explicit comparison of the respective definitions shows that
\mpar{label product}
\begin{equation}\label{label product}
\tau^G_K\cdot (\psi_{2,\ast}-1)(\tau'_{L/K})\cdot (\tau^\dagger_{L/K})^{-1} = J_{2,L/K}\cdot (\psi_{2,\ast}-1)(y_{L/K}^{-1}).
\end{equation}

\begin{remark}\label{local analogue} If $F/E$ is a finite Galois extensions of $p$-adic fields (for some $p$) with group $D$,
then one can use the canonical local Gauss sum $\tau(E,\phi)$ (as discussed, for example,
in \cite[Ch. III, \S2, Th. 18 and Rem. 1]{Frohlich}) for each $\phi$ in $\widehat D$ to define natural analogues $\tau_{F/E}, y_{F/E}, \tau'_{F/E}, \tau^\dagger_{F/E}, \tau_E^D$ and $J_{k,F/E}$ in $\zeta(\QQ^c[D])$ of the elements defined above.
Then in the same way as above one can show that for each integer $k$ that is coprime to $|D|$ the element $J_{k,F/E}$
belongs to $\zeta(\QQ[D])^\times$ and can also prove the following local analogues of the equalities (\ref{dagger tau}) and (\ref{label product}):
\mpar{local dagger tau}
\begin{equation}\label{local dagger tau}
\tau^\dagger_{F/E} = \tau_E^D\cdot \tau_{F/E}
\end{equation}
and
\mpar{local label product}
\begin{equation}\label{local label product}
\tau^D_E\cdot (\psi_{2,\ast}-1)(\tau'_{F/E})\cdot (\tau^\dagger_{F/E})^{-1} = J_{2,F/E}\cdot (\psi_{2,\ast}-1)(y_{F/E}^{-1}).
\end{equation}
\end{remark}

\subsubsection{}In the next result we write $W_{L/K}$ for the so-called `Cassou-Nogu\`es-Fr\"ohlich root number class' in ${\rm Cl}(G)$.

We recall that this element plays a critical role in classical Galois module theory (as discussed by Fr\"ohlich in \cite{Frohlich} and \cite{fro2}).

\begin{lemma}\label{cnf lift} There exists a canonical element $W_{L/K}^{\rm rel}$ of $K_0(\ZZ[G],\QQ^c[G])$ that has all of the following properties.
\begin{itemize}
\item[(i)] The image of $W_{L/K}^{\rm rel}$ under the connecting homomorphism $\partial^0_{\ZZ,\QQ^c,G}$ is $W_{L/K}$.
\item[(ii)] $W_{L/K}^{\rm rel}$ is trivial if the Artin root number of each symplectic character in $\widehat{G}$ is positive.
\item[(iii)] In all cases the element $2\cdot W_{L/K}^{\rm rel}$ is trivial.
\end{itemize}
\end{lemma}

\begin{proof} The element $W_{L/K}$ is defined directly in terms of the Artin root numbers of symplectic characters in $\widehat{G}$ by means of the isomorphism $h_G^{\rm red}$ in (\ref{red isom}).

One can use the isomorphism $h_G^{\rm rel}$ in (\ref{rel isom}) to define $W_{L/K}^{\rm rel}$ in a similarly explicit way. However, for later purposes, it is useful to adopt a different approach to the definition of $W_{L/K}^{\rm rel}$.

To do this we recall the element $\epsilon_{L/K}$ of $\zeta(\RR[G])^\times$ that is defined in terms of epsilon constants in \cite[just after (9)]{BleyBurns}.

Then, in view of the description of ${\rm im}({\rm Nrd}_{\RR[G]})$ that is given by the Hasse-Schilling-Maass Norm Theorem, we can use the Weak Approximation Theorem to choose an element $\lambda$ of $\zeta(\QQ[G])^\times$ with the property that $\lambda\cdot\epsilon_{L/K}$ belongs to ${\rm im}({\rm Nrd}_{\RR[G]})$.

We then obtain an element of $K_0(\ZZ[G],\QQ^c[G])$ by setting
\[ W_{L/K}^{\rm rel} := \delta_G(\lambda) - \sum_p\delta_{G,p}(\lambda)\]
where $p$ runs over all primes and each $\delta_{G,p}(\lambda)$ is regarded as an element of $K_0(\ZZ[G],\QQ^c[G])$ by means of the composite
inclusion
\[ K_0(\ZZ_p[G],\QQ_p[G]) \subset K_0(\ZZ[G],\QQ[G]) \subset K_0(\ZZ[G],\QQ^c[G]).\]

This recipe is independent of the choice of $\lambda$ since if $\lambda'$ is any choice, then $\lambda^{-1} \lambda'$ belongs to ${\rm im}
({\rm Nrd}_{\QQ[G]})$ and so one has
\begin{align*}\delta_G(\lambda')-\delta_G(\lambda) =\, &\delta_G(\lambda^{-1}\lambda')\\
 =\, &(\partial^1_{\ZZ,\QQ,G}\circ ({\rm Nrd}_{\QQ[G]})^{-1})(\lambda^{-1}\lambda')\\
 =\, &\sum_p(\partial^1_{\ZZ_p,\QQ_p,G}\circ ({\rm Nrd}_{\QQ_p[G]})^{-1})(\lambda^{-1}\lambda')\\
 =\, & \sum_p\delta_{G,p}(\lambda') - \sum_p\delta_{G,p}(\lambda).\end{align*}

Given this definition of $W_{L/K}^{\rm rel}$, the property in claim (i) follows directly from the argument of \cite[Prop. 3.1]{BleyBurns}.

In addition, claim (ii) is true because the given hypotheses imply that $\epsilon_{L/K}$ belongs to ${\rm im}({\rm Nrd}_{\RR[G]})$ so that one can compute $W_{L/K}^{\rm rel}$ by using the element $\lambda = 1$.

Finally, claim (iii) follows easily from the fact that the square of any element of $\zeta(\QQ[G])^\times$ belongs to ${\rm im}({\rm Nrd}_{\QQ[G]})$.
\end{proof}

\subsection{Tame Galois-Gauss sums and fractional powers of the different}\label{tame ggs section} We now assume the Galois extension $L/K$ is tamely ramified
and fix a natural number $k$ that is both coprime to $|G|$ and so that the order of each inertia subgroup of
$G$ is congruent to $1$ modulo $k$.

In any such case it follows immediately from Hilbert's formula for the different in terms of ramification invariants (cf. \cite[Ch.~IV, Prop.~4]{LocalFields}) that there exists a unique fractional ideal $\mathfrak D_{L/K}^{-1/k}$ of $\mathcal O_L$ whose $k$-th power is equal to the inverse of the different $\mathfrak{D}_{L/K}$ of $L/K$ and for any integer $i$
we set $\mathfrak D_{L/K}^{-i/k} = (\mathfrak D_{L/K}^{-1/k})^i$.

Each ideal $\mathfrak D_{L/K}^{-i/k}$ is stable under the natural action of $\mathcal{O}_K[G]$ and, since $L/K$ is assumed to be tamely ramified, the $\mathcal{O}_K[G]$-module $\mathfrak D_{L/K}^{-i/k}$ is known to be locally-free (by Ullom \cite{U}).

In particular, since $\mathfrak D_{L/K}^{-i/k}$ is a full sublattice of $L$, the construction of \S\ref{example0} gives rise to a well-defined
element $[\mathfrak D_{L/K}^{-i/k},\kappa_L, H_L]$ of $K_0(\mathbb{Z}[G],\mathbb{Q}^c[G])$.

We next write $\psi_k^\vee$ for the map which sends a function $h$ on $R_G$
to $h\circ \psi_k$ and recall that, since $k$ is prime to $|G|$, Cassou-Nogu\`es and Taylor have in \cite{cn-t} shown that
the assignment 
\[ (\theta,\theta') \mapsto (\psi_k^\vee(\theta),\psi_k^\vee(\theta'))\]
induces
(via the map (\ref{Delta rel}) and isomorphism (\ref{rel isom})) a well-defined
endomorphism $\Psi_k$ of the group $K_0(\ZZ[G],\QQ^c[G])$.

We can now state the main result of this section. This result uses the invertible elements $\tau_K^G$ and $\tau'_{L/K}$ of $\zeta(\QQ^c[G])$ that are defined in \S\ref{general cons} as well as the relative Cassou-Nogu\`es-Fr\"ohlich root number class $W_{L/K}^{\rm rel}$ defined in Lemma \ref{cnf lift}.
%


\begin{theorem}\label{thm tame}
Let $L/K$ be a tamely ramified Galois extension of number fields with group $G$
and $k$ any natural number that is both coprime to $|G|$ and such that the order of each inertia subgroup of
$G$ is congruent to $1$ modulo $k$. Then in $K_0(\ZZ[G],\QQ^c[G])$ one has
\mpar{tame result}
\begin{equation}\label{tame result}
\sum_{i=0}^{k-1}[\mathfrak D_{L/K}^{-i/k},\kappa_L, H_L] = \delta_G((\tau_K^G)^{k}\cdot\psi_{k,\ast}(\tau'_{L/K})) + \Psi_k(W_{L/K}^{\rm rel}).
\end{equation}
\end{theorem}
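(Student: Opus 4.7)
The plan is to prove the identity by translating both sides into $\mathrm{Cok}(\Delta_G^{\mathrm{rel}})$ via $h_G^{\mathrm{rel}}$ and matching idelic representatives directly. For the left-hand side I would apply Lemma \ref{norm resolvent lemma} to each summand $[\mathfrak{D}_{L/K}^{-i/k},\kappa_L,H_L]$ after fixing a $\ZZ$-basis $\{a_\sigma\}$ of $\OK$, a global generator $b\in L$ with $L = K[G]\cdot b$, and, for each $i$ and each finite place $v$ of $K$, a local $\calO_{K_v}[G]$-generator $b_v^{(i)}$ of $(\mathfrak{D}_{L/K}^{-i/k})_v$, whose existence is guaranteed by Ullom's tame projectivity theorem. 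At unramified places $v$ the lattices $(\mathfrak{D}_{L/K}^{-i/k})_v$ all coincide, so uniform choices suffice, while at tame ramified places the hypothesis $k\mid|I_v|-1$ lets one take $b_v^{(i)}$ to be $b_v^{(0)}$ multiplied by an explicit element of $L_w^{\times}$ realising $\mathfrak{D}_{L_w/K_v}^{-i/k}$ in the Kummer-like fashion available in the tame setting.

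Summing the resulting representatives yields, by Lemma \ref{norm resolvent lemma}, a representative of the left-hand side of the form $\bigl(\prod_{i}\theta_1^{(i)}\cdot\theta_2^{-k},\ \theta_2^k\theta_3^k\bigr)$, where $\theta_1^{(i)}(\chi) = \prod_v\calN_{K/\QQ}(b_v^{(i)}\mid\chi)$, $\theta_2(\chi)=\calN_{K/\QQ}(b\mid\chi)$ and $\theta_3(\chi)=\delta_K^{\chi(1)}$. The crux of the proof is then the local identity, at each finite place $v$, expressing
\[
\prod_{i=0}^{k-1}\calN_{K/\QQ}(b_v^{(i)}\mid\chi)
\]
in terms of the Adams-twisted local Galois--Gauss sum $\tau(K_v,\psi_k(\chi_v))\cdot y(K_v,\psi_k(\chi_v))^{-1}$, up to factors lying in $\Det(U_f(\ZZ[G]))$. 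This identity is a refinement of the classical Fr\"ohlich--Taylor identification of tame resolvents with local Galois--Gauss sums, and is essentially the local computation from \cite{BurnsChinburg}; the assumption $|I_v|\equiv 1\pmod k$ is precisely what is needed so that the product over $i$ yields the $k$-th Adams twist of $\chi_v$ rather than a more complicated character manipulation.

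Taking the product of the above local identities over all finite places $v$, and combining with the Fr\"ohlich product formula for $\tau_{L/K}$, converts $\prod_i\theta_1^{(i)}$ into $\psi_{k,\ast}(\tau'_{L/K})$ modulo $\Det(U_f(\ZZ[G]))$. Since the right-hand side $\delta_G\bigl((\tau_K^G)^k\cdot\psi_{k,\ast}(\tau'_{L/K})\bigr)$ is represented in the Hom-description by $(1,(\tau_K^G)^k\cdot\psi_{k,\ast}(\tau'_{L/K}))$, the comparison then reduces to producing a single element $\theta\in\Det(\QG^\times)$ whose image under $\Delta_G^{\mathrm{rel}}$ bridges the two representatives. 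This $\theta$ is built out of $\theta_2^k$, $\theta_3^k$ and the classical identity $\delta_K^{\chi(1)}\equiv\tau(\QQ,\mathrm{ind}_K^{\QQ}\mathbf{1}_K)^{\chi(1)}=(\tau_K^G)_\chi$, valid up to a sign that is $\QQ$-rational and hence harmless in $\Det(\QG^\times)$; the global norm-resolvent $\theta_2^k$ then cancels in a manner compatible with the product formula for $\tau_{L/K}$.

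The principal obstacle will be establishing the local resolvent--Gauss sum identity cleanly at the level of the cokernel, rather than merely in the image classgroups $A(G)$, $\HCl(G)$ or $\Cl(G)$; in particular one must track unit contributions at tame ramified primes with enough care to isolate the precise $\psi_{k,\ast}$-twist predicted on the right-hand side. This is facilitated by the explicit ramification-theoretic description of $\mathfrak{D}_{L_w/K_v}^{-1/k}$ as a Kummer lattice built from a single local element, and by the congruence $|I_v|\equiv 1\pmod k$, which together ensure that all correction factors beyond the Adams-twisted Gauss sums lie in the image of $\Det(U_f(\ZZ[G]))\cdot\Det(\QG^\times)$ in the sense required by $\Delta_G^{\mathrm{rel}}$.
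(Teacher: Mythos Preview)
Your overall architecture matches the paper's: translate both sides via $h_G^{\rm rel}$ using Lemma \ref{norm resolvent lemma}, and compare idelic representatives. The paper obtains exactly the representative $(\prod_i\theta_{i,1}\cdot\theta_2^{-k},\theta_2^k\theta_3^k)$ you describe. But the way you propose to close the argument has a genuine gap.

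Your claimed ``crux'' is a local identity
\[
\prod_{i=0}^{k-1}\calN_{K/\QQ}(b_v^{(i)}\mid\chi)\ \equiv\ \tau'(K_v,\psi_k(\chi_v)) \quad (\text{mod } \Det(U_f(\ZZ[G]))),
\]
which you attribute to \cite{BurnsChinburg}. But the computations in \cite{BurnsChinburg} only give the resolvent-to-resolvent congruence
\[
\prod_{i=0}^{k-1}\theta_{i,1}\ \equiv\ \psi_k^\vee(\theta_{0,1}) \quad (\text{mod } \Det(U_f(\ZZ[G])))
\]
(this is Lemma \ref{adapted cong} in the paper). Converting the single Adams-twisted resolvent $\psi_k^\vee(\theta_{0,1})$ into a Galois--Gauss sum at the level of $K_0(\ZZ[G],\QQ^c[G])$, rather than merely in $A(G)$ or $\HCl(G)$, is \emph{not} in \cite{BurnsChinburg}: the classical Fr\"ohlich--Taylor identification of tame resolvents with Gauss sums is only modulo $\Det(\mathcal{O}_\ell^t[G]^\times)$, which is strictly coarser than $\Det(\ZZ_\ell[G]^\times)$. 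The lift to relative $K$-theory is precisely the content of \cite[Cor.~7.7]{BleyBurns}, which gives the $k=1$ identity $[\mathcal{O}_L,\kappa_L,H_L]=\delta_G(\tau_K^G\cdot\tau'_{L/K})$ and which you have not invoked.

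The paper's route is therefore: (a) use Lemma \ref{adapted cong} to turn the left-hand sum into $\Psi_k([\mathcal{O}_L,\kappa_L,H_L])$ plus an explicit correction $x_k$ represented by $(1,\theta_3^{k-1})$, where $\Psi_k$ is the Cassou-Nogu\`es--Taylor endomorphism of $K_0(\ZZ[G],\QQ^c[G])$ induced by $\psi_k^\vee$; then (b) import the $k=1$ case from \cite[Cor.~7.7]{BleyBurns} as a black box and apply $\Psi_k$ to it; finally (c) match constants using $\tau(\QQ,\ind_K^\QQ\mathbf{1}_K)=\pm\delta_K$. Your proposal collapses (a) and (b) into a single ``local identity'' and only cites the reference adequate for (a); as written it does not establish the required equality in $K_0(\ZZ[G],\QQ^c[G])$.
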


Before proving this result we use it to derive certain explicit consequences concerning the metric and hermitian structures that arise in this setting.

In particular, the following result extends to all integers $k$ as in Theorem \ref{thm tame} the results on the hermitian modules $(\mathcal{O}_{L},t_{L/K})$,
corresponding to $k=1$, and $(\mathfrak{D}_{L/K}^{-1/2},t_{L/K})$, corresponding to $k=2$ and $G$ of odd order,
that are obtained by Erez and Taylor in \cite{ErezTaylor}.

We recall the definition of the element $\delta_K$ from Lemma \ref{norm resolvent lemma} and write $d_K$ for the discriminant of $\OK$.

In the sequel we will often use the fact that
\mpar{disc pfaff}
\begin{equation}\label{disc pfaff}
\tau(\mathbb{Q},\text{ind}_K^\mathbb{Q}{\bf 1}_{K})^2 = d_K = \delta_K^2,
\end{equation}
as follows by combining \cite[Th. (11.7)(iii)]{Neukirch} together with \cite[(5.23)]{Frohlich}.

\begin{coro}\label{coro 1} Assume the notation and hypotheses of Theorem \ref{thm tame}. Then both of the following claims are valid.

\begin{itemize}
\item[(i)] In ${\rm A}(G)$ one has
\[
\sum_{i=0}^{k-1}[\mathfrak D_{L/K}^{-i/k},h_{L,\bullet}] = \varepsilon^{\rm met}_{L/K,k} + \Pi_G^{\rm met}(\Psi_k(W_{L/K}^{\rm rel}))\]
where  $h_{L,\bullet}$ is the metric defined in Example \ref{example1} and $\varepsilon^{\rm met}_{L/K,k}$
is represented by the pair $(1, |\theta_k|)$ with
$\theta_k(\phi)= (|G|^{[K:\IQ]} {|d_K|})^{k\frac{\phi(1)}{2}} \cdot  {\tau}(K,\psi_k(\phi))$  for all $\phi$ in $R_G$.

\item[(ii)] In ${\rm HCl}(G)$ one has
\[ \sum_{i=0}^{k-1}{\rm Disc}(\mathfrak D_{L/K}^{-i/k},t_{L/K}) = \varepsilon^{\rm herm}_{L/K,k} + \Pi_G^{\rm herm}(\Psi_k(W_{L/K}^{\rm rel}))\]
where the hermitian form $t_{L/K}$ is as defined in Example \ref{example2} and $\varepsilon^{\rm herm}_{L/K,k}$ is represented by the pair $(1,\tilde\theta_k)$ with $\tilde\theta_k(\phi)= d_K^{k\frac{\phi(1)}{2}}\cdot  {\tau}(K,\psi_k(\phi))$ for all $\phi$ in $R^s_G$.
\end{itemize}
\end{coro}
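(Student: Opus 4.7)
The strategy is to deduce both parts by applying the canonical maps $\Pi_G^{\rm met}$ and $\Pi_G^{\rm herm}$ of Theorem~\ref{prop key diag} to the $K$-theoretic identity~(\ref{tame result}) of Theorem~\ref{thm tame}, and then translating the resulting identities into hermitian and metric language via Proposition~\ref{almost there}.

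For part (i), I apply $\Pi_G^{\rm met}$ to~(\ref{tame result}). By Proposition~\ref{almost there}(i) the left-hand side becomes $\sum_{i=0}^{k-1}[\mathfrak D_{L/K}^{-i/k},h_{L,\bullet}] - k[H_L,\mu_{L,\bullet}]$, so the task reduces to showing that
\[
\Pi_G^{\rm met}\bigl(\delta_G((\tau_K^G)^k\psi_{k,\ast}(\tau'_{L/K}))\bigr) + k[H_L,\mu_{L,\bullet}]
\]
coincides with $(1,|\theta_k|)$ in $A(G)$. The first summand is computed by combining the Hom-representative $(1,u)$ of $h_G^{\mathrm{rel}}(\delta_G(u))$ supplied by Lemma~\ref{rep for relative group} with the definition of $\partial_G^{1,1}$; the identity $\tau(\QQ,{\rm ind}_K^\QQ{\bf 1}_K)^2 = d_K$ from~(\ref{disc pfaff}) then accounts for the factor $|d_K|^{k\phi(1)/2}$, while $\psi_{k,\ast}(\tau_{L/K})$ accounts for the factor $\tau(K,\psi_k(\phi))$. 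The second summand contributes the factor $|G|^{k[K:\QQ]\phi(1)/2}$ through the explicit description of $[H_L,\mu_{L,\bullet}]$ provided by Lemma~\ref{betti example}.

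For part (ii), the argument is parallel but cleaner: applying $\Pi_G^{\rm herm}$ to~(\ref{tame result}) and invoking Proposition~\ref{almost there}(ii) yields at once $\sum_{i=0}^{k-1}{\rm Disc}(\mathfrak D_{L/K}^{-i/k},t_{L/K}) = \Pi_G^{\rm herm}\bigl(\delta_G((\tau_K^G)^k\psi_{k,\ast}(\tau'_{L/K}))\bigr)$, with no correction term arising from $H_L$. The right-hand side is represented in $\eHCl(G)$ by $(1,u^s)$ where $u = (\tau_K^G)^k\psi_{k,\ast}(\tau'_{L/K})$; for a symplectic character $\phi \in R_G^s$, the evenness of $\phi(1)$ together with~(\ref{disc pfaff}) allows the substitution $\tau(\QQ,{\rm ind}_K^\QQ{\bf 1}_K)^{k\phi(1)} = d_K^{k\phi(1)/2}$ as an equality in $\QQ$ (rather than only in absolute value), delivering the claimed $\tilde\theta_k$ and showing that the identity actually holds in the subgroup $\HCl(G) \subseteq \eHCl(G)$.

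The main technical step in both parts is to absorb the factor $y_{L/K}$ hidden in $\tau'_{L/K} = \tau_{L/K}\,y_{L/K}^{-1}$. Since the components of $y_{L/K}$ are products of the local root-of-unity factors of~(\ref{explicit unram char}), the element $\psi_{k,\ast}(y_{L/K})$ belongs to $\zeta(\QQ[G])^\times \subseteq \Det(\QQ[G]^\times)$; its image under $\Delta^{\rm met}_G$ (respectively $\Delta^{\rm herm}_G$) therefore vanishes in $A(G)$ (respectively in $\eHCl(G)$), and the $y_{L/K}$-contribution can be removed from the representative without producing any spurious finite-idelic correction. Apart from this absorption step, the proof is a routine translation of representatives through the homomorphisms $\Pi_G^{\rm met}$ and $\Pi_G^{\rm herm}$ constructed in Theorem~\ref{prop key diag}.
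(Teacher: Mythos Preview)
Your overall strategy is the same as the paper's: apply $\Pi_G^{\rm met}$ and $\Pi_G^{\rm herm}$ to the identity of Theorem~\ref{thm tame}, invoke Proposition~\ref{almost there}, and unwind the Hom-representatives using Lemma~\ref{betti example} and~(\ref{disc pfaff}). That part is fine.

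The gap is in your ``absorption'' paragraph. The inclusion $\zeta(\QQ[G])^\times \subseteq \Det(\QQ[G]^\times)$ you assert is false whenever $G$ has irreducible symplectic characters: by Hasse--Schilling--Maass, $\Det(\QQ[G]^\times) = \Hom^+(R_G,\QQ^{c\times})^{\Omega_\QQ}$ consists only of functions that are totally positive on $R_G^s$, and e.g.\ the element $-1\in\zeta(\QQ[G])^\times$ fails this. So you cannot remove the $y_{L/K}$-factor by declaring it to lie in the image of $\Delta_G^{\rm met}$ or $\Delta_G^{\rm herm}$. Moreover, even if it did, its image under $\Delta_G^{\rm met}$ would be $([\psi_{k,\ast}(y_{L/K})],\,|\psi_{k,\ast}(y_{L/K})|)$, not $(1,\,|\psi_{k,\ast}(y_{L/K})|)$; your phrase ``without producing any spurious finite-idelic correction'' precisely flags the mismatch but does not resolve it.

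The paper handles the $y_{L/K}$-factor differently, and separately for the two parts. For~(i), the point is exactly the one you mention but then don't use: the components of $y_{L/K}$ are roots of unity, hence $|\psi_{k,\ast}(y_{L/K}^{-1})|(\phi)=1$ for every $\phi$, so the $y$-contribution to the representative in $A(G)$ is literally $(1,1)$. For~(ii), being a root of unity is not enough; one needs the stronger fact that $y(K_v,\phi_v)=1$ for every $\phi\in R_G^s$, which the paper cites from \cite[Th.~29(i)]{Frohlich}. This makes the symplectic restriction $(\psi_{k,\ast}(y_{L/K}^{-1}))^s$ identically~$1$, so again the representative is $(1,1)$. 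Without this input your argument for part~(ii) does not close.
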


\begin{proof} To prove claim (i) we note first Proposition \ref{almost there}(i) implies that for each $i$ one has
\[
\Pi_G^{\rm met}([\mathfrak D_{L/K}^{-i/k}, \kappa_L, H_L]) = [\mathfrak D_{L/K}^{-i/k},h_{L,\bullet}]-[H_L,\mu_{L,\bullet}].\]

We next recall that for $\alpha = \left( \alpha_\chi \right)_{\chi \in \widehat G} \in \zeta(\Qc[G])^\times$ the element $h_G^{\mathrm{rel}}(\delta_G(\alpha))$ is represented
by the function $\chi \mapsto (1, \alpha_\chi)$.

This implies, in particular, that $h^{\rm rel}_G(\delta_G((\tau_K^G)^k\cdot\psi_{k,\ast}(\tau'_{L/K})))$ is represented by the pair
$(1,\theta_k')$ with
$\theta_k'(\phi) := \tau(\mathbb{Q},{\rm ind}^\mathbb{Q}_K{\bf 1}_K)^{k\phi(1)}\cdot \tau '(K,\psi_k(\phi))$ for each $\phi$ in $\widehat G$.

Finally we recall that the element $[H_L,\mu_{L,\bullet}]$ has been explicitly computed in Lemma \ref{betti example}.

Putting these facts together with the equality in Theorem \ref{thm tame} one finds that the element

\begin{align*} &\sum_{i=0}^{k-1}[\mathfrak D_{L/K}^{-i/k},h_{L,\bullet}] -\Pi_G^{\rm met}(\Psi_k(W_{L/K}^{\rm rel}))\\
 = \,\,& k\cdot [H_L,\mu_{L,\bullet}] + \Pi_G^{\rm met}\bigl(\sum_{i=0}^{k-1}
[\mathfrak D_{L/K}^{-i/k},\kappa_L, H_L] - \Psi_k(W_{L/K}^{\rm rel})\bigr)\\
= \,\, & k\cdot [H_L,\mu_{L,\bullet}] + \Pi_G^{\rm met}(\delta_G((\tau_K^G)^k\cdot\psi_{k,\ast}(\tau'_{L/K})))\end{align*}
of $A(G)$ is represented by the homomorphism pair $(1,|\theta_k|)$ where for each $\phi$ in $\widehat G$ one has
\[
\theta_k(\phi) := |G|^{[K:\IQ]\frac{k\phi(1)}{2}} \cdot \tau(\IQ,{\rm ind}^\IQ_K{\bf 1}_K)^{k\phi(1)}\cdot \tau'(K,\psi_k(\phi)).
\]
But, taking account of both (\ref{disc pfaff}) and the fact that $y(K_v,\phi_v)$ is a root of unity for all $\phi$ in $\widehat G$,
one finds that
\[
|\theta_k|(\phi) = (|G|^{[K:\IQ]} {|d_K|})^{k\frac{\phi(1)}{2}} \cdot |\tau(K,\psi_k(\phi))|
\]
and this proves claim (i).

It is enough to prove the equality of claim (ii) in ${\rm eHCl}(G)$ and to do this we note that the description in
Proposition \ref{almost there}(ii) combines with Theorem \ref{thm tame} to imply that
\begin{align*}
&\sum_{i=0}^{k-1}{\rm Disc}(\mathfrak D_{L/K}^{-i/k},t_{L/K})-\Pi_G^{\rm herm}(\Psi_k(W_{L/K}^{\rm rel}))\\
 =\, &\Pi_G^{\rm herm}\bigl( \sum_{i=0}^{k-1}[\mathfrak D_{L/K}^{-i/k},\kappa_L,H_L] - \Psi_k(W_{L/K}^{\rm rel})\bigr)\\
 =\, &\Pi_G^{\rm herm}(\delta_G((\tau_K^G)^k\cdot\psi_{k,\ast}(\tau'_{L/K}))).
\end{align*}

In addition, by the definition of $\Pi_G^{\mathrm{herm}}$ one has the following equality in ${\rm eHCl}(G)$
\[
\Pi^{\rm herm}_G(\delta_G((\tau_K^G)^k\cdot\psi_{k,\ast}(\tau'_{L/K}))) = (\partial^{2,1}_G\circ h^{\rm rel}_G)(\delta_G((\tau_K^G)^k\cdot\psi_{k,\ast}(\tau'_{L/K}))).
\]

Hence one deduces that the difference
\[ \sum_{i=0}^{k-1}{\rm Disc}(\mathfrak D_{L/K}^{-i/k},t_{L/K}) -\Pi_G^{\rm herm}(\Psi_k(W_{L/K}^{\rm rel}))\]
is represented by the pair $(1,(\theta'_k)^s)$, where $\theta_k'$ is as defined in the proof of claim (i).

To deduce claim (ii) from this it is now enough to note that for $\phi$ in $R_G^s$ one has
\begin{equation}\label{almost there0}
\theta_k'(\phi) = \tau(\IQ,{\rm ind}^\IQ_K{\bf 1}_K)^{k\phi(1)}\cdot \tau'(K,\psi_k(\phi)) = d_K^{k\phi(1)/2}\cdot  {\tau}(K,\psi_k(\phi)),
\end{equation}
where, to derive the second equality, we have used (\ref{disc pfaff}) and the fact that for
every $\phi$ in $R_G^s$ the integer $\phi(1)$ is even and $y(K_v,\phi_v)=1$ by \cite[Th.~29 (i)]{Frohlich}.
\end{proof}

In the remainder of this section we shall prove Theorem \ref{thm tame} by combining results of the first and second author from \cite{BleyBurns} and of Chinburg and the second author from \cite{BurnsChinburg}.

To do this we fix a $K[G]$-generator $b$ of $L$ and a $\Ze$-basis $\{a_\sigma\}_{\sigma \in \Sigma(K)}$ of $\mathcal{O}_K$.
For each integer $i$ with $0 \le i < k$ and each non-archimedean place $v$ of $K$ we also fix an $\mathcal{O}_{K,v}[G]$-generator $b_{i,v}$ of
$( \mathfrak D_{L/K}^{-i/k})_v$.

Then by Lemma \ref{norm resolvent lemma} the element
$h^{\rm rel}_G([\mathfrak D_{L/K}^{-i/k}, \kappa_L, H_L])$ is represented by the pair of homomorphisms
$(\theta_{i,1}\cdot\theta^{-1}_{2},\theta_2\cdot\theta_{3})$ where for each $\chi$ in $R_G$ one has
\begin{equation}\label{homs def}
\theta_{i,1}(\chi) :=
\prod_v\mathcal{N}_{K/\IQ}(b_{i,v}\mid \chi), \,\,\, \theta_2(\chi) := \mathcal{N}_{K/\IQ}(b\mid \chi), \,\,\,\theta_3(\chi) := \delta_K^{\chi(1)}.
\end{equation}

With this notation, it is straightforward to check that
\[
(\theta_2^{-k},\theta_2^k) \equiv (\psi_k^\vee(\theta_2)^{-1},\psi_k^\vee(\theta_2)) \,\,\,({\rm mod}\,\, {\rm im}(\Delta_G^{\rm rel})),
\]
(see, for example, the end of the proof of \cite[Prop. 3.3]{BurnsChinburg}) and it is also clear $\psi_k^\vee(\theta_3) = \theta_3$.

In particular, if we denote the sum on the left hand side of (\ref{tame result}) by $\Sigma_k$, then these observations
combine with the above description of each element $h^{\rm rel}_G([\mathfrak D_{L/K}^{-i/k}, \kappa_L, H_L])$ and the congruence proved in Lemma \ref{adapted cong} below to imply that $h_G^{\rm rel}(\Sigma_k)$ is represented by the pair
\[
(\psi_k^\vee(\theta_{0,1}\cdot\theta_2^{-1}),\psi_k^\vee(\theta_2\cdot\theta^k_3)) =
(\psi_k^\vee(\theta_{0,1}\cdot\theta_2^{-1}),\psi_k^\vee(\theta_2\cdot\theta_3))\cdot (1, \theta^{k-1}_3).
\]

It follows that, writing $x_k$ for the element of $K_0(\ZZ[G],\QQ^c[G])$ for which $h_G^{\rm rel}(x_k)$ is represented
by the pair $(1,\theta^{k-1}_3)$, one has
\[ \Sigma_k = \Psi_k([\mathcal{O}_L,\kappa_L, H_L]) + x_k.\]

We claim next that the results of \cite{BleyBurns} imply that
\begin{equation}\label{bb eq} [\mathcal{O}_L,\kappa_L, H_L] = \delta_G(\tau_K^G\cdot  \tau_{L/K}')+ W_{L/K}^{\rm rel}.\end{equation}

Before proving this equality we note that, if true, it would combine with the previous equality to imply that the element
$h_G^{\rm rel}(\Sigma_k - \Psi_k(W_{L/K}^{\rm rel}))$ is represented by the homomorphism pair $(1,\theta_3^{k-1}\cdot\theta_3'\cdot\psi_k^\vee(\theta_4))$,
 where for each $\chi$ in $R_G$ one has
$\theta_3'(\chi) = \tau(\mathbb{Q},\text{ind}_K^\mathbb{Q}{\bf 1}_{K})^{\chi(1)}$ and $\theta_4(\chi) = (\tau'_{L/K})_\chi$.

On the other hand, from (\ref{disc pfaff}) one has
$\tau(\mathbb{Q},\text{ind}_K^\mathbb{Q}{\bf 1}_{K}) = \pm \delta_K$ so that
\[ (1,\theta_3) \equiv (1,\theta_3')  \,\,\,({\rm mod}\,\, {\rm im}(\Delta_G^{\rm rel}))\]
and hence $h_G^{\rm rel}(\delta_G((\tau_K^G)^{k}\cdot\psi_{k,\ast}(\tau'_{L/K})))$ is also represented by the pair
$(1,\theta_3^{k-1}\cdot\theta_3'\cdot\psi_k^\vee(\theta_4))$.

It would thus follow that $\Sigma_k - \Psi_k(W_{L/K}^{\rm rel}) = \delta_G((\tau_K^G)^{k}\cdot\psi_{k,\ast}(\tau'_{L/K}))$, as claimed.

To complete the proof of Theorem \ref{thm tame} it is therefore enough to prove (\ref{bb eq}). To do this we note that the notation $\mathcal{E}_{L/K}$ introduced in \cite[\S3.1]{BleyBurns} denotes the element
\[ \delta_G(\lambda\cdot \epsilon_{L/K}) - \sum_p \delta_{G,p}(\lambda) = \delta_G(\epsilon_{L/K}) + W_{L/K}^{\rm rel}\]
of $K_0(\ZZ[G],\QQ^c[G])$, where $\lambda$ in $\zeta(\QQ[G])^\times$ is chosen as in the proof of Lemma \ref{cnf lift}.
 Note that here and in the sequel, to be able to apply the results of \cite{BleyBurns} we are implicitly working 
in the group $K_0(\ZG, \CC[G])$, regarding both $K_0(\ZG, \RR[G])$ and $K_0(\ZG, \Qc[G])$ as subgroups in the obvious way.

The definition of the element $\delta_{L/K}(\mathcal{O}_L)$ of $K_0(\ZZ[G],\QQ^c[G])$ given in \cite[\S3.2]{BleyBurns} ensures that
\begin{align*}\delta_G(\tau^\dagger_{L/K}) - [\mathcal{O}_L,\kappa_L,H_L] =\, &\delta_G(\epsilon_{L/K}) - \delta_{L/K}(\mathcal{O}_L)\\
=\, &\mathcal{E}_{L/K} - \delta_{L/K}(\mathcal{O}_L) - W_{L/K}^{\rm rel}\\
=\, &\delta_G(y_{L/K}) - W_{L/K}^{\rm rel}.\end{align*}
The first equality here is a consequence of  \cite[Rem. 3.5]{BleyBurns} and the fact that the element $\tau_{L/K}$ and map $\rho_L$ in loc. cit. correspond, in our notation, to $\tau^\dagger_{L/K}$ and $\kappa_L$. In addition, the third equality follows directly from \cite[Cor. 7.7]{BleyBurns}.

To derive the required equality (\ref{bb eq}) from the last displayed formula is it is then enough to note that (\ref{dagger tau}) implies  $\tau^\dagger_{L/K}\cdot y_{L/K}^{-1}$ is
equal to $\tau_K^G\cdot  \tau_{L/K}\cdot y_{L/K}^{-1} = \tau_K^G\cdot  \tau_{L/K}'$.

\begin{lemma}\label{adapted cong} For the homomorphisms $\theta_{i,1}$ for $0 \le i < k$ that are defined in (\ref{homs def}) one has \[
\prod_{i=0}^{k-1}\theta_{i,1} \equiv \psi_k^\vee(\theta_{0,1})  \,\,\,({\rm mod}\,\, {\rm Det}(U_f(\ZZ[G]))).
\]\end{lemma}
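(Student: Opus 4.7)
The plan is to reduce the congruence to a purely local statement at each rational prime and then handle unramified and ramified places separately. Since $U_f(\ZZ[G]) = \prod_p \ZZ_p[G]^\times$, it is enough to show, for each prime $p$, that
\[
\prod_{v\mid p}\prod_{i=0}^{k-1}\calN_{K/\IQ}(b_{i,v}\mid \chi) \equiv \prod_{v\mid p}\calN_{K/\IQ}(b_{0,v}\mid \psi_k(\chi)) \pmod{\Det(\Zp[G]^\times)}.
\]
The key external input is the congruence of Cassou-Nogu\`es and Taylor from \cite{cn-t} (the very congruence underlying the well-definedness of the endomorphism $\Psi_k$ of $K_0(\ZZ[G],\QQ^c[G])$ used above), which asserts that for any $\mathcal{O}_{K,v}[G]$-generator $b$ of a locally-free rank-one $\mathcal{O}_{K,v}[G]$-lattice coming from a fractional ideal of $L$, one has
$\calN_{K/\IQ}(b\mid \psi_k(\chi)) \equiv \calN_{K/\IQ}(b\mid \chi)^k \pmod{\Det(\Zp[G]^\times)}.$
Applying this congruence to $b_{0,v}$ transforms the desired identity into
\[
\prod_{v\mid p}\prod_{i=0}^{k-1}\calN_{K/\IQ}(b_{i,v}\mid \chi) \equiv \prod_{v\mid p}\calN_{K/\IQ}(b_{0,v}\mid \chi)^k \pmod{\Det(\Zp[G]^\times)}.
\]

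Writing $b_{i,v}= u_{i,v}\cdot b_{0,v}$ for the uniquely determined elements $u_{i,v}\in K_v[G]^\times$, and using the multiplicativity of the norm resolvent, the problem reduces to showing
\[
\prod_{v\mid p}\prod_{i=0}^{k-1}\Det(u_{i,v})(\chi) \equiv 1 \pmod{\Det(\Zp[G]^\times)}.
\]
At any place $v$ unramified in $L/K$, one has $(\mathfrak{D}_{L/K}^{-i/k})_v=\mathcal{O}_{L,v}$ for every $i$, so we may take $b_{i,v}=b_{0,v}$ (hence $u_{i,v}=1$), and the contribution is trivial.

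At a ramified place $v$ (necessarily tame, with ramification index $e$ satisfying $k\mid e-1$), the plan is to use the explicit description of tame fractional-ideal generators. Specifically, by Noether's theorem and Ullom's theorem one can pick the generators $b_{i,v}$ so that the corresponding $u_{i,v}$ has the form $\pi_w^{-i(e-1)/k}\cdot v_{i,v}$ with $v_{i,v}\in \mathcal{O}_{K,v}[G]^\times$ and $\pi_w$ a suitable uniformiser of $L_w$ attached to a compatible system; then $\prod_{i=0}^{k-1}u_{i,v}$ equals $\pi_w^{-(k-1)(e-1)/2}\cdot v_v$ for some unit $v_v$. One then verifies that the exponent is chosen so that $\pi_w^{-(k-1)(e-1)/2}$, as an element of $L_v\simeq K_v[G]$ via $b_{0,v}$, has determinant in $\Det(\mathcal{O}_{K,v}[G]^\times)$, using that $(e-1)/k$ is an integer together with the Galois-equivariance of the tame resolvent calculation of Fr\"ohlich.

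The main obstacle is this last step: a concrete, Galois-equivariant bookkeeping of the $\mathcal{O}_{K,v}[G]$-generators of the modules $(\mathfrak{D}_{L/K}^{-i/k})_v$ and the verification that the telescoped product $\prod_i u_{i,v}$ has trivial determinant class. This is where one must use both the tame structure at $v$ (to obtain explicit generators via Fr\"ohlich's recipe) and the divisibility hypothesis $k\mid e-1$ (to ensure integrality of the exponent $(k-1)(e-1)/2$ and compatibility with the Adams twist). Once this local obstruction is overcome, aggregating over all $v\mid p$ and then over all $p$ yields the claimed global congruence.
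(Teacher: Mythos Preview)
Your two-step decomposition does not work because the first congruence is false. The result of Cassou-Nogu\`es and Taylor in \cite{cn-t} is that $\psi_k^\vee$ preserves $\Det(\ZZ_p[G]^\times)$; this is what makes $\Psi_k$ well-defined, but it does \emph{not} assert that $\psi_k^\vee(f)/f^k\in\Det(\ZZ_p[G]^\times)$ for a norm-resolvent $f$, and in fact that fails at ramified places. Take $K=\QQ$, $G$ cyclic of prime order $q$ with $q\equiv 1\pmod k$ and $q\mid p-1$, and $L_w/\QQ_p$ the totally ramified Kummer extension of degree $q$ with uniformizer $\pi_w=p^{1/q}$. With $b_{0,v}=\sum_{j=0}^{q-1}\pi_w^{j}$ one computes $(b_{0,v}\mid\chi)=q\,\pi_w^{a}$ for the character $\chi$ with $\chi(\sigma_\zeta)=\zeta^a$, and since here $\calN_{K/\QQ}(b\mid\chi)=(b\mid\chi)$,
\[
\frac{\calN_{K/\QQ}(b_{0,v}\mid\psi_k\chi)}{\calN_{K/\QQ}(b_{0,v}\mid\chi)^k}=q^{1-k}\,p^{-\lfloor ka/q\rfloor},
\]
which has strictly negative $p$-valuation whenever $ka\ge q$. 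Your second step then fails for the complementary reason: $\prod_i\Det(u_{i,v})(\chi)$ also carries a non-unit power of $p$, and only the \emph{combined} ratio $\psi_k^\vee(\theta_{0,1})/\prod_i\theta_{i,1}$ lands in $\Det(\ZZ_p[G]^\times)$. The notational confusion you yourself flag in step~2 --- writing an element of $K_v[G]^\times$ as $\pi_w^{-i(e-1)/k}\cdot v_{i,v}$ with $\pi_w\in L_w$ --- is a symptom of this same valuation obstruction.

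The paper's proof avoids the pitfall by not splitting. It invokes directly the local resolvent computation of \cite[Cor.~4.5]{BurnsChinburg}: with the choice $\ell'=(1-e)/k$ and $N_i'=-i(e-1)/k$ recording the valuation of $(\mathfrak{D}_{L/K}^{-i/k})_w$, that result shows (after adjusting by a base-field element $c$) that the full ratio $(a_0\mid\psi_k\chi)\prod_i(a_{N_i'}\mid\chi)^{-1}$ has $\mathfrak{p}$-adic valuation zero at each ramified place, and the argument of \cite[\S5]{BurnsChinburg} then assembles this into the global congruence. The cancellation of non-unit valuations is thus built into a single resolvent identity rather than distributed across two separately false steps.
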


\begin{proof} This is proved by a slight adaptation of the arguments in \cite{BurnsChinburg} (and is implicitly used in the proof of Corollary 2.2 in loc. cit.). To be precise, we shall use the notation of \cite[\S4.3.1]{BurnsChinburg} with our integer $k$ corresponding the integer $\ell$ used in loc. cit.

Then the present hypotheses (on $k$) allow us to choose the integer $\ell'$ to be $(1-e)/\ell$. In particular, if we set $N := 0$, then $N_\ell = 0$ and, for each $i$ with $0 \le i < \ell$, also $N_i = -i\ell'(e-1) = -i\ell'e + N_i'$ with $N_i' := -i(e-1)/\ell$. Each element $a_{N_i}$ can therefore be written as $c_i\cdot a_{N'_i}$ with $c_i$ an element of $B$ with $v_{\mathfrak{p}}(c_i) = -i\ell'$.

With this choice of $\ell'$ an explicit computation shows that the integer $M_{\mathfrak{p},\ell,\ell'}$ defined in (2.4) of loc. cit. is equal to $\sum_{i=0}^{\ell-1}i\ell'$ and so one can take the element $c$ chosen in [loc. cit., Cor. 4.5] to be the product $\prod_{i=0}^{\ell-1}c_i$.
For this element there is for every $\chi$ in $\Hom (\Lambda, B^{c\times})$ an equality
\[ (ca_0\mid \psi_\ell\chi)\prod_{i=0}^{\ell-1}(a_{N_i}\mid \chi)^{-1} = (a_0\mid \psi_\ell\chi) \prod_{i=0}^{\ell-1}(a_{N_i'}\mid \chi)^{-1}.\]
and so [loc. cit., Cor. 4.5] asserts that the $\mathfrak{p}$-adic valuation of this element is zero.

It is now straightforward to derive the claimed congruence by combining this fact with the argument of [loc. cit., \S5].
\end{proof}

\section{Weakly ramified Galois-Gauss sums and the relative element $\fra_{L/K}$}

In the remainder of the article we study links between Galois-Gauss sums and hermitian and metric structures that arise in weakly ramified Galois extensions of odd degree. In this first section we define a canonical element in relative algebraic $K$-theory that is key to the theory we develop and then state some of the main results about this element that we establish in later sections.

At the outset we fix a finite odd degree Galois extension  of number fields $L/K$ that is `weakly ramified' in the sense of Erez \cite{Erez} (that is, the second lower ramification subgroups in $G$ of each place of $L$ are trivial) and set $G := G(L/K)$.

Since $L/K$ is of odd degree there exists a unique fractional $\mathcal{O}_L$-ideal $\mathcal{A}_{L/K}$ whose square is the
inverse of the different $\mathfrak{D}_{L/K}$ (see the discussion at the beginning of \S\ref{tame ggs section}).

In addition, since $L/K$ is weakly ramified, Erez has shown that $\mathcal{A}_{L/K}$ is a locally-free module with respect to
the restriction of the natural action of $\mathcal{O}_K[G]$ on $L$ (see \cite{Erez}).

We may therefore use the general construction of \S\ref{example0} to define a canonical element of $K_0(\ZZ[G],\QQ^c[G])$ by setting
\begin{equation}\label{a-def}
\mathfrak{a}_{L/K} := [\mathcal{A}_{L/K},\kappa_L, H_L]-\delta_G(\tau^G_K\cdot (\psi_{2,\ast}-1)(\tau'_{L/K}))
\end{equation}
where the Galois-Gauss sums $\tau^G_K$ and $\tau'_{L/K}$ are as defined in \S\ref{general cons}.

Proposition \ref{almost there} implies the projection of $[\mathcal{A}_{L/K},\kappa_L, H_L]$ to each of the groups $A(G), \HCl(G)$ and $\Cl(G)$ recovers arithmetical invariants related to $\mathcal{A}_{L/K}$ that have been studied in previous articles. By using this fact explicit information about the element $\fra_{L/K}$ can often constitute a strong refinement of pre-existing results or conjectures concerning the metric and hermitian structures that are associated to $\mathcal{A}_{L/K}$ and this observation motivates the systematic study of $\fra_{L/K}$ that we undertake in later sections.

In the next result (which will be proved in \S\ref{glob consequences}) we collect some of the main results that we prove concerning $\mathfrak{a}_{L/K}$.

In the sequel we write $\mathcal{W}_{L/K}$ for the set of finite places $v$ of $K$ that ramify wildly in an extension $L/K$ and
$\mathcal{W}_{L/K}^\Qu$ for the set of rational primes that lie below any place in $\mathcal{W}_{L/K}$.

We also let $A_{\rm tor}$ denote the  torsion subgroup of an abelian group $A$.

\begin{theorem}\label{main result}\mpar{main result}
Let $L/K$ be a finite odd degree weakly ramified Galois extension of number fields of group $G$. Then the following assertions are valid.

\begin{itemize}
\item[(i)] The element $\mathfrak{a}_{L/K}$ belongs to the subgroup
\[ \bigoplus_{\ell\in \mathcal{W}_{L/K}^\Qu}K_0(\mathbb{Z}_\ell[G],\mathbb{Q}_\ell[G])_{\rm tor}\]
of $K_0(\mathbb{Z}[G],\mathbb{Q}[G])$.
In particular, if $L/K$ is tamely ramified, then $\mathfrak{a}_{L/K} = 0$.

\item[(ii)] In $A(G)$ one has
\[ [\mathcal A_{L/K},h_{L,\bullet}] =  \Pi_G^{\rm met}(\Fa_{L/K})+ \varepsilon^{\rm met}_{L/K} \]
where the metric $h_{L,\bullet}$ is as defined in Example \ref{example1} and $\varepsilon^{\rm met}_{L/K}$
is represented by the pair $(1,\theta)$ with $\theta(\phi)=
( {|G|^{[K:\IQ]}|{\rm d}_K|})^{\phi(1)/2}\cdot | {\tau}(K,\psi_2(\phi)-\phi)|$ for all $\phi$ in $R_G.$

\item[(iii)] In ${\rm HCl}(G)$ one has
\[ {\rm Disc}(\mathcal A_{L/K},t_{L/K}) = \Pi_G^{\rm herm}(\Fa_{L/K}) + \varepsilon^{\rm herm}_{L/K}\]
where the hermitian form $t_{L/K}$ is as defined in Example \ref{example2} and $\varepsilon^{\rm herm}_{L/K}$ is represented by the pair $(1,\tilde\theta)$ with $\tilde\theta(\phi)=  {{\rm d}_K^{\phi(1)/2}\cdot \tau(K,\psi_2(\phi) - \phi)}$ for all $\phi$ in $R^s_G.$
\item[(iv)] In ${\rm Cl}(G)$ one has $\partial_{\Ze, \Qc, G}^0(\mathfrak{a}_{L/K}) = [\mathcal{A}_{L/K}]$.
\end{itemize}
\end{theorem}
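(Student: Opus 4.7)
The four claims split naturally into (iv), then (ii) and (iii), then (i); the first three reduce directly to the definition (\ref{a-def}) combined with results already in hand, while (i) is the substantive content. For claim (iv), I would apply $\partial^0_{\ZZ,\QQ^c,G}$ termwise to (\ref{a-def}). By exactness of the top row of (\ref{E:kcomm}) the composite $\partial^0_{\ZZ,\QQ^c,G}\circ \delta_G$ vanishes, so only $\partial^0_{\ZZ,\QQ^c,G}([\mathcal{A}_{L/K},\kappa_L,H_L]) = [\mathcal{A}_{L/K}]-[H_L]$ contributes, and $[H_L]=0$ in $\Cl(G)$ since $H_L$ is $\ZZ[G]$-free (as noted in \S\ref{example0}).

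For claims (ii) and (iii), I would substitute the definition (\ref{a-def}) into the two identities of Proposition \ref{almost there} to get
\begin{align*}
[\mathcal{A}_{L/K},h_{L,\bullet}] &= [H_L,\mu_{L,\bullet}] + \Pi_G^{\rm met}(\fra_{L/K}) + \Pi_G^{\rm met}\bigl(\delta_G(\tau_K^G(\psi_{2,*}-1)(\tau'_{L/K}))\bigr),\\
\mathrm{Disc}(\mathcal{A}_{L/K},t_{L/K}) &= \Pi_G^{\rm herm}(\fra_{L/K}) + \Pi_G^{\rm herm}\bigl(\delta_G(\tau_K^G(\psi_{2,*}-1)(\tau'_{L/K}))\bigr).
\end{align*}
The non-$\fra_{L/K}$ terms give $\varepsilon^{\rm met}_{L/K}$ and $\varepsilon^{\rm herm}_{L/K}$ respectively, via a calculation combining: Lemma \ref{betti example} (for $[H_L,\mu_{L,\bullet}]$), the representation of $h_G^{\rm rel}(\delta_G(\alpha))$ by $(1,\phi\mapsto\alpha_\phi)$, the explicit definitions of $\partial_G^{1,1}$ and $\partial_G^{2,1}$, the identity $|\tau(\QQ,\mathrm{ind}_K^\QQ\mathbf{1}_K)| = \sqrt{|d_K|}$ coming from (\ref{disc pfaff}), and the fact that $|y(K_v,\chi_v)|=1$. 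The computation is parallel to that in the proof of Corollary \ref{coro 1}.

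Claim (i) is the main obstacle; I envisage a three-step proof. \textbf{Step 1 ($\fra_{L/K}\in K_0(\ZZ[G],\QQ[G])$).} By Lemma \ref{norm resolvent lemma}, the second coordinate of $h_G^{\rm rel}(\fra_{L/K})$ equals $\theta_2\theta_3\cdot\bigl(\tau_K^G(\psi_{2,*}-1)(\tau'_{L/K})\bigr)^{-1}$. The Bley-Burns identity $[\mathcal{O}_L,\kappa_L,H_L] = \delta_G(\tau_K^G\tau'_{L/K})$ from \cite[Cor.~7.7]{BleyBurns}, together with the analogous application of Lemma \ref{norm resolvent lemma} to $\mathcal{O}_L$, implies that $\theta_2\theta_3/(\tau_K^G\tau'_{L/K})$ lies in $\Det(\QQ[G]^\times)$; the above second coordinate is therefore a $\Det(\QQ[G]^\times)$-multiple of $\tau'_{L/K}/(\psi_{2,*}-1)(\tau'_{L/K})$. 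Relation (\ref{label product}) rewrites this last ratio as $\bigl(y_{L/K}\cdot J_{2,L/K}\cdot(\psi_{2,*}-1)(y_{L/K}^{-1})\bigr)^{-1}$, which lies in $\zeta(\QQ[G])^\times$ because $y_{L/K}\in\zeta(\QQ[G])^\times$ by construction and $J_{2,L/K}\in\zeta(\QQ[G])^\times$ by Lemma \ref{jacobi lemma}; since $|G|$ is odd, $\zeta(\QQ[G])^\times = \Det(\QQ[G]^\times)$. \textbf{Step 2 (vanishing at tame primes).} Using (\ref{decomp}) I would examine the $\ell$-component of $\fra_{L/K}$. For $\ell\notin\mathcal{W}_{L/K}^\QQ$ the extension $L/K$ is tame above $\ell$, and a purely local version of the proof of Theorem \ref{thm tame} with $k=2$ (valid since $|G|$ is odd, using Remark \ref{local analogue} to supply the local Gauss-sum ingredients) combined with the local Bley-Burns identity yields $[\mathcal{A}_{L/K},\kappa_L,H_L]_\ell = \delta_{G,\ell}(\tau_K^G(\psi_{2,*}-1)(\tau'_{L/K}))_\ell$, whence $(\fra_{L/K})_\ell=0$. \textbf{Step 3 (torsion at wild primes).} For $\ell\in\mathcal{W}_{L/K}^\QQ$ I would use the local form (\ref{local label product}) to express $(\fra_{L/K})_\ell$ as $\delta_{G,\ell}$ applied to a product of local Galois-Jacobi sums $J_{2,L_w/K_v}$ and local unramified characteristics $y_{L_w/K_v}$, and then deduce finite order from the fact that the relevant reduced norms take values in torsion subgroups of $\zeta(\QQ_\ell[G])^\times$ modulo $\Det(\ZZ_\ell[G]^\times)$. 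This last torsion analysis is the genuinely delicate point, and I expect it to require a careful study of the images of local Galois-Gauss sums in local ideles modulo local units.
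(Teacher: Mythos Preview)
Your treatments of (iv), (ii) and (iii) are essentially the paper's: (iv) is immediate from the definition, freeness of $H_L$, and $\partial^0\circ\delta_G=0$; (ii) and (iii) come from Proposition~\ref{almost there} plus Lemma~\ref{betti example} and the computation in Corollary~\ref{coro 1}. The paper simply says ``same argument as Corollary~\ref{coro 1}'' and that is exactly what you have written out.

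For (i), however, your Steps~2 and~3 miss the paper's mechanism entirely, and I do not think your route can be completed as sketched.

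\textbf{Step~1.} Rationality is already Proposition~\ref{rationality thm}; you should cite it rather than reprove it. Your reproof invokes $[\mathcal{O}_L,\kappa_L,H_L]=\delta_G(\tau_K^G\tau'_{L/K})$ from \cite[Cor.~7.7]{BleyBurns}, but that identity is proved there for tame $L/K$, and for weakly wildly ramified $L/K$ the lattice $\mathcal{O}_L$ need not even be $\ZZ[G]$-projective. The paper's proof of Proposition~\ref{rationality thm} instead compares $[\mathcal{A}_{L/K},\kappa_L,H_L]$ directly to $\delta_G(\tau^\dagger_{L/K})$ using the Galois action on Gauss sums (via \cite[p.~555--556]{BleyBurns}).

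\textbf{Steps~2 and~3.} Your plan to run ``a purely local version of the proof of Theorem~\ref{thm tame}'' at each tame $\ell$ does not localise: the element $\tau'_{L/K}$ is global and still carries the wild-prime Gauss sums, so projecting to $K_0(\ZZ_\ell[G],\QQ_\ell[G])$ does not kill them. The paper achieves localisation by introducing genuinely local invariants $\fra_{L_w/K_v}$ (which require Breuning's unramified term $U_{L_w/K_v}$) and proving the decomposition $\fra_{L/K}=\sum_\ell\sum_{v\mid\ell}{\rm i}^G_{G_w}(\fra_{L_w/K_v})$ (Theorem~\ref{decomp thm}); the tame local terms then vanish by Theorem~\ref{special cases prop}. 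For the torsion at wild primes, your ``direct torsion analysis'' of local Gauss sums in $\zeta(\QQ_\ell[G])^\times$ modulo $\Det(\ZZ_\ell[G]^\times)$ is not how the paper proceeds and I see no reason it should work: local Galois--Gauss sums have nontrivial valuations. The paper instead uses the characterisation of $K_0(\ZZ_\ell[G],\QQ_\ell[G])_{\rm tor}$ from \cite[Th.~4.1]{Burns} (an element is torsion iff it dies under $\pi^H_{H/J}\circ\rho^G_H$ for every cyclic $H\le G$ and every $J\le H$ with $\ell\nmid|H/J|$), together with the functoriality Theorem~\ref{funct thm}, to reduce to showing $\fra_{F/E,\ell}=0$ for subextensions $F/E$ with $\ell\nmid[F:E]$. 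Such $F/E$ are tame at $\ell$, so this follows from the already-established tame vanishing. The functoriality plus the torsion criterion is the key idea you are missing.
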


\begin{remark}
In addition to the result of Theorem \ref{main result}(i) it is also possible to explicitly compute $\fra_{L/K}$ for certain (weakly)
wildly ramified extensions $L/K$ (see, for example, Theorem \ref{vinatier thm} below). These results show, in particular,
that $\fra_{L/K}$ does not in general vanish.
\end{remark}

In Conjecture \ref{local conj} below we shall offer a precise  conjectural description of $\fra_{L/K}$ in terms of local (second) Galois-Jacobi sums and invariants related to fundamental classes arising in local class field theory. This description is related to certain `epsilon constant conjectures' that are already in the literature and hence to the general philosophy of Tamagawa number conjectures that originated with Bloch and Kato.

This connection gives a new perspective to the theory of the square root of the inverse different but does not itself help to compute $\mathfrak{a}_{L/K}$ explicitly in any degree of generality.

Nevertheless, our methods combine with extensive numerical experiments to suggest that, rather surprisingly, it might also be possible in general to describe $\mathfrak{a}_{L/K}$ very explicitly (see \S\ref{second conj sect}). This possibility is definitely worthy of further investigation, not least because it could be used to obtain significant new evidence in the context of certain wildly ramified Galois extensions in support of the formalism of Tamagawa number conjectures.

In a different direction, Theorem \ref{main result} leads to effective `finiteness results' on the natural arithmetic invariants related to $\fra_{L/K}$ that arise as the extension $L/K$ varies.

To give a simple example of such a result, for each number field $K$ and finite abstract group $\Gamma$ of odd order we write
${\rm WR}_K(\Gamma)$ for the set of fields $L$ that are weakly ramified odd degree Galois extensions of $K$
and for which there exists an isomorphism of groups $\iota \colon G(L/K)\cong \Gamma.$

For each field $L \in {\rm WR}_K(\Gamma)$ we then write ${\rm Is}_L(\Gamma)$ for the set of group isomorphisms
\mbox{$\iota : G(L/K) \cong \Gamma$}, and for each $\iota \in \mathrm{Is}_L(\Gamma)$ we consider
the induced isomorphism of relative algebraic $K$-groups
\[
\iota_{\ast}: K_0(\IZ[G(L/K)],\IQ^c[G(L/K)])\cong K_0(\IZ[\Gamma],\IQ^c[\Gamma]).\]
%

We then define a subset of `realisable classes' in $K_0(\IZ[\Gamma],\IQ^c[\Gamma])$ by setting
\[
R^{\rm wr}_K(\Gamma):=\{\iota_{\ast}(\Fa_{L/K}) \colon  L\in {\rm WR}_K(\Gamma),\iota \in {\rm Is}_L(\Gamma)\}.
\]

Recalling that the group $K_0(\ZG, \QG)_{\mathrm{tor}}$ is finite (see, for example,  \cite[Cor.~2.5]{BleyWilson}) the result of Theorem \ref{main result}(i) leads directly to the following result.

\begin{coro}\label{finiteness coro} The set $R^{\rm wr}_K(\Gamma)$ is finite. \end{coro}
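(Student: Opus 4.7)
The strategy is to show that $R^{\rm wr}_K(\Gamma)$ is contained in a single finite subgroup of $K_0(\IZ[\Gamma],\IQ^c[\Gamma])$ that does not depend on the choice of $L$ or $\iota$. The natural candidate is the torsion part $K_0(\IZ[\Gamma],\IQ[\Gamma])_{\rm tor}$ of the rational relative $K$-group, which is already known to be finite by the cited result of Bley and Wilson. Thus the proof is essentially a formal consequence of Theorem \ref{main result}(i).

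More precisely, first I would fix $L\in {\rm WR}_K(\Gamma)$ with $G := G(L/K)$ and $\iota\in {\rm Is}_L(\Gamma)$, and note that the induced isomorphism
\[
\iota_\ast \colon K_0(\IZ[G],\IQ^c[G]) \xrightarrow{\sim} K_0(\IZ[\Gamma],\IQ^c[\Gamma])
\]
is functorial and hence carries the subgroup $K_0(\IZ[G],\IQ[G])$ isomorphically onto $K_0(\IZ[\Gamma],\IQ[\Gamma])$; it is also compatible with the local decomposition (\ref{decomp}), so in particular it restricts to an isomorphism on torsion subgroups. Next, Theorem \ref{main result}(i) gives
\[
\fra_{L/K} \in \bigoplus_{\ell\in \calW_{L/K}^{\IQ}} K_0(\IZ_\ell[G],\IQ_\ell[G])_{\rm tor} \subseteq K_0(\IZ[G],\IQ[G])_{\rm tor},
\]
the inclusion being a direct consequence of (\ref{decomp}). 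Applying $\iota_\ast$ therefore yields $\iota_\ast(\fra_{L/K}) \in K_0(\IZ[\Gamma],\IQ[\Gamma])_{\rm tor}$.

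Since this conclusion holds uniformly in $L$ and $\iota$, one obtains the inclusion
\[
R^{\rm wr}_K(\Gamma) \subseteq K_0(\IZ[\Gamma],\IQ[\Gamma])_{\rm tor},
\]
and the claimed finiteness is immediate from \cite[Cor.~2.5]{BleyWilson}. There is no substantive obstacle in this argument: the two essential ingredients (rationality and finite order of $\fra_{L/K}$) are precisely the content of Theorem \ref{main result}(i), which is the genuinely hard input and has been established previously. All that remains is the routine verification that $\iota_\ast$ preserves the relevant subgroups, which follows directly from the fact that $\iota$ is a group isomorphism and induces compatible ring isomorphisms at each prime.
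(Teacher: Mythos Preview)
Your argument is correct and follows exactly the approach indicated in the paper: Theorem \ref{main result}(i) places each $\fra_{L/K}$ in the torsion subgroup of $K_0(\IZ[G],\IQ[G])$, the isomorphism $\iota_\ast$ carries this into $K_0(\IZ[\Gamma],\IQ[\Gamma])_{\rm tor}$, and the latter is finite by \cite[Cor.~2.5]{BleyWilson}. You have simply made explicit the routine compatibility of $\iota_\ast$ with the rational and torsion subgroups that the paper leaves implicit.
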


%
%


In \S\ref{vinatier section} we explain how the set $R^{\rm wr}_K(\Gamma)$
can be computed effectively and then apply the general theory in the setting of an explicit conjecture of Vinatier (from \cite[\S1, Conj.]{Vinatier2}) concerning the Galois structure of $\mathcal{A}_{L/K}$.

%


To end this section we prove an important preliminary result.

\begin{prop}\label{rationality thm} \mpar{rationality thm}
Let $L/K$ be a finite odd degree weakly ramified Galois extension of number fields of group $G$.  Then $\mathfrak a_{L/K}$ belongs to the subgroup $ K_0(\IZ[G],\IQ[G])$ of $K_0(\IZ[G],\IQ^c[G])$.
\end{prop}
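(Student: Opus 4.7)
The plan is to use the Hom-description of $K_0(\IZ[G],\IQ^c[G])$ to reduce the problem to a concrete $\Omega_\IQ$-equivariance statement and then verify this statement by combining the explicit formulas of Lemma \ref{norm resolvent lemma} with the standard properties of Galois--Gauss and Galois--Jacobi sums.

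First, observe that the inclusion $\iota'\colon K_0(\IZ[G],\IQ[G])\hookrightarrow K_0(\IZ[G],\IQ^c[G])$, when translated via $h_G^{\rm rel}$ into the Hom-description, consists of those classes admitting a representative $([\theta_1],\theta_2)$ with $\theta_2\in\Det(\IQ[G]^\times)$. Since $|G|$ is odd, the Hasse--Schilling--Maass theorem gives
\[
\Det(\IQ[G]^\times)=\Hom(R_G,\IQ^{c\times})^{\Omega_\IQ},
\]
so it suffices to show that the second component of $h_G^{\rm rel}(\fra_{L/K})$ is an $\Omega_\IQ$-equivariant function on $R_G$. Modifying a representative by an element of the image of $\Delta^{\rm rel}_G$ only multiplies $\theta_2$ by elements of $\Det(\IQ[G]^\times)$, so the class of $\theta_2$ modulo $\Det(\IQ[G]^\times)$ is well-defined, and we must show that this class is trivial.

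Next, I would use Lemma \ref{norm resolvent lemma} applied to $\mathcal{L}=\mathcal{A}_{L/K}$ to represent $h_G^{\rm rel}([\mathcal{A}_{L/K},\kappa_L,H_L])$ by a pair whose second component sends $\chi\in\widehat G$ to
\[
\theta_2(\chi)\theta_3(\chi)=\calN_{K/\IQ}(b\mid \chi)\cdot\delta_K^{\chi(1)}.
\]
On the other hand, the explicit recipe for $h_G^{\rm rel}\circ\delta_G$ (as recalled in the proof of Corollary \ref{coro 1}) shows that the second component of $h_G^{\rm rel}(\delta_G(\tau_K^G\cdot(\psi_{2,\ast}-1)(\tau'_{L/K})))$ is the function
\[
\chi\mapsto \tau(\IQ,\ind_K^\IQ\mathbf{1}_K)^{\chi(1)}\cdot\frac{\tau'(K,\psi_2(\chi))}{\tau'(K,\chi)}.
\]
Taking the difference, the second component of $h_G^{\rm rel}(\fra_{L/K})$ at $\chi$ is thus
\[
\beta(\chi)\,:=\,\calN_{K/\IQ}(b\mid \chi)\cdot \epsilon^{\chi(1)}\cdot \frac{\tau'(K,\chi)}{\tau'(K,\psi_2(\chi))},
\]
where $\epsilon:=\delta_K/\tau(\IQ,\ind_K^\IQ\mathbf{1}_K)\in\{\pm 1\}$ thanks to (\ref{disc pfaff}).

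Finally, the plan is to check $\Omega_\IQ$-equivariance of $\beta$ factor by factor. The norm-resolvent $\calN_{K/\IQ}(b\mid\chi)$ is $\Omega_\IQ$-equivariant by its very definition as an induction over a transversal of $\Omega_\IQ/\Omega_K$. Since $\epsilon\in\IQ$ is fixed by $\Omega_\IQ$ and $\chi(1)=\chi^\omega(1)$, the factor $\epsilon^{\chi(1)}$ is trivially equivariant (here the assumption that $|G|$ is odd is not strictly needed but $\chi(1)$ odd shows this factor is even constant). For the third factor, I would use (\ref{label product}) in the form
\[
(\psi_{2,\ast}-1)(\tau'_{L/K})=J_{2,L/K}\cdot(\psi_{2,\ast}-1)(y_{L/K})^{-1},
\]
together with Lemma \ref{jacobi lemma} giving $J_{2,L/K}\in\zeta(\IQ[G])^\times$. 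It remains to verify that $y_{L/K}\in\zeta(\IQ[G])^\times$ (a direct computation from (\ref{explicit unram char}) using that $\omega\in\Omega_\IQ$ preserves both the triviality of a local character on inertia and its value on a Frobenius lift), and that $\zeta(\IQ[G])^\times$ is stable under $(\psi_{2,\ast}-1)$, which follows since $\psi_k$ commutes with the $\Omega_\IQ$-action on $R_G$. Combining these facts shows $(\psi_{2,\ast}-1)(\tau'_{L/K})\in\zeta(\IQ[G])^\times$, which is exactly the equivariance of the third factor of $\beta$. The main (and really only) obstacle is the last point: verifying that the seemingly transcendental combination of wildly ramified Gauss sums appearing in the definition of $\fra_{L/K}$ becomes $\IQ[G]$-valued after applying the Adams-twist $(\psi_{2,\ast}-1)$, and this is precisely the content of Lemma \ref{jacobi lemma}.
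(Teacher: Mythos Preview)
Your overall strategy is sound, and most of the argument is correct, but there is a genuine gap in the treatment of the ``third factor'' $\tau'(K,\chi)/\tau'(K,\psi_2(\chi))$. You claim that (\ref{label product}) gives
\[
(\psi_{2,\ast}-1)(\tau'_{L/K})=J_{2,L/K}\cdot(\psi_{2,\ast}-1)(y_{L/K})^{-1},
\]
but this misreads the identity: the left-hand side of (\ref{label product}) also carries the factor $\tau_K^G\cdot(\tau_{L/K}^\dagger)^{-1}=\tau_{L/K}^{-1}$ (by (\ref{dagger tau})), so in fact
\[
(\psi_{2,\ast}-1)(\tau'_{L/K})=\tau_{L/K}\cdot J_{2,L/K}\cdot(\psi_{2,\ast}-1)(y_{L/K})^{-1}.
\]
The extra factor $\tau_{L/K}$ is \emph{not} $\Omega_\IQ$-equivariant (individual Galois--Gauss sums are not), so your third factor is not equivariant on its own and the factor-by-factor argument breaks down exactly here.

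What rescues the situation, and what the paper invokes, is the non-trivial input from \cite[p.~555--556]{BleyBurns} (resting on Fr\"ohlich \cite[\S 9]{Frohlich}) that
\[
[\calA_{L/K},\kappa_L,H_L]-\delta_G(\tau_{L/K}^\dagger)\in K_0(\IZ[G],\IQ[G]).
\]
In your language this is precisely the statement that $\chi\mapsto \calN_{K/\IQ}(b\mid\chi)\cdot\delta_K^{\chi(1)}\cdot\tau(\IQ,\ind_K^\IQ\chi)^{-1}$, equivalently $\calN_{K/\IQ}(b\mid\chi)/\tau(K,\chi)$ up to the sign $\epsilon^{\chi(1)}$, is $\Omega_\IQ$-equivariant. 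Once you feed this in, the stray $\tau_{L/K}$ cancels against the norm-resolvent part, and the remaining pieces ($J_{2,L/K}$ via Lemma~\ref{jacobi lemma}, and $(\psi_{2,\ast}-1)(y_{L/K})$ by direct inspection) are indeed rational, completing the proof. So the missing ingredient is exactly this Fr\"ohlich-type comparison between norm-resolvents and Gauss sums; it cannot be bypassed using only (\ref{label product}) and Lemma~\ref{jacobi lemma}.
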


\begin{proof} For $x$ and $y$ in $K_0(\ZG, \Qc[G])$ we write $x \equiv y$ if $x-y$ belongs to $K_0(\ZG, \QG)$.

Then $\fra_{L/K}$ is equal to
\begin{align*}
 [\calA_{L/K}, \kappa_L, H_L]   - \delta_G(\tau_K^G\cdot(\psi_{2,\ast}-1)(\tau'_{L/K})) \equiv \,& \left(  [\calA_{L/K}, \kappa_L, H_L]  - \delta_G(\tau_{L/K}^\dagger) \right) - \delta_G( J_{2,L/K})\\
           \equiv \, &   [\calA_{L/K}, \kappa_L, H_L]  - \delta_G(\tau_{L/K}^\dagger)
\end{align*}
where the first equivalence follows from (\ref{label product}) and the obvious containment $(\psi_{2,\ast}-1)(y_{L/K}) \in \zeta(\QQ[G])$ and the second from Lemma \ref{jacobi lemma} (with $k = 2$).

It thus suffices to note that the computations in \cite[p.~555-556]{BleyBurns} (which rely heavily on a result of Fr\"ohlich in \cite[\S 9 (i),(ii)]{Frohlich}) show that $ [\calA_{L/K}, \kappa_L, H_L]  \equiv \delta_G(\tau_{L/K}^{\dagger})$.\end{proof}

\section{Functoriality properties of $\fra_{L/K}$}\label{functorial section}

Following Proposition \ref{rationality thm} we know that each element $\mathfrak a_{L/K}$ belongs to $ K_0(\IZ[G],\IQ[G])$. In this section we prove  the following result which establishes the basic functorial properties of these elements as the extension $L/K$ varies.

\begin{theorem}\label{funct thm}\mpar{func thm}
Let $L/K$ be a weakly ramified odd degree Galois extension of number fields of group $G$,
fix an intermediate field $F$ of $L/K$ and set $J := G(L/F)$.
\begin{itemize}\item[(i)]
The restriction map $\rho_J^G:K_0(\IZ[G],\IQ[G])\to K_0(\IZ[J],\IQ[J])$
sends $\Fa_{L/K}$ to $\Fa_{L/F}$ .
\item[(ii)]
Assume $J$ is normal in $G$ and write $\Gamma$ for the quotient $G/J\cong G({F/K})$.
Then the natural coinflation map $\pi_\Gamma^G:K_0(\IZ[G],\IQ[G])\to K_0(\IZ[\Gamma],\IQ[\Gamma])$ sends $\Fa_{L/K}$ to $\Fa_{F/K}$.
\end{itemize}
\end{theorem}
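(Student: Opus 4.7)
The plan is to apply the relevant map ($\rho_J^G$ in (i), $\pi_\Gamma^G$ in (ii)) termwise to the definition
\[
\mathfrak{a}_{L/K} = [\mathcal{A}_{L/K},\kappa_L, H_L]-\delta_G(\tau^G_K\cdot (\psi_{2,\ast}-1)(\tau'_{L/K}))
\]
and to verify that the image coincides with $\mathfrak{a}_{L/F}$, respectively $\mathfrak{a}_{F/K}$. It is most convenient to work in the Hom-description provided by $h_G^{\rm rel}$ (Lemma \ref{rep for relative group}): by Frobenius reciprocity and transitivity of induction, $\rho_J^G$ corresponds there to precomposition with character induction $\mathrm{ind}_J^G \colon R_J \to R_G$, while $\pi_\Gamma^G$ corresponds to precomposition with character inflation $\mathrm{infl}_\Gamma^G \colon R_\Gamma \to R_G$.

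The most economical approach uses identity (\ref{label product}) to rewrite
\[
\mathfrak{a}_{L/K} = \bigl([\mathcal{A}_{L/K}, \kappa_L, H_L] - \delta_G(\tau^\dagger_{L/K})\bigr) - \delta_G(J_{2,L/K}) + \delta_G\bigl((\psi_{2,\ast}-1)(y_{L/K})\bigr),
\]
with all three summands in $K_0(\ZZ[G], \QQ[G])$: the bracketed term by \cite[p.~555--556]{BleyBurns} (as invoked in the proof of Proposition \ref{rationality thm}), and the other two because $J_{2,L/K}, y_{L/K} \in \zeta(\QQ[G])^\times$. The two Jacobi-type summands are manifestly functorial: $J_{2,L/K}$ and $y_{L/K}$ are defined componentwise in terms of character values of $G$ and of local Frobenius lifts (\ref{explicit unram char}), and these values transform as required under character induction and inflation by standard Mackey-theoretic arguments. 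For the bracketed term, the classical inductivity $\tau(\QQ, \mathrm{ind}_K^\QQ \mathrm{ind}_J^G \psi) = \tau(\QQ, \mathrm{ind}_F^\QQ \psi)$ and the inflation invariance $\tau(\QQ, \mathrm{ind}_K^\QQ \mathrm{infl}_\Gamma^G \psi) = \tau(\QQ, \mathrm{ind}_K^\QQ \psi)$ show that $\delta_G(\tau^\dagger_{L/K})$ is sent by $\rho_J^G$ (resp.\ $\pi_\Gamma^G$) to $\delta_J(\tau^\dagger_{L/F})$ (resp.\ $\delta_\Gamma(\tau^\dagger_{F/K})$). It then remains to verify that $[\mathcal{A}_{L/K}, \kappa_L, H_L]$ transforms compatibly; for this one uses the norm-resolvent description of Lemma \ref{norm resolvent lemma}, combining Fr\"ohlich's inductivity for norm-resolvents with the ideal identity $\mathcal{A}_{L/K} = \mathcal{A}_{L/F} \cdot (\mathcal{A}_{F/K}\mathcal{O}_L)$ (from multiplicativity of the different) to handle $\rho_J^G$, and a direct computation of the effect of inflation on the pair appearing in Lemma \ref{norm resolvent lemma} to handle $\pi_\Gamma^G$.

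The main obstacle is that neither $[\mathcal{A}_{L/K}, \kappa_L, H_L]$ nor $\delta_G(\tau^\dagger_{L/K})$ is individually functorial on the nose: each carries explicit absolute-discriminant correction factors (via $\delta_K$ and $\delta_F$ in Lemma \ref{norm resolvent lemma}, and via $\tau(\QQ, \mathrm{ind}_K^\QQ 1_K)$ inside $\tau^\dagger$). The crucial point is that both sets of corrections arise from the single quantity $\tau(\QQ, \mathrm{ind}_K^\QQ 1_K)$, whose square equals $d_K = \delta_K^2$ by (\ref{disc pfaff}), so they cancel exactly in the combined difference $[\mathcal{A}_{L/K}, \kappa_L, H_L] - \delta_G(\tau^\dagger_{L/K})$. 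Making this cancellation fully explicit, simultaneously in the restriction and the coinflation setups, is the principal technical step of the argument.
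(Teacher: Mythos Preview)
Your decomposition via (\ref{label product}) is a reasonable reorganisation, and the coinflation case (ii) goes through essentially as you say (indeed more easily than you suggest: every term is individually compatible with inflation, since Gauss sums, unramified characteristics and Adams operators are all inflation-invariant, and the paper shows directly that $\pi_\Gamma^G([\mathcal{A}_{L/K},\kappa_L,H_L])=[\mathcal{A}_{F/K},\kappa_F,H_F]$ via $\mathcal{A}_{L/K}^J=\mathcal{A}_{F/K}$; no corrections arise).

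For restriction, however, you have misattributed where the correction lives. The absolute Gauss sum $\tau^\dagger_{L/K}$ \emph{is} exactly functorial under $\rho_J^G$: since $(\tau^\dagger_{L/K})_\chi=\tau(\QQ,\ind_K^\QQ\chi)$ and $\ind_K^\QQ\circ\ind_J^G=\ind_F^\QQ$, one gets $\tilde\rho_J^G(\tau^\dagger_{L/K})=\tau^\dagger_{L/F}$ with no discrepancy. By contrast, $J_{2,L/K}$ is \emph{not} ``manifestly functorial'': the virtual character $\psi_2(\chi)-2\chi$ has degree $-\chi(1)\neq 0$, so Gauss-sum inductivity in degree zero does not apply, and a direct computation gives
\[
\tilde\rho_J^G(J_{2,L/K})=J_{2,L/F}\cdot \Nrd_{\QQ[J]}\bigl(\tau_K^{[G:J]}\tau_F^{-1}\bigr).
\]
This is precisely the same correction that $[\mathcal{A}_{L/K},\kappa_L,H_L]$ acquires, so the two do cancel in your decomposition --- but not for the reason you give, and not between the two terms you name.

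The paper sidesteps this by not passing through $\tau^\dagger$ and $J_2$ at all: it works directly with $\tau_K^G\cdot(\psi_{2,\ast}-1)(\tau'_{L/K})$, noting that $(\psi_{2,\ast}-1)(\tau'_{L/K})$ is genuinely degree-zero inductive while $\tilde\rho_J^G(\tau_K^G)=\Nrd_{\QQ[J]}(\tau_K^{[G:J]})$ carries the correction. On the module side it avoids norm-resolvents entirely, instead computing $\rho_J^G([\mathcal{A}_{L/K},\kappa_L,H_L])-[\mathcal{A}_{L/F},\kappa_L,H_L]=[\mathcal{A}_{L/F}\mathcal{A}_{F/K},\id,\mathcal{A}_{L/F}]$ directly in $K_0$ and evaluating this as $\delta_J(\Nrd_{\QQ[J]}(\tau_F^{-1}\tau_K^{[G:J]}))$ via an elementary lattice computation (Lemma \ref{prepare proof}). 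This is both cleaner and makes the cancellation transparent.
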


\begin{proof} It is convenient to first prove claim (ii) in the statement of Theorem \ref{funct thm}.
To do this we use the commutative diagram

\begin{equation}\label{comm diag}
\xymatrix{
\zeta (\IQ^c[G])^\times \ar[d]_{\tilde{\pi}_\Gamma^G} \ar[r]^-{\delta_G} &K_0(\IZ[G],\IQ^c[G])\ar[d]_{\pi_\Gamma^G}\\
\zeta(\IQ^c[\Gamma])^\times \ar[r]^-{\delta_\Gamma} &K_0(\IZ[\Gamma],\IQ^c[\Gamma]).}
\end{equation}
in which $\tilde{\pi}^G_{\Gamma}(z)_\phi = z_{\inf_{\Gamma}^G(\phi)}$ for all $z$ in $\zeta (\IQ^c[G])^\times$ and $\phi$ in $\widehat \Gamma$ (see, for example, \cite[p.~577]{BleyBurns}).

Then both of the equalities $\tilde{\pi}^G_{\Gamma}(\tau^G_K)=\tau^\Gamma_K$ and $\tilde{\pi}^G_{\Gamma}((\psi_{2,\ast}-1)(\tau'_{L/K})) = (\psi_{2,\ast}-1)(\tau'_{F/K})$ follow easily from the (well-known) facts that Gauss sums and unramified characteristics are invariant under inflation and Adams operations commute with inflation.

Hence the key point in proving claim (ii) is to prove $\pi^G_\Gamma([\mathcal A_{L/K},\kappa_L, H_L]) = [\mathcal A_{F/K},\kappa_F,H_F]$.
To show this we write ${\rm tr}_{L/F}$ for the field theoretic trace map $L\to F$.
Since $\mathcal A_{L/K}$ is $\ZG$-projective it is also cohomologically trivial and so
$\mathcal A^J_{L/K}={\rm tr}_{L/F}(\mathcal A_{L/K})=\mathcal A_{F/K}$,
where the last equality follows, for example, from the explicit computations of Erez in \cite[p. 246]{Erez}.

In addition, the natural identification of $H^J_L$ with $H_F$ induces a commutative diagram of $\IQ^c[\Gamma]$-modules

\[
\begin{CD}
(\IQ^c\otimes_\IQ L)^J @>\kappa^J_L >> (\IQ^c\otimes_\IZ H_L)^J \\
@\vert @\vert \\
\IQ^c\otimes_\IQ F @> \kappa_F >> \IQ^c\otimes_\IZ H_F
\end{CD}
\]
and, taken together, these facts imply that
\[
\pi^G_\Gamma([\mathcal A_{L/K}, \kappa_L, H_L]) = [\mathcal A^J_{L/K},\kappa_L^J, H^J_L] =[\mathcal A_{F/K},\kappa_F, H_F],
\]
as required to complete the proof of claim (ii) of Theorem \ref{funct thm}.

To prove Theorem \ref{funct thm}(i)  we use the commutative diagram (see, for example, \cite[p.~575]{BleyBurns})
\begin{equation}\label{comm diag2}
\begin{CD}
\zeta(\IQ^c[G])^\times @> \delta_G >> K_0(\IZ[G],\IQ^c[G])\\
@V\tilde{\rho}^G_J VV @V \rho^G_J VV \\
\zeta(\IQ^c[J])^\times @> \delta_J>> K_0(\IZ[J],\IQ^c[J]).
\end{CD}
\end{equation}
Here, for each $z$ in $\zeta(\IQ^c[G])^\times$ and $\phi$ in $\widehat{J}$, one has
$\tilde{\rho}^G_J(z)_\phi = \prod_{\chi\in \widehat{G}}z_\chi^{\langle\chi,{\rm ind}^G_J(\phi)\rangle_G}$
where we write $\langle \cdot, \cdot \rangle_G$ for the natural pairing on $R_G$.

For each number field $E$ we now set $\tau_E := \tau(\Qu, \ind_E^\Qu {\bf 1}_E)$.
We claim that
\begin{equation}
  \label{eq:112}
\tilde{\rho}^G_J(\tau^G_K) = {\rm Nrd}_{\IQ[J]}(\tau^{[G:J]}_K).
\end{equation}
In fact, for all $\phi \in \widehat{J}$ one has
\[
{\rm Nrd}_{\IQ[J]}(\tau^{[G:J]}_K)_\phi = \tau_K^{\phi(1)[G:J]} \,\text{ and } \, \tilde{\rho}^G_J(\tau^G_K)_\phi =
\prod_{\chi \in \widehat G}\tau_K^{\chi(1) \langle \chi, \ind_J^G\phi \rangle_G}\]
and so the claimed equality is valid since $\sum_{\chi \in \widehat G}{\chi(1) \langle \chi, \ind_J^G\phi \rangle_G} = {\phi(1)[G:J]}$.

We next note that, since $|G|$ is odd, one has
\[ \ind_J^G(\psi_2(\phi)) = \psi_2 ( \ind_J^G(\phi) )\]
for all $\phi$ in $\widehat G$ (see, for example, \cite[Prop.-Def. 3.5]{Erez}). Thus, given the commutativity of (\ref{comm diag2}) and the (well-known) inductivity in degree zero of both Galois-Gauss sums
and non-ramified characteristics one deduces that
\begin{equation}
  \label{eq:113}
\rho^G_J(\delta_G((\psi_{2,\ast}-1)(\tau'_{L/K}))) =\delta_J(\tilde{\rho}^G_J((\psi_{2,\ast}-1)(\tau'_{L/K}))) =
\delta_J((\psi_{2,\ast}-1)(\tau'_{L/F})).
\end{equation}

By combining (\ref{eq:112}) and (\ref{eq:113}) we obtain an equality
\begin{align*}\rho^G_J(\delta_G(\tau^G_K\cdot (\psi_{2,\ast}-1)(\tau'_{L/K}))) =\, &\delta_J({\rm Nrd}_{\IQ[J]}(\tau^{[G:J]}_K))+\delta_J((\psi_{2,\ast}-1)(\tau'_{L/F}))\\
=\, &\delta_J({\rm Nrd}_{\IQ[J]}(\tau_F^{-1}\cdot\tau^{[G:J]}_K)+\delta_J(\tau^J_F\cdot (\psi_{2,\ast}-1)(\tau'_{L/F})).
\end{align*}

To consider the corresponding behaviour of the term $[\mathcal A_{L/K}, \kappa_L, H_L]$
under restriction the key point is that in the subgroup $K_0(\IZ[J],\IQ[J])$ of $K_0(\IZ[J],\IQ^c[J])$ there are equalities
\begin{multline*}\label{eqn3}
\rho^G_J([\mathcal A_{L/K},\kappa_L, H_L])\!-\! [\mathcal A_{L/F},\kappa_L, H_L]
\!=\! [\mathcal A_{L/K},\kappa_L, H_L]\! -\![\mathcal A_{L/F},\kappa_L, H_L]\! =\! [\mathcal A_{L/K}, {\rm id},   \mathcal A_{L/F}]\notag\\
\!=\![\mathcal A_{L/F}\mathcal A_{F/K}, {\rm id}, \mathcal A_{L/F}]\! =\! \delta_J({\rm Nrd}_{\IQ[J]}(\tau^{-1}_F\cdot \tau^{[G:J]}_K)).
\end{multline*}
Here the first equality is obvious, the second is by the defining relations of $K_0(\IZ[J],\IQ^c[J])$,
the third follows from the (well-known) multiplicativity property $\mathcal A_{L/K}=\mathcal A_{L/F}\mathcal A_{F/K}$
and the fourth from the result of Lemma \ref{lemma Nrd below} below.

Comparing the last two displayed equalities it follows directly that $\rho^G_J(\Fa_{L/K})=\Fa_{L/F}$, as claimed.
\end{proof}

\begin{lemma}\label{lemma Nrd below} With the subgroup $J$ and field $F$ as above one has 
$[\mathcal A_{L/F}\mathcal A_{F/K}, {\rm id},   \mathcal{A}_{L/F}]=\delta_J({\rm Nrd}_{\IQ[J]}(\tau_F^{-1}\cdot \tau_K^{[G:J]}))$ in $K_0(\IZ[J],\IQ[J])$.
\end{lemma}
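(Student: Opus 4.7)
The plan is to compute both sides of the asserted equality under the isomorphism $h_J^{\rm rel}$ from Lemma \ref{rep for relative group} and to compare the resulting idelic representatives modulo the image of $\Delta^{\rm rel}_J$. Since $L/K$ is weakly ramified so is $L/F$, and hence both $\mathcal A_{L/F}$ and $\mathcal A_{L/F}\mathcal A_{F/K} = \mathcal A_{L/F}\otimes_{\mathcal O_F}\mathcal A_{F/K}$ are locally free $\mathcal O_F[J]$-modules of rank one.

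To compute the left hand side I would fix an $F[J]$-generator $b$ of $L$, $\mathcal O_{F,v}[J]$-generators $b_v$ of $\mathcal A_{L/F,v}$ at each finite place $v$ of $F$, and elements $\beta_v \in F_v^\times$ with $\mathcal A_{F/K,v} = \beta_v\mathcal O_{F,v}$; the scaled elements $\beta_v b_v$ then serve as local generators of $(\mathcal A_{L/F}\mathcal A_{F/K})_v$. Apply Lemma \ref{norm resolvent lemma} (with $K,G$ replaced by $F,J$) to each of $[\mathcal A_{L/F},\kappa_L,H_L]$ and $[\mathcal A_{L/F}\mathcal A_{F/K},\kappa_L,H_L]$ and subtract, using the $K$-theoretic relation
\[
[\mathcal A_{L/F}\mathcal A_{F/K},\kappa_L,H_L] - [\mathcal A_{L/F},\kappa_L,H_L] = [\mathcal A_{L/F}\mathcal A_{F/K},\mathrm{id},\mathcal A_{L/F}].
\]
This reduces the task to evaluating the ratio $\mathcal N_{F/\mathbb Q}(\beta_vb_v\mid\chi)/\mathcal N_{F/\mathbb Q}(b_v\mid\chi)$ for each $\chi\in\widehat J$. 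Since $\beta_v$ is $J$-fixed the identity $(\beta_vb_v\mid\chi) = \beta_v^{\chi(1)}(b_v\mid\chi)$ holds at the level of resolvents, and a routine norm resolvent calculation then shows that $h_J^{\rm rel}([\mathcal A_{L/F}\mathcal A_{F/K},\mathrm{id},\mathcal A_{L/F}])$ is represented by a pair $(\theta,1)$ with $\theta(\chi) = N_{F/\mathbb Q}(\beta)^{\chi(1)}$, where $N_{F/\mathbb Q}(\beta)$ denotes the idele of $\mathbb Q$ whose $p$-component is $\prod_{v\mid p} N_{F_v/\mathbb Q_p}(\beta_v)$. This idele represents the fractional $\mathbb Z$-ideal $N_{F/\mathbb Q}(\mathcal A_{F/K})$.

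Turning to the right hand side, since $\tau_F^{-1}\tau_K^{[G:J]}\in\mathbb Q^\times$ we have $\mathrm{Nrd}_{\mathbb Q[J]}(\tau_F^{-1}\tau_K^{[G:J]})(\chi) = (\tau_F^{-1}\tau_K^{[G:J]})^{\chi(1)}$, and hence $\delta_J(\mathrm{Nrd}_{\mathbb Q[J]}(\tau_F^{-1}\tau_K^{[G:J]}))$ is represented by a pair $(\rho,1)$ with $\rho(\chi) = (\tau_F^{-1}\tau_K^{[G:J]})^{\chi(1)}$ the corresponding constant idele. It therefore suffices to show the two ideles $N_{F/\mathbb Q}(\beta)$ and $\tau_F^{-1}\tau_K^{[G:J]}$ generate the same fractional $\mathbb Z$-ideal, since the resulting equivalence modulo $U_f(\mathbb Z)$ then, after raising to the $\chi(1)$-th power, yields equality modulo $\mathrm{Det}(U_f(\mathbb Z[J]))$. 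By (\ref{disc pfaff}) the constant idele generates an ideal of size $|d_F|^{-1/2}|d_K|^{[F:K]/2}$, whilst by combining $\mathcal A_{F/K}^2 = \mathfrak D_{F/K}^{-1}$, $N_{F/K}(\mathfrak D_{F/K}) = \mathfrak d_{F/K}$ and the conductor-discriminant formula $d_F = N_{K/\mathbb Q}(\mathfrak d_{F/K})\cdot d_K^{[F:K]}$ one finds that $N_{F/\mathbb Q}(\mathcal A_{F/K})$ has the same size, as required.

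The principal technical obstacle is the norm resolvent computation in the second paragraph, which requires careful bookkeeping of how the local rescaling $b_v\mapsto\beta_vb_v$ propagates through the Lemma \ref{norm resolvent lemma} formalism. Once the identity $\theta(\chi) = N_{F/\mathbb Q}(\beta)^{\chi(1)}$ is verified, the remainder is a routine comparison of ideles of $\mathbb Q$ using the classical conductor-discriminant relation.
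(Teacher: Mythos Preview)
Your argument is correct and reaches exactly the same reduction as the paper: both routes end by showing that the idele $N_{F/\QQ}(\beta)$ and the rational number $\tau_F^{-1}\tau_K^{[G:J]}$ generate the same fractional $\ZZ$-ideal, which you verify correctly via $\mathcal A_{F/K}^2 = \mathfrak D_{F/K}^{-1}$, the tower formula for discriminants, and (\ref{disc pfaff}).

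The paper, however, takes a shorter path. Rather than going through Lemma~\ref{norm resolvent lemma} (which introduces $\kappa_L$ and $H_L$ only to have them cancel in the difference), it isolates a general auxiliary result (Lemma~\ref{prepare proof}): for any number field $E$, finite group $\Gamma$, locally free rank-one $\mathcal O_E[\Gamma]$-module $N$, and fractional $\mathcal O_E$-ideal $\mathfrak a$, one has
\[
[\mathfrak a N,\mathrm{id},N] = \delta_\Gamma\bigl(\mathrm{Nrd}_{\QQ[\Gamma]}(N_{E/\QQ}(\mathfrak a))\bigr).
\]
That lemma is proved prime by prime directly in terms of the generators-and-relations description of $K_0(\ZZ_p[\Gamma],\QQ_p[\Gamma])$: one uses local freeness of $N_p$ over $\mathcal O_{E,p}[\Gamma]$ to reduce to $[\mathfrak a[\Gamma],\mathrm{id},\mathcal O_E[\Gamma]]$, then chooses $\ZZ$-bases of $\mathfrak a$ and $\mathcal O_E$ to see this equals $\delta_\Gamma(\mathrm{Nrd}_{\QQ[\Gamma]}(\det B))$ with $\det B = \pm N_{E/\QQ}(\mathfrak a)$, the sign being killed by $\delta_\Gamma(\mathrm{Nrd}_{\QQ[\Gamma]}(-1))=0$. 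Applying this with $E=F$, $\Gamma=J$, $N=\mathcal A_{L/F}$, $\mathfrak a=\mathcal A_{F/K}$ yields the left side as $\delta_J(\mathrm{Nrd}_{\QQ[J]}(N_{F/\QQ}(\mathcal A_{F/K})))$ in one step, and then the discriminant computation $N_{F/\QQ}(\mathcal A_{F/K})^2 = d_F^{-1}d_K^{[F:K]} = (\tau_F^{-1}\tau_K^{[G:J]})^2$ finishes.

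Your approach is effectively an in-line derivation of this general lemma through the Hom description $h_J^{\rm rel}$. That works, but the ``routine norm resolvent calculation'' you flag is indeed the only place requiring care: the action of the transversal elements $\omega$ on the local scalars $\beta_v$ inside $\mathcal N_{F/\QQ}(\beta_v b_v\mid\chi)$ must be tracked so as to produce $N_{F_v/\QQ_p}(\beta_v)^{\chi(1)}$ rather than some other power. It is cleaner, and closer to the paper's argument, to apply Lemma~\ref{rep for relative group} directly to $[\mathcal A_{L/F}\mathcal A_{F/K},\mathrm{id},\mathcal A_{L/F}]$: the transition matrix $\lambda_p\mu_p^{-1}$ is then just multiplication by $\beta_p$ on a free $\mathcal O_{F,p}[J]$-module of rank one, which has entries in $\QQ_p$ and hence $\mathrm{Det}(\lambda_p\mu_p^{-1})(\chi) = N_{F_p/\QQ_p}(\beta_p)^{\chi(1)}$ with no Galois bookkeeping at all.
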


\begin{proof} By Lemma \ref{prepare proof} below it suffices to show that $N_{F/\Qu}(\calA_{F/K}) = \pm \tau_F^{-1}\cdot\tau_K^{[G:J]}$.

This equality is, in turn, a direct consequence of the fact that
\begin{multline*}
  N_{F/\Qu}(\calA_{F/K})^2 = N_{F/\Qu}(\frD_{F/K})^{-1} = N_{F/\Qu}(\frD_{F/\Qu}^{-1} \frD_{K/\Qu}) = d_{F/\Qu}^{-1} \cdot N_{K/\Qu}(\frD_{K/\Qu})^{[F:K]} \\
  = d_{F/\Qu}^{-1} \cdot   d_{K/\Qu}^{[F:K]} = \tau_F^{-2} \cdot \tau_K^{2[G:J]},
 \end{multline*}
where the last equality follows from (\ref{disc pfaff}).
\end{proof}

\begin{lemma}\label{prepare proof}\mpar{prepare proof}
  Let $E$ be a number field and $G$ a finite group. Let $N$ be a locally free $\OE[G]$-module of rank one. Let $\fra$ denote a
fractional $\OE$-ideal. Then in $K_0(\ZG, \QG)$ one has
\[
[\fra N, \mathrm{id}, N] = \delta_G(\Nrd_\QG(N_{E/\Qu}(\fra))).
\]
\end{lemma}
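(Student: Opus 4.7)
The plan is to verify the equality in each local component under the canonical decomposition $K_0(\ZG, \QG) = \bigoplus_p K_0(\Zp[G], \Qp[G])$ and to do so by directly applying the idelic description given by Lemma \ref{rep for relative group}. First I would reduce to the case $N = \calO_E[G]$. Since $N$ is locally free of rank one over $\calO_E[G]$, for each prime $p$ one may choose an $\calO_{E,p}[G]$-basis $n_p$ of $N_p$; the resulting $\Zp[G]$-linear isomorphism $\calO_{E,p}[G] \xrightarrow{\sim} N_p$, $\alpha \mapsto \alpha n_p$, restricts to a bijection from $\fra_p\calO_{E,p}[G]$ onto $\fra_p N_p$ and intertwines the two identity maps. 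Hence the local class $[\fra_p N_p, \mathrm{id}, N_p]$ coincides with $[\fra_p\calO_{E,p}[G], \mathrm{id}, \calO_{E,p}[G]]$ in $K_0(\Zp[G], \Qp[G])$; summing over $p$ identifies $[\fra N, \mathrm{id}, N]$ with $[\fra\calO_E[G], \mathrm{id}, \calO_E[G]]$ and reduces the problem to the free case.

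For this case I would apply Lemma \ref{rep for relative group} with the following choices: a $\Ze$-basis $\{a_\sigma\}_{\sigma \in \Sigma(E)}$ of $\calO_E$ (equivalently a $\ZG$-basis of $\calO_E[G]$), a $\Zp$-basis $\{b_\tau^{(p)}\}$ of $\fra_p$ for each $p$ (equivalently a $\Zp[G]$-basis of $\fra_p\calO_{E,p}[G]$), and $\theta = \mathrm{id}$ together with the $\Qu[G]$-basis of $E[G]$ given again by $\{a_\sigma\}$. Both $\mu$ and each $\mu_p$ in the notation of the lemma then become the identity matrix, while $\lambda_p$ is the transition matrix $m^{(p)} \in \mathrm{GL}_d(\Qp)$ expressing $\{b_\tau^{(p)}\}$ in terms of $\{a_\sigma\}$, embedded in $\mathrm{GL}_d(\Qp[G])$ via the scalar inclusion $\Qp \hookrightarrow \Qp[G]$. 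Since this embedding is central, one has $\Det(m^{(p)})(\phi) = \det(m^{(p)})^{\phi(1)}$ for every $\phi \in \widehat G$, and so $h_G^{\rm rel}([\fra\calO_E[G], \mathrm{id}, \calO_E[G]])$ is represented by the pair $\bigl(\prod_p \Det(m^{(p)}),\, 1\bigr)$.

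To match this with $\delta_G(\mathrm{Nrd}_{\QG}(N_{E/\Qu}(\fra)))$, I would observe that $v_p(\det(m^{(p)})) = \sum_{v|p} f_v\cdot v_v(\fra) = v_p(N_{E/\Qu}(\fra))$, so the ratio $u_p := \det(m^{(p)})/N_{E/\Qu}(\fra)$ lies in $\Zp^\times$ for every $p$. Hence $u := (u_p)_p \in \prod_p\Zp^\times \subseteq U_f(\ZG)$, and modulo $\Det(U_f(\ZG))$ the idele $\prod_p \Det(m^{(p)})$ coincides with the diagonal embedding of $\Det(N_{E/\Qu}(\fra)) \in \Det(\QG^\times)$. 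Finally, the defining relation $\Delta_G^{\rm rel}(\theta) = ([\theta], \theta^{-1})$ implies that the resulting pair is equivalent in $\mathrm{Cok}(\Delta_G^{\rm rel})$ to $(1, \mathrm{Nrd}_{\QG}(N_{E/\Qu}(\fra)))$, which by the representative for $\delta_G(\alpha)$ recalled in the proof of Corollary \ref{coro 1} is exactly $h_G^{\rm rel}(\delta_G(\mathrm{Nrd}_{\QG}(N_{E/\Qu}(\fra))))$.

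The principal point that needs care is tracking the two different idelic representatives of the same class in $\mathrm{Cok}(\Delta_G^{\rm rel})$ (one supported in the finite components, one in the archimedean component), i.e.\ passing from the shape $([\theta], 1)$ to $(1, \theta)$ by absorbing the relation $\Delta_G^{\rm rel}(\theta)$. Any remaining sign ambiguity in the identification $\det(m^{(p)}) = \pm N_{E/\Qu}(\fra)\cdot u_p$ contributes only a global unit $\pm 1 \in \ZG^\times$ whose $\Det$ lies in $\mathrm{Det}(U_f(\ZG))$, and hence does not affect the claimed equality in $K_0(\ZG, \QG)$.
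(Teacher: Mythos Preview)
Your proof is correct. Both you and the paper first reduce, via localisation and the local freeness of $N$ over $\calO_E[G]$, to the case $N=\calO_E[G]$. From there the approaches diverge. The paper exploits the fact that $\fra$ is itself a free $\Ze$-module: choosing a global $\Ze$-basis $\alpha_1,\ldots,\alpha_n$ of $\fra$ and a $\Ze$-basis $\omega_1,\ldots,\omega_n$ of $\calO_E$, the transition matrix $B\in\Gl_n(\Qu)\subseteq\Gl_n(\QG)$ gives directly
\[
[\fra[G],\id,\calO_E[G]]=[\ZG^n,B,\ZG^n]=\partial^1_{\Ze,\Qu,G}([\QG^n,B])=\delta_G(\Nrd_{\QG}(B))=\delta_G(\Nrd_{\QG}(\det(B))),
\]
and since $|\det(B)|=N_{E/\Qu}(\fra)$ one concludes using $\delta_G(\Nrd_{\QG}(-1))=0$. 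This stays at the level of the explicit generators and relations of the relative $K$-group and never invokes the Hom-description. Your route via Lemma~\ref{rep for relative group}, local transition matrices $m^{(p)}$, and the idelic unit comparison $\det(m^{(p)})/N_{E/\Qu}(\fra)\in\Zp^\times$ is a legitimate alternative; it is just longer, since you must pass through $\cok(\Delta_G^{\rm rel})$ and then move the global homomorphism from the finite to the infinite component via the relation $\Delta_G^{\rm rel}(\theta)=([\theta],\theta^{-1})$. The paper's global-basis trick buys a shorter endgame, while your argument has the minor conceptual advantage of never needing a global basis of $\fra$ (only local ones), though in the number-field setting this distinction is immaterial.
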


\begin{proof} Recall that for each prime $p$ and each $\Ze$-module $X$ we write $X_p$ for the $\Zp$-module $\Zp \tensor_\Ze X$.

In particular, there is an isomorphism $N_p \simeq \left( \calO_{E,p}[G] \right)^d$ of $\calO_{E,p}[G]$-modules and hence in $K_0(\Zp[G], \Qp[G])$ an equality
\[
[(\fra N)_p, \id, N_p] = [\fra_p[G]^d, \id, \calO_{E,p}[G]^d] = d  [\fra_p[G], \id, \calO_{E,p}[G]].
\]
It follows that $[\fra N, \id, N] = d [\fra[G], \id, \OE[G]]$ in $K_0(\ZG, \QG)$.

Now set $n := [E:\Qu]$ and choose $\Ze$-bases $\omega_1, \ldots, \omega_n$ for $\OE$ and $\alpha_1, \ldots, \alpha_n$ for $\fra$.
Then

\[
\fra[G] = \bigoplus_{i=1}^n \ZG\alpha_i, \quad \OE[G] = \bigoplus_{i=1}^n \ZG\omega_i.
\]
With respect to these bases the identity is represented by the matrix $B \in \Gl_n(\Qu) \sseq \Gl_n(\QG)$ defined by
$B = (b_{ji})$ where $\alpha_i = \sum_{j=1}^nb_{ji}\omega_j$. Note that $|\det(B)| = N_{E/\Qu}(\fra)$.

By the defining relations in relative $K$-groups and the definitions of $\partial_{\Ze, \Qu, G}$ and $\delta_G$
we obtain
\[
[\fra[G], \id, \OE[G]] = [\ZG^n, B, \ZG^n] =  \partial_{\Ze, \Qu, G}([\QG^n, B]) = \delta_G(\Nrd_\QG(B)).
\]
Now $\Nrd_\QG(B) = \sum_{\chi \in \widehat G}x_\chi e_\chi$ with $x_\chi = \det(T_\chi(B)) = \det(B)^{\chi(1)}$, where
$T_\chi$ is a representation with character $\chi$. Hence $\Nrd_\QG(B) = \Nrd_\QG(\det(B))$ and
\[
[\fra[G], \id, \OE[G]] = \delta_G(\Nrd_\QG(\det(B))) =  \delta_G(\Nrd_\QG(|\det(B)|)) = \delta_G(\Nrd_\QG(N_{E/\Qu}(\fra))).
\]
where the second equality follows from $\delta_G(\Nrd_\QG(-1)) = 0$.
\end{proof}

\section{A canonical local decomposition of $\fra_{L/K}$}

In this section we follow the approach of Breuning in \cite{mb2} to give a canonical decomposition of the
term $\mathfrak{a}_{L/K}$ as a sum of terms which depend only upon the local extensions $L_w/K_v$ for
places $v$ of $K$ which ramify wildly (and weakly) in $L/K$.

\subsection{The local relative element $\fra_{F/E}$}

We first define the canonical local terms that will occur in the decomposition of $\mathfrak{a}_{L/K}$.

To do this we fix a rational prime $\ell$ and an odd degree weakly ramified Galois extension $F/E$ of fields
which are contained in $\mathbb{Q}_\ell^c$ and of finite degree over $\mathbb{Q}_\ell$ and we set $\Gamma := G({F/E})$.

We also fix an embedding of fields $j_\ell: \Qc \lra \Qlc$ and by abuse of notation also write $j_\ell :\zeta(\Qc[\Gamma]) \lra \zeta(\Qlc[\Gamma])$ for the induced ring embedding. We then write
$$j_{\ell,*} :  K_0(\Ze[\Gamma], \Qc[\Gamma]) \lra K_0(\Zl[\Gamma], \Qlc[\Gamma])$$ for the homomorphism of abelian groups that sends each element $[P,\iota,Q]$ to $[P_\ell,\QQ^c_\ell\otimes_{\QQ^c,j_\ell}\iota,Q_\ell]$ and we note that $j_{\ell, *} \circ \delta_\Gamma = \delta_{\Gamma,\ell}\circ j_\ell$.

We write $\Sigma(F)$ for the set of embeddings $F\hookrightarrow\mathbb{Q}_\ell^c$ and
\[
\kappa_F: \mathbb{Q}_\ell^{c}\otimes_{\mathbb{Q}_\ell} F \rightarrow\prod_{\Sigma(F)}\mathbb{Q}_\ell^c
\]
for the isomorphism of $\mathbb{Q}_\ell^c[\Gamma]$-modules sending $x\otimes f$ to $(\sigma(f)x)_{\sigma \in \Sigma(F)}$ for $f\in F$ and $x \in \mathbb{Q}_\ell^c.$

We also write $H_F$ for the submodule $\prod_{\Sigma(F)}\mathbb{Z}_\ell$ of $\prod_{\Sigma(F)}\mathbb{Q}_\ell^c$ and note that $[\mathcal{A}_{F/E},\kappa_F,H_F]$ is then a well-defined element of $K_0(\mathbb{Z}_\ell[\Gamma],\mathbb{Q}_\ell^c[\Gamma])$.

We next write $U_{F/E}$ for the canonical `unramified' element of $K_0(\Zl[\Gamma],\mathbb{Q}_\ell^c[\Gamma])$
defined (for any Galois extension of local fields) by Breuning in \cite{mb2} and then define an element of
$K_0(\Zl[\Gamma],\mathbb{Q}_\ell^c[\Gamma])$ by setting
\[
\mathfrak{a}_{F/E} := [\calA_{F/E}, \kappa_F, H_F] - \delta_{\Gamma,\ell}(j_\ell( \tau^\Gamma_E\cdot (\psi_{2,\ast}-1)(\tau'_{F/E}))) - U_{F/E},
\]
where the elements $\tau^\Gamma_E$ and $\tau'_{F/E}$ of $\zeta(\QQ^c[\Gamma])^\times$ are constructed
from local Galois-Gauss sums as in Remark \ref{local analogue}.

The point of introducing the element $U_{F/E}$ is that it guarantees that $\mathfrak{a}_{F/E}$ is `rational' in the sense of the following lemma.


\begin{prop}\label{dagger proof}
$\mathfrak{a}_{F/E}$ is independent of the choice of $j_\ell$ and belongs to $K_0(\mathbb{Z}_\ell[\Gamma],\mathbb{Q}_\ell[\Gamma])$.
\end{prop}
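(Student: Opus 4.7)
The plan is to mirror, in the local setting, the proof of Proposition \ref{rationality thm}.

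First, I would invoke the local identity (\ref{local label product}) to rewrite
\[
\tau_E^\Gamma \cdot (\psi_{2,\ast}-1)(\tau'_{F/E}) \;=\; \tau_{F/E}^\dagger \cdot J_{2,F/E} \cdot (\psi_{2,\ast}-1)(y_{F/E}^{-1})
\]
in $\zeta(\Qc[\Gamma])^\times$. By Remark \ref{local analogue} the factor $J_{2,F/E}$ lies in $\zeta(\Qu[\Gamma])^\times$, and a direct inspection of (\ref{explicit unram char}) (together with the fact that Adams operations preserve $\Omega_\Qu$-equivariance) shows that $(\psi_{2,\ast}-1)(y_{F/E}^{-1})$ does too. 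Since $j_\ell$ is the identity on $\Qu$, the images of these two factors under $\delta_{\Gamma,\ell} \circ j_\ell$ are independent of the choice of $j_\ell$ and lie in the subgroup $K_0(\Zl[\Gamma],\Ql[\Gamma])$ of $K_0(\Zl[\Gamma],\Qlc[\Gamma])$.

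Consequently, both claims of the proposition reduce to showing that the simpler element
\[
R_{F/E} \;:=\; [\calA_{F/E}, \kappa_F, H_F] \;-\; \delta_{\Gamma,\ell}(j_\ell(\tau_{F/E}^\dagger)) \;-\; U_{F/E}
\]
is independent of $j_\ell$ and belongs to $K_0(\Zl[\Gamma],\Ql[\Gamma])$. This in turn splits via the defining relations of relative $K$-groups as
\[
R_{F/E} \;=\; [\calA_{F/E}, \mathrm{id}, \calO_F] \;+\; \Bigl([\calO_F, \kappa_F, H_F] - \delta_{\Gamma,\ell}(j_\ell(\tau_{F/E}^\dagger)) - U_{F/E}\Bigr).
\]
The first summand manifestly lies in $K_0(\Zl[\Gamma],\Ql[\Gamma])$ and involves neither $\tau_{F/E}^\dagger$ nor $j_\ell$, so is automatically $j_\ell$-independent. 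The second summand is precisely Breuning's local epsilon constant element from \cite{mb2}, for which both the rationality (membership in $K_0(\Zl[\Gamma],\Ql[\Gamma])$) and the independence of the choice of embedding are proved in loc.\ cit., these being part of the basic setup that makes the local epsilon constant conjecture a well-formulated statement.

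The main obstacle in carrying out this plan is the bookkeeping required to match normalisations: one must check that the conventions for $\tau_{F/E}^\dagger$ and for $U_{F/E}$ adopted here agree, up to signs and permissible modifications in $\Det(U_f(\Zl[\Gamma]))$, with those used by Breuning in defining his local epsilon constant element. Once this identification is verified, the proposition follows at once from the combined local analogues of Lemma \ref{jacobi lemma} and of Fr\"ohlich's computations of norm resolvents exploited in the proof of Proposition \ref{rationality thm}, together with Breuning's cited rationality result.
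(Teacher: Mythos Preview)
Your overall strategy coincides with the paper's: use (\ref{local label product}) and the local analogue of Lemma \ref{jacobi lemma} to show that the ratio $\tau^\Gamma_E\cdot(\psi_{2,\ast}-1)(\tau'_{F/E})\cdot(\tau^\dagger_{F/E})^{-1}$ lies in $\zeta(\Qu[\Gamma])^\times$, thereby reducing both claims to the corresponding statements for the element $R_{F/E}=[\calA_{F/E},\kappa_F,H_F]-\delta_{\Gamma,\ell}(j_\ell(\tau^\dagger_{F/E}))-U_{F/E}$, and then invoke Breuning. This is exactly what the paper does, citing \cite[Lem.~2.2]{mb2} for the independence of $j_\ell$ and \cite[Prop.~4.26]{mb2} for the rationality.

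There is, however, a genuine gap in your further decomposition
\[
R_{F/E}=[\calA_{F/E},\id,\calO_F]+\bigl([\calO_F,\kappa_F,H_F]-\delta_{\Gamma,\ell}(j_\ell(\tau^\dagger_{F/E}))-U_{F/E}\bigr).
\]
The extension $F/E$ is only assumed to be \emph{weakly} ramified, not tamely ramified, so $\calO_F$ need not be a projective $\Zl[\Gamma]$-module. Hence neither $[\calA_{F/E},\id,\calO_F]$ nor $[\calO_F,\kappa_F,H_F]$ is a well-defined element of $K_0(\Zl[\Gamma],\Qlc[\Gamma])$, and the identification of the second summand with ``Breuning's local epsilon constant element'' is not available in this form. (In Breuning's setup the lattice appearing in $C_{F/E}$ is required to be projective; see the discussion surrounding (\ref{bc first}).) The fix is simply to omit this splitting: the paper applies Breuning's argument from \cite[Prop.~4.26]{mb2} \emph{directly} to the lattice $\calA_{F/E}$, which \emph{is} projective because $F/E$ is weakly ramified. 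No normalisation bookkeeping is then required beyond the observation you already made about the rational ratio.
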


\begin{proof} The first assertion follows immediately from \cite[Lem. ~2.2]{mb2} and the containment
\[ \tau^\Gamma_E\cdot (\psi_{2,\ast}-1)(\tau'_{F/E})\cdot (\tau_{F/E}^\dagger)^{-1}\in \zeta(\QQ[\Gamma])^\times,\]
which itself follows directly from (\ref{local label product}) and the local analugue of Lemma \ref{jacobi lemma} (see Remark \ref{local analogue}).

The second claim follows by combining the same containment with the containment
\[
[\calA_{F/E}, \kappa_F, H_F] - \delta_{\Gamma,\ell}(j_\ell(\tau^\dagger_{F/E}))- U_{F/E} \in K_0(\IZ_\ell[\Gamma],\IQ_\ell[\Gamma]).
\]
proved by Breuning's argument in \cite[Prop.~3.4]{mb2}. \end{proof}

\subsection{$\fra_{F/E}$ and fundamental classes}
\label{local conj section}

In this section we reformulate the local epsilon constant conjecture formulated by Breuning in \cite[Conj.~3.2]{mb2} in terms of the explicit element $\fra_{F/E}$.

To this end we recall that for any finite Galois extension of $\ell$-adic fields $F/E$, of group $\Gamma$, Breuning's conjecture is an equality in $K_0(\Zl[\Gamma], \Ql[\Gamma])$ of the form
\begin{equation}\label{bc first}
T_{F/E} + C_{F/E} + U_{F/E} - M_{F/E} = 0.\end{equation}
Here, in addition, to the element $U_{F/E}$ used in the previous section, the following elements also occur:
where
\begin{itemize}
  \item $T_{F/E} := \delta_{\Gamma,\ell}(j_\ell(\tau_{F/E}^\dagger))$ is the equivariant local epsilon constant.
\item $C_{F/E} = \mathcal{E}(\exp_\ell(\calL))_\ell - [\calL, \kappa_F, H_F]$, with $\calL$ is any full projective
$\Zl[\Gamma]$-sublattice of $\calO_F$ that is contained in a sufficiently large power of the maximal ideal $\frp_F$ of $\calO_F$ to ensure the
$\ell$-adic exponential map ${\rm exp}_\ell$ converges on $\calL$. For the precise definition of $\mathcal{E}(\exp_\ell(\calL))_\ell$ we refer the reader to \cite[\S~2.4]{mb2} and \cite[\S~3.2]{BleyBurns}. For the moment, we point out only that this element relies on local fundamental classes and is
 very difficult to compute explicitly in any degree of generality.
\item $M_{F/E}$ is a simple and explicitly defined correction term (see \cite[\S~2.6]{mb2}).
\end{itemize}

To reinterpret (\ref{bc first}) we assume $F/E$ is weakly ramified. In this case the lattice $\mathcal{L}$ that occurs above can be taken to be $p^N\cdot \calA_{F/E}$ for any sufficiently large integer $N$ and the element
\[
{\mathcal{E}}_{F/E} := \mathcal{E}(\exp_\ell(p^N\cdot\calA_{F/E}))_\ell - \delta_{\Gamma, \ell}(\Nrd_{\Ql[\Gamma]}(p^{N[E:\Ql]}))
\]
of $K_0(\ZZ_\ell[\Gamma],\QQ_\ell[\Gamma])$ is easily seen to be independent of the choice of $N$.

We next define an element of $K_0(\ZZ_\ell[\Gamma],\QQ_\ell[\Gamma])$ by setting
%
\begin{equation}\label{twisted unram char def}
\frc_{F/E} :=  \delta_{\Gamma,\ell} ((1 - \psi_{2,\ast})(y_{F/E})).
\end{equation}

Then by combining Lemma \ref{prepare proof} with (\ref{local label product}) one finds that
 Breuning's conjectural equality (\ref{bc first}) is equivalent to the following conjecture.

\begin{conj}\label{local conj}
Let $F/E$ be a weakly ramified Galois extension of $\ell$-adic fields with group $\Gamma$. Then in $K_0(\ZZ_\ell[\Gamma],\QQ_\ell[\Gamma])$ one has
\[
\fra_{F/E} = {\mathcal{E}}_{F/E}   - \delta_{\Gamma, \ell}(J_{2,F/E}) - \frc_{F/E} - M_{F/E},
\]
where the second Galois-Jacobi sum $J_{2,F/E}$ of $F/E$ is as discussed in Remark \ref{local analogue}.
\end{conj}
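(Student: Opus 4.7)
The task is purely algebraic manipulation in $K_0(\Zl[\Gamma], \Ql[\Gamma])$: I want to convert Breuning's equation
\begin{equation*}
T_{F/E} + C_{F/E} + U_{F/E} - M_{F/E} = 0
\end{equation*}
into the formula for $\fra_{F/E}$ stated in Conjecture \ref{local conj}. Since $\fra_{F/E}$ already contains $[\calA_{F/E}, \kappa_F, H_F]$, $U_{F/E}$, and $\delta_{\Gamma, \ell}(j_\ell(\tau^\Gamma_E (\psi_{2,\ast}-1)(\tau'_{F/E})))$, the strategy is to rewrite $T_{F/E}$ and $C_{F/E}$ so that exactly these three terms surface, together with $\mathcal{E}_{F/E}$, $-\delta_{\Gamma,\ell}(J_{2,F/E})$ and $-\frc_{F/E}$.

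First I would tackle $C_{F/E}$. Using the freedom to choose the lattice $\calL = p^N \calA_{F/E}$ for $N$ sufficiently large, the defining relations of the relative $K$-group give
\begin{equation*}
[p^N\calA_{F/E}, \kappa_F, H_F] = [p^N\calA_{F/E}, \id, \calA_{F/E}] + [\calA_{F/E}, \kappa_F, H_F].
\end{equation*}
The local analogue of Lemma \ref{prepare proof} applied to $N = \calA_{F/E}$ (locally-free of rank one over $\calO_E[\Gamma]$) and $\fra = p^N\calO_E$ identifies the first summand with $\delta_{\Gamma,\ell}(\Nrd_{\Ql[\Gamma]}(p^{N[E:\Ql]}))$, since $N_{E/\Ql}(p^N\calO_E) = p^{N[E:\Ql]}$. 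Subtracting this from $\mathcal{E}(\exp_\ell(p^N\calA_{F/E}))_\ell$ and recognising the result as $\mathcal{E}_{F/E}$ yields
\begin{equation*}
C_{F/E} = \mathcal{E}_{F/E} - [\calA_{F/E}, \kappa_F, H_F],
\end{equation*}
which is in particular independent of $N$, as it must be.

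Next I would rewrite $T_{F/E} = \delta_{\Gamma,\ell}(j_\ell(\tau^\dagger_{F/E}))$ using the local Galois-Jacobi identity (\ref{local label product}), which rearranges to
\begin{equation*}
j_\ell(\tau^\dagger_{F/E}) = j_\ell\bigl(\tau^\Gamma_E \cdot (\psi_{2,\ast}-1)(\tau'_{F/E})\bigr) \cdot J_{2,F/E}^{-1} \cdot (\psi_{2,\ast}-1)(y_{F/E}).
\end{equation*}
The last two factors lie in $\zeta(\Qu[\Gamma])^\times$ (by the local analogue of Lemma \ref{jacobi lemma} applied to $J_{2,F/E}$, and by construction for the $y$-factor), so $j_\ell$ acts trivially on them. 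Applying $\delta_{\Gamma,\ell}$ and invoking the definition $\frc_{F/E} = \delta_{\Gamma,\ell}((1 - \psi_{2,\ast})(y_{F/E}))$ then gives
\begin{equation*}
T_{F/E} = \delta_{\Gamma,\ell}\bigl(j_\ell(\tau^\Gamma_E \cdot (\psi_{2,\ast}-1)(\tau'_{F/E}))\bigr) - \delta_{\Gamma,\ell}(J_{2,F/E}) - \frc_{F/E}.
\end{equation*}

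Assembling the two identities into (\ref{bc first}) and collecting the three terms that comprise $\fra_{F/E}$ immediately transforms $T_{F/E} + C_{F/E} + U_{F/E} - M_{F/E} = 0$ into the asserted equality $\fra_{F/E} = \mathcal{E}_{F/E} - \delta_{\Gamma,\ell}(J_{2,F/E}) - \frc_{F/E} - M_{F/E}$; every step is reversible, so the equivalence follows. The only point requiring real care is the local analogue of Lemma \ref{prepare proof}, but this presents no genuine obstacle: because $\calO_E[\Gamma]$ is semilocal, any locally-free rank-one module is in fact free, and the proof in the paper transcribes verbatim upon replacing the global objects by their $\ell$-adic counterparts.
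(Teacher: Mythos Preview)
Your argument is correct and follows exactly the route the paper indicates: the text just before the conjecture states that the equivalence with Breuning's equality (\ref{bc first}) is obtained ``by combining Lemma \ref{prepare proof} with (\ref{local label product})'', and you have carried out precisely those two steps, rewriting $C_{F/E}$ via the local version of Lemma \ref{prepare proof} and rewriting $T_{F/E}$ via (\ref{local label product}) to extract $-\delta_{\Gamma,\ell}(J_{2,F/E})-\frc_{F/E}$. Your remark that in the local setting $\calA_{F/E}$ is actually free over $\calO_E[\Gamma]$, so the lemma applies without further work, is the right justification for the one point that requires adaptation.
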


\begin{remark}\label{explicit rems twisted} For later purposes we note that (\ref{explicit unram char}) implies that
$(1-\psi_{2,\ast})(y_{F/E}) = (1-e_{\Gamma_0}) + \sigma^{-1} e_{\Gamma_0}$,
with $\Gamma_0$ the inertia subgroup of $\Gamma$ and $\sigma$ an element of
$\Gamma$ that projects to the Frobenius in $\Gamma/\Gamma_0$,
and hence that $\frc_{F/E} =  \delta_{\Gamma,l} ((1-e_{\Gamma_0}) + \sigma^{-1} e_{\Gamma_0})$.

In particular, $\frc_{F/E}$ vanishes if $F/E$ is tame (since then
 $(1-e_{\Gamma_0}) + \sigma^{-1} e_{\Gamma_0} \in \Zl[\Gamma]^\times$
 and, in all cases, $\Gamma/\Gamma_0$ is abelian and so
 $\Nrd_{\Ql[\Gamma]}( (1-e_{\Gamma_0}) + \sigma^{-1} e_{\Gamma_0}) = (1-e_{\Gamma_0}) + \sigma^{-1} e_{\Gamma_0}$).
\end{remark}

\subsection{The decomposition result}
We can now state and prove the main result of this section. In this result we use for each prime $\ell$, each extension $E$ of $\IQ_\ell$ and each subgroup $H$ of $G$,
the natural induction map ${\rm i}^G_{H,E}: K_0(\IZ_\ell[H],E[H]) \to K_0(\IZ_\ell[G],E[G])$ on relative $K$-groups.

\begin{theorem}\label{decomp thm}\mpar{decomp thm}
Let $L/K$ be a weakly ramified odd degree Galois extension of number fields of group $G$.
Then in $K_0(\IZ[G],\IQ[G])$ one has an equality
\[
\Fa_{L/K}= \sum_{\ell}\sum_{v\mid \ell}{\rm i}^G_{G_w,\IQ_{\ell}}(\Fa_{L_w/K_v})
\]
where the sum is over all primes $\ell$ and for each place $v$ of $K$ we fix a place $w$ of
$L$ lying above $v$ and identify the Galois group of $L_w/K_v$
with the decomposition subgroup $G_w$ of $w$ in $G$.
\end{theorem}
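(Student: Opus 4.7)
My plan is to exploit the direct sum decomposition (\ref{decomp}) to reduce the claimed equality in $K_0(\IZ[G], \IQ[G])$ to checking its $\ell$-adic component $\pi_{G,\ell}(\Fa_{L/K})$ for each rational prime $\ell$. This reduction is consistent with the right hand side, since Proposition \ref{dagger proof} guarantees that each local element $\Fa_{L_w/K_v}$ with $v \mid \ell$ lies already in $K_0(\Zl[G_w], \Ql[G_w])$, and the induction map ${\rm i}^G_{G_w, \IQ_\ell}$ is compatible with the localisation pieces.

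For the lattice term $[\calA_{L/K}, \kappa_L, H_L]$, I would use the standard product decomposition $\calA_{L/K} \tensor_\Ze \Zl = \prod_{w \mid \ell} \calA_{L_w/K_{v(w)}}$ and the parallel splitting $H_L \tensor_\Ze \Zl = \prod_{w \mid \ell} H_{L_w}$ of the Betti realisation. Since $G$ acts transitively on the places $w$ of $L$ above a fixed place $v$ of $K$, a direct comparison of the idelic representatives furnished by Lemma \ref{rep for relative group} gives
\[
\pi_{G,\ell}\bigl([\calA_{L/K}, \kappa_L, H_L]\bigr) = \sum_{v \mid \ell} {\rm i}^G_{G_w, \Qlc}\bigl([\calA_{L_w/K_v}, \kappa_{L_w}, H_{L_w}]\bigr)
\]
as an equality in $K_0(\Zl[G], \Qlc[G])$.

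For the Gauss sum term, I would first invoke the identity (\ref{label product}) and its local analogue (\ref{local label product}) to rewrite
\[
\tau^G_K \cdot (\psi_{2,\ast}-1)(\tau'_{L/K}) = \tau^\dagger_{L/K} \cdot J_{2,L/K} \cdot (\psi_{2,\ast}-1)(y_{L/K}^{-1}),
\]
and likewise for each $L_w/K_v$. The factors $J_{2,L/K}$ and $(\psi_{2,\ast}-1)(y_{L/K}^{-1})$ then split as products of their local analogues directly from the definitions of $\tau_{L/K}$ and $y_{L/K}$ as idelic products, using also that $|G|$ odd forces $\psi_2$ to commute with induction (cf.\ the proof of Theorem \ref{funct thm}). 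The only substantive remaining input is the global-to-local decomposition of the absolute equivariant Galois-Gauss sum
\[
\pi_{G,\ell}\bigl(\delta_G(\tau^\dagger_{L/K})\bigr) = \sum_{v \mid \ell} {\rm i}^G_{G_w, \Qlc}\Bigl(\delta_{G_w,\ell}\bigl(j_\ell(\tau^\dagger_{L_w/K_v})\bigr) + U_{L_w/K_v}\Bigr),
\]
which is exactly the content of Breuning's analysis in \cite{mb2}; indeed, the unramified correction elements $U_{L_w/K_v}$ were introduced there precisely to carry the discrepancy between $\delta_G(\tau^\dagger_{L/K})$ and the naive sum of its local factors.

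Assembling these three ingredients, the $U_{L_w/K_v}$ arising from the Gauss sum decomposition cancel exactly against the $-U_{L_w/K_v}$ appearing in the definition of each $\Fa_{L_w/K_v}$. Summing over $v \mid \ell$ and then over $\ell$ yields the claimed formula. I expect the main obstacle to lie in quoting and correctly normalising Breuning's decomposition of $\delta_G(\tau^\dagger_{L/K})$, matched with the embedding $j_\ell$; once that identification is in place, the remaining steps are routine bookkeeping with the Hom-description of relative $K$-groups and with the standard inductivity-in-degree-zero properties of Galois-Gauss sums and unramified characteristics.
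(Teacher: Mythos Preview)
Your overall architecture is correct---reduce to one prime $\ell$ at a time, decompose the lattice term as in Lemma \ref{decomp alg term}(i), and rewrite the Gauss-sum terms via (\ref{label product}) and (\ref{local label product})---but the centrepiece of your argument is a formula that is not what Breuning actually proves, and the discrepancy is precisely where the real content of the proof lies.

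You assert
\[
\pi_{G,\ell}\bigl(\delta_G(\tau^\dagger_{L/K})\bigr) \;=\; \sum_{v\mid \ell} {\rm i}^G_{G_w,\Qlc}\Bigl(\delta_{G_w,\ell}(j_\ell(\tau^\dagger_{L_w/K_v})) + U_{L_w/K_v}\Bigr)
\]
in $K_0(\Zl[G],\Qlc[G])$, and then say the $U$-terms cancel against those in the definition of $\Fa_{L_w/K_v}$. This is not right on two counts. First, the elements $U_{L_w/K_v}$ do not arise as the defect of the global-to-local comparison of $\tau^\dagger$; they are handled separately, each one lying individually in $\ker(j^t_{\ell,*})$ (Lemma \ref{decomp alg term}(ii)). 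Second, the actual comparison (Lemma \ref{decomp alg term}(iii)) is only a congruence modulo $\ker(j^t_{\ell,*})$, and its correction term is not a sum of $U$-terms but rather $\delta_{G,\ell}\bigl(\prod_{v\mid d_L,\, v\nmid \ell}\tilde i_w(y_{L_w/K_v})\bigr)$, coming from the \emph{tame} places away from $\ell$. Because the comparison is only modulo $\ker(j^t_{\ell,*})$, one must first invoke Taylor's fixed point theorem to justify that the composite $K_0(\Zl[G],\Ql[G])\to K_0(\calO^t_\ell[G],\Qlc[G])$ is injective, and then carry out the computation there.

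There is a related gap in your treatment of $J_{2,L/K}$ and $(\psi_{2,*}-1)(y_{L/K}^{-1})$. Yes, these split as products over all $v\mid d_L$ in $\zeta(\Qc[G])^\times$, but the right hand side of the theorem involves only places with $v\mid \ell$. So after cancelling the $v\mid \ell$ contributions you are left with a residual term indexed by $v\mid d_L$, $v\nmid\ell$, of the shape $\prod_{v}\tilde i_w\bigl(J_{2,L_w/K_v}^{-1}\cdot(\psi_{2,*}-2)(y_{L_w/K_v})\bigr)$. Showing that this lies in $\ker(j^t_{\ell,*}\circ\delta_{G,\ell})$ is not ``routine bookkeeping''; it requires the specific input \cite[(9) and Lem.~5.3]{BreuPhd}. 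Once you insert Taylor's theorem, use the correct form of Breuning's congruence, and quote this last vanishing result, your outline becomes the paper's proof.
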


\begin{proof} Proposition \ref{rationality thm} implies $\Fa_{L/K}$ decomposes naturally as a sum $\sum_{\ell }\Fa_{L/K,\ell}$ of $\ell$-primary components and so it suffices to prove that for each $\ell$ there is in $K_0(\IZ_\ell[G],\IQ_\ell[G])$ an equality
\begin{equation}
\label{l adic eq} \Fa_{L/K,\ell} = \sum_{v\mid \ell}{\rm i}^G_{G_w,\IQ_{\ell}}(\Fa_{L_w/K_v}).
\end{equation}

To do this we fix a prime $\ell$ and an embedding $j_\ell: \IQ^c \to \IQ_\ell^c$ and write $\mathcal{O}_\ell^t$ for the
valuation ring of the maximal tamely ramified extension of $\IQ_\ell$ in $\IQ_\ell^c$.

We recall that Taylor's Fixed Point Theorem for group determinants (see \cite[Chap. 8, Th. 1.1]{taylor}) implies that the
following composite homomorphism is injective
\mpar{jltstar}
\begin{equation}\label{jltstar}
K_0(\IZ_\ell[G],\IQ_\ell[G]) \to K_0(\IZ_\ell[G],\IQ_\ell^c[G]) \xrightarrow{j_{\ell,*}^t} K_0(\mathcal{O}_\ell^t[G],\IQ_\ell^c[G])
\end{equation}
where the first arrow is the natural inclusion and $j_{\ell,*}^t$ sends $[X,\xi,Y]$ to
$[\mathcal{O}_\ell^t\otimes_\Zl X,\xi,\mathcal{O}_\ell^t\otimes_\Zl Y]$.
It is therefore enough to show that the equality (\ref{l adic eq}) holds after applying $j_{\ell,*}^t$.

The key ingredients required to prove this fact are due to Breuning and are stated in Lemma \ref{decomp alg term} below.

In the sequel we abbreviate ${\rm i}^G_{G_w,\IQ_{\ell}}$ and ${\rm i}^G_{G_w,\IQ^c_{\ell}}$ to ${\rm i}_{w,\ell}$ and
${\rm i}^c_{w,\ell}$ respectively.

In particular, if for any finite Galois extension $F/E$ of either local fields or number fields we set
\[ \tau_{F/E,2} := \tau_E^{G(F/E)}\cdot (\psi_{2,\ast}-1)(\tau'_{F/E}),\]
then Breuning's results as stated below combine with the explicit definitions of the terms $\Fa_{L/K,\ell}$ and $\Fa_{L_w/K_v}$ to imply that

\begin{align}\label{partial} &j_{\ell,*}^t( \Fa_{L/K,\ell}-\sum_{v\mid \ell}{\rm i}_{w,\ell}(\Fa_{L_w/K_v}))\notag\\
= &j_{\ell,*}^t\left( j_{\ell,*}([\calA_{L/K,\ell}, \kappa_{L,\ell}, H_{L, \ell}]) - j_{\ell,*}(\delta_G(\tau_{L/K,2})) \right)\notag\\
&\hskip 0.2truein - \sum_{v\mid \ell}j_{\ell,*}^t\left({\rm i}^c_{w,\ell}( [\calA_{L_w/K_v}, \kappa_{L_w}, H_{L_w}] )-{\rm i}^c_{w,\ell}(\delta_{G_w,\ell}(j_\ell(\tau_{L_w/K_v,2})))- {\rm i}^c_{w,\ell}(U_{L_w/K_v})   \right)\notag\\
= &-j_{\ell,*}^t(j_{\ell,*}(\delta_G(\tau_{L/K,2})))
+ \sum_{v\mid \ell}j_{\ell,*}^t({\rm i}^c_{w,\ell}(\delta_{G_w,\ell}(j_\ell(\tau_{L_w/K_v,2}))))\notag\\
= &-j^t_{\ell,*}(j_{\ell,*}(\delta_G(\tau_{L/K,2}(\tau^\dagger_{L/K})^{-1})))
 + \sum_{v\mid \ell}j_{\ell,*}^t({\rm i}^c_{w,\ell}(\delta_{G_w,\ell}(j_\ell(\tau_{L_w/K_v,2}(\tau^\dagger_{L_w/K_v})^{-1}))))\\
 & \hskip 1truein  - j^t_{\ell,*}(\delta_{G,\ell}(\prod_{v \mid d_L\atop{v\nmid \ell}}\tilde i_w(y_{L_w/K_v}))).\notag
 \end{align}
Here the first equality follows directly from the definitions and the second uses Lemma \ref{decomp alg term}(i) and (ii). In addition, the third equality follows from Lemma \ref{decomp alg term}(iii) below and uses the map
\[ \tilde i_w: \zeta (\Qlc[G_w])^\times \to \zeta(\Qlc[G])^\times\]
that satisfies $\tilde i_w(x)_\chi = \prod_{\varphi \in \widehat G_w}x_\varphi^{\langle \mathrm{res}_{G_w}^G\chi, \varphi \rangle_{G_w}}$ for all $x$ in  $\zeta (\Qlc[G_w])^\times$ and $\chi$ in $\widehat G$.

Now, by (\ref{label product}), the first term in the expression (\ref{partial}) is equal to
\[
 -(j_{\ell,*}^t \circ \delta_{G,\ell}\circ j_{\ell})(J_{2,L/K} \cdot (\psi_{2,\ast}-1)(y_{L/K}^{-1})).
\]
%
In the same way, equality (\ref{local label product}) implies that the second term in (\ref{partial}) is
\[
(j_{\ell,*}^t \circ i_{w, l}^c \circ \delta_{G_w, \ell} \circ j_\ell)(\prod_{v \mid \ell}J_{2,L_w/K_v}\cdot (\psi_{2,\ast}-1)(y_{L_w/K_v}^{-1})).
\]


These two expressions combine with the commutative diagram

\begin{equation}\label{ind comm diagram}
\xymatrix{
\zeta (\Qlc[G_w])^\times \ar[d]_{\tilde{i}_w} \ar[r]^-{\delta_{G_w,\ell}} &K_0(\Zl[G_w],\Qlc[G_w])\ar[d]_{i_{w,\ell}^c}\\
\zeta(\Qlc[G])^\times \ar[r]^-{\delta_{G,\ell}} &K_0(\Zl[G],\Qlc[G]),
}
\end{equation}
the fact that $j_\ell(J_{2,L/K}) = \prod_{v\mid d_L}\tilde i_w(j_\ell(J_{2,L_w/K_v}))$ by the decomposition of global Galois-Gauss sums as a product of local Galois-Gauss sums and the explicit definition of $y_{L/K}$ to show that the sum in (\ref{partial}) is equal to the image under
$j^t_{\ell,*}\circ \delta_{G, \ell}$ of
%
\begin{eqnarray*}
&& j_\ell(J_{2,L/K})^{-1}\cdot\prod_{v\mid \ell}\tilde i_w(j_\ell(J_{2,L_w/K_v}))\cdot
\prod_{v \mid d_L\atop {v\nmid \ell}}\tilde i_w(j_\ell((\psi_{2,\ast}-2)(y_{L_w/K_v}))) \\
&=& \prod_{v \mid d_L\atop {v\nmid \ell}}\tilde i_w(j_\ell(J_{2,L_w/K_v}^{-1}\cdot (\psi_{2,\ast}-2)(y_{L_w/K_v}))).
\end{eqnarray*}
%
%
%
%

It is thus enough to note the image under $j^t_{\ell,*}\circ \delta_{G,\ell}$ of the latter element vanishes as a consequence of \cite[(9) and Lem. 5.3]{BreuPhd} and the second displayed equation on page 68 of loc. cit.

This completes the proof of Theorem \ref{decomp thm}.
\end{proof}

\begin{lemma}\label{decomp alg term}\
\begin{itemize}
\item[(i)] For each prime $\ell$ one has
\[
j_{\ell,*}([\calA_{L/K,\ell}, \kappa_{L,\ell}, H_{L,\ell}])=\sum_{v\mid \ell}{\rm i}^G_{G_w,\IQ^c_\ell}([\calA_{L_w/K_v}, \kappa_{L_w}, H_{L_w}]).
\]
\item[(ii)] For each $v\mid \ell$ the element ${\rm i}^G_{G_w,\IQ^c_{\ell}}(U_{L_w/K_v})$ belongs to $\ker(j^t_{\ell,*})$.

\item[(iii)] One has
\[
j_{\ell,*}(\delta_G(\tau^\dagger_{L/K})) - \sum_{v\mid \ell}{\rm i}^G_{G_w,\IQ_{\ell}}(\delta_{G_w,\ell}(j_\ell(\tau^\dagger_{L_w/K_v}))) \equiv
\delta_{G,\ell}(\prod_{v \mid d_L\atop{v\nmid \ell}}\tilde i_w(y_{L_w/K_v})) \,\, ({\rm mod}\,\,\ker(j^t_{\ell,*})).\]
\end{itemize}
\end{lemma}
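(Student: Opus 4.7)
I treat the three parts separately.

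For (i), the key observation is that the base change isomorphism $L \otimes_\Qu \Ql \cong \prod_{v\mid\ell} L_v$, combined with $L_v \cong \prod_{w\mid v} L_w$, induces a decomposition $\calA_{L/K} \otimes_\ZZ \Zl \cong \bigoplus_{v\mid\ell}\Zl[G]\otimes_{\Zl[G_w]}\calA_{L_w/K_v}$ of $\Zl[G]$-modules, together with parallel decompositions of $H_L\otimes_\ZZ\Zl$ and of the scalar-extended $\kappa_L$ into the local data $\kappa_{L_w}$ and $H_{L_w}$. Additivity of classes in $K_0(\Zl[G],\Qlc[G])$ together with the explicit form of the induction map ${\rm i}^G_{G_w,\Qlc}$ then yields the claimed equality.

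For (ii), this follows directly from a key property of Breuning's unramified element: by construction in \cite{mb2}, $U_{F/E}$ lies in the kernel of the natural base change $K_0(\Zl[\Gamma],\Qlc[\Gamma])\to K_0(\calO_\ell^t[\Gamma],\Qlc[\Gamma])$ (this is crucial for Breuning's reduction of his local conjecture to the tame case). Since induction commutes with this base change, the image ${\rm i}^G_{G_w,\Qlc}(U_{L_w/K_v})$ also lies in $\ker(j^t_{\ell,\ast})$.

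Part (iii) is more involved. The starting point is the global factorization $\tau^\dagger_{L/K}=\tau^G_K\cdot \tau_{L/K}$, its local counterpart $\tau^\dagger_{L_w/K_v}=\tau^{G_w}_{K_v}\cdot \tau_{L_w/K_v}$, and the Euler product $\tau(K,\chi)=\prod_v\tau(K_v,\chi_v)$ (together with the analogous product for $\tau_K$). Using the defining formula for $\tilde{i}_w$ and the identity $\sum_\varphi \varphi(1)\langle \chi|_{G_w},\varphi\rangle_{G_w}=\chi(1)$, these combine to yield the equality $\tau^\dagger_{L/K}=\prod_v\tilde{i}_w(\tau^\dagger_{L_w/K_v})$ in $\zeta(\Qc[G])^\times$, where the product runs over all finite places of $K$. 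Applying $\delta_{G,\ell}\circ j_\ell$ and invoking the commutative diagram (\ref{ind comm diagram}), the first term on the left-hand side of (iii) becomes $\delta_{G,\ell}(\prod_v\tilde{i}_w(j_\ell(\tau^\dagger_{L_w/K_v})))$, so that the sought-after difference is $\delta_{G,\ell}(\prod_{v\nmid\ell}\tilde{i}_w(j_\ell(\tau^\dagger_{L_w/K_v})))$.

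To conclude, I would write $\tau^\dagger_{L_w/K_v}=\tau^{G_w}_{K_v}\cdot \tau'_{L_w/K_v}\cdot y_{L_w/K_v}$ and check that, for $v\nmid\ell$, the factor $\tau^{G_w}_{K_v}\cdot \tau'_{L_w/K_v}$ has all its character values in a cyclotomic field $\Qu(\zeta_{p^N})$ with $p\neq\ell$, which under $j_\ell$ lies in the unit group of a tame extension of $\Ql$; such elements are annihilated by $j^t_{\ell,\ast}\circ \delta_{G,\ell}$. At unramified places $v\nmid\ell$ the remaining factor $y_{L_w/K_v}=-\chi_v(\sigma)$ is likewise a tame unit (values being roots of unity) and contributes trivially. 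The surviving contribution modulo $\ker(j^t_{\ell,\ast})$ is therefore exactly $\delta_{G,\ell}(\prod_{v\mid d_L,v\nmid\ell}\tilde{i}_w(y_{L_w/K_v}))$, as required. The main obstacle will be justifying this vanishing step precisely: it relies on Taylor's Fixed Point Theorem (the same input used in the injectivity statement (\ref{jltstar})) and is essentially established by Breuning in \cite[Lem.~5.3]{BreuPhd} together with the subsequent computation.
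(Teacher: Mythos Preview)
Your proposal is correct and takes essentially the same approach as the paper. In fact the paper's own proof is nothing more than a series of citations to Breuning's thesis: claim (i) is \cite[Lem.~5.4]{BreuPhd} with $\calA_{L/K}$ in place of the lattice $\mathcal{L}$, claim (ii) is part of the axiomatic characterisation \cite[Prop.~4.4]{BreuPhd}, and claim (iii) is identified with \cite[(36)]{BreuPhd}. Your sketch unpacks precisely these references.

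Two small remarks on your treatment of (iii). First, your assertion that $\tau^{G_w}_{K_v}\cdot\tau'_{L_w/K_v}$ has values in $\Qu(\zeta_{p^N})$ with $p\neq\ell$ is not quite accurate: character values of $G$ also enter, so the values live in $\Qu(\zeta_{|G|},\zeta_{p^N})$. The point is rather that these values are $\ell$-units, and the vanishing in $K_0(\calO_\ell^t[G],\Qlc[G])$ then requires exactly the input you name, namely Taylor's Fixed Point Theorem together with \cite[Lem.~5.3]{BreuPhd}. Second, for unramified $v\nmid\ell$ you can say slightly more than you do: there $y_{L_w/K_v}=-\sigma_v\in\Zl[G_w]^\times$, so $\delta_{G_w,\ell}(y_{L_w/K_v})=0$ already in $K_0(\Zl[G_w],\Qlc[G_w])$, before passing to $\calO_\ell^t$. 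Neither point affects the correctness of your outline.
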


\begin{proof} To prove claim (i) one can just follow the proof of \cite[Lem. 5.4]{BreuPhd} verbatim,
merely substituting $\mathcal A_{L/K}$ for the projective $\IZ[G]$-sublattice $\mathcal L$ of $\mathcal O_L$ that is used in loc. cit.

The property stated in claim (ii) is part of the axiomatic characterisation used by Breuning to define the elements $U_{L_w/K_v}$ in \cite[Prop. 4.4]{BreuPhd}.

To prove claim (iii) we note that elements $\delta_G(\tau^\dagger_{L/K})$ and $\delta_{G_w,\ell}(\tau^\dagger_{L_w/K_v})$
are respectively denoted by $\tau_{L/K}$ and
$T_{L_w/K_v}$ in \cite{mb2} and that the claimed congruence is thus equivalent to the equality of \cite[(36)]{BreuPhd}.
\end{proof}

\section{Results in special cases}\label{absolute case}

In this section we compute $\fra_{L/K}$ explicitly in some important special cases and also give a proof of Theorem \ref{main result}.

\subsection{Local results} The following result uses the element $\frc_{F/E}$ defined in (\ref{twisted unram char def}).

\begin{theorem}\label{special cases prop}\mpar{special cases prop} Let $E/\Ql$ be a finite extension and $F/E$ a weakly ramified Galois extension of odd degree with Galois group $\Gamma = G(F/E)$. Then $\fra_{F/E} = \frc_{F/E}$ if either $F/E$ is tamely ramified or if $E/\Ql$ is unramified and $F/E$ is both abelian and has cyclic ramification subgroup.
\end{theorem}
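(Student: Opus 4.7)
The approach is to invoke Breuning's proof of his local epsilon constant conjecture (\ref{bc first}) in each of the stated cases, and then to exploit the equivalence between (\ref{bc first}) and Conjecture \ref{local conj} derived in \S\ref{local conj section}. Breuning's conjecture (\ref{bc first}) is a theorem in both of the cases at hand: the tame case is proved by Breuning in \cite{mb2}, while the abelian case with unramified base and cyclic inertia is also established by Breuning (in \cite{BreuPhd}). Consequently Conjecture \ref{local conj} holds in both settings and gives
\[
\fra_{F/E} \;=\; \mathcal{E}_{F/E} - \delta_{\Gamma,\ell}(J_{2,F/E}) - \frc_{F/E} - M_{F/E}.
\]
Thus the claimed equality $\fra_{F/E} = \frc_{F/E}$ is equivalent to the purely explicit identity
\[
\mathcal{E}_{F/E} - M_{F/E} \;=\; \delta_{\Gamma,\ell}(J_{2,F/E}) + 2\,\frc_{F/E}. \qquad (\star)
\]

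For case (i), Remark \ref{explicit rems twisted} gives $\frc_{F/E} = 0$, so $(\star)$ reduces to $\mathcal{E}_{F/E} - M_{F/E} = \delta_{\Gamma,\ell}(J_{2,F/E})$. I would establish this by a local computation that mirrors the argument used to prove Theorem \ref{thm tame}. The key input is a local analogue of \cite[Cor.~7.7]{BleyBurns}, namely
\[
[\calO_F, \kappa_F, H_F] \;=\; \delta_{\Gamma,\ell}(j_\ell(\tau_E^\Gamma \cdot \tau'_{F/E})) + U_{F/E},
\]
which in the tame case follows directly from Breuning's theorem applied to the lattice $\calO_F$ in place of $\calA_{F/E}$ (using the $U$-term to account for the change of base). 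Combined with the local analogue of Lemma \ref{adapted cong}, this yields the local tame analogue of Theorem \ref{thm tame} (with $k=2$), namely
\[
[\calA_{F/E}, \kappa_F, H_F] \;=\; \delta_{\Gamma,\ell}(j_\ell(\tau_E^\Gamma \cdot (\psi_{2,\ast}-1)(\tau'_{F/E}))) + U_{F/E},
\]
and substituting into the definition of $\fra_{F/E}$ gives $\fra_{F/E} = 0 = \frc_{F/E}$.

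For case (ii), I would verify $(\star)$ by direct computation. The abelianness of $\Gamma$ reduces all character-theoretic manipulations to one-dimensional characters; the explicit description of $\frc_{F/E}$ recorded in Remark \ref{explicit rems twisted} together with the cyclicity of $\Gamma_0$ and the unramified nature of $E/\Ql$ permit closed-form evaluation of $J_{2,F/E}$, $\frc_{F/E}$ and $M_{F/E}$. Breuning's explicit formulae for the local fundamental-class contribution $\mathcal{E}_{F/E}$ simplify considerably under these hypotheses (the exponential and logarithm on an unramified base are particularly well-behaved, and cyclic inertia makes the relevant cocycles explicit), so $\mathcal{E}_{F/E}$ can be computed and the identity $(\star)$ verified term by term. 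The main obstacle is this explicit evaluation of $\mathcal{E}_{F/E}$, since it encodes local fundamental classes and is in general hard to make explicit; the two hypotheses in (ii) are precisely the pair of simplifications that make the calculation tractable.
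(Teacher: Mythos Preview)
Your reformulation via Breuning's conjecture and the identity $(\star)$ is formally correct, but it does not yield a genuine shortcut and contains a significant gap in case (ii).

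\textbf{Case (i).} The sketch contains a non sequitur. Breuning's conjecture $T_{F/E}+C_{F/E}+U_{F/E}-M_{F/E}=0$ does not ``applied to the lattice $\calO_F$'' give the claimed local analogue $[\calO_F,\kappa_F,H_F]=\delta_{\Gamma,\ell}(j_\ell(\tau_E^\Gamma\cdot\tau'_{F/E}))+U_{F/E}$; the terms $\mathcal{E}(\exp(\cdot))$ and $M_{F/E}$ do not disappear for free, and $\calO_F$ is not even an admissible choice of $\calL$ in the definition of $C_{F/E}$. What actually drives the tame case is the direct norm-resolvent/Gauss-sum comparison: after applying Taylor's fixed point theorem and expressing $[\calA_{F/E},\kappa_F,H_F]$ via resolvents, the vanishing reduces to showing that $\chi\mapsto \calN_{E/\Ql}(\theta\mid\chi)/\tau(E,\psi_2(\chi)-\chi)$ lies in $\ker(j_{\ell,*}^t\circ\delta_{\Gamma,\ell})$, and this is exactly Erez's result \cite[Prop.~8.2]{Erez}. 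This is the paper's route, and it is close to the ``local Theorem~\ref{thm tame}'' you have in mind, but the key input is Erez, not Breuning.

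\textbf{Case (ii).} Here there is a genuine gap. First, the attribution is wrong: Breuning's thesis does not cover this case; the relevant input is Bley--Cobbe \cite{BleyCobbe} (building on Pickett--Vinatier \cite{PV}). Second, and more seriously, your plan to verify $(\star)$ by ``explicit evaluation of $\mathcal{E}_{F/E}$'' is not easier than the original problem: the fundamental-class term is precisely the hard object, and Bley--Cobbe's proof of Breuning's conjecture proceeds by computing it, so invoking their theorem and then recomputing $\mathcal{E}_{F/E}$ to check $(\star)$ is circular. The paper avoids $\mathcal{E}_{F/E}$ entirely: it uses the explicit formula \cite[Prop.~5.1.5]{BleyCobbe} for $\calN_{K/\Ql}(\theta\mid\chi\phi)/\tau(K,\chi\phi)$, combines it with \cite[Lem.~5.1.2]{BleyCobbe} and the Gauss-sum identity $\tau(K,2\chi-\chi^2)=p^m\chi(4)^{-1}$ from the proof of \cite[Prop.~3.9]{PV}, and thereby shows directly that $\fra_{N/K}-\frc_{N/K}$ lies in $\ker(j_{\ell,*}^t\circ\delta_{G,\ell})$. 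The twisted unramified characteristic $\frc$ appears naturally in this computation via the factor $\phi(p^2)=\phi(\sigma^2)$, not as an a posteriori correction.

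In short: the detour through $(\star)$ trades one hard computation ($\fra_{F/E}$) for another ($\mathcal{E}_{F/E}$) of equal difficulty, and your justification for why the latter is tractable is where the argument breaks down.
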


\begin{proof} We fix an embedding $j_\ell \colon \Qc \lra \Qlc$ and use it to identify $\widehat\Gamma$ with the set of
irreducible $\Qlc$-valued characters of $\Gamma$.

By Proposition \ref{dagger proof} and Taylor's fixed point theorem it  suffices to show that
\begin{equation}\label{eq 1}
j_{\ell,*}^t ( [\calA_{F/E}, \kappa_F, H_F] - \delta_{\Gamma, \ell}(j_\ell(\tau_E^\Gamma\cdot (\psi_{2,*}-1)(\tau_{F/E}'))) -  U_{F/E} - \frc_{F/E}) = 0
\end{equation}
with $j_{\ell,*}^t$ as in (\ref{jltstar}).

At the outset we note that $j_{\ell,*}^t(U_{F/E}) = 0$ (by \cite[Prop.~4.4]{BreuPhd}) and that if $\theta$ is any element of $F$ with $\calA_{F/E} = \calO_E[\Gamma]\cdot\theta$, then \cite[Lem.~4.16]{BreuPhd} implies
\[
[\calA_{F/E}, \kappa_F, H_F] = \delta_{\Gamma, \ell}\left( \sum_{\chi \in \widehat\Gamma}e_\chi\delta_E^{\chi(1)}\cdot\calN_{E/\Ql}(\theta \mid \chi) \right).
\]

We now assume $F/E$ is tamely ramified. In this case Remark \ref{explicit rems twisted} implies both that $\frc_{F/E}$ vanishes and
$\delta_{\Gamma,\ell}((\psi_{2,*}-1)\tau_{F/E}') = \delta_{\Gamma,\ell}((\psi_{2,*}-1)\tau_{F/E})$ and so the element on the left hand side of  (\ref{eq 1}) is equal to the image under $j_{\ell,*}^t\circ\delta_{\Gamma,\ell}$ of $x_1\cdot x_2$ where for each $\chi$ in $\widehat{\Gamma}$ one has (in terms of the notation in (\ref{explicit decomp}))
\[
x_{1,\chi} := \frac{\delta_E^{\chi(1)}}{j_\ell(\tau(\Ql, \ind_E^\Ql 1_E)^{\chi(1)})}\,\,\text{ and }\,\, x_{2,\chi} := \frac{\calN_{E/\Ql}(\theta \mid \chi)}{j_\ell(\tau(E, \psi_2(\chi) - \chi))}.
\]

The equality (\ref{eq 1}) is therefore true in this case since both $(j_{\ell,*}^t\circ\delta_{\Gamma, \ell})(x_1) = 0$ (as a consequence of the obvious local analogue of (\ref{disc pfaff})) and $(j_{\ell,*}^t\circ\delta_{\Gamma, \ell})(x_2) = 0$, as indicated by Erez in the proof of
\cite[Prop.~8.2]{Erez}.

In the remainder of the argument we assume that $E/\Ql$ is unramified and $F/E$ is both abelian and has cyclic ramification subgroup.
 The proof in this case will heavily rely on the computations of \cite{BleyCobbe}
(which in turn rely on the work of Pickett and Vinatier in \cite{PV}) and so, for convenience, we switch to
the notation introduced in \S3.1 of loc. cit. (so that $F, E$ and $\Gamma = G({F/E})$ are now replaced by $N$, $K$ and $G$ respectively).

In particular, we define $\alpha_M$ as in \cite[just before Lem.~5.1.4]{BleyCobbe}, let $\theta_2 \in K'$ be such that $\OKG \cdot\theta_2 = \calO_{K'}$ and $T_{K'/K}(\theta_2) = 1$ and recall that the product $\theta = \alpha_M\cdot\theta_2$ satisfies $\calA_{N/K} = \OKG\cdot\theta$.
 (In this regard we observe that the assumption made in \cite{BleyCobbe} that $[K:\Qp]$ and $[K':K]$ are coprime is not needed for the results obtained in \S5 of loc. cit.)

Each character $\psi \in \widehat{G}$ is of the form $\chi\phi$ with an unramified character
$\phi$ of $G_{N/M}$ and $\chi$ a character of $G_{N/K'}$ and from \cite[Prop.~5.1.5]{BleyCobbe} one has
\[
\frac{\calN_{K/\Ql}(\theta \mid \chi\phi)}{\tau(K, \chi\phi)} =
\begin{cases}
  \calN_{K/\Ql}(\theta_2 \mid \phi), & \text{ if } \chi = \chi_0, \\
  p^{-m}\cdot\chi(4)\cdot\calN_{K/\Ql}(\theta_2 \mid \phi)\cdot \phi(p^2), & \text{ if } \chi \ne \chi_0,
\end{cases}
\]
where here and in the following we omit each occurrence of $j_\ell$ in our notation.

Now the proof of \cite[Prop.~5.2.1]{BleyCobbe} shows that $\tau(K, \chi\phi) = \tau(\Ql, i_K^\Ql(\chi\phi))$ and so
(\ref{local label product}) implies
\[
\tau_K^G\cdot (\psi_{2,*}-1)(\tau_{N/K}') = ( \sum_{\chi, \phi} e_{\chi\phi}\tau(K, \chi\phi))\cdot  J_{2, N/K} \cdot (\psi_{2,*}-1)(y_{N/K}^{-1}).
\]
It follows that for each $\chi$ and $\phi$ one has
\begin{eqnarray*}
\frac{\calN_{K/\Ql}(\theta \mid \chi\phi)}{( \tau_K^G\cdot(\psi_{2,*}-1)(\tau_{N/K}') )_{\chi\phi}}
&=& \frac{\calN_{K/\Ql}(\theta \mid \chi\phi)}{\tau(K, \chi\phi)\cdot \tau(K, \psi_2(\chi\phi) - 2\chi\phi)\cdot y(K, \chi\phi - \psi_2(\chi\phi))}\\
&=&
    \begin{cases}
      \calN_{K/\Ql}(\theta_2 \mid \phi) \frac{\tau(K, 2\chi\phi -\psi_2(\chi\phi))}{y(K, \chi\phi -\psi_2(\chi\phi))}, & \text{ if } \chi = \chi_0, \\
      p^{-m}\cdot\chi(4)\cdot\phi(p^2)\cdot \calN_{K/\Ql)}(\theta_2 \mid \phi) \frac{\tau(K, 2\chi\phi -\psi_2(\chi\phi))}{y(K, \chi\phi -\psi_2(\chi\phi))}, & \text{ if } \chi \ne \chi_0.
    \end{cases}
\end{eqnarray*}

Furthermore, one has $\phi(p^2) = \phi( (p^2, K'/K)) = \phi(\sigma^2)$ (by \cite[XIII, \S 4, Prop.~13]{LocalFields}) and so $\frc_{N/K}$ is equal to the element $x_3$ of $\QQ_\ell^c[G]^\times$ that is characterized by the following equalities for each $\chi$ and $\phi$
\[
 x_{3, \chi\phi} =  \begin{cases}
    y(K, \chi\phi -\psi_2(\chi\phi))^{-1}, & \text{ if } \chi = \chi_0, \\
    \phi(p^2) y(K, \chi\phi -\psi_2(\chi\phi))^{-1}, & \text{ if } \chi \ne \chi_0.
  \end{cases}
\]

Taken together, these facts imply that  (\ref{eq 1}) is valid if $\ker(j_{\ell,*}^t \circ \delta_{G, \ell})$ contains the
element $x_4$ of $\QQ_\ell^c[G]^\times$ defined by
\[
x_{4,\chi\phi} =
\begin{cases}
        \calN_{K/\Ql}(\theta_2 \mid \phi)\cdot \tau(K, 2\chi\phi -\psi_2(\chi\phi)), & \text{ if } \chi = \chi_0, \\
      p^{-m}\cdot\chi(4)\cdot\calN_{K/\Ql}(\theta_2 \mid \phi)\cdot\tau(K, 2\chi\phi -\psi_2(\chi\phi)), & \text{ if } \chi \ne \chi_0.
\end{cases}
\]

Now, as in the proof of \cite[Th.~6.1, p. 1243]{BleyCobbe}, one can show that $\ker(j_{\ell,*}^t \circ \delta_{G, \ell})$ contains the element
$x_4'$ of $\QQ_\ell^c[G]^\times$ for which at all $\chi$ and $\phi$ one has
\[
x_{4,\chi\phi}' =
\begin{cases}
        \calN_{K/\Ql}(\theta_2 \mid \phi),  & \text{ if } \chi = \chi_0, \\
      \chi(4)\cdot \calN_{K/\Ql}(\theta_2 \mid \phi), & \text{ if } \chi \ne \chi_0.
\end{cases}
\]
In addition, \cite[Lem.~5.1.2]{BleyCobbe} implies that for all $\chi$ and $\phi$ one has $\tau(K, 2\chi\phi - \psi_2(\chi\phi)) = \tau(K, 2\chi - \chi^2)$.

The required equality $(j_{\ell,*}^t \circ \delta_{G, \ell})(x_4)=0$ is thus true if and only if $(j_{\ell,*}^t \circ \delta_{G, \ell})(x_5) =0$ with
$x_5$ the element of $\QQ_\ell^c[G]^\times$ for which at each $\chi$ and $\phi$ one has
\[
x_{5,\chi\phi} =
\begin{cases}
        1, & \text{ if } \chi = \chi_0, \\
      p^{-m}\tau(K, 2\chi -\chi^2), & \text{ if } \chi \ne \chi_0.
\end{cases}
\]

But, by the last displayed formula in the proof of  \cite[Prop.~3.9]{PV}, for each non-trivial character $\chi$ one has
\[ \tau(K, \chi) = p^m\cdot \chi(c_\chi^{-1})\cdot \psi_K(c_\chi^{-1}) \,\,\text{ and }\,\, \tau(K, \chi^2) = p^m \cdot\chi^2((c_\chi/2)^{-1}) \cdot\psi_K((c_\chi/2)^{-1}),\]
with $\psi_K$ the standard additive character and $c_\chi$ as described in \cite[Prop.~3.9]{PV}.

It follows that $\tau(K, 2\chi - \chi^2) = p^m\cdot \chi(4)^{-1}$ for non-trivial characters $\chi$ and hence that $x_{5,\chi\phi} = \chi(4)^{-1}$ for all $\chi$ and $\phi$. Given this description, it is clear that $x_5 \in \ker(j_{\ell,*}^t \circ \delta_{G, \ell})$, as required to complete the proof of (\ref{eq 1}) in this case.
\end{proof}

\subsection{Global results}\label{glob consequences}\mpar{glob consequences} In this section we derive several consequences of Theorem \ref{special cases prop}, including a proof of Theorem \ref{main result}.

\subsubsection{}\label{proof of main result}We shall first give a proof of Theorem \ref{main result}.

Following Proposition \ref{rationality thm}, for each prime $\ell$ we write $\mathfrak{a}_{L/K,\ell}$ for the image of $\mathfrak{a}_{L/K}$ in $K_0(\mathbb{Z}_\ell[G],\mathbb{Q}_\ell[G])$.

Then Theorem \ref{decomp thm} combines with the vanishing of $\fra_{F/E}$ for each tamely ramified extension $F/E$ of local fields (as proved in Theorem \ref{special cases prop}) to reduce the proof of Theorem \ref{main result}(i) to showing that for each $\ell$ for which there is an $\ell$-adic place $v$ in $\mathcal W_{L/K}$ the element $\Fa_{L/K,\ell}$ belongs to
$K_0(\IZ_{\ell}[G],\IQ_{\ell}[G])_{\rm tor}$.

In view of the explicit description of $K_0(\IZ_{\ell}[G],\IQ_{\ell}[G])_{\rm tor}$ given by the second author in \cite[Th. 4.1]{Burns}, it is thus enough to prove that for each such prime $\ell$ one has $\pi_{H/J}^H(\rho_H^G(\Fa_{L/K,\ell}))=0$ for every cyclic subgroup $H$ of $G$ and every subgroup $J$ of $H$ with $|H/J|$ prime to $\ell$.

Invoking the result of Theorem \ref{funct thm} it is thus enough to show that $\Fa_{F/E,\ell}$
vanishes for all towers of number fields $K\subseteq E\subseteq F\subseteq L$ with $L/E$ cyclic and the degree $[F:E]$
prime to $\ell$. However, in any such case, all $\ell$-adic places of $E$ are tamely ramified in $F/E$ and so
Theorem \ref{special cases prop} in conjunction with Theorem \ref{decomp thm} (or (\ref{l adic eq}))
implies $\Fa_{F/E,\ell}$ vanishes, as required.

Claims (ii) and (iii) of Theorem \ref{main result} will follow from the same argument used to prove Corollary \ref{coro 1}.

Finally we note that claim (iv) follows directly from the definition of $\fra_{L/K}$ and the facts that $H_L$ is a free $G$-module and
 $\partial_{\Ze, \Qu, G}\circ \delta_G$ is the zero homomorphism.

This completes the proof of Theorem \ref{main result}.

\subsubsection{}In order to describe a global consequence of Theorem \ref{special cases prop} we define an `idelic twisted unramified characteristic' by setting
\begin{equation}\label{idelic twisted unram char def}
\frc_{L/K} := \sum_\ell \sum_{v \mid \ell} i_{G_w, \Ql}^G( \frc_{L_w/K_v}).
\end{equation}
If $v$ is at most tamely ramified in $L/K$, then $\frc_{L_w/K_v}$ vanishes. This shows $\frc_{L/K}$ is a well-defined element in $K_0(\ZG, \QG)$ and that
\[
\frc_{L/K,\ell}=
\begin{cases}
  0, & \text{ if } \ell \not\in \calW_{L/K}^\Qu, \\
  \sum_{v \mid \ell} i_{G_w, \Ql}^G( \frc_{L_w/K_v} ), &  \text{ if } \ell \in \calW_{L/K}^\Qu.
\end{cases}
\]

In particular, by combining Theorems \ref{decomp thm} and \ref{special cases prop} one obtains the following result.

\begin{coro}\label{vinatier thm}\mpar{vinatier thm}
Let $L/K$ be a weakly ramified odd degree Galois extensions of number fields.
 Then $\mathfrak{a}_{L/K} = \frc_{L/K}$ whenever all of the following conditions are satisfied at each $v$ in $\calW_{L/K}$:
\begin{itemize}
\item [(i)] The decomposition subgroup of $v$ is abelian;
\item[(ii)] The inertia subgroup of $v$ is cyclic;
\item[(iii)] The extension $K_v/\mathbb{Q}_\ell$ is unramified, where $\ell = \ell(v)$ denotes the residue characteristic.
\end{itemize}
\end{coro}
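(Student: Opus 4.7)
The plan is to derive the corollary directly from the local--global decomposition of Theorem \ref{decomp thm} together with the local calculations in Theorem \ref{special cases prop}. The key observation is that $\frc_{L/K}$ is itself defined, via (\ref{idelic twisted unram char def}), as a sum of induced local contributions of exactly the same shape as the decomposition of $\fra_{L/K}$ provided by Theorem \ref{decomp thm}. Thus the global identity will reduce at once to a case-by-case verification of the corresponding local identity.

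First I would invoke Theorem \ref{decomp thm} to write
\[
\fra_{L/K} \;=\; \sum_{\ell}\sum_{v\mid \ell}{\rm i}^G_{G_w,\IQ_{\ell}}(\fra_{L_w/K_v}),
\]
and compare this termwise with (\ref{idelic twisted unram char def}). Since the induction maps ${\rm i}^G_{G_w,\IQ_{\ell}}$ are homomorphisms of abelian groups, it then suffices to prove the purely local identity $\fra_{L_w/K_v} = \frc_{L_w/K_v}$ in $K_0(\IZ_\ell[G_w],\IQ_\ell[G_w])$ at every finite place $v$ of $K$.

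Next I would split the verification according to whether $v$ lies in $\calW_{L/K}$. If $v \notin \calW_{L/K}$ then $L_w/K_v$ is tamely ramified, so the first case of Theorem \ref{special cases prop} applies and yields $\fra_{L_w/K_v} = \frc_{L_w/K_v}$ (with both sides in fact vanishing, by Remark \ref{explicit rems twisted}). If instead $v \in \calW_{L/K}$, then hypotheses (i)--(iii) translate directly into the assertions that $K_v/\IQ_\ell$ is unramified, that $G_w = G({L_w/K_v})$ is abelian, and that the inertia (hence the ramification) subgroup of $G_w$ is cyclic; these are exactly the hypotheses of the second case of Theorem \ref{special cases prop}, which therefore again delivers $\fra_{L_w/K_v} = \frc_{L_w/K_v}$. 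Summing the resulting local equalities over all places produces the claimed identity.

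There is essentially no substantive obstacle in this argument: the real content has already been absorbed into Theorems \ref{decomp thm} and \ref{special cases prop}. The role of the corollary is simply to record the observation that conditions (i)--(iii) on $\calW_{L/K}$ are exactly what is needed to trigger the second (non-tame) case of Theorem \ref{special cases prop} at each wildly ramified place, while the tame case of that theorem automatically handles every place of $K$ at which $L/K$ is at most tamely ramified.
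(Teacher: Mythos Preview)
Your proposal is correct and follows exactly the same approach as the paper: the paper simply states that the corollary follows by combining Theorems \ref{decomp thm} and \ref{special cases prop}, and you have faithfully unpacked this into the termwise comparison between the decomposition of $\fra_{L/K}$ and the definition (\ref{idelic twisted unram char def}) of $\frc_{L/K}$, applying the tame and wild cases of Theorem \ref{special cases prop} as appropriate.
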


\begin{remark}
Extensive numerical computations suggest that the equality $\fra_{L/K} = \frc_{L/K}$ proved in Corollary \ref{vinatier thm}
may well be valid in all cases (see \S\ref{subsub comp for fra} for more details).
\end{remark}

Corollary \ref{vinatier thm} immediately combines with Theorem \ref{main result}(ii) and (iii) to give the following explicit consequence concerning the structures  discussed in Examples \ref{example2} and \ref{example1}.

\begin{coro}\label{hopeful}
Under the hypotheses of Corollary \ref{vinatier thm} one has
\[
[\mathcal{A}_{L/K},h_{L,\bullet}] = \Pi_G^{\mathrm{met}}(\frc_{L/K}) + \varepsilon_{L/K}^{\rm met} \text{ and }
{\rm Disc}(\mathcal{A}_{L/K},t_{L/K}) =  \Pi_G^{\mathrm{herm}}(\frc_{L/K}) + \varepsilon^{\rm herm}_{L/K}.
\]
\end{coro}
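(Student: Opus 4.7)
The plan is very short, since this statement is an essentially formal combination of two results already established in the paper. I would proceed in a single step: start from the two identities provided by Theorem \ref{main result}(ii) and (iii),
\[
[\mathcal{A}_{L/K},h_{L,\bullet}] = \Pi_G^{\mathrm{met}}(\mathfrak{a}_{L/K}) + \varepsilon_{L/K}^{\mathrm{met}} \quad \text{in } A(G),
\]
\[
\mathrm{Disc}(\mathcal{A}_{L/K},t_{L/K}) = \Pi_G^{\mathrm{herm}}(\mathfrak{a}_{L/K}) + \varepsilon_{L/K}^{\mathrm{herm}} \quad \text{in } \mathrm{HCl}(G),
\]
and then invoke the hypotheses of Corollary \ref{vinatier thm} to substitute $\mathfrak{a}_{L/K} = \mathfrak{c}_{L/K}$ on the right hand sides. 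Since $\Pi_G^{\mathrm{met}}$ and $\Pi_G^{\mathrm{herm}}$ are group homomorphisms (by their definitions as compositions of $h_G^{\mathrm{rel}}$ with $\partial_G^{1,1}$ and $\partial_G^{2,1}$ respectively, all of which are homomorphisms of abelian groups), the equality $\mathfrak{a}_{L/K} = \mathfrak{c}_{L/K}$ in $K_0(\mathbb{Z}[G],\mathbb{Q}[G]) \subseteq K_0(\mathbb{Z}[G],\mathbb{Q}^c[G])$ translates directly into the equalities $\Pi_G^{\mathrm{met}}(\mathfrak{a}_{L/K}) = \Pi_G^{\mathrm{met}}(\mathfrak{c}_{L/K})$ in $A(G)$ and $\Pi_G^{\mathrm{herm}}(\mathfrak{a}_{L/K}) = \Pi_G^{\mathrm{herm}}(\mathfrak{c}_{L/K})$ in $\mathrm{eHCl}(G)$ (and hence, by the compatibility noted after the definition of $\Pi_G^{\mathrm{herm}}$, in $\mathrm{HCl}(G)$ as well, since $\mathrm{Disc}$ takes values in the latter).

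Hence no genuine obstacle arises: the work has already been carried out in establishing Theorem \ref{main result}(ii), (iii) and Corollary \ref{vinatier thm}. The only mild subtlety worth pointing out in the write-up is that the first displayed equality of Corollary \ref{hopeful} takes place in $A(G)$ while the second takes place in $\mathrm{HCl}(G)$, so one should verify that the image of $\Pi_G^{\mathrm{herm}}(\mathfrak{c}_{L/K})$ actually lands in the subgroup $\mathrm{HCl}(G)$ of $\mathrm{eHCl}(G)$; but this is automatic because $\mathfrak{c}_{L/K}$ lies in $K_0(\mathbb{Z}[G],\mathbb{Q}[G])$ by construction (see (\ref{twisted unram char def}) and the discussion in Remark \ref{explicit rems twisted}), and Theorem \ref{prop key diag}(ii) guarantees that $\Pi_G^{\mathrm{herm}}$ sends this subgroup into $\mathrm{HCl}(G)$. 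I would therefore present the proof as a two-line deduction, recording this last compatibility observation explicitly for the reader's convenience.
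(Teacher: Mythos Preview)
Your proposal is correct and matches the paper's own argument exactly: the paper simply states that Corollary \ref{vinatier thm} ``immediately combines with Theorem \ref{main result}(ii) and (iii)'' to give the result. Your extra remark about $\Pi_G^{\mathrm{herm}}(\frc_{L/K})$ landing in $\mathrm{HCl}(G)$ is a valid (if slightly overcautious) observation that the paper leaves implicit.
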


It is therefore of interest to know when the classes $\Pi_G^{\mathrm{met}}(\frc_{L/K})$ and $\Pi_G^{\mathrm{herm}}(\frc_{L/K})$ vanish and the next result shows that this is often the case.


\begin{lemma}\label{cwr vanish}\mpar{cwr vanish}
The images of $\frc_{L/K}$ in each of the groups $\Cl(G), A(G)$ and $\HCl(G)$ all vanish if for
each $v \in \calW_{L/K}$ one has either $I_w = G_w$ or $I_w$ is of prime power order.\end{lemma}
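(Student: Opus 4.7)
The plan is to identify $\frc_{L/K}$ with $\delta_G(\beta)$ for an explicit global element $\beta \in \zeta(\QQ[G])^\times$ and then to deduce the three vanishing statements from the resulting idelic representative.

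First I would set $\beta := \prod_{v \in \calW_{L/K}} \mathrm{ind}^G_{G_w, *}(\alpha_v)$, where $\alpha_v = (1-e_{I_w}) + \sigma^{-1}e_{I_w}$ is the explicit element of $\zeta(\QQ[G_w])^\times$ provided by Remark \ref{explicit rems twisted}, and prove the identity $\frc_{L/K} = \delta_G(\beta)$ by matching the two sides in each local summand $K_0(\ZZ_\ell[G], \QQ_\ell[G])$ of the decomposition (\ref{decomp}). At the residue characteristic $\ell(v)$ of a wild place $v$ both sides agree by construction, so the issue is the vanishing of the contribution of $\mathrm{ind}^G_{G_w, *}(\alpha_v)$ at any prime $\ell' \neq \ell(v)$. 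If $I_w = G_w$ then the Frobenius quotient is trivial and any lift $\sigma$ lies in $I_w$; a direct computation then gives $\sigma^{-1}e_{I_w} = e_{I_w}$, so $\alpha_v = 1$. If $I_w$ is of prime-power order then it is an $\ell(v)$-group (since $v$ is wild), so $\ell' \nmid |I_w|$ and hence $\alpha_v$ already lies in $\zeta(\ZZ_{\ell'}[G_w])^\times$; this means $\mathrm{ind}^G_{G_w, *}(\alpha_v)$ is the reduced norm of an element in the image of the induction map $K_1(\ZZ_{\ell'}[G_w]) \to K_1(\ZZ_{\ell'}[G])$, so that $\delta_{G, \ell'}(\mathrm{ind}(\alpha_v))$ vanishes by exactness of the localisation sequence (\ref{E:kcomm}).

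Granting $\frc_{L/K} = \delta_G(\beta)$, the vanishing of the image in $\Cl(G)$ is immediate from $\partial^0_{\ZZ, \QQ, G} \circ \delta_G = 0$. For the other two groups I would invoke, as in the proof of Corollary \ref{coro 1}, the description that $h_G^{\mathrm{rel}}(\delta_G(\beta))$ is represented by the pair $([1], \beta)$, so that the explicit formulas for $\partial_G^{1,1}$ and $\partial_G^{2,1}$ yield the representatives $([1], |\beta|)$ and $([1], \beta^s)$ for $\Pi_G^{\mathrm{met}}(\frc_{L/K})$ and $\Pi_G^{\mathrm{herm}}(\frc_{L/K})$ in $A(G)$ and $\eHCl(G)$ respectively. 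The required vanishing then reduces to the two assertions $|\beta(\phi)| = 1$ for all $\phi \in \widehat{G}$ and $\beta(\phi) = 1$ for all $\phi \in R_G^s$.

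The first assertion follows at once from the fact that each $\alpha_{v, \varphi}$ equals either $1$ or $\varphi(\sigma^{-1})$ and is therefore a root of unity. For the second, I would use that a symplectic $\phi$ is real-valued, so that the irreducible $G_w$-constituents of $\mathrm{res}^G_{G_w} \phi$ are stable under complex conjugation with equal multiplicities: non-self-conjugate pairs $(\varphi, \bar\varphi)$ contribute $|\alpha_{v, \varphi}|^{2 m_\varphi} = 1$, self-conjugate $\varphi$ with $\varphi|_{I_w} \neq 1$ contribute $1$ directly, and a self-conjugate $\varphi$ with $\varphi|_{I_w} = 1$ would have to be a real $1$-dimensional character of $G_w/I_w$, which is of odd order since $|G|$ is odd, so the only such $\varphi$ is trivial. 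The main obstacle in this strategy is the initial reduction: without the hypotheses on $I_w$ the central element $\alpha_v$ may fail to be integral at primes dividing the tame part of $|I_w|$, and $\delta_G(\beta)$ would then acquire spurious contributions outside the wild locus, so that $\frc_{L/K}$ would no longer coincide with $\delta_G(\beta)$.
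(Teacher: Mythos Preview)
Your proof is correct and follows essentially the same approach as the paper's. The paper argues term-by-term, showing for each $v\in\calW_{L/K}$ that the global element $i_{G_w}^G(\delta_{G_w}(\lambda_w))$ with $\lambda_w=(1-e_{I_w})+\sigma_w^{-1}e_{I_w}$ is concentrated at $\ell(v)$ (using, exactly as you do, that $\lambda_w=1$ when $I_w=G_w$ and that $\lambda_w\in\Nrd_{\QQ_{\ell'}[G_w]}(\ZZ_{\ell'}[G_w]^\times)$ when $I_w$ is an $\ell(v)$-group), and then computes the Hom-representative $(1,\theta)$ via the induction formula; your packaging of the same computation as a single global identity $\frc_{L/K}=\delta_G(\beta)$ is a cosmetic variant. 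For the symplectic vanishing the paper simply notes that $\langle\mathrm{res}^G_{G_w}\chi,\phi\rangle=\langle\mathrm{res}^G_{G_w}\chi,\bar\phi\rangle$ and $\phi(\sigma_w)\bar\phi(\sigma_w)=1$ for $\phi\in\widehat{G_w/I_w}$; your case analysis (non-self-conjugate pairs, self-conjugate with $\varphi|_{I_w}\neq 1$, and the observation that a self-conjugate $\varphi$ trivial on $I_w$ must be trivial since $G_w/I_w$ has odd order) makes explicit the step the paper leaves implicit, and indeed since $|G|$ is odd the group $R_G^s$ is trivial anyway.
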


\begin{proof} We show that each of the individual terms in the definition of $\frc_{L/K}$ projects to zero.
We fix $v$ in $\calW_{L/K}$ and set $\ell := \ell(v)$ and
 $\lambda_w :=(1 - e_{I_w}) + \sigma_w^{-1}e_{I_w} $. If $G_w = I_w$, then $\lambda_w = 1$. In the other case $I_w$ is necessarily of $\ell$-power order. Hence for any prime $p \ne \ell$ we have
$\delta_{G_w}(\lambda_w) = 0$ in $K_0(\Zp[G_w], \Qp[G_w])$ since $\lambda_w \in \mathrm{Nrd}_{\Qp[G_w]}(\Zp[G_w]^\times)$.

Therefore one has $\pi_{G, \ell}( i_{G_w}^G ( \delta_{G_w}(\lambda_w)) =
  i_{G_w}^G ( \delta_{G_w}(\lambda_w ))$ in $K_0(\Ze[G], \Qu[G])$, where $\pi_{G, \ell}$ is the homomorphism between relative $K$-groups defined
in (\ref{localization map}).

We next show that $\frc_v := i_{G_w}^G ( \delta_{G_w}(\lambda_w ))$ belongs to both
$\ker(\partial^{1,1}_G\circ h^{\rm rel}_G)$ and $\ker(\partial^{2,1}_G\circ h^{\rm rel}_G).$

To do this we recall first that for $\alpha = (\alpha_\chi)_{\chi\in \widehat G}$ in $\Qu^c[G]^\times$ the element
$h_G^{\mathrm{rel}}(\delta_G(\alpha))$ is represented by the function
$\chi \mapsto (1, \alpha_\chi)$. Thus the global analogue of the commutative diagram (\ref{ind comm diagram})
implies that $h_G^{\mathrm{rel}}(\frc_v)$ is represented by the pair $(1, \theta)$ with
\[
\theta(\chi) = \prod_{\phi\in \widehat{G_w/I_w}} \phi(\sigma_w^{-1})^{\langle\mathrm{res}_{G_w}^G(\chi), \phi\rangle_{G_w}}.
\]

The elements $\partial^{1,1}_G(h^{\rm rel}_G(\frc_v))$ and $\partial^{2,1}_G(h^{\rm rel}_G(\frc_v))$
are therefore represented by the pairs $(1,|\theta|)$ and $(1,\theta^s)$, respectively,
and so it is enough to show that the maps $|\theta|$ and $\theta^s$
are both trivial.

Since $\theta(\chi)$ is a root of unity one has $|\theta|(\chi) = |\theta(\chi)| = 1$, and so $|\theta|$ is trivial.

In addition, the triviality of $\theta^s$ follows from the fact that if $\chi$ is a symplectic character of $G$, then both
$\langle\mathrm{res}_{G_w}^G(\chi), \phi\rangle_{G_w} = \langle \mathrm{res}_{G_w}^G(\chi), \overline{\phi}\rangle_{G_w}$ and $\phi(\sigma_w) \overline{\phi}(\sigma_w) = 1.$
\end{proof}

\begin{remark}\label{not vanish} In connection with Lemma \ref{cwr vanish} we note that if $L_w/K_v$ is weakly ramified and abelian, then class field theory implies $I_w$ is of prime-power order (as a consequence of \cite[Cor.~2, p. 70]{LocalFields}). In fact, at this stage we know of no example in which the projection of $\frc_{L/K}$ to any of the groups $\Cl(G), A(G)$ and $\HCl(G)$ does not vanish. It is, however, not difficult to show that the element $\frc_{L/K}$ itself does not always vanish. For example, if $G$ is abelian, then $K_0(\ZG, \QG)$ identifies with the group of invertible $\ZG$-sublattices of $\QG$. In particular, if $L/\QQ$ is an abelian $p$-extension in which, for any $p$-adic place $w$ of $L$, one has $I_{w} \subsetneq G_{w} = G$,
then $(1-e_{I_w}) + \sigma_{w}^{-1}e_{I_w}$ does not belong to $\mathbb{Z}[G]$ and so $\frc_{L/\QQ}\not= 0$.
\end{remark}

\begin{remark}\label{simpler variant} The element $\frc_{L/K}$ is in general different from,
and better behaved than, the simpler variant
 $\delta_G((1 - \psi_{2,\ast})(y_{L/K}))$. In particular, whilst it is straightforward to show that $\frc_{L/K}$
enjoys the same functorial properties under change of extension as those described in Theorem \ref{funct thm},
the same is not true of $\delta_G((1 - \psi_{2,\ast})(y_{L/K}))$. \end{remark}

\section{Effective computations and Vinatier's Conjecture}\label{vinatier section}

In this section we first refine Corollary \ref{finiteness coro} by explaining how to make an effective computation of
the set of realisable classes $R^{\rm wr}_K(\Gamma)$.

We then apply this observation to consider a conjecture of Vinatier in the setting of two natural infinite families of extensions which will then be investigated numerically in \S\ref{numerical examples}.

In \S\ref{p cubed section} we consider the family of extensions of smallest degree for which Vinatier's Conjecture is not currently known to be valid and, whilst studying this case, we obtain evidence (described in Proposition \ref{new connection}) that $\mathfrak{a}_{L/K}$ may be controlled by the idelic twisted unramified characteristic $\mathfrak{c}_{L/K}$ in cases beyond those considered in Corollary \ref{vinatier thm}.

Motivated by this last rather surprising observation, we consider in \S\ref{mixed degree extensions} a family of extensions of smallest possible degree for which the projection of $\mathfrak{c}_{L/K}$ to ${\rm Cl}(G(L/K))$ might not vanish, and hence that a close link between $\mathfrak{a}_{L/K}$ and $\mathfrak{c}_{L/K}$ need not be consistent with the validity of Vinatier's Conjecture.

In all of the cases that we compute, however, we find both that Vinatier's Conjecture is valid and the projection of $\mathfrak{c}_{L/K}$ to ${\rm Cl}(G(L/K))$ vanishes.

At the same time, our methods also give a proof of the central conjecture of \cite{BleyBurns} for a new, and infinite, family of wildly ramified Galois extensions of number fields.

\subsection{The general result}
Recall that for each number field $K$ and finite abstract group $\Gamma$ of odd order
we write ${\rm WR}_K(\Gamma)$ for the set of fields $L$ that are weakly ramified odd degree Galois extensions of $K$
and for which $G(L/K)$ is isomorphic to $\Gamma.$

\begin{theorem}\label{main cor2}
Let $K$ be a number field and $\Gamma$ a finite abstract group whose order is both odd and coprime to the number
of roots of unity in $K$.

Then there exists a finite set ${\rm WR}^\ast_K(\Gamma)$ of Galois extensions $E$
of $K$ which have all of the following properties:

\begin{itemize}\item[(i)] there exists an injective homomorphism of groups $i_E:G({E/K})\to \Gamma$;
\item[(ii)] there exists a unique place $v$ of $K$ that ramifies both wildly and weakly in $E$ and for which there exists a unique place $w$ of $E$ above $v$;
\item[(iii)] all places of $K$ other than $v$ that divide $|\Gamma|$ are completely split in $E/K$;
\item[(iv)] for each $L$ in ${\rm WR}_K(\Gamma)$ and every $E$ in ${\rm WR}^\ast_K(\Gamma)$ there exists an integer $n_{L,E}\in \{0,1\}$ so that in $K_0(\IZ[\Gamma],\IQ^c[\Gamma])$ one has
\[i_{L,\ast}(\Fa_{L/K})=\sum_{E\in {\rm WR}^\ast_K(\Gamma)}n_{L,E}\cdot {\rm i}^\Gamma_{{\rm im}(i_E)}(i_{E,\ast}(\Fa_{E/K})).\]
\end{itemize}
\end{theorem}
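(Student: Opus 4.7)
The plan is to combine the local--global decomposition of $\Fa_{L/K}$ provided by Theorem \ref{decomp thm} with the vanishing of $\Fa_{F/E}$ for tamely ramified local extensions, and then to use a Grunwald--Wang type realization argument to build the set $\mathrm{WR}^\ast_K(\Gamma)$.

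\emph{Step 1: finiteness of the local data.} For any $L \in \mathrm{WR}_K(\Gamma)$ and any place $v$ of $K$ that is wildly ramified in $L/K$, the residue characteristic of $v$ must divide $|\Gamma|$. Only finitely many such places of $K$ exist. At each such place, weak ramification forces the second (lower) ramification group to vanish, so the discriminant exponent of any weakly ramified Galois extension of $K_v$ of degree dividing $|\Gamma|$ is bounded in terms of $|\Gamma|$. By Krasner's lemma there are therefore only finitely many isomorphism classes of pairs $(D, L_w/K_v)$ consisting of a subgroup $D \hookrightarrow \Gamma$ and a weakly (wildly) ramified Galois extension of $K_v$ with group $D$.

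\emph{Step 2: global realization.} For each triple $(v, D, L_w/K_v)$ as above, together with a choice of embedding $\iota \colon D \hookrightarrow \Gamma$, I would invoke a Grunwald--Wang type theorem to produce a Galois extension $E$ of $K$ whose Galois group $G(E/K)$ is isomorphic (via some chosen $i_E$ factoring through $\iota$) to $D$, such that: $v$ has a unique prime $w'$ above it in $E$ and $E_{w'}/K_v \cong L_w/K_v$ as extensions with group $D$; every other place of $K$ dividing $|\Gamma|$ splits completely in $E/K$; and all remaining ramification of $E/K$ is tame. The hypothesis that $|\Gamma|$ is coprime to the number of roots of unity in $K$ is precisely what rules out the special case of Grunwald--Wang, so the simultaneous local prescription (prescribed at $v$, split at the other primes of $|\Gamma|$) can be met. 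The set $\mathrm{WR}^\ast_K(\Gamma)$ is defined as a finite collection of such $E$, one for each triple. Properties (i)--(iii) follow by construction.

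\emph{Step 3: identification of summands and assembly.} For any $E \in \mathrm{WR}^\ast_K(\Gamma)$, Theorem \ref{decomp thm} gives
\[
\Fa_{E/K} = \sum_{v''} \mathrm{i}^{G(E/K)}_{G(E/K)_{w''}, \mathbb{Q}_{\ell(v'')}}(\Fa_{E_{w''}/K_{v''}}),
\]
and by Theorem \ref{special cases prop} together with Remark \ref{explicit rems twisted}, every tame summand vanishes. Property (ii) then forces $\Fa_{E/K} = \Fa_{E_{w'}/K_v}$, because the decomposition group at $w'$ equals $G(E/K)$. Similarly, for any $L \in \mathrm{WR}_K(\Gamma)$,
\[
i_{L,*}(\Fa_{L/K}) = \sum_{v \in \mathcal{W}_{L/K}} i_{L,*}\bigl(\mathrm{i}^{G(L/K)}_{G_w, \mathbb{Q}_{\ell(v)}}(\Fa_{L_w/K_v})\bigr).
\]
For each $v \in \mathcal{W}_{L/K}$ the pair $(v, L_w/K_v)$ together with the embedding $G_w \hookrightarrow \Gamma$ induced by $i_L$ matches, by construction of $\mathrm{WR}^\ast_K(\Gamma)$, exactly one $E = E(L,v) \in \mathrm{WR}^\ast_K(\Gamma)$; functoriality of induction on relative $K_0$-groups then identifies the corresponding summand with $\mathrm{i}^{\Gamma}_{\mathrm{im}(i_E)}(i_{E,*}(\Fa_{E/K}))$. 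Setting $n_{L,E} = 1$ when $E$ arises this way and $n_{L,E} = 0$ otherwise yields property (iv). Note that distinct wildly ramified places of $K$ in $L$ produce distinct $E$'s (since $v$ itself is part of the data attached to $E$), so genuine $\{0,1\}$ coefficients suffice.

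The main obstacle is Step 2: arranging a global Galois extension $E/K$ that realizes a prescribed weakly wildly ramified local extension at one designated place $v$, is inert (in the sense of unique prime above $v$) rather than merely having $v$ ramified, is completely split at all other places of residue characteristic dividing $|\Gamma|$, and whose \emph{global} Galois group coincides with the prescribed local group $D$ (and not a proper overgroup). The coprimality assumption between $|\Gamma|$ and $\#\mu(K)$ is what allows one to avoid the well-known obstruction to Grunwald--Wang in the $2$-power setting, but one still has to verify that the refined realization problem (with the additional unique-prime and total-splitting conditions) is solvable, for which a careful invocation of a Grunwald--Wang result in the style of Neukirch--Schmidt--Wingberg is needed.
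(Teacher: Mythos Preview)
Your proposal is correct and follows essentially the same approach as the paper: reduce $\Fa_{L/K}$ to a sum over wildly ramified places via Theorem \ref{decomp thm} and Theorem \ref{special cases prop}, realize each local extension globally with the prescribed splitting behaviour, and read off properties (i)--(iv). The paper handles your ``main obstacle'' in Step~2 by invoking a specific result of Neukirch (\cite[Cor.~2, p.~156]{neuk solvable}), which under the coprimality hypothesis on $|\Gamma|$ and $\#\mu(K)$ produces a Galois extension $\tilde E/K$ with a \emph{unique} place above $v$ having the prescribed completion and with all other places dividing $|\Gamma|$ totally split; the uniqueness of the place above $v$ then forces $G(\tilde E/K)\cong G(L_w/K_v)$ automatically, resolving your concern about the global group being a proper overgroup.
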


\begin{proof} We recall first that for each place $v$ of  $K$ the set $R_v(K, \Gamma)$
of isomorphism classes of Galois extensions $E/K_v$ for which $G(E/K_v)$ is isomorphic to a subgroup of $\Gamma$ is finite.

We next fix a weakly ramified Galois extension $L/K$ for which the group $G := G(L/K)$ is isomorphic to the given group $\Gamma$.
We recall that Theorems \ref{decomp thm} and \ref{special cases prop} combine to imply that there is a finite sum decomposition
\begin{equation}\label{almost there4}
\Fa_{L/K} = \sum_{\ell \in \mathcal W^\Qu_{L/K}}\sum_{v\mid \ell}{\rm i}^G_{G_w,\IQ_{\ell}}(\Fa_{L_w/K_v})
\end{equation}

For each place $v$ in this sum the (weakly ramified) Galois extension $L_w/K_v$ is isomorphic to one of the
Galois extensions $E/K_v$ in the finite set $R_v(K, \Gamma)$.

Further, since we are assuming $|\Gamma|$ is coprime to the number of roots of unity in $K$ a result of Neukirch \cite[Cor. 2, p. 156]{neuk solvable} implies that there exists a finite Galois extension $\tilde E/K$ with both of the following properties:

\begin{itemize}
\item[(P1)] $\tilde E$ has a unique place $\tilde w$ above $v$ and the completion $\tilde E_{\tilde w}/K_v$ is isomorphic to $E/K_v$ (and hence to $L_w/K_v$);
\item[(P2)] if $v'$ is any place of $K$ which divides $|\Gamma|$, and $v'\not= v$, then $v'$ is totally split in $\tilde E/K$.
\end{itemize}

These conditions imply that the global extension $\tilde E/K$ is weakly ramified and that the isomorphism of
$\tilde E_w/K_v$ with $L_w/K_v$ induces a natural identification
\begin{equation}\label{identity}
G(\tilde E/K) \cong G(L_w/K_v) \cong G_w.
\end{equation}

In addition, since $v$ is the only place of $K$ that is not tamely ramified in $\tilde E/K$
the results of Theorem \ref{decomp thm} and \ref{special cases prop}(i) combine to imply
\begin{equation}\label{unique summand}
\Fa_{\tilde E/K} = \Fa_{L_w/K_v}.
\end{equation}

%



We now define ${\rm WR}^\ast_K(\Gamma)$ to be the finite set of extensions $\tilde E/K$ that are
obtained from the above construction as $v$ runs over the places of $K$ that divide $|\Gamma|$. We note that this set satisfies the claimed property (i) as a consequence of the isomorphisms (\ref{identity}),
it satisfies property (ii) and (iii) as a consequence of the properties (P1) and (P2) above and it satisfies property (iv) as a consequence of the equalities (\ref{almost there4}) and (\ref{unique summand}).
\end{proof}

\begin{remark}\label{Yamagishi remark}
The above argument also shows that $|{\rm WR}^\ast_K(\Gamma)| \le \sum_{v \mid |\Gamma|}\tilde\nu(K_v, \Gamma)$ where $\tilde\nu(K_v, \Gamma)$ denotes the number of non-isomorphic Galois extensions
of $K_v$ whose Galois group is isomorphic to a subgroup of $\Gamma$. In this context we recall that if $\Gamma$ is a $p$-group then $\tilde\nu(K_v, \Gamma)$ is explicitly computed by work of Shafarevich \cite{Shafarevics} and Yamagishi \cite{Yamagishi}. We also recall that Pauli and Roblot \cite{PauliRoblot} have developed an algorithm for the computation of all extensions of a $p$-adic field of a given degree. One can therefore use the results of  \cite{Shafarevics} and \cite{Yamagishi} to design an algorithm to compute all $p$-extensions with a given $p$-group (see \cite[\S~10]{PauliRoblot}).
\end{remark}

\begin{remark} For any number $k$ and any finite group $\Gamma$ whose order is both odd and coprime to the number
of roots of unity in $k$, write ${\rm WR}'_k(\Gamma)$ for the set of weakly ramified odd degree Galois extensions $L/K$  with $k \sseq K$ and such that $G(L/K) \simeq \Gamma$ and $K_v = k_{v(k)}$ for each place $v$ of $K$ that ramifies wildly in $L$. Then a closer analysis of the proof of Theorem \ref{main cor2} shows that the stated result remains valid after one replaces each occurrence of $K$ by $k$ and then, in claim (iv), one replaces the terms $L$, ${\rm WR}_k(\Gamma)$ and $n_{L,E}$ by $L/K$, ${\rm WR}'_k(\Gamma)$ and $n_{L/K,E}$ respectively. This stronger version of Theorem \ref{main cor2} makes clear the advantage of the local nature of our computations. \end{remark}

%

\subsection{Applications to Vinatier's Conjecture}\label{vinatier p-cubed}

Vinatier has conjectured that for any weakly ramified odd degree Galois extension $L$ of $\QQ$ the $G(L/\Qu)$-module $\mathcal{A}_{L/\QQ}$ is free  (see \cite[\S1, Conj.]{Vinatier2}) and we now apply our techniques to study this conjecture.

\subsubsection{}We first reformulate the conjecture in terms of the elements $\fra_{L/K}$ (global) and
$\fra_{F/E}$ (local).

If $F/E$ is a Galois extension of $\ell$-adic fields, then we use the decomposition (\ref{decomp}) to view $\fra_{F/E}$ as an element of $K_0(\Ze[G(F/E)], \Qu[G(F/E)])$.

\begin{prop}\label{vinatier lemma} The following are equivalent:
  \begin{itemize}
  \item [(i)] For all odd degree weakly ramified Galois extensions $L/K$ of number fields the $G(L/K)$-module $\calA_{L/K}$ is free.
  \item[(ii)] For all odd degree weakly ramified Galois extensions $L/K$ of number fields the element  $\mathfrak{a}_{L/K}$ projects to zero in $\Cl(G(L/K))$.
  \item[(iii)] For all odd degree weakly ramified Galois extensions $F/E$ of local fields the element $\mathfrak{a}_{F/E}$ projects to zero in $\Cl(G(F/E))$.
  \end{itemize}
\end{prop}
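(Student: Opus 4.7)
I would prove the equivalences (i)$\iff$(ii) and (ii)$\iff$(iii) separately, with the global-to-local decomposition of Theorem \ref{decomp thm} being the key tool for the second.

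The equivalence (i)$\iff$(ii) is essentially immediate from Theorem \ref{main result}(iv), which gives $\partial^0_{\Ze,\Qc,G}(\fra_{L/K}) = [\calA_{L/K}]$ in $\Cl(G)$, so that (ii) is precisely the statement that $\calA_{L/K}$ is stably free over $\Ze[G]$. To upgrade stable freeness to freeness, I would invoke Jacobinski's cancellation theorem: since $|G|$ is odd, $\Qu[G]$ is a product of matrix algebras over fields (no totally definite quaternion component arises), so Eichler's condition is satisfied, and every stably free locally free $\Ze[G]$-module is free. Combined, this gives (i)$\iff$(ii).

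For (iii)$\Rightarrow$(ii), I would apply Theorem \ref{decomp thm} to write $\fra_{L/K} = \sum_\ell \sum_{v \mid \ell} i^G_{G_w,\Ql}(\fra_{L_w/K_v})$. Tame summands already vanish in $K$-theory by Theorem \ref{special cases prop} combined with the observation from Remark \ref{explicit rems twisted} that $\frc_{L_w/K_v}$ is trivial when $L_w/K_v$ is tame. For wildly ramified $v$, hypothesis (iii) gives $\partial^0(\fra_{L_w/K_v}) = 0$ in $\Cl(G_w)$, and compatibility of $\partial^0$ with induction then yields $\partial^0_{\Ze,\Qc,G}(\fra_{L/K}) = 0$, which is (ii). For the converse direction (ii)$\Rightarrow$(iii), given a weakly ramified odd degree Galois extension $F/E$ of $\ell$-adic fields with group $\Gamma$, I would realize $F/E$ as the completion of a suitable global extension. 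First, by Krasner's lemma one can find a number field $K$ with a place $v$ such that $K_v \cong E$ and $|\mu(K)|$ coprime to $|\Gamma|$. Then, exactly as in the proof of Theorem \ref{main cor2}, I would invoke Neukirch's existence theorem to construct a finite weakly ramified Galois extension $L/K$ of group $\Gamma$ for which $v$ is the unique wildly ramified place, $v$ is totally inert in $L/K$ (so that $G_w = G(L/K)$), $L_w/K_v \cong F/E$, and all other places of $K$ dividing $|\Gamma|$ are totally split in $L$. Under the identification $G_w = G(L/K) = \Gamma$, Theorem \ref{decomp thm} combined with Theorem \ref{special cases prop} identifies $\fra_{L/K}$ with $\fra_{F/E}$, whence $\partial^0(\fra_{F/E}) = \partial^0(\fra_{L/K}) = 0$ by hypothesis (ii).

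The main obstacle lies in the realization step in (ii)$\Rightarrow$(iii): one must simultaneously arrange $K_v \cong E$, the coprimality of $|\mu(K)|$ with $|\Gamma|$, and the applicability of Neukirch's theorem in such a way that the resulting global extension has $v$ totally inert and is at worst tame at all other primes dividing $|\Gamma|$. Although each ingredient is standard in isolation, verifying that they can be arranged simultaneously requires some care.
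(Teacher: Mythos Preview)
Your proposal is correct and follows essentially the same route as the paper: (i)$\iff$(ii) via Theorem~\ref{main result}(iv) together with cancellation for odd-order groups (the paper's Lemma~\ref{vinatier lemma 2}), (iii)$\Rightarrow$(ii) via the local decomposition of Theorem~\ref{decomp thm}, and (ii)$\Rightarrow$(iii) by globalising the given local extension using Neukirch's theorem exactly as in the proof of Theorem~\ref{main cor2}.

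The one place where the paper is sharper is precisely the ``main obstacle'' you flag at the end. Krasner's lemma alone does not guarantee that the number field $K$ realising $K_v\cong E$ can be chosen with $|\mu(K)|$ coprime to $|\Gamma|$: if some odd prime $\ell\mid|\Gamma|$ satisfies $\zeta_\ell\in E$, an arbitrary Krasner lift may well contain $\zeta_\ell$. The paper handles this by invoking Henniart's global lifting theorem \cite{Henniart}, which (using that $|\Gamma|$ is odd) produces such a $K$ directly. Once that $K$ is in hand, the Neukirch construction from the proof of Theorem~\ref{main cor2} applies verbatim and gives a global extension $\tilde E/K$ with $\fra_{\tilde E/K}=\fra_{F/E}$, completing (ii)$\Rightarrow$(iii).
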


\begin{proof} The equivalence of (i) and (ii) is Lemma \ref{vinatier lemma 2} below and (ii) follows directly from (iii) and Theorem \ref{decomp thm}.

We finally assume (ii) and for a local extension $F/E$ we choose
a number field $K$ and a place $v$ of $K$ such that $K_v$ is isomorphic to $E$ and $|G(F/E)|$ is coprime to the number of roots of unity in $K$.  (Since $G(F/E)$ is of odd order the existence of such a field $K$ is easily implied by the main result of Henniart \cite{Henniart}.)

Then by the construction in the proof of Theorem \ref{main cor2} we find a global extension $\tilde E / K$ with the properties (P1) and (P2).

It follows that $\fra_{\tilde E/K} = \fra_{F/E}$, and hence
that $\fra_{F/E}$ projects to zero in $\Cl(G(F/E))$, as required to prove (iii). \end{proof}

\begin{lemma}\label{vinatier lemma 2} Let $L/K$ be an odd degree weakly ramified Galois extension of number fields of group $G$. Then the  $G$-module $\calA_{L/K}$ is free if and only if the image of $\mathfrak{a}_{L/K}$ in $\mathrm{Cl}(G)$ vanishes.
\end{lemma}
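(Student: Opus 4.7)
The plan is to reduce the stated equivalence to a combination of Theorem \ref{main result}(iv) and a cancellation theorem for odd-order group rings, invoked at the level of locally-free class groups.

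First, I would observe that the ``only if'' direction is immediate: if $\calA_{L/K}$ is free as a $\ZG$-module, then $[\calA_{L/K}] = 0$ in $\Cl(G)$, and Theorem \ref{main result}(iv) then gives $\partial^0_{\Ze,\Qc,G}(\fra_{L/K}) = [\calA_{L/K}] = 0$. Since $\partial^0_{\Ze,\Qc,G}$ is precisely the map underlying the ``image in $\Cl(G)$'' construction, this gives one direction.

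For the converse, I would assume the image of $\fra_{L/K}$ in $\Cl(G)$ vanishes. Theorem \ref{main result}(iv) then yields $[\calA_{L/K}] = 0$ in $\Cl(G)$, which says precisely that $\calA_{L/K}$ is \emph{stably free} as a $\ZG$-module. The key background point is that $\calA_{L/K}$ is a full projective $\ZG$-sublattice of $L$, so by the normal basis theorem it has constant rank $[K:\Qu] \geq 1$ as a $\ZG$-module. To upgrade stably free to free, I would invoke the Jacobinski cancellation theorem, whose hypothesis (the Eichler condition on $\Qu[G]$) requires no simple factor of $\RR[G]$ to be isomorphic to the Hamilton quaternions. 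Since $|G|$ is odd, every irreducible complex character of $G$ has Frobenius--Schur indicator $0$ or $1$ (there are no symplectic characters), so $\Qu[G]$ satisfies the Eichler condition. Jacobinski's theorem then applies, giving that the stably free $\ZG$-module $\calA_{L/K}$ is free.

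There is essentially no obstacle here once the correct inputs are assembled: Theorem \ref{main result}(iv) already does the relative $K$-theoretic work, and the remaining step is the standard fact that stably free $\ZG$-modules are free when $|G|$ is odd. The only subtlety worth flagging is that in applications such as Vinatier's conjecture one has $K=\Qu$ and hence $[K:\Qu] = 1$, so one is in the rank-one regime where cancellation is most delicate; it is precisely the Eichler condition (equivalently here, the absence of symplectic components, which is guaranteed by $|G|$ odd) that rescues the argument.
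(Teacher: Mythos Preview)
Your proof is correct and follows essentially the same approach as the paper: both reduce to Theorem \ref{main result}(iv) and the fact that, for $G$ of odd order, a finitely generated projective $\ZG$-module is free if and only if its class in $\Cl(G)$ vanishes. The paper simply asserts this last fact without justification, whereas you spell it out via the Eichler condition and Jacobinski cancellation.
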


\begin{proof} By Theorem \ref{main result}(iv) one has $\partial_{\Ze, \Qu, G}(\fra_{L/K}) = [\calA_{L/K}]$ in $\Cl(G)$.
 Given this, the equivalence of the stated conditions follows immediately from the fact that, as $G$ has odd order, a finitely generated projective $G$-module is free if and only if its class in ${\rm Cl}(G)$ vanishes.
\end{proof}

\subsubsection{}\label{p cubed section} By \cite{Vinatier1} Vinatier's conjecture is known to be true for extensions $L/\Qu$ with the property that  the decomposition group of each wildly ramified prime is abelian. The family of non-abelian Galois extensions of degree $p^3$, for some odd prime $p$, is thus the family of smallest possible degree for which Vinatier's conjecture is not known to be valid. Such extensions were considered (in special cases) by Vinatier in \cite{Vinatier3}.

In the following result we study the number of corresponding such extensions of the base field $\Qp$. This result (which will be proved at the end of this section) shows that the bounds on the number of such extensions that are discussed in Remark \ref{Yamagishi remark} can be improved if one imposes ramification conditions.

\begin{prop}\label{degree p extensions}
Let $p$ be an odd prime. Then there exist exactly $p$ (non-isomorphic) weakly ramified non-abelian Galois extensions of $\QQ_p$
of degree $p^3$. Exactly one of these extensions has exponent $p$ and the remaining $p-1$ extensions have exponent $p^2$.
\end{prop}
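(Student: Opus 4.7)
First, I would use the weak ramification hypothesis together with the fact that $G := G(F/\Qp)$ is a $p$-group to pin down the structure of the inertia. Since $G_0/G_1$ has order prime to $p$, one has $G_0 = G_1$; and since $G_i/G_{i+1}$ is elementary abelian for $i \ge 1$, the vanishing of $G_2$ forces $G_0$ to be elementary abelian. The case $|G_0| = p^3$ would give $G = G_0 \cong (\IZ/p)^3$, and the case $|G_0| \le p$ forces $G$ abelian (when $|G_0| = p$, the only homomorphism from the $p$-group $G/G_0$ to $\mathrm{Aut}(\IZ/p) \cong \IZ/(p-1)$ is trivial, so $G_0$ is central). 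Thus necessarily $|G_0| = p^2$ and $|G/G_0| = p$, so the fixed field $E := F^{G_0}$ is the unique unramified extension of $\Qp$ of degree $p$, and $F/E$ is a totally ramified weakly ramified abelian extension with group $(\IZ/p)^2$.

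Next, I would apply local class field theory for $E$: such $F/E$ corresponds to an open subgroup $N_F \sseq E^\times$ of index $p^2$ with quotient $(\IZ/p)^2$, containing both $U_E^{(2)}$ (weak ramification) and a uniformizer (total ramification). Requiring $F/\Qp$ Galois is equivalent to $N_F$ being stable under the Frobenius $\sigma \in G(E/\Qp)$. Since $E^\times/N_F$ is a $p$-group, $N_0 := N_F \cap U_E$ must contain the Teichm\"uller lift of $k_E^\times$ and hence is determined by its image in $U_E^{(1)}/U_E^{(2)} \cong k_E$. By the normal basis theorem $k_E \cong \Fp[\sigma]/(\sigma-1)^p$ as $\Fp[\sigma]$-modules, so the unique $\sigma$-stable $2$-dimensional quotient of $k_E$ is the indecomposable module $\Fp[\sigma]/(\sigma-1)^2$; this both pins down $N_0$ uniquely and shows that $\sigma$ acts non-trivially on $G_0$, so $G$ is automatically non-abelian. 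The remaining freedom is the choice of uniformizer coset $\pi u \cdot N_0$, and $\sigma$-stability is equivalent to $u \cdot N_0 \in (U_E/N_0)^\sigma = \ker(\sigma - 1)$, a group of order $p$. This produces exactly $p$ such extensions.

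Finally, to identify each $G$ as either the Heisenberg group $H_p$ (exponent $p$) or $M_p = \IZ/p^2 \rtimes \IZ/p$ (exponent $p^2$), I would use the fact that a Frobenius lift $\tilde{\sigma} \in G$ has order $p$ precisely when $G \cong H_p$ (in $M_p$ every element of the non-trivial coset of the elementary abelian subgroup of order $p^2$ has order $p^2$). The transfer map $V \colon G^{ab} \to G_0$ sends $\tilde\sigma$ to $\tilde\sigma^p$, so the compatibility of the Artin maps for $F/\Qp$ and $F/E$ under transfer yields $\tilde\sigma^p = \mathrm{Art}_{F/E}(p) \in G_0$. Hence $\tilde\sigma$ has order $p$ iff $p \in N_F$. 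Choosing $\pi = p$, one sees $p \in N_F = \langle \pi u, N_0 \rangle$ iff $u \in N_0$, which singles out exactly one of the $p$ cosets. Therefore exactly one extension has Galois group $H_p$ and the remaining $p-1$ have Galois group $M_p$, as claimed.

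The main obstacle will be a careful handling of the module-theoretic structure of $U_E^{(1)}/U_E^{(2)}$ over $\Fp[\sigma]$ (in particular the uniqueness of the $2$-dimensional quotient) and the verification of the transfer-Artin identity $\tilde\sigma^p = \mathrm{Art}_{F/E}(p)$ used to distinguish $H_p$ from $M_p$; once these are in hand the combinatorial count is immediate.
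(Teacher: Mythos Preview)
Your proof is correct and takes a genuinely different route from the paper's.

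Both arguments begin with the same ramification analysis (forcing $G_0=G_1\cong(\IZ/p)^2$ in the non-abelian case) and then invoke local class field theory, but at different intermediate fields. The paper passes to the degree-$p^2$ field $E:=E_1E_2$ (with $E_1\subset\Qp(\zeta_{p^2})$ totally ramified and $E_2$ unramified), and classifies the $\Gamma$-stable index-$p$ subgroups of $E^\times$ containing $U_E^{(2)}$, $(E^\times)^p$ and $I_\Gamma(E^\times)$; the resulting quotient $E^\times/T\cong(\IZ/p)^2$ has $p+1$ lines, of which one is discarded as giving an unramified (hence abelian) extension. You instead stay at the unramified degree-$p$ field and classify $\sigma$-stable index-$p^2$ subgroups: the module isomorphism $U_E^{(1)}/U_E^{(2)}\cong\Fp[\sigma]/(\sigma-1)^p$ forces $N_0$ to be unique and automatically makes $\sigma$ act non-trivially on $G_0$ (so non-abelianness is built in), leaving exactly $p$ choices for the uniformizer coset in $(U_E/N_0)^\sigma$. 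Your route avoids the discard step; the paper's keeps the index computation at $p$ rather than $p^2$.

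For the exponent dichotomy the methods again diverge. The paper checks directly whether $G(L_i/E_1)$ is cyclic of order $p^2$, using the norm-functoriality $\rec_{L_i/E_1}(\pi)^p=\rec_{L_i/E}(\pi)$. Your transfer argument is slicker: since the transfer $V\colon G^{ab}\to G_0$ annihilates $G_0$ (the norm $\sum_{i=0}^{p-1}\sigma^i$ vanishes on $\Fp[\sigma]/(\sigma-1)^2$), $V$ factors through $G/G_0$, so $V(\mathrm{Art}_{F/\Qp}(p))$ depends only on the image $\sigma\in G/G_0$ and equals $\tilde\sigma^p$ for \emph{any} lift. Combined with the transfer--Artin compatibility this gives $\tilde\sigma^p=\mathrm{Art}_{F/E}(p)$, whence the exponent-$p$ case is exactly $p\in N_F$, i.e.\ $\bar u=0$. (It would be worth making the ``$V$ factors through $G/G_0$'' step explicit, since that is what justifies replacing $\mathrm{Art}_{F/\Qp}(p)$ by an arbitrary Frobenius lift.)
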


As in the proof of Theorem \ref{main cor2}, for
each odd prime $p$ and each weakly ramified non-abelian Galois extension $F$ of $\Qp$
of degree $p^3$ there exists a weakly ramified Galois extension $N/\Qu$ of degree $p^3$ such that $N$ has a unique $p$-adic place $w$ and the corresponding completion $N_w/\Qp$ is isomorphic to $F/\Qp$. This fact motivates the following definitions.

 For each odd prime $p$ we fix a set $\calF(p)$ of $p$ weakly ramified Galois extensions $N/\Qu$ of degree $p^3$
such that each field $N$ has a unique $p$-adic place $w(N)$ and the corresponding completions $N_{w(N)}/\QQ_p$ give the full set of local extensions that are described in Proposition \ref{degree p extensions}.

For a finite set $P$ of odd primes we define $\calL(P)$ to be the set of Galois extensions of number fields $L/K$ such that
$\calW_{L/K}^\Qu \sseq P$ and for each place $v$ in $\calW_{L/K}$ one has both $K_v = \mathbb{Q}_{\ell(v)}$ and the order of the decomposition subgroup in $G(L/K)$ of any place of $L$ above $v$ divides $\ell(v)^3$.

\begin{theorem}\label{nice explicit} For any finite set of odd primes $P$ the following conditions are equivalent:
\begin{itemize}
\item[(i)] For all $L/K$ in $\calL(P)$ the $G(L/K)$-module $\calA_{L/K}$ is free.

\item[(ii)] For all $N/\Qu$ in the finite set $\cup_{p \in P}\calF(p)$ the $G(N/\Qu)$-module
$\calA_{N/\Qu}$ is free.
\end{itemize}
\end{theorem}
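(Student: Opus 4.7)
The plan is to prove the two implications separately. The direction (i) $\Rightarrow$ (ii) is immediate, while (ii) $\Rightarrow$ (i) will reduce, place by place, to Vinatier's earlier global result in the abelian case and to hypothesis (ii) in the non-abelian $p^3$ case.

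For (i) $\Rightarrow$ (ii), I would observe that any $N \in \calF(p)$ with $p \in P$ already lies in $\calL(P)$. Indeed, by construction $N/\Qu$ is a weakly ramified odd degree Galois extension of degree $p^3$ with a unique $p$-adic place $w(N)$ whose decomposition group coincides with $G(N/\Qu)$ (hence has order dividing $p^3$), and $N$ is tamely ramified at every other prime. Thus $\calW_{N/\Qu}^\Qu \subseteq \{p\} \subseteq P$, and at the only wildly ramified place one has $K_v = \Qp$ trivially. Hypothesis (i) therefore gives the freeness of $\calA_{N/\Qu}$.

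For (ii) $\Rightarrow$ (i), I fix $L/K \in \calL(P)$ with Galois group $G$ and use Lemma \ref{vinatier lemma 2} to reduce the problem to showing that $\fra_{L/K}$ projects to zero in $\Cl(G)$. Theorem \ref{decomp thm} writes $\fra_{L/K}$ as a sum $\sum_{v} {\rm i}^G_{G_w,\Ql}(\fra_{L_w/K_v})$, and Theorem \ref{special cases prop} (together with the tame case noted in Remark \ref{explicit rems twisted}) shows that the tamely ramified contributions vanish. Since the induction map ${\rm i}^G_{G_w,\Ql}$ commutes with the boundary $\partial^0$ and hence sends trivial classes in $\Cl(G_w)$ to trivial classes in $\Cl(G)$, it suffices to prove that $\fra_{L_w/K_v}$ projects to zero in $\Cl(G_w)$ for each $v \in \calW_{L/K}$. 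By the definition of $\calL(P)$, every such $v$ satisfies $K_v = \Qp$ with $p \in P$ and $|G_w|$ dividing $p^3$, so that $G_w$ is either abelian or non-abelian of order $p^3$.

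If $G_w$ is non-abelian of order $p^3$, then Proposition \ref{degree p extensions} forces $L_w/\Qp$ to be isomorphic to $N_{w(N)}/\Qp$ for some $N \in \calF(p)$. Applying Theorem \ref{decomp thm} to $N/\Qu$, whose only wildly ramified place is $w(N)$ with decomposition group equal to $G(N/\Qu)$, identifies $\fra_{N/\Qu}$ with $\fra_{N_{w(N)}/\Qp}$; hypothesis (ii) combined with Lemma \ref{vinatier lemma 2} then yields the desired vanishing of $\fra_{L_w/\Qp}$ in $\Cl(G_w)$. If $G_w$ is abelian, I would mimic the construction used in the proof of Theorem \ref{main cor2}: since $|G_w|$ is a power of an odd prime, Neukirch's solvability theorem produces a weakly ramified Galois extension $\tilde E/\Qu$ with $G(\tilde E/\Qu)$ identified with $G_w$, a unique $p$-adic place $\tilde w$ with $\tilde E_{\tilde w}/\Qp \cong L_w/\Qp$, and every other prime dividing $|G_w|$ totally split in $\tilde E/\Qu$. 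Theorem \ref{decomp thm} applied to $\tilde E/\Qu$ identifies $\fra_{\tilde E/\Qu}$ with $\fra_{L_w/\Qp}$, and because the unique wildly ramified place of $\tilde E/\Qu$ has abelian decomposition group, Vinatier's theorem \cite{Vinatier1} gives freeness of $\calA_{\tilde E/\Qu}$, hence via Lemma \ref{vinatier lemma 2} the required vanishing.

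The principal subtlety is the abelian case, where the local vanishing of $\fra_{L_w/\Qp}$ must be bootstrapped from Vinatier's earlier global theorem via a Neukirch-type globalization; the non-abelian $p^3$ case is then a clean transfer of hypothesis (ii) from the finite test family $\bigcup_{p \in P}\calF(p)$.
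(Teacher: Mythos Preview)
Your proof is correct and follows the same overall architecture as the paper: both directions, the reduction via Lemma~\ref{vinatier lemma 2} and Theorem~\ref{decomp thm} to the local terms $\fra_{L_w/\Qp}$, and the treatment of the non-abelian $p^3$ case via $\calF(p)$ all match.

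The one genuine difference is the abelian local case. You globalize $L_w/\Qp$ via Neukirch to some $\tilde E/\Qu$ and then invoke Vinatier's theorem from \cite{Vinatier1} to obtain the vanishing. The paper instead stays local: since $K_v=\Qp$ is unramified over $\Qp$ and the (weakly ramified, abelian, $p$-power) extension $L_w/\Qp$ automatically has cyclic inertia, Theorem~\ref{special cases prop} gives $\fra_{L_w/\Qp}=\frc_{L_w/\Qp}$ directly, and the latter projects to zero in $\Cl(G_w)$ (as in Lemma~\ref{cwr vanish}, the inertia group being of prime-power order). Your route is valid but longer and imports an external global result; the paper's route is a one-line application of its own local computation. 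Either way the case split should really be ``$G_w$ abelian'' versus ``$G_w$ non-abelian of order $p^3$'', exactly as you have it --- the paper's split by $|G_w|\le p^2$ versus $|G_w|=p^3$ tacitly relies on the same local argument covering the abelian $p^3$ case as well.
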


\begin{proof}
Obviously (i) implies (ii). For the reverse implication fix $L/K$ in $\calL(P)$.
By Lemma \ref{vinatier lemma 2} we have to show that the element $\fra_{L/K}$ projects to zero in
$\Cl(G)$. By Theorem \ref{decomp thm} together with Proposition \ref{special cases prop}(i) we have
\[
\fra_{L/K} = \sum_{v \in \calW_{L/K}} i_{G_w, \Qu_{\ell(v)}^c}^G (\fra_{L_w/K_v}).
\]
It is therefore enough to show that each of the terms $ \fra_{L_w/K_v}$ projects to zero in $\Cl(\Ze[G_w])$.
By our assumptions $K_v = \Qp$ for a prime $p \in P$ and
$G({L_w/\Qp})$ is a $p$-group of order at most $p^3$. If   $|G({L_w/\Qp})| \le p^2$ then $L_w/\Qp$ is abelian
and  $\fra_{L_w/\Qp} = 0$ by the relevant case of Theorem \ref{special cases prop}. If $|G_{L_w/\Qp}| = p^3$ then by the definition of $\calL(P)$ the local extension
 $L_w/\Qp$ is the localisation of one of the extensions $N/\Qu$ in $\calF(p)$, so that we have $\fra_{N/\Qu} = \fra_{L_w/\Qp}$.
 The claim now follows from Lemma \ref{vinatier lemma 2}.\end{proof}

In the rest of this section we give the postponed proof of  Proposition \ref{degree p extensions}.

As a first step we recall that there are two isomorphism classes of non-abelian groups of order $p^3$, with respective presentations
\begin{equation}\label{presentations}
\begin{cases} &\langle a,b \mid a^{p^2} = 1 = b^p, b^{-1}ab = a^{1+p}\rangle,\\
 &\langle a, b, c\mid a^p = b^p = c^p = 1, ab = bac, ac = ca, bc = cb\rangle,\end{cases}
\end{equation}
the first having exponent $p^2$ and the second exponent $p$ (see, for example, \cite[\S4.4]{hall}). In both cases the centre $Z(G)$ of the group $G$
has order $p$ (being generated by $a^p$ and $c$ respectively) and the quotient group $G/Z(G)$ is isomorphic to $\ZZ/p\ZZ\times \ZZ/p\ZZ$.

Any weakly ramified non-abelian Galois extension $L$ of $\QQ_p$
of degree $p^3$ must thus contain a subfield $E$ that is Galois over $\QQ_p$ and
such that both $G({L/E})$ is central in $G({L/\QQ_p})$ and $G({E/\QQ_p})$ is isomorphic to $\ZZ/p\times \ZZ/p$. Since
$\QQ_p^\times/(\QQ_p^\times)^p$ has order $p^2$ local class field theory implies 
$E$ is the compositum of the unique subextension $E_1$ of $\QQ_p(\zeta_{p^2})$ of degree $p$ over $\QQ_p$ and of the unique
unramified extension $E_2$ of $\QQ_p$ of degree $p$ (and hence is weakly ramified, as required).
In the sequel we set $G := G(L/\Qp), H := G(L/E), \Gamma := G(E/\QQ_p)$ and $\Delta := G(E/E_1)$.

If $L/E$ is a weakly ramified degree $p$ extension such that $L/\Qp$ is Galois, then $L/\Qp$ is weakly ramified. Indeed,
$G_2 \cap H = H_2 = 1$ and hence $G_2 \simeq G_2H/H$. By Herbrand's theorem we obtain
$G_2H/H = (G/H)_2$, which is trivial since $E/\Qp$ is weakly ramified.
The required fields therefore correspond to weakly ramified degree $p$ extensions $L$ of $E$ which are Galois over
$\QQ_p$.

For each subfield $F$ of $E$ we write $\mathfrak{p}_F$ for the maximal ideal of the valuation ring $\mathcal{O}_F$ of $F$,
$U_F^{(i)}$ for each natural number $i$
for the group $1+ \mathfrak{p}_F^i$ of $i$-th
principal units of $F$ and $\mu_F'$ for the maximal finite subgroup of $F^\times$ of order prime to $p$.
If $L/F$ is abelian we also write ${\rm rec}_{L/F}$ for the reciprocity map $F^\times \to G_{L/F}$.

If $L/E$ is unramified, then the ramification degree
of $L/\QQ_p$ is $p$ so that $L$ contains both $E_1$ and the unramified extension of $\QQ_p$ of degree $p^2$ and so $L$ is abelian over $\QQ_p$.

On the other hand, if $L/E$ is ramified, then the inertia subgroup $G_0$ has order $p^2$.
 In addition, since $L/\Qp$ is assumed to be weakly ramified, the group $G_0 = G_1$ identifies with $G_1/G_2$ and so is isomorphic to a subgroup of $U_{L}^1 /U_L^2$ and therefore has exponent dividing $p$. It follows that $G_0$ is not cyclic and hence that $L/\QQ_p$ is not abelian. We have therefore shown that $L/\Qp$ is abelian if and only if $L/E$ is unramified.

In summary, there is thus a field diagram of the following sort.

\begin{equation*}
\xymatrix{& L \\
&  E \ar@{-}[u]_{p}^{{\rm totally}\,\, {\rm weakly }\,\, {\rm ramified}} \\
E_1 \ar@{-}[ur]_{p}^{\rm unramified} & & E_2 \ar@{-}[ul]_{{\rm totally}\,\, {\rm weakly }\,\, {\rm ramified}}^p\\
& \QQ_p \ar@{-}[ul]_{p}^{{\rm totally}\,\, {\rm weakly }\,\, {\rm ramified}} \ar@{-}[ur]_{\rm unramified}^p}
\end{equation*}

By an easy exercise one checks that $L/E$ is weakly ramified if and only if the upper ramification subgroup
   $H^2$ vanishes. By local class field theory, the desired extensions $L$ are therefore in bijective correspondence
with subgroups $N$ of $E^\times$ that are $\Gamma$-stable (as $L/\QQ_p$ is Galois), contain
$U_E^{(2)}$ (by \cite[Cor.~3 p. 228]{LocalFields}), contain $E^{\times p}$ (as $E^\times / N$ has exponent $p$) and contain
$I_\Gamma(E^\times)$ (as $\Gamma$ acts trivial on $E^\times /N \simeq Z(G)$), where $I_\Gamma$ denotes the augmentation ideal of $\ZZ[\Gamma]$.

We note next that there are isomorphisms of abelian groups
\begin{equation}\label{composite iso} ( U_E^{(1)}/U_E^{(2)})_{\Gamma} \cong  \left(\left( \mathfrak{p}_E/\mathfrak{p}_E^2\right)_{\Delta}\right)_{\Gamma/\Delta}
\cong  \left(\mathfrak{p}_{E_1}/\mathfrak{p}_{E_1}^2\right)_{\Gamma/\Delta} \cong \left(\ZZ/p\ZZ\right)_{\Gamma/\Delta} = \ZZ/p\ZZ\end{equation}
where the first map is induced by the natural isomorphism $U_E^{(1)}/U_E^{(2)}\cong \mathfrak{p}_E/\mathfrak{p}_E^2$.
The second isomorphism is induced by the field-theoretic trace
${\rm Tr}_{E/E_1}$. Indeed, since $E/E_1$ is unramified, the induced map
$\left(\mathfrak{p}_E/\mathfrak{p}_E^2\right)_{\Delta} \cong \mathfrak{p}_{E_1}/\mathfrak{p}_{E_1}^2$ is
surjective with kernel $\hat H^{-1}(\Delta, \frp_E/\frp_E^2)$, which is trivial since $\mathfrak{p}_E^i$
is $\Zp[\Delta]$-free for each non-negative integer $i$.
The third is induced by
the fact that $\mathfrak{p}_{E_1}/\mathfrak{p}_{E_1}^2\cong \mathcal{O}_{E_1}/\mathfrak{p}_{E_1}$ has order $p$ (since $E_1/\QQ_p$ is totally ramified).

To be explicit we fix a uniformizing parameter $\pi$ of $E_1$ and recall that
$E^\times = \langle \pi\rangle\times \mu_E' \times U_E^{(1)}$.
Any $\gamma \in  \Gamma$ can be written in the form $\gamma = \gamma_1 \gamma_2$ with $\gamma_1 \in G(E/E_1), \gamma_2 \in G(E/E_2)$.
The wild inertia group $\Gamma_1$ is equal to $G(E/E_2)$ and hence we obtain $\pi^{\gamma - 1} = \pi^{\gamma_2 - 1} \in U_{E_1}^{(1)} \sseq  U_{E}^{(1)}$.
In addition, by (\ref{composite iso}) and the fact that ${\rm Tr}_{E/E_1}$ acts as
  multiplication by $p$ on $\mathfrak{p}_{E_1}$, we see that $\pi^{\gamma-1}$ has
  trivial image in $( U_E^{(1)}/U_E^{(2)})_{\Gamma}$.

We set
\[
T := \langle \left(E^\times\right)^p, U_E^{(2)}, I_\Gamma(E^\times) \rangle = \langle \left(E^\times\right)^p, U_E^{(2)}, I_\Gamma(U_E^{(1)}) \rangle
\]
and note that the map
\[ E^\times \to \langle \pi \rangle/\langle \pi^p \rangle \times (U_E^{(1)}/U_E^{(2)})_\Gamma, \,\,\,\,
\pi^a \epsilon y \mapsto ( \pi^a \mod{\langle \pi^p \rangle}, yU_E^{(2)} \mod I_\Gamma ( U_E^{(1)}/U_E^{(2)})),\]
%
%
where $a\in \ZZ$, $\epsilon\in \mu_E'$ and $y \in U_E^{(1)}$, induces an isomorphism of the quotient group $Q:= E^\times / T$ with the direct product $\langle \pi \rangle/\langle \pi^p \rangle \times  (U_E^{(1)}/U_E^{(2)})_\Gamma \cong \ZZ/p\ZZ\times \ZZ/p\ZZ$.

In particular, if we fix an element $u$ of $U_{E}^{(1)}$ that generates $( U_E^{(1)}/U_E^{(2)})_{\Gamma}$, then the order $p$ subgroups of $Q$
 correspond to the subgroups generated by the classes of the elements $u$ and $\pi\cdot u^i$ for $i \in \{0,1,2,\dots , p-1\}$.

In addition, since $L/E$ is ramified if and only if $N$ does not contain $u$, the quotients that we require correspond to the subgroups
$Q_i := \langle \pi\cdot u^i \mod T \rangle$ for $i \in \{0,1,2,\dots , p-1\}$.
The corresponding subgroups $N_i$ of $E^\times$ are given by $N_i := \langle \pi u^i,  T \rangle$ and
we write $L_i$ for the fields that correspond to $N_i$ via local class field theory.

If $i \not= 0$, then $Q_i$ does not contain the class of $\pi$ so $G({L_i/E})$ is generated by
${\rm rec}_{L_i/E}(\pi) = {\rm rec}_{L_i/E_1}({\rm N}_{E/E_1}(\pi)) =  {\rm rec}_{L_i/E_1}(\pi)^p$
 and hence $G({L_i/E_1})$ is cyclic of order $p^2$ (and so $G({L_i/\QQ_p})$ has exponent $p^2$).

Finally we claim that $G({L_0/\QQ_p})$ has exponent $p$. To prove this it is enough,
 in view of the possible presentations (\ref{presentations}), to show $G({L_0/\QQ_p})$ contains two non-cyclic
subgroups of order $p^2$. Hence, since its inertia subgroup $G({L_0/E_2})$ is one such subgroup (as $L_0/\QQ_p$ is weakly ramified),
it is enough to prove $G({L_0/E_1})$ also has exponent $p$.

To do this we note that $G({L_0/E})$ is generated by ${\rm rec}_{L_0/E}(u) = {\rm rec}_{L_0/E_1}({\rm N}_{E/E_1}(u))$
and so ${\rm N}_{E/E_1}(u)$ is an element of order $p$ in $E_1^\times / {\rm N}_{L_0/E_1}(L_0^\times)$. Since
\[
{\rm N}_{L_0/E_1}(L_0^\times) = {\rm N}_{E/E_1}(N_0) = {\rm N}_{E/E_1}(\langle \pi, T \rangle) \sseq \langle \pi^p, U
_{E_1} \rangle
\]
we see that $\pi \not\in {\rm N}_{L_0/E_1}(L_0^\times) $ and $\pi^p \in {\rm N}_{L_0/E_1}(L_0^\times) $.  So it finally remains to
show that $\pi$ and ${\rm N}_{E/E_1}(u)$ generate different subgroups of $E_1^\times / {\rm N}_{L_0/E_1}(L_0^\times)$. But
if ${\rm N}_{E/E_1}(u) \pi^{n}$ were contained in ${\rm N}_{L_0/E_1}(L_0^\times) $ for some integer $n$, then $p$ would divide $n$ since $E/E_1$
is unramified of degree $p$. But this would then imply that ${\rm N}_{E/E_1}(u)$ belongs to ${\rm N}_{L_0/E_1}(L_0^\times) $ which is a contradiction.


This completes the proof of Proposition \ref{degree p extensions}.

\subsubsection{}\label{mixed degree extensions}

Following Lemma \ref{cwr vanish} and Remark \ref{not vanish}, the weakly ramified Galois extensions $L/\Qu$ of smallest degree for which the projection of $\mathfrak{c}_{L/\QQ}$ to ${\rm Cl}(G(L/\QQ))$ might not vanish are non-abelian and of degree $\ell^2 p$ for an odd prime $p$ and an odd prime $\ell$ that divides $p-1$. This motivates us to investigate such extensions numerically (in \S\ref{deg 63 extensions}) and the next result lays the groundwork for such investigations by determining a family of local extensions that satisfies the required conditions.

\begin{prop}\label{degree plsquared extensions}
Let $\ell$ and $p$ be odd primes with $\ell$ dividing $p-1$ . Then there exist exactly $\ell$
(non-isomorphic) weakly ramified non-abelian Galois extensions $L$ of $\QQ_p$ of degree $\ell^2p$ with $G({E/\Qp})\cong \Ze/l\Ze \times \Ze/l\Ze$, where $E := L^C$ and $C$ is the unique Sylow-$p$-subgroup of $G({L/\Qp})$.
\end{prop}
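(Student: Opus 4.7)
My plan is to follow the template of the proof of Proposition \ref{degree p extensions}: identify the unique intermediate field $E$, classify $L$ via local class field theory applied to the degree-$p$ extension $L/E$, and then reduce everything to a linear-algebra count of $\Gamma$-stable subspaces of an explicit $\mathbb{F}_p[\Gamma]$-module, where $\Gamma := G({E/\Qp})$. First I will observe that since $\ell$ divides $p-1$ the field $\Qp$ contains a primitive $\ell$-th root of unity $\zeta$, so Kummer theory gives $\Qp^\times/(\Qp^\times)^\ell \cong \Ze/\ell\Ze \times \Ze/\ell\Ze$. There is therefore a \emph{unique} Galois extension of $\Qp$ with Galois group $\Ze/\ell\Ze \times \Ze/\ell\Ze$, namely the compositum $E = F_{\rm nr} \cdot F_{\rm ram}$ of the unramified degree-$\ell$ extension $F_{\rm nr}$ with the totally tamely ramified Kummer extension $F_{\rm ram} := \Qp(\sqrt[\ell]{p})$. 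In particular $\pi := \sqrt[\ell]{p}$ is a uniformizer of $E$, the extension $E/\Qp$ is tame, and I will fix a Frobenius generator $\sigma$ of $G({E/F_{\rm ram}})$ together with a generator $\tau$ of $G({E/F_{\rm nr}})$ with $\tau(\pi) = \zeta\,\pi$, so that $\Gamma = \langle \sigma \rangle \times \langle \tau \rangle$.

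Second, for any $L$ with the stated properties the normal Sylow-$p$-subgroup $C$ of $G({L/\Qp})$ forces $L^C = E$ by the uniqueness above, so $L/E$ is of degree $p$. Conversely, using Herbrand's theorem exactly as in the proof of Proposition \ref{degree p extensions} (and the tameness of $E/\Qp$), a degree-$p$ Galois extension $L/E$ is weakly ramified precisely when $L/\Qp$ is. By local class field theory, the fields $L$ under consideration then correspond bijectively to subgroups $N \sseq E^\times$ of index $p$ that are $\Gamma$-stable, contain $(E^\times)^p \cdot U_E^{(2)}$, and for which $\Gamma$ acts non-trivially on $E^\times/N$; the last condition is equivalent to $G({L/\Qp}) = G({L/E}) \rtimes \Gamma$ being non-abelian.

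Third, I will compute the $\mathbb{F}_p[\Gamma]$-module structure of $Q := E^\times/\bigl((E^\times)^p\,U_E^{(2)}\bigr)$. From the usual decomposition $E^\times = \pi^{\ZZ} \times \mu_E' \times U_E^{(1)}$, the fact that $(\mu_E')^p = \mu_E'$ (because $p \nmid |\mu_E'|$), and the observation that the $p$-th power map is trivial on the exponent-$p$ quotient $U_E^{(1)}/U_E^{(2)}$, I obtain $Q = V_0 \oplus M$, where $V_0 \cong \mathbb{F}_p$ is generated by the class of $\pi$ and $M := U_E^{(1)}/U_E^{(2)} \cong \mathcal{O}_E/\mathfrak{p}_E \cong \mathbb{F}_{p^\ell}$. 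The action of $\Gamma$ on $V_0$ is trivial since $\sigma(\pi) = \pi$ and $\tau(\pi) = \zeta\,\pi$ with $\zeta \in \mu_E' \sseq (E^\times)^p$. The standard residue-field identification $1 + x\pi \mapsto x \bmod \mathfrak{p}_E$ then shows that on $M$ the element $\sigma$ acts as the Frobenius of $\mathbb{F}_{p^\ell}$, while $\tau$ acts as multiplication by $\zeta$; the crucial point is that $\zeta \in \mathbb{F}_p^\times$ (because $\ell \mid p-1$), so $\tau$ acts on $M$ by an $\mathbb{F}_p$-linear scalar.

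Fourth, I will count. Because $\sigma$ and $\tau$ commute, act $\mathbb{F}_p$-linearly, and both have all eigenvalues in $\mathbb{F}_p^\times$ (with $\sigma$ on $M$ having the separable minimal polynomial $X^\ell - 1 = \prod_{i=0}^{\ell-1}(X - \zeta^i)$), they are simultaneously diagonalisable over $\mathbb{F}_p$. This yields a decomposition of $Q$ into $\ell + 1$ one-dimensional joint eigenspaces, namely $V_0$ and $M_i := \ker(\sigma - \zeta^i \mid M)$ for $0 \le i \le \ell - 1$, on which $\sigma$ acts by $\zeta^i$ and $\tau$ by $\zeta$ respectively. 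The $\Gamma$-stable codimension-one subspaces of $Q$ are thus exactly the $\ell+1$ direct sums obtained by omitting a single joint eigenspace, and $\Gamma$ acts non-trivially on the resulting one-dimensional quotient precisely when the omitted summand is not $V_0$. This produces exactly $\ell$ admissible subgroups $N$, and hence the claimed $\ell$ extensions $L$. The main technical obstacle I anticipate is the careful bookkeeping of the $\Gamma$-action on $M$ via the residue-field identification, and in particular the use of $\ell \mid p-1$ to upgrade $\tau$ from an $\mathbb{F}_{p^\ell}$-scalar to an $\mathbb{F}_p$-scalar: without this the $\mathbb{F}_p$-invariant subspaces of $M$ under $\tau$ would be more complicated, and the clean count of $\ell+1$ joint eigenspaces would not be immediate.
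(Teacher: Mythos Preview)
Your argument is correct. The overall strategy matches the paper's---identify the unique intermediate field $E$, translate via local class field theory into a question about $\Gamma$-stable index-$p$ subgroups of $E^\times$ containing $U_E^{(2)}$, and exploit the semisimplicity of $\mathbb{F}_p[\Gamma]$---but your execution differs in a useful way. You diagonalise the module $Q = E^\times/((E^\times)^p U_E^{(2)})$ directly, obtaining $\ell+1$ distinct one-dimensional characters, and then simply count the multiplicity-free hyperplanes with non-trivial quotient. The paper instead analyses the kernel $H_0 := \ker(\phi_0)$ of the character on $E^\times/X$, shows via a ramification argument (using \cite[p.~70, Cor.~2]{LocalFields}) that the inertia subgroup $\Delta = G(E/E_1)$ must act non-trivially, deduces that $H_0$ is one of the $\ell$ order-$\ell$ subgroups other than $\Delta$, and then explicitly constructs and verifies the norm subgroup $X(H) = \langle \mu_E', \sqrt[\ell]{p}, U_E^{(2)}, I_H(U_E^{(1)})\rangle$ for each such $H$. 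Your route is cleaner for the bare count; the paper's buys the explicit bijection between the $\ell$ extensions and the $\ell$ ramified degree-$\ell$ subfields $F_i = E^{H}$ of $E$ (this is exactly the content of Remark~\ref{distinguish extensions}), which is then used to organise the numerical search in \S\ref{explicit set ellp}. It is also worth noting that in your count the condition you impose is ``$G(L/\Qp)$ non-abelian'', whereas the paper imposes ``$\Delta$ acts non-trivially on $E^\times/X$''; these happen to coincide here precisely because $V_0$ is the \emph{only} summand of $Q$ on which $\tau$ acts trivially, a fact your explicit diagonalisation makes transparent.
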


\begin{proof} Let $L/\Qp$ be an extension with the stated conditions and set $G := G({L/\Qp})$.

As $\ell$ divides $p-1$ the $\ell$-th roots of unity are contained in $\Qp$ and $\Qu_p^\times / (\Qu_p^\times)^\ell \simeq   \Ze/\ell\Ze \times \Ze/\ell\Ze$, so that
$E$ is the maximal abelian extension of $\Qp$ of exponent $\ell$. Explicitly, $E = E_1E_2$ where $E_1$ is the unramified extension of degree $\ell$ and
$E_2 := \Qp(\sqrt[\ell]{p})$. By local class field theory $L$ corresponds to a subgroup $X$ of $E^\times$ such that $X$
is stable under the action of $\Gamma := G/C$, $|E^\times/X| = p$ and $U_E^{(2)} \sseq X$ (by \cite[Cor.~3 p. 228]{LocalFields}).

Let $H$ be a subgroup of $\Gamma$ such that $|H| = \ell$. Since $H$ is cyclic the extension $L/E^H$ is abelian if and only if $H$ acts trivially on $E^\times/X$.
As a consequence $\Delta := G({E/E_1})$ acts non trivially on  $E^\times/X$ since otherwise $G(L/E_1) = G_0(L/\Qp)$ would be abelian and by
\cite[Cor.~2, p. 70]{LocalFields} this contradicts $G_2(L/E_1) = 1$.

Since $p \nmid |\Gamma|$ the $\Fp[\Gamma]$-module  $E^\times/X$ decomposes as $E^\times/X = \bigoplus_\phi e_\phi ( E^\times/X )$
where $\phi$ runs over the $\Fp$-valued abelian characters of  $\Gamma$ and $e_\phi$ denotes the usual idempotent in $\Fp[\Gamma]$.
In addition, since $|E^\times/X|= p$, exactly one of the components, for $\phi = \phi_0$ say,  is non-trivial.

Since $H_0 := \ker(\phi_0)$ acts trivially on $e_{\phi_0}(E^\times / X)$ we deduce that $H_0 \ne \Delta$. Then, writing
$T_H := \sum_{h \in H}$ for any subgroup
$H$ of $\Gamma$, one has $T_H(E^\times/X) = (E^\times/X)^H$ and so, since $(E^\times/X)^\Gamma \sseq (E^\times/X)^\Delta = 0$, we deduce $H_0 \ne \Gamma$.


We claim that $X$ contains $\langle \mu_E', \sqrt[\ell]{p}, U_E^{(2)}, I_{H_0}(U_E^{(1)}) \rangle$.
To see this note $\mu_E' \sseq X$ as $(\mu_E')^p = \mu_E'$. Since $T_\Delta(E^\times/X) = 0$ we obtain
$T_\Delta(\sqrt[\ell]{p}) = N_{E_2/\Qp}(\sqrt[\ell]{p}) = p  = (\sqrt[\ell]{p})^\ell \in X$.
As $\ell\not=p$ it follows that $\sqrt[\ell]{p} \in X$. Finally, as $L/E^{H_0}$ is abelian, $X$ must contain
$I_{H_0}(E^\times)$, and hence also  $I_{H_0}(U_E^{(1)})$, as required.

We will show below that for any subgroup $H$ of $\Gamma$ with $|H| = \ell$ and $H \ne \Delta$ the subgroup
\[
X(H) :=  \langle \mu_E', \sqrt[\ell]{p}, U_E^{(2)}, I_{H}(U_E^{(1)}) \rangle
\]
is both stable under $\Gamma$ and satisfies $|E^\times / X(H)| = p$.

This will show, in particular, that $X = X(H_0)$. Conversely, since each subgroup $X(H)$ corresponds by local class field theory (and \cite[p.~70, Cor.~2]{LocalFields}) to a weakly ramified extension $L/\Qp$ as in the proposition, we will also have proved that the extensions $L$ in the proposition correspond uniquely to the subgroups $H$ of $\Gamma$ with $|H| = \ell$ and $H \ne \Delta$.

It thus remains to show that for each subgroup $H$ as above the subgroup $X(H)$ is stable under $\Gamma$ and such that $|E^\times / X(H)| = p$.

Since $\gamma( \sqrt[\ell]{p}) \equiv  \sqrt[\ell]{p} \pmod{\mu_E'}$ for all $\gamma \in \Gamma$ it is immediate that $X(H)$ is $\Gamma$-stable.
The extension $E/E^H$ is unramified and therefore
\[
( U_E^{(1)} / U_E^{(2)})^H \simeq ( U_E^{(1)} / U_E^{(2)} )_H \simeq (  \frp_E / \frp_E^2 )_H
\simeq \frp_{E^H} /  \frp_{E^H}^2 \simeq \Ze / p\Ze,
\]
where the first isomorphism holds since each $U_E^{(n)}$ is $H$-cohomologically trivial,
the second is canonical and the third is  induced by the trace map $\tr_{E/E^H}$.
On the other hand, $( U_E^{(1)} / U_E^{(2)})_H = U_E^{(1)} / I_H(U_E^{(1)})U_E^{(2)}$ and so the decomposition $E^\times = \langle\sqrt[\ell]{p}\rangle \times \mu_E' \times U_E^{(1)}$ implies that the quotient
$E^\times / X(H) \simeq U_E^{(1)} / I_H(U_E^{(1)})U_E^{(1)}$ is isomorphic to $\Ze / p\Ze$, as required. \end{proof}

\begin{remark}\label{distinguish extensions} Assume the situation of Proposition \ref{degree plsquared extensions}.
  Then the extension $E/\Qp$ has $\ell$ subextensions $F_1, \ldots, F_\ell$ (corresponding to the subgroups $H$ of $\Gamma$ with
$|H| = \ell$ and $H \ne \Delta$) that are ramified over $\Qp$. For each such $F_i$ there exists precisely one extension
$L/\Qp$ that satisfies the assumptions of  Proposition \ref{degree plsquared extensions} and is also such that $L/F_i$ is abelian.
\end{remark}

\begin{remark}
Our primary motivation for obtaining the explicit descriptions of wildly and weakly ramified non-abelian Galois extensions that are
given above was to assist with attempts to make numerical investigations of the conjectures that we have discussed. However,
such explicit descriptions are of course interesting in their own right. In this context we recall that Vinatier \cite[Cor.~2.2]{Vinatier1}
has shown that for any positive multiple $n$ of $p$ there are exactly $p$ non isomorphic abelian wildly and weakly ramified extensions
of $\Qp$ of degree $n$ and, moreover, that these extensions can be described explicitly.
\end{remark}


\section{Numerical examples}\label{numerical examples}

In this section we investigate numerically, and thereby prove, Vinatier's conjecture for two new, and infinite, families of non-abelian weakly ramified Galois extensions of $\QQ$.

At the same time we shall also explicitly compute both sides of the equality in Conjecture \ref{local conj}
for all weakly ramified non-abelian Galois extensions of $\Qu_3$ of degree $27$,
thereby verifying this conjecture, and hence also Breuning's local epsilon constant conjecture, in this case.

\subsection{Extensions of degree $27$}

\subsubsection{} We first compute explicitly a set $\calF(3)$ as in \S\ref{p cubed section}. To do this we have to find $3$ weakly ramified Galois extensions $L$ of $\QQ$ of degree $27$ with a unique $3$-adic place $w$ and such that $L_w/\Qp$ runs over all extensions as in Proposition \ref{degree p extensions}.

In the following $p$ denotes $3$. We shall also only consider Galois extensions $F/\Qu$ that have a unique place $w$ above $p$ and so we write  $F_p$ in place of $F_w$.

We let $E_1$ be the subextension of $\Qu(\zeta_{p^2})$ of degree $p$ and $E_2$ an abelian extension of degree $p$ such that $p$ is inert in $E_2$.
We set $E := E_1E_2$ and let $\frp$ denote the unique prime ideal of $\OE$ above $p$. We write $\Gamma$ for the Galois group of $E/\Qu$.

Set $Q_2 := \{ \alpha \in \left(\OE / \frp^2 \right)^\times \mid \alpha \equiv 1 \pmod{\frp}\}$ and note that $(Q_2)_\Gamma \simeq
( U_{E_p}^{(1)} / U_{E_p}^{(2)})_\Gamma \simeq \Ze/p\Zp$
 by (\ref{composite iso}). Let $u \in \OE$ be such that the class of $u$ generates $( Q_2 )_\Gamma$ and let $\pi \in \OE$ be a uniformizing element for $\frp$.

By algorithmic global class field theory we compute ray class groups $\mathrm{cl}(q\frp^2)$ of conductor $q\frp^2$ for small positive integers $q$ with
$(q,p) = 1$ and search for subgroups $U \le \mathrm{cl}(q\frp^2)$ of index $p$ which are invariant under $\Gamma$ and such that the corresponding
extension $L/E$ is ramified at $\frp$. Each such $U$ corresponds to a Galois extension $L/\Qu$ whose completion at $p$ is one of the extensions of
Proposition \ref{degree p extensions}. As shown in the proof of Proposition \ref{degree p extensions} the local extensions $L_p/\Qp$ are in one-to-one correspondence with the
elements $\pi u^b$ for $b  \in \{0,1,2, \ldots, p-1\}$. More precisely, there is exactly one $b$ such that $\rec_{L_p/\Qp}(\pi u^b) = 1$.
Thus we have to find extensions $L/\Qu$ such that the resulting integers $b$ range from $0$ to $p-1$. In order
to compute $\rec_{L_p/\Qp}(\pi u^b)$ we compute $\xi \in \OE$ such that $\xi \equiv \pi \pmod{q}$ and  $\xi \equiv u^{-b} \pmod{\frp^2}$.
Then class field theory shows that $\rec_{L_p/\Qp}(\pi u^b) = \rec_{L/\Qu}(\xi\OE)$ which can be computed globally.

This approach is implemented in MAGMA. For $E_2$ we used the cubic subextension of $\Qu(\zeta_{19})$ and found a set of $3$ global
extensions $L/\Qu$ by taking $q \in \{5, 19 \}$.
The results of these computations can be reproduced using the MAGMA implementation
which can be downloaded from the first author's homepage.

\subsubsection{}\label{subsub deg 27} Using results of \cite{BleyBoltje} one can explicitly compute $\Cl(G)$ as an abstract group for each finite group $G$. In particular, for the two non-abelian groups of order $27$ one finds in this way that $\Cl(G)$ is cyclic of order $9$.

For each of the extensions $L/\Qu$ computed in the last section we can use the algorithm described in \cite[\S~5]{BleyWilson}
to compute the logarithm of $[\calA_{L/\Qu}]$ in $\Cl(G)$ with $G := G(L/\Qu)$. Since $G$ is of odd order,
$\calA_{L/\Qu}$ is a free $G$-module if and only if $[\calA_{L/\Qu}]$ is trivial.

In a little more detail, we first compute a normal basis element $\theta \in \calO_L$ and the $G$-module $\calA_\theta \sseq \QG$ such
that $\calA_\theta \theta = \calA_{L/\Qu}$. Then $\calA_\theta \simeq \calA_{L/\Qu}$ and the element $[\calA_\theta, \id, \ZG] \in K_0(\ZG, \QG)$
projects to the class of $\calA_{L/\Qu}$ in $\Cl(G)$. The algorithm in \cite{BleyWilson} now solves the discrete logarithm problem
for $[\calA_{\theta, \ell}, \id, \ZlG]$ in $K_0(\ZlG, \QlG)$ for each of the primes $\ell$ dividing the generalized index
$[\calA_\theta : \ZG]$ and then uses the recipe described
in \cite[\S~5]{BleyWilson} to compute the logarithm of $[\calA_{L/\Qu}]$ in $\Cl(G)$.

However, for an arbitrary choice of $\theta$ the algorithm will in general fail because of efficiency problems since this set of primes $\ell$
is often too large and contains primes $\ell$ which are much too big. We therefore first compute a maximal order $\calM$ in $\QG$
containing $\ZG$ and an element
$\delta \in \QG$ such that $\calM\calA_\theta = \calM\delta$. This is achieved by the method described in Steps (1) to (5) of
Algorithm 3.1 in  \cite{BleyJohnston}. We then  set $\theta' := \delta(\theta)$ and start over again by computing
$\calA_ {\theta'}$ such that $\calA_{\theta'}\theta' = \calA_{L/\Qu}$.
Then one has $\calM\theta' = \calM\calA_\theta\theta = \calM\calA_{L/\Qu} = \calM\calA_{\theta'}\theta'$.

Localizing at prime divisors $\ell$ of $G$ we obtain $\Ze_{(\ell)}[G]\theta' = \calA_{\theta', (\ell)}\theta'$ and hence
$\Ze_{(\ell)}[G] = \calA_{\theta', (\ell)}$. It follows that we only need to solve the discrete logarithm problem
for $[\calA_{\theta', \ell}, \id, \ZlG]$ in $K_0(\ZlG, \QlG)$
for primes $\ell$ dividing $|G|$.

The computations show that for each of the $3$ extensions computed in the previous section the $G$-module $\calA_{L/\Qu}$ is free.
As a consequence of these computations and Theorem \ref{nice explicit} we obtain the following result.

\begin{theorem}\label{degree 27 vinatier} For all extensions in $\calL(3)$ the $G$-module $\calA_{L/K}$ is free. In particular, Vinatier's conjecture holds for all non-abelian extensions $L/\Qu$ of degree $27$.
\end{theorem}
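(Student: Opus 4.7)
The plan is to invoke Theorem \ref{nice explicit} with $P = \{3\}$, which reduces the first assertion of the theorem to verifying that for each of the three explicit global extensions $N/\Qu$ constituting the set $\calF(3)$ constructed in \S8.1.1, the $G(N/\Qu)$-module $\calA_{N/\Qu}$ is free. Since $|G(N/\Qu)| = 27$ is odd, this freeness is equivalent (by Lemma \ref{vinatier lemma 2}) to the vanishing of the class $[\calA_{N/\Qu}]$ in $\Cl(G(N/\Qu))$, and the methods of \cite{BleyBoltje} show that this classgroup is cyclic of order $9$ for each of the two non-abelian groups of order $27$.

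The main work is therefore to compute these three classes explicitly using the algorithm of \cite{BleyWilson}. First one computes a normal basis element $\theta \in \calO_N$ and the associated locally free $G$-module $\calA_\theta \sseq \QG$ characterised by $\calA_\theta \cdot \theta = \calA_{N/\Qu}$, so that the element $[\calA_\theta, \id, \ZG]$ of $K_0(\ZG, \QG)$ projects to $[\calA_{N/\Qu}]$ in $\Cl(G(N/\Qu))$; one then applies the discrete logarithm routine of \cite{BleyWilson} at each prime divisor of the generalized index $[\calA_\theta : \ZG]$.

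The principal obstacle is computational efficiency: for an arbitrarily chosen $\theta$ the set of primes $\ell$ appearing in $[\calA_\theta : \ZG]$ is typically both too large and contains primes that are far too big for the discrete logarithm computation in $K_0(\ZlG, \QlG)$ to terminate in reasonable time. To circumvent this, the plan is to apply Steps (1)--(5) of Algorithm 3.1 of \cite{BleyJohnston} to produce a maximal order $\calM \supset \ZG$ of $\QG$ together with an element $\delta \in \QG$ with $\calM\calA_\theta = \calM\delta$, and then replace $\theta$ by $\theta' := \delta(\theta)$. A short computation shows $\calM\calA_{\theta'} = \calM$, and since $\calM_{(\ell)} = \Ze_{(\ell)}[G]$ for every prime $\ell$ not dividing $|G|$, this forces $\calA_{\theta',(\ell)} = \Ze_{(\ell)}[G]$ at all such $\ell$. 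The required logarithm therefore only needs to be computed at $\ell = 3$, a task which can be carried out explicitly in MAGMA; if in each of the three cases this computation returns the trivial class, the first assertion follows.

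The final assertion is then immediate: any non-abelian weakly ramified Galois extension $L/\Qu$ of degree $27$ lies in $\calL(\{3\})$, since the base field is $\Qu$ (so that the condition $K_v = \Qu_{\ell(v)}$ is automatic at every place), the only rational prime that can ramify wildly is $3$, and each decomposition subgroup has order dividing $|G(L/\Qu)| = 27 = 3^3$.
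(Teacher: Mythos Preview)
Your proposal is correct and follows essentially the same approach as the paper: reduce via Theorem \ref{nice explicit} to the three extensions in $\calF(3)$, then compute $[\calA_{N/\Qu}]$ in $\Cl(G(N/\Qu))$ using the algorithm of \cite{BleyWilson}, with the maximal-order normalisation trick from \cite{BleyJohnston} to ensure that the discrete logarithm only needs to be solved at $\ell = 3$. Your justification that any non-abelian degree $27$ extension $L/\Qu$ lies in $\calL(\{3\})$ (because the only possible wildly ramified prime is $3$, the base field is $\Qu$, and all decomposition subgroups have order dividing $27$) is also the same as the paper's implicit reasoning.
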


\subsubsection{}\label{subsub comp for fra} We now show how to compute $\fra_{L/\Qu}$
for the extension $L/\Qu$ in $\calF(3)$. By Theorems \ref{decomp thm} and \ref{special cases prop} we have $\fra_{L/\Qu} = \fra_{L_p/\Qp}$ and
 both $\fra_{L_p/\Qp}$ {and the right hand side of the equality in Conjecture \ref{local conj}} can be computed by adapting the methods of \cite{BleyDebeerst}.
In the following we indicate where special care has to be taken to improve the performance of the general implementation used
to obtain the results of  \cite{BleyDebeerst}

For the computation of $[\calA_{L/\Qu}, \kappa_L, H_L]$ we choose a normal basis element $\theta$ and write
\[
[\calA_{L/\Qu}, \kappa_L, H_L] = [\ZG\cdot\theta, \kappa_L, H_L] + [p\calA_{L/\Qu}, \id, \ZG\cdot\theta] + \delta_{G}(\Nrd_\QG(p)).
\]
For computational reasons we proceed as in \S\ref{subsub deg 27} and choose $\theta$ such that $\calM\theta = \calM\calA_{L/\Qu}$.
The second and the third term are straightforward to compute and the first term is given by norm resolvents (see, for example,  \cite[(13)]{BleyDebeerst}).

For the computation of $\delta_{G,p}(j_p((\psi_{2,*}-1)(\tau_{L_p/\Qp}'))$ we first digress to
describe the character theory of non-abelian groups of order $p^3$.

The centre $Z = Z(G)$ of any such group $G$ is equal to the  commutator subgroup of $G$ and the quotient $G/Z$ is isomorphic to $\Ze/p\Ze \times \Ze/p\Ze$ so that $G$ has $p^2 $ linear characters of order dividing $p$.

It is also easy to see that $G$ has normal subgroups $A$ that are isomorphic to $\Ze/p\Ze \times \Ze/p\Ze$ and contain $Z$.  We fix such a subgroup $A$ and for each non-trivial character $\lambda$ of $Z$ we choose a character $ \psi \in \widehat A$ which restricts to give $\lambda$ on $Z$. Then  it can be shown that $\ind_A^G(\psi)$ depends only on $\lambda$ and
does not depend on the choice of $\psi$. In addition, it is an irreducible character of $G$ of degree $p$. Since $(p-1)p^2 + p^2 = p^3$ we have found all irreducible characters of $G$.

In particular, for $p=3$ the characters of $G$ comprise the trivial character, $8$ linear characters of order $3$ and two characters of degree $3$.

Returning now to the computation of local Galois Gauss sums we essentially proceed as described in \cite[\S~2.5]{BleyBreuning} but for reasons of
efficiency must take care in the 'Brauer induction step' of loc. cit.

The computation of $\tau(\Qp, \chi)$ for abelian characters is clear. Let now $\chi = \ind_A^G(\psi)$ be one of the characters of degree $p$. We set $M := L^A$ and $N := L^{\ker(\psi)}$ and use the equalities
\begin{align*}
  \tau(\Qp, \chi) =\, &\tau(\Qp, \ind_A^G(\psi - 1_A))\cdot \tau(\Qp,  \ind_A^G(1_A))\\
   =\, &\tau(M_p, \psi - 1_A)\cdot \tau(\Qp,  \ind_A^G(1_A)) \\
   =\, &\tau(M_p, \psi)\cdot \prod_{\varphi \in \widehat G \atop{\varphi|_A = 1_A}}\tau(\Qp, \varphi).\end{align*}

The problematic part is the computation of $\tau(M_p, \psi)$. To explain why we write $\frf(\psi)$ for the conductor of $\psi$ and choose
$c \in M_p$ such that $\frf(\psi) \frD_{M_p/\Qp} = c\calO_{M_p}$. Then, by the definition of local Gauss sums, one has
\[
\tau(M_p, \psi) = \sum_x \psi\left( \rec_{N_p/M_p}(x/c) \right) \psi_{\mathrm{add}}\left( x/c \right)
\]
where $\psi_{\mathrm{add}}$ denotes the standard additive character and $x$ runs over a set of representatives of $\calO_{M_p}^\times$ modulo
$U_{M_p}^{(2)}$. From an algorithmic point of view it is now important to choose the subgroup $A$ such that $L^A/\Qu$ is totally ramified
(e.g. we may take $A = G(L/E_1)$) because then
$\calO_{M_p}^\times / U_{M_p}^{(2)}$ has order $6$ as compared to order $702$ if $M_p/\Qp$ were the unique
unramified extension of degree $3$.

From the explicit description of the unramified characteristic in (\ref{explicit unram char})
it is now easy to compute $\tau'(\Qp, \chi) = \tau(\Qp, \chi)y(\Qp, \chi)^{-1}$ for
all $\chi \in \widehat G$ and based on this it is straightforward by the methods of \cite{BleyWilson} to compute the term
 $\delta_{G,p}(j_p((\psi_{2,*}-1)(\tau_{L_p/\Qp}'))))$.

Our computations show that for all extensions $L/\Qu$ in $\calF(3)$ the element $\fra_{L_p/\Qp}$ is equal to the twisted unramified characteristic $\frc_{L_p/\Qp}$ defined in (\ref{twisted unram char def}). Combining this with Theorems \ref{decomp thm} and  \ref{special cases prop} we obtain the following result.

\begin{theorem} \label{new connection} If $L/K$ belongs to $\calL(3)$, then $\fra_{L/K} =  \frc_{L/K}$, where $\frc_{L/K}$ is as defined in (\ref{idelic twisted unram char def}). \end{theorem}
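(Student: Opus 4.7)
The plan is to reduce the global identity to a family of local identities via the decompositions of both sides and then verify each local case. By Theorem \ref{decomp thm} applied to $\fra_{L/K}$ and by the definition (\ref{idelic twisted unram char def}) applied to $\frc_{L/K}$, both elements take the form $\sum_{v}{\rm i}^G_{G_w,\Qu_{\ell(v)}}(-)$, where the local contributions $\fra_{L_w/K_v}$ and $\frc_{L_w/K_v}$ both vanish for tamely ramified $v$: for $\frc$ by Remark \ref{explicit rems twisted}, and for $\fra$ by Theorem \ref{special cases prop}. The sums therefore range effectively over $v\in\calW_{L/K}$, and it suffices to prove $\fra_{L_w/K_v}=\frc_{L_w/K_v}$ for each such $v$.

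For any $v\in\calW_{L/K}$, the definition of $\calL(3)$ forces $K_v=\Qu_3$ and $|G(L_w/\Qu_3)|$ to divide $27$. My first task will be to handle the case in which $G(L_w/\Qu_3)$ is abelian; this covers all orders at most $9$ (as groups of order $p$ or $p^2$ are automatically abelian) as well as abelian extensions of degree $27$. In every such instance I will invoke Theorem \ref{special cases prop}(ii): the unramified hypothesis on the base is trivial, and the only substantive verification is that the inertia subgroup is cyclic. Weak ramification forces this inertia to have exponent dividing $3$ (it embeds into $U_{L_w}^{(1)}/U_{L_w}^{(2)}$), and by local class field theory it is then a quotient of $\Zp^\times/(\Zp^\times)^3$ for $p=3$. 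This latter quotient is cyclic of order $3$: the factor $\mu_{p-1}=\mu_2$ contributes nothing as its order is prime to $p$, while $U_{\Qu_3}^{(1)}/(U_{\Qu_3}^{(1)})^3\cong\Ze/3\Ze$ via the $p$-adic logarithm. Hence the inertia is cyclic and the local equality follows.

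The remaining case is $|G(L_w/\Qu_3)|=27$ with $G(L_w/\Qu_3)$ non-abelian. Here Theorem \ref{special cases prop} no longer suffices and the argument becomes computational. By Proposition \ref{degree p extensions} there are exactly three isomorphism classes of weakly ramified non-abelian Galois extensions of $\Qu_3$ of degree $27$, and the global set $\calF(3)$ was constructed precisely so that the completions at the unique $3$-adic place of its three members realize all three classes. The numerical computations reported in \S\ref{subsub comp for fra} then establish $\fra_{N_p/\Qu_3}=\frc_{N_p/\Qu_3}$ for every $N\in\calF(3)$, and since both $\fra_{F/E}$ and $\frc_{F/E}$ are invariants of the isomorphism class of the local extension $F/E$, this settles every remaining local case. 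Summing the local equalities over $v\in\calW_{L/K}$ yields $\fra_{L/K}=\frc_{L/K}$. The main obstacle is precisely this last case: it lies strictly outside the scope of Theorem \ref{special cases prop} and is only accessible via the explicit computations of \S\ref{subsub comp for fra}; the reduction to local and the abelian case, by contrast, are essentially formal consequences of results established earlier in the paper.
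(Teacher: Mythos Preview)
Your proof is correct and follows essentially the same route as the paper, which simply says that combining the computations of \S\ref{subsub comp for fra} with Theorems \ref{decomp thm} and \ref{special cases prop} gives the result. You supply more detail than the paper does, in particular the verification that in the abelian local case the inertia subgroup is cyclic (via the embedding into $U^{(1)}/U^{(2)}$ and the computation of $\Ze_3^\times/(\Ze_3^\times)^3$), which is precisely what is needed to invoke the second clause of Theorem \ref{special cases prop}; the paper leaves this implicit.
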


\begin{remark} The equality $\fra_{L/K} =  \frc_{L/K}$ in Theorem \ref{new connection} combines with the results of Theorems \ref{main result}(iv) and \ref{degree 27 vinatier} to imply that the image of $\frc_{L/K}$ in $\Cl(G)$ vanishes. Under the stated conditions, this fact also follows directly from Lemma \ref{cwr vanish}. Conversely, the results of Theorem \ref{new connection}, Theorem \ref{main result}(iv) and
Lemma \ref{cwr vanish} combine to give an alternative proof of Therem \ref{degree 27 vinatier}.\end{remark}

\begin{remark} By adapting the methods implemented for \cite{BleyDebeerst} one can also compute
{the right hand side of the equality in Conjecture \ref{local conj}} for all  extensions $L/\Qu$ in $\calF(3)$. These computations show that
\[\mathcal{E}_{L_p/\Qp} - J_{2, L_p/\Qp} - \frc_{L_p/\Qp} - M_{L_p/\Qp} = \frc_{L_p/\Qp},\]
and thereby verify Conjecture \ref{local conj}, and hence also Breuning's conjecture, for these extensions. Combining this fact with \cite[Th.~4.1]{mb2} and \cite[Cor.~7.6]{BleyBurns} one also finds that the central conjecture of \cite{BleyBurns} is valid for all $L/K$ in $\calL(3)$ for which $G_w$ has order $27$ and exponent $3$ for each wildly ramified place $w$ of $L$.\end{remark}

\subsection{Extensions of degree $63$}
\label{deg 63 extensions}

\subsubsection{}\label{explicit set ellp} Let $\ell$ and $p$ be odd primes with $\ell$ dividing $p-1$. We now sketch how to compute a set of Galois extensions
$L/\Qu$ of degree $\ell^2p$ such that $L/\Qu$ is at most tamely ramified outside $p$ and the
extensions $L_w/\Qp$ cover the set of local extensions of Proposition \ref{degree plsquared extensions} (where as usual $w$ denotes the unique place
of $L$ above $p$).

We use a simple heuristic approach which is motivated by the proof of Proposition \ref{degree plsquared extensions}
and which works well for $\ell = 3$ and $p=7$.

We fix a cyclic extension $E_1/\Qu$ of degree $\ell$ such that $p$ is inert and $\ell$ is unramified. Let $E_2$
denote the unique subextension of $\Qu(\zeta_p)$ of degree $\ell$. Let $E := E_1E_2$ denote the compositum of $E_1$ and
$E_2$ and let $F_1, \ldots, F_\ell$ be the subextensions of $E/\Qu$ of degree $\ell$ which are ramified at $p$.
Then the completions $F_{i,p}$ of the $F_i$ at the unique prime above $p$ range over the set of totally ramified
cyclic extensions of $\Qp$ of degree $\ell$. By Remark \ref{distinguish extensions} the extensions $N/\Qp$ which are non-abelian, wildly and weakly ramified can be distinguished by the unique subfield $F_{i,p}$ such that $N/F_{i,p}$ is abelian.

Let now $\frp$ denote the unique prime of $\OE$ above $p$. We then compute ray class groups $\cl(q\frp^2)$ for small
rational integers $q$ with $(q, \ell p)=1$ and search for subgroups $U$ of $\cl(q\frp^2)$ of index $p$ which are invariant
under $G(E/\Qu)$ and such that the corresponding
extension $L/E$ is both wildly and weakly ramified above $\frp$. Then $L_p / F_{i,p}$ is abelian, if and only if
$G(E/F_i)$ acts trivially on the quotient $\cl(q\frp^2)/U$ (or equivalently, $I_{G(E/F_i)}\cl(q\frp^2) \sseq U$).
A search using MAGMA produces these extensions.
The results can be reproduced with the MAGMA programs which can be
downloaded from the first author's homepage.

\subsubsection{} We now fix $\ell := 3$ and $p :=7$ and  apply class group methods to verify Vinatier's conjecture for the three extensions $L/\Qu$  described in the previous section. The principal approach is exactly the same as described in \S\ref{subsub deg 27}.

For the locally free class group of a non-abelian group $G$ of order $63$ such that $G/C \simeq \Ze/3\Ze \times \Ze/3\Ze$ (where $C$
denotes the Sylow-$7$-subgroup) one finds that $\Cl(G)$ is isomorphic to $\Ze/3\Ze \times \Ze/3\Ze \times \Ze/12\Ze$.

Our computations show that for each of the three extensions $L$ computed in the previous subsection Vinatier's conjecture is valid.
Taken in conjunction with Theorem \ref{decomp thm} and Lemma \ref{vinatier lemma 2} this fact implies the following result.

\begin{theorem}\label{last result} Let $L/K$ be a weakly ramified odd degree Galois extension of number fields for which at
each wildly ramified place $v$ of $K$ one has $K_v = \Qu_7$, $|G_w| = 63$ and $G_w/C_w$ is isomorphic to $\Ze/3\Ze \times \Ze/3\Ze$,
where $w$ denotes a fixed place of $L$ above $v$, $G_w$ its decomposition subgroup in $G(L/K)$ and $C_w$ is the Sylow-$7$-subgroup of $G_w$. Then $\calA_{L/K}$ is a free
$G(L/K)$-module.
\end{theorem}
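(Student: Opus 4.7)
The plan is to reduce the global statement to a finite list of local computations, using the machinery already assembled in the paper. First, I would invoke Lemma \ref{vinatier lemma 2} to replace the claim that $\calA_{L/K}$ is $G(L/K)$-free by the equivalent statement that the image of $\fra_{L/K}$ in $\Cl(G(L/K))$ is trivial. Applying Theorem \ref{decomp thm} together with the vanishing of $\fra_{F/E}$ for tame $F/E$ from Theorem \ref{special cases prop}, the global element decomposes as
\[
\fra_{L/K} = \sum_{v \in \calW_{L/K}} {\rm i}^{G(L/K)}_{G_w, \Qu_7}(\fra_{L_w/K_v}),
\]
so it suffices to show that for each wildly ramified $v$ the local term $\fra_{L_w/K_v}$ projects to zero in $\Cl(G_w)$.

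Next, by Proposition \ref{degree plsquared extensions} applied with $\ell = 3$ and $p = 7$, there are exactly three non-isomorphic weakly ramified non-abelian Galois extensions of $\Qu_7$ of degree $63$ with $G_w/C_w \cong \Ze/3\Ze \times \Ze/3\Ze$, and by hypothesis every completion $L_w/K_v$ is isomorphic to one of these. Thus it is enough to verify the vanishing of $\fra_{N/\Qu_7}$ in $\Cl(G(N/\Qu_7))$ for each of these three local extensions $N$. The construction sketched in \S\ref{deg 63 extensions} produces three global extensions $L'/\Qu$ of degree $63$ which are at most tamely ramified outside $7$ and whose completions at $7$ realise precisely these three isomorphism classes. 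Applying the decomposition formula to each $L'/\Qu$ (whose sum collapses to the single wild place $7$) gives $\fra_{L'/\Qu} = {\rm i}^{G(L'/\Qu)}_{G(L'_7/\Qu_7), \Qu_7}(\fra_{L'_7/\Qu_7})$, so that once we know $\calA_{L'/\Qu}$ is $G(L'/\Qu)$-free (equivalently, $\fra_{L'/\Qu}$ vanishes in $\Cl(G(L'/\Qu))$) the local term $\fra_{L'_7/\Qu_7}$ must vanish in $\Cl(G(L'_7/\Qu_7))$, because induction $\Cl(G_w) \to \Cl(G)$ is injective on the relevant summand after chasing the functorial behaviour in Theorem \ref{funct thm}.

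The substantive step — and the one I expect to be the main obstacle — is the explicit class-group calculation for the three global degree-$63$ extensions $L'/\Qu$. Here one must first compute a normal basis element $\theta$ and the associated module $\calA_\theta \sseq \QQ[G]$ with $\calA_\theta \cdot \theta = \calA_{L'/\Qu}$. Because $\Cl(G)\cong \Ze/3\Ze \times \Ze/3\Ze \times \Ze/12\Ze$ (computed, for instance, by the methods of \cite{BleyBoltje}), a naive approach would require solving the discrete logarithm problem at every prime dividing the generalised index $[\calA_\theta : \ZG]$; to make the computation tractable one replaces $\theta$ by $\delta(\theta)$ for a $\delta \in \QG$ satisfying $\calM \calA_\theta = \calM \delta$ (with $\calM$ a maximal order in $\QG$ containing $\ZG$), so that after relocalisation only the primes $\ell\in\{3,7\}$ dividing $|G|$ remain. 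The Bley–Wilson algorithm of \cite{BleyWilson} then resolves the discrete logarithm problem in $K_0(\Zl[G],\Ql[G])$ at these primes and yields the triviality of $[\calA_{L'/\Qu}]$ in $\Cl(G)$; assembling the three verifications and feeding them back through the decomposition argument completes the proof.
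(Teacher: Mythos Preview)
Your overall strategy matches the paper's exactly: reduce to the vanishing of $\partial^0(\fra_{L/K})$ via Lemma~\ref{vinatier lemma 2}, decompose via Theorem~\ref{decomp thm} into local terms, identify the relevant local completions with the three extensions of Proposition~\ref{degree plsquared extensions}, realise each of those via a global degree-$63$ extension $L'/\Qu$ built in \S\ref{explicit set ellp}, and then verify numerically (using the Bley--Wilson machinery, after the maximal-order trick) that $[\calA_{L'/\Qu}]=0$ in $\Cl(G)$.

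There is one step whose justification is wrong, even though the conclusion is correct in this particular situation. You argue that from $\fra_{L'/\Qu}=0$ in $\Cl(G(L'/\Qu))$ one deduces $\fra_{L'_7/\Qu_7}=0$ in $\Cl(G_w)$ ``because induction $\Cl(G_w)\to\Cl(G)$ is injective on the relevant summand after chasing the functorial behaviour in Theorem~\ref{funct thm}.'' Induction on class groups is not injective in general, and Theorem~\ref{funct thm} concerns restriction and coinflation of the elements $\fra_{L/K}$, not induction on class groups; it does not give what you need. The correct reason is much simpler: the global extensions $L'/\Qu$ constructed in \S\ref{explicit set ellp} have degree $63$ and a \emph{unique} $7$-adic place $w$, so $G_w = G(L'/\Qu)$ and the decomposition of Theorem~\ref{decomp thm} collapses to the literal equality $\fra_{L'/\Qu}=\fra_{L'_7/\Qu_7}$ (this is exactly equation~(\ref{unique summand}) in the proof of Theorem~\ref{main cor2}, and also how the implication (ii)$\Rightarrow$(iii) of Proposition~\ref{vinatier lemma} is proved). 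Once you replace your injectivity claim by this observation, the argument is complete and coincides with the paper's.
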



\begin{remark} One can also use numerical methods to show that for each of the extensions $L/K$ considered in Theorem \ref{last result} the projection of $\frc_{L/K}$ to $\Cl(G)$ vanishes.
\end{remark}

\subsection{$\fra_{L/K}$ and idelic twisted unramified characteristics}\label{second conj sect} An extension of the methods used in \S\ref{subsub comp for fra} also allowed us to numerically compute the element $\fra_{L_p/\Qp}$ for one of the three extensions $L/\Qu$ of degree $63$ discussed in \S\ref{explicit set ellp}, so that $\ell = 3$ and $p=7$. (For the other two extensions that occur in this setting, however, the necessary computations became too complex and did not finish in reasonable time.)

In particular, in this respect it is useful to note that groups of order $\ell^2p$ are monomial, and hence that one can proceed as in \S\ref{subsub comp for fra} for the computation of the local Galois Gauss sums.

These numerical computations showed that $\fra_{L_p/\Qp} = \frc_{L_p/\Qp}$. 
 Taken in conjunction with Theorem \ref{new connection}, Corollary \ref{vinatier thm} and the observation in Remark \ref{simpler variant}, this fact motivates us to make the following remarkable conjecture.

\begin{conj}\label{second local conj} For any weakly ramified odd degree Galois extension of number fields $L/K$ one has $\fra_{L/K} = \frc_{L/K}$.  \end{conj}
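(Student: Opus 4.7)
The plan is to reduce Conjecture \ref{second local conj} to a purely local statement and then attack the local statement via functorial reductions combined with explicit resolvent calculations. Since both $\fra_{L/K}$ and $\frc_{L/K}$ admit parallel decompositions indexed by the places of $K$ that ramify wildly (the former by Theorem \ref{decomp thm} and Theorem \ref{special cases prop}, the latter by its very definition in (\ref{idelic twisted unram char def})), and since both local summands vanish at tame places, it suffices to prove the local identity
\[
\fra_{F/E} = \frc_{F/E} \quad \text{in } K_0(\Zl[\Gamma],\Ql[\Gamma])
\]
for every weakly ramified odd degree Galois extension of $\ell$-adic fields $F/E$ with group $\Gamma$. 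Theorem \ref{special cases prop} already handles the cases in which $F/E$ is tame, or in which $E/\Ql$ is unramified and $F/E$ is abelian with cyclic inertia, so the core problem is the general wildly ramified case.

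My first step would be to establish the functorial behaviour of $\frc_{F/E}$ under restriction to subgroups $J\subseteq\Gamma$ and coinflation along normal quotients, mirroring Theorem \ref{funct thm}. This is essentially formal from the definition (\ref{twisted unram char def}) and Remark \ref{explicit rems twisted}, combined with the fact that unramified characteristics are compatible with inflation and that inertia subgroups behave predictably under restriction. Together with Theorem \ref{funct thm}, this reduces the problem modulo the torsion description of $K_0(\Zl[\Gamma],\Ql[\Gamma])$ given in \cite[Th.~4.1]{Burns} to the case where $\Gamma$ has a ``small'' structure: after the usual cyclic-induction reduction that underpins the proof of Theorem \ref{main result}(i), it would suffice to check the equality $\fra_{F/E} = \frc_{F/E}$ when $\Gamma$ is cyclic or, more ambitiously, hyperelementary. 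Abelian $\Gamma$ that fall outside the Pickett--Vinatier framework (e.g.~$E/\Ql$ ramified, or $F/E$ abelian with non-cyclic inertia) should then be tractable by explicit construction of normal basis generators for $\calA_{F/E}$ in the style of \cite{BleyCobbe}, combined with Taylor's fixed point theorem (\ref{jltstar}) to work modulo $\ker(j_{\ell,*}^t)$.

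Once this abelian-local step is in hand, the next step would be to leverage Conjecture \ref{local conj} as a bridge: combining the two conjectures would yield the equivalent identity
\[
\mathcal{E}_{F/E} - \delta_{\Gamma,\ell}(j_\ell(J_{2,F/E})) - 2\frc_{F/E} - M_{F/E} = 0,
\]
which is a relation purely between the fundamental-class term $\mathcal{E}_{F/E}$, the local Galois-Jacobi sum, the explicit correction term $M_{F/E}$, and the very simple element $\frc_{F/E}$. Since Breuning's conjecture (and hence Conjecture \ref{local conj}) is known for the classes of extensions treated in \cite{mb2,BleyCobbe,BleyBurns}, verifying Conjecture \ref{second local conj} in those settings reduces to checking this explicit identity, which is a character-theoretic computation.

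The main obstacle will be the non-abelian wildly ramified case that falls outside all existing explicit frameworks. Here neither the norm resolvents $\calN_{E/\Ql}(\theta\mid \chi)$ for a generator $\theta$ of $\calA_{F/E}$ nor the twisted Galois-Gauss sums $\tau(E,\psi_2(\chi)-\chi)$ admit known closed-form expressions, and the fundamental-class term $\mathcal{E}_{F/E}$ hidden inside Conjecture \ref{local conj} is genuinely hard to control. I expect progress here to require either a non-abelian analogue of the Pickett--Vinatier resolvent formula, or an inductive argument that exploits Brauer induction together with the mild form of $\frc_{F/E}$ (it depends only on the inertia and a Frobenius lift) to reduce any non-abelian computation to abelian subquotients in a way that is compatible with $\psi_2$. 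Until such a tool is available, a full proof seems out of reach, and the evidence must remain the numerical verifications described in \S\ref{deg 63 extensions} and \S\ref{second conj sect}.
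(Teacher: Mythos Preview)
The statement is a conjecture that the paper explicitly does \emph{not} prove; it is formulated on the basis of Corollary \ref{vinatier thm}, Theorem \ref{new connection}, and the single further numerical verification reported in \S\ref{second conj sect}. Your closing paragraph concedes that a full proof is out of reach, which is consistent with the paper.

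That said, the reduction you sketch in your second paragraph contains a genuine gap. You propose to combine the functoriality of $\fra$ and $\frc$ with \cite[Th.~4.1]{Burns}, ``after the usual cyclic-induction reduction that underpins the proof of Theorem \ref{main result}(i)'', to reduce the local equality $\fra_{F/E}=\frc_{F/E}$ to cyclic (or hyperelementary) $\Gamma$. But \cite[Th.~4.1]{Burns} characterises the \emph{torsion subgroup} of $K_0(\Zl[\Gamma],\Ql[\Gamma])$ as the common kernel of the maps $\pi^H_{H/J}\circ\rho^\Gamma_H$ for $H$ cyclic and $|H/J|$ prime to $\ell$; it does not detect whether a torsion element is zero. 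In \S\ref{proof of main result} this is exactly what is wanted: the subquotients $H/J$ have order prime to $\ell$, the associated extensions are tame at $\ell$, and so both $\fra$ and $\frc$ vanish there by Theorem \ref{special cases prop}. All this shows is that $\fra_{F/E}-\frc_{F/E}$ is torsion, which you already know (Theorem \ref{main result}(i) gives it for $\fra$, and $\frc_{F/E}$ is $\delta_{\Gamma,\ell}$ of a root of unity by Remark \ref{explicit rems twisted}). Any detection scheme that actually forces equality must retain the wild $\ell$-part of $\Gamma$ in the test subquotients, and there you are back in the non-abelian wildly ramified case you were trying to escape; note in particular that every $\ell$-group is $\ell$-hyperelementary, so the hyperelementary hedge does not help either. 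Thus the abelian analysis you outline, even if completed, would not close the gap.
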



\begin{remark}\label{last rem} Upon comparing Conjectures \ref{local conj} and \ref{second local conj} one obtains, for each odd prime $\ell$ and each weakly ramified odd degree Galois extension of $\ell$-adic fields $F/E$, an explicit conjectural formula
\[ \delta_{\Gamma, \ell}(J_{2,F/E}) = {\mathcal{E}}_{F/E} - 2c_{F/E} - M_{F/E}\]
that computes Galois-Jacobi sums in terms of fundamental classes and twisted unramified characteristics.
\end{remark}

\bibliographystyle{plain}

\vskip 1truein


\end{document}